\newcommand{\smallsum}{\textstyle\sum}
\newcommand{\D}{\mathcal D}
\newcommand{\V}{V}
\newcommand{\ga}{\gamma_0}
\newcommand{\gb}{\gamma_1}
\newcommand{\gc}{\gamma_2}
\newcommand{\R}{\mathbb{R}}
\newcommand{\N}{\mathbb{N}}
\newcommand{\Z}{\mathbb{Z}}
\newcommand{\E}{\mathbb{E}}
\renewcommand{\P}{\mathbb{P}}
\newcommand{\tr}{\operatorname{trace}}
\newcommand{\Hess}{\operatorname{Hess}}
\newtheorem{lemma}{Lemma}[section]
\newtheorem{remark}[lemma]{Remark}
\newtheorem{theorem}[lemma]{Theorem}
\newtheorem{definition}[lemma]{Definition}
\newtheorem{prop}[lemma]{Proposition}
\newtheorem{corollary}[lemma]{Corollary}
\newcommand{\U}{U}
\providecommand{\N}{{\ensuremath{\mathbbm{N}}}}
\providecommand{\R}{{\ensuremath{\mathbbm{R}}}}
\providecommand{\E}{{\ensuremath{\mathbb{E}}}}
\renewcommand{\P}{{\ensuremath{\mathbb{P}}}}
\providecommand{\1}{{\ensuremath{\mathbbm{1}}}}
\providecommand{\HS}{{\ensuremath{\textup{HS}}}}
\renewcommand{\H}{{\ensuremath{\mathbb{H}}}}
\begin{document}
\title{Exponential moments for numerical approximations 
of stochastic partial differential equations}
\author{ 
Arnulf Jentzen
and Primo\v{z} Pu\v{s}nik}
\maketitle
\begin{abstract}
Stochastic partial differential equations
(SPDEs)
have become a crucial ingredient
in a number of models from economics and the natural sciences.
Many SPDEs that appear in such applications include non-globally
monotone nonlinearities.
Solutions of SPDEs
with non-globally monotone nonlinearities
are in nearly all cases not known explicitly.
Such SPDEs can thus only be solved approximatively and
it is an important research problem
to construct and analyze discrete numerical approximation schemes which 
converge with positive
strong 
convergence rates
to the solutions of such
infinite dimensional SPDEs.
In the case of finite dimensional stochastic ordinary differential
equations (SODEs) with non-globally monotone nonlinearities
it has recently been revealed 
that exponential integrability
properties of the discrete numerical approximation scheme are
a key instrument to establish positive strong  convergence rates for the considered approximation scheme.
Exponential integrability properties
for appropriate 
approximation schemes have been established
in the literature in the case of a large class
of finite dimensional SODEs with non-globally
monotone nonlinearities.
To the best of our knowledge, there exists no result in the scientific literature which proves exponential integrability properties for a time discrete approximation scheme in the case of an infinite dimensional SPDE.
In particular, to the best of our knowledge,
there exists no result in the scientific literature which establishes strong convergence rates for a time discrete approximation scheme in the case of 
a
SPDE with a non-globally monotone nonlinearity.
In this paper we propose a new class of tamed
space-time-noise discrete
exponential Euler approximation
schemes
that admit exponential integrability properties in the case of infinite dimensional
SPDEs.
More specifically, the main result
of this article  
proves that these approximation schemes
enjoy exponential integrability properties
for a large class of SPDEs with possibly non-globally monotone nonlinearities. 
In particular, we establish exponential moment bounds for the
proposed approximation schemes in the case of
stochastic Burgers equations,
stochastic Kuramoto-Sivashinsky equations,
and two-dimensional stochastic Navier-Stokes equations.
\end{abstract}
\tableofcontents
\newpage
\section{Introduction}
Stochastic partial differential equations
(SPDEs)
have become a crucial ingredient
in a number of models from economics and the natural sciences.
For example, SPDEs frequently appear in models for the approximative
pricing of interest-rate based financial derivatives
(cf., e.g., Theorem~2.5 in Harms et al.~\cite{HarmsStefanovitsTeichmannWutrich2015} 
and (1.2) in Filipovi\'c et al.~\cite{FilipovicTappeTeichmann2010}),
for the approximative description of random surfaces
in surface growth models
(cf., e.g., (1) in Bl\"omker \& Romito~\cite{BlomkerRomito2015}
and (3) in Hairer~\cite{HairerKPZ}),
for describing the temporal dynamics associated to Euclidean quantum field
theories (cf., e.g., (1.1) in Mourrat \& Weber~\cite{MourratWeber2015}),
for the approximative description of velocity fields
in fully developed turbulent flows 
(cf., e.g., (7) in Birnir~\cite{Birnir2013a}
and (1.5) in Birnir~\cite{Birnir2013b}),
and for the approximative description
of the temporal evolution of the concentration of an undesired
(chemical or biological)
contaminant in water
(e.g., in a water basin,
the groundwater system, or a river;
cf., e.g., (1.1) in Kouritzin \& Long~\cite{KouritzinLong2002}
and also (1.1) in Kallianpur \& Xiong~\cite{KallianpurXiong1994}).
Many SPDEs that appear in such applications include non-globally
monotone nonlinearities.
Solutions of SPDEs
with non-globally monotone nonlinearities
are in nearly all cases not known explicitly.
Such SPDEs can thus only be solved approximatively and
it is an important research problem
to construct and analyze discrete numerical approximation schemes which 
converge with positive
strong 
convergence rates
to the solutions of such
infinite dimensional SPDEs.
In the case of finite dimensional stochastic ordinary differential
equations (SODEs) with non-globally monotone nonlinearities
it has recently been revealed
in the literature
that exponential integrability
properties of the discrete numerical approximation scheme are
a key ingredient to establish positive strong convergence rates for the considered approximation scheme;
cf., e.g., 
Hutzenthaler et al.~\cite{HutzenthalerJentzenWang2014},
Hutzenthaler \& Jentzen~\cite{HutzenthalerJentzen2014PerturbationArxiv},
and
Cozma \& Reisinger~\cite{CozmaReisinger2015}.
In particular, e.g., Corollary~3.8 in 
Hutzenthaler et al.~\cite{HutzenthalerJentzenWang2014}
and
Proposition~3.3 in Cozma \& Reisinger~\cite{CozmaReisinger2015} 
(cf. also Lemma~3.6 in Bou-Rabee \& Hairer~\cite{BouRabeeHairer2013})
establish
exponential integrability properties
for appropriate
stopped/tamed/truncated
approximation schemes in the case of a large class
of finite dimensional SODEs with non-globally
monotone nonlinearities.
To the best of our knowledge, there exists no result in the scientific literature which proves exponential integrability properties for a time discrete approximation scheme in the case of an infinite dimensional SPDE.
In particular, to the best of our knowledge,
there exists no result in the scientific literature which establishes strong convergence rates for a time discrete approximation scheme in the case of 
a
SPDE with a non-globally monotone nonlinearity
(cf., e.g., D\"orsek~\cite{Dorsek2012} 
and Hutzenthaler \& Jentzen~\cite{HutzenthalerJentzen2014PerturbationArxiv}).
In this paper we propose a new class of tamed
space-time-noise discrete
 exponential Euler approximation
schemes
that admit exponential integrability properties in the case of infinite dimensional
SPDEs.
More specifically, the main result
of this article 
(see Theorem~\ref{theorem:full_discrete_scheme_moments} 
in Section~\ref{section:3} below)
proves that these approximation schemes
enjoy exponential integrability properties
for a large class of SPDEs with possibly non-globally monotone nonlinearities. 
In particular, we establish exponential moment bounds for the
proposed approximation schemes in the case of
stochastic Burgers equations (see Corollary~\ref{corollary:Burgers}
in Subsection~\ref{sec:Burgers} below),
stochastic Kuramoto-Sivashinsky equations (see Corollary~\ref{corollary:Kuramoto}
in Subsection~\ref{sec:Kuramoto} below),
and two-dimensional stochastic Navier-Stokes equations (see Corollary~\ref{corollary:2DNavier}
in Subsection~\ref{sec:2DNavier} below).

In this introductory section we now illustrate the proposed approximation schemes and our
main result
(see Theorem~\ref{theorem:full_discrete_scheme_moments}) in the case of a stochastic Burgers
equation
(cf., e.g., 
Section~1 in 
Da Prato et al.~\cite{DaPratoDebusscheTemam1994}
and
Section~2 in Hairer \& Voss~\cite{HairerVoss2011}).
Let  
$ T \in (0,\infty) $,    
$ \delta \in (0, \nicefrac{1}{18}) $, 
$ H = L^2( (0,1) ; \R) $, 
let
$ Q \in L_1(H) $
be non-negative and symmetric,
  let
 $ ( \Omega, \mathcal{ F }, \P  ) $
 be a probability space,
 let
$ (W_t)_{t\in [0,T]} $
be an
$ \operatorname{Id}_H $-cylindrical 
 $ \P $-Wiener process,
let
$ A \colon D( A ) \subseteq H \to H $ 
be the Laplacian
with Dirichlet boundary conditions on $ H $,
let 
$ ( e_n )_{n \in \N} \subseteq H $,
$ (P_n)_{n \in \N} \subseteq L(H) $,
$ F \colon W_0^{1,2}( (0,1), \R) \to H $, 
$ \xi \in W_0^{1,2}( (0,1), \R ) $
satisfy for all 
$ n \in \N $, 
$ u \in H $,  
$ v \in W_0^{1,2}( (0,1), \R ) $ 
that   
$ e_n ( \cdot ) = \sqrt{2} \sin( n \pi ( \cdot ) ) $,
$ P_n(u) = 
\sum_{k=1}^n \langle e_k, u \rangle_H
e_k $,
  $ F( v ) = - v' \cdot v $,
let $ W^n \colon [0,T] \times \Omega \to P_n(H) $, $ n \in \N $, 
be
stochastic processes
with continuous sample paths
which satisfy for all $ n \in \N $, $ t \in [0,T] $
that
$ \P ( W_t^n = \int_0^t P_n \, dW_s ) = 1 $,
and
let 
$ Y^{ N, M }\colon [0, T] \times \Omega \to P_N(H) $, 
$ N, M \in \N $,
be   
stochastic processes 
which satisfy for all 
$ N, M \in \N $,
$ m \in \{ 0, 1, \ldots, M-1 \} $,   
$ t \in [ \frac{mT}{M}, \frac{ (m+1) T }{M} ] $
that
$ Y^{N, M}_0 = P_N(\xi) $ and
\begin{multline}
\label{scheme:full_discrete_intro}
  Y_t^{N, M}   
=  
e^{(t-\nicefrac{mT}{M} )A} 
\Big( 
Y_{
	mT / M
}^{  N, M} 
+ 
\1_{  \{ 
\| ( - A )^{\nicefrac{1}{2}} Y^{N, M}_{ mT / M } \|_H^2  
	+ 1
	\leq 
	\nicefrac{ M^\delta }{ T^\delta }   \}}
P_N
\Big[ 
F(
Y_{ mT / M }^{N, M}
) \,
(
t - \tfrac{mT}{M}
)  
\\
+
\tfrac{ 
	Q^{\nicefrac{1}{2}}
		( W_t^N - W_{ mT / M }^N )
}{
1 + 
\|  
P_N
Q^{\nicefrac{1}{2}} 
( W_t^N - W_{ mT / M }^N )
\|_H^2
} 
\Big]
\Big) 
\end{multline}
(cf., e.g., 
\cite{HutzenthalerJentzenKloeden2012,
HutzenthalerJentzenMemoires2015,
Sabanis2013ECP,
Sabanis2014Arxiv,
HutzenthalerJentzenWang2014,
MasterRyan,
GyongySabanisSiska2016,
JentzenPusnik2015,
BeckerJentzen2016,
HutzenthalerJentzenSalimova2016}
for related schemes).
In Corollary~\ref{corollary:Burgers}
in Subsection~\ref{sec:Burgers}
below we demonstrate
that the approximation scheme~\eqref{scheme:full_discrete_intro}
enjoys finite exponential moments.
More precisely,
Corollary~\ref{corollary:Burgers}
in Subsection~\ref{sec:Burgers}
proves\footnote{(with 
	$ d = 1 $,
	$ \mathcal{D} = (0,1) $,
	$ \eta = 0 $,
$ \gamma = \nicefrac{1}{2} $,  
$ T = T $,
$ \varepsilon = \varepsilon - \nicefrac{1}{ \sqrt{3} } $,
$ \delta = \delta $,
$ U = H $,
$ H = H $,
$ \H = \{ e_n \colon n \in \N \} $,
$ \mathbb{U} = \{ e_n \colon n \in \N \} $,
$ \lambda_{e_N} = - \pi^2 N^2 $,
$ ( \Omega, \mathcal{F}, \P, ( \mathcal{F}_t )_{t \in [0,T]} )
=
( \Omega, \mathcal{F}, \P, ( \sigma_\Omega( (W_s )_{s \in [0,t]} ) )_{t \in [0,T]} ) $,
$ W = W $,
$ Q = Q $,
$ A = A $,
$ r = ( H_{ \nicefrac{1}{2} } \ni v \mapsto 
 2 \varepsilon \max \{ 1, \sqrt{ \operatorname{trace}_H(Q) } \} 
+
   2 \varepsilon 
  \max \{ 1, \sqrt{ \operatorname{trace}_H(Q) } \}
\|  (-A)^{ \nicefrac{1}{2} }  v  \|_H^2 \in [0,\infty) ) $, 
$ b = ( (0,1) \times \R \ni (x,y) \mapsto 1 \in \R ) $,
$ \vartheta = \operatorname{trace}_H(Q) $, 
$ c = 2 \varepsilon \max \{ 1, \sqrt{ \operatorname{trace}_H(Q) } \} $,
$ R = \operatorname{Id}_H $,
$ F = F $,
$ \xi = ( \Omega \ni \omega \mapsto \xi \in W_0^{1,2}( (0,1), \R) ) $,
$ Y^{ \{ 0, T/M,\ldots, T \}, \{ e_1, \ldots, e_N \}, \{ e_1, \ldots, e_N \}} = Y^{N, M} $
for 
$ N, M \in \N $,
$ \varepsilon \in [1, \infty) $
in the notation of Corollary~\ref{corollary:Burgers})
} 
that
for all $ \varepsilon \in [1,\infty) $ 
it holds that
\begin{equation}
  \begin{split}
  \label{eq:BurgersExpMomentsIntro}
  & 
  \sup_{N, M \in \N} 
  \sup_{t\in [0,T]} 
  \E\!\left[
       \exp\!\left(
         \tfrac{   
         \varepsilon      
           \| Y^{ N, M  }_t \|_H^2
         }
         {
         e^{ 2 \varepsilon  \operatorname{trace}_H(Q)  t }
         }
       \right)
     \right] 
<\infty 
.
\end{split}
  \end{equation}
Corollary~\ref{corollary:Burgers}
follows from an application of Corollary~\ref{Corollary:full_discrete_scheme_convergence} below
(see Subsection~\ref{sec:Burgers} below for details).
Corollary~\ref{Corollary:full_discrete_scheme_convergence},
in turn,
is a direct consequence of
Theorem~\ref{theorem:full_discrete_scheme_moments},
which is the main result of this article.
Theorem~\ref{theorem:full_discrete_scheme_moments}
establishes
exponential integrability properties
for a more general class of SPDEs
(such as
stochastic Burgers equations with non-additive noise,
stochastic Kuramoto-Sivashinsky equations,
and two-dimensional
stochastic Navier-Stokes equations on a torus) 
as well as for a more general type of approximation schemes.
Exponential integrability
properties
such as~\eqref{eq:BurgersExpMomentsIntro}
are a key instrument
to establish
strong convergence rates for 
SPDEs
with
non-globally
monotone
nonlinearities
(cf.~\cite{HutzenthalerJentzen2014PerturbationArxiv}).
In particular we intend to use~\eqref{eq:BurgersExpMomentsIntro} 
and Theorem~\ref{theorem:full_discrete_scheme_moments},
respectively,
in succeeding articles 
to establish strong convergence rates  
for numerical approximations of stochastic Burgers
equations and other SPDEs
with non-globally monotone nonlinearities.

While polynomial moment bounds for numerical approximations of infinite dimensional SPDEs
and
exponential moment bounds
for numerical approximations of finite dimensional SODEs
have been established in the scientific literature,
Theorem~\ref{theorem:full_discrete_scheme_moments} 
is -- to the best of our knowledge -- the first result in the literature
which establishes
exponential moment bounds
for time discrete numerical approximations
in the case of infinite dimensional
SPDEs.
In particular,
Theorem~\ref{theorem:full_discrete_scheme_moments}
and 
its consequences in
Corollaries~\ref{Corollary:full_discrete_scheme_convergence},
\ref{corollary:Burgers},
\ref{corollary:Kuramoto},
and~\ref{corollary:2DNavier},
respectively,
are -- to the best of our knowledge -- the first
results
in the literature
that establish
exponential integrability
properties
for
time discrete numerical approximations of
 stochastic Burgers equations,
stochastic Kuramoto Sivashinsky equations,
and 
two-dimensional stochastic Navier Stokes equations.
\subsection{Notation}
Throughout this article the following notation is used.
For sets $ A $ and $ B $ we denote by $ \mathbb{M}(A, B) $ the set of all mappings from $ A $ to $ B $.
For a topological space $ (X, \tau) $ and a set $ D\subseteq X $
we denote by $ \mathring D \subseteq X $ the interior of $ D $. 
For a natural number $ k \in \N $ and 
normed $ \R $-vector spaces 
$ (U, \left \| \cdot \right \|_U) $ and 
$ (V, \left \| \cdot \right \|_V) $ we denote by
$ L^{(k)}(U,V) $ the set of all continuous $ k $-linear mappings from
$ U^k $ to $ V $, we denote by
$ \left \| \cdot \right \|_{L^{(k)}(U, V)}\colon
L^{(k)}(U, V) \to [0,\infty) $
the mapping which satisfies for all $ A\in L^{(k)}(U, V) $
 that
$ \| A \|_{L^{(k)}(U, V)}
=
\sup_{u_1,u_2,\ldots, u_k\in U\backslash \{0\}}
\big(
 \frac{
 \| A(u_1, u_2,\ldots, u_k)\|_V
 }
 {
 \|u_1\|_U \cdot \| u_2 \|_U \cdots \|u_k\|_U
 }
\big) $, 
we denote by $ L^{(0)}(U,\V) $ the set given by $ L^{(0)}(U,V) = V $,
and we denote by $ \left \| \cdot \right \|_{L^{(0)}(U, \V)}\colon
\V \to [0,\infty) $
the mapping which satisfies
for all $ v \in \V $ that
$ \| v \|_{L^{(0)}(U, \V)} =   \| v \|_\V $.
For measurable spaces $ (\Omega_1, \mathcal{F}_1) $ and
$ (\Omega_2, \mathcal{F}_2) $ we denote by $ \mathcal{M}( \mathcal{F}_1 , \mathcal{F}_2 ) $ 
the set of all $ \mathcal{F}_1/\mathcal{F}_2 $-measurable functions. 
For a normed $ \R $-vector space
$ ( V, \left\| \cdot \right\|_V) $,
a measure space 
$ ( \Omega, \mathcal{F}, \mu) $,
a real number $ p \in (0,\infty) $,
and a measurable function 
$ f \in \mathcal{M}( \mathcal{F}, 
\mathcal{B}(V) ) $
we denote by 
$ \| f \|_{\mathcal{L}^p(\mu; V)} \in [0,\infty] $
and
$ \| f \|_{\mathcal{L}^\infty(\mu; V)}
\in [0,\infty] $
the extended real numbers
given by
$ \| f \|_{\mathcal{L}^p(\mu; V)}
=
\left(
\int_\Omega \| f(\omega) \|_V^p \,\mu(d\omega)
\right)^{\nicefrac{1}{p}} $
and
$ \| f \|_{\mathcal{L}^\infty(\mu; V)}
=
\inf \{ c\in [0,\infty)\colon
\mu( \{ v \in V \colon | f(v) | > c  \} )
= 0
 \}
$.
For a topological space $ (X, \tau) $ we denote by $ \mathcal{B}(X) $ the sigma-algebra of all Borel measurable sets in $ X $.
For a natural number $ d\in \N $ and a Borel
measurable set
$ A \in \mathcal{B}( \R^d ) $
we denote by
$ \mu_{ A }
  \colon
  \mathcal{B}( A )
  \rightarrow [0,\infty] $
the Lebesgue-Borel
measure on
$ A \subseteq \R^d $.
For
$ \R $-Hilbert spaces  
$ (H, \left< \cdot, \cdot \right>_H, \left \| \cdot \right \|_H) $
and
$ (U, \left< \cdot, \cdot \right>_U, \left \| \cdot \right \|_U) $,
a set $ \mathcal{H} \in \mathcal{P}(H) $,
and
functions
$ F
  \colon \mathcal{H}
  \rightarrow H $
and
$ B
  \colon \mathcal{H}
  \rightarrow \HS(U,H) $
we denote by
$ \mathcal{G}_{ F, B }
  \colon
  \mathcal{C}^2( H, \mathbb{R} )
  \rightarrow
  \mathbb{M}( \mathcal{H}, \R) $
the function which satisfies for all $ x \in \mathcal{H} $,
$ \phi \in \mathcal{C}^2( H,
  \mathbb{R} ) $ 
  that
\begin{equation}
\label{eq:generator}
\begin{split}
  ( \mathcal{G}_{ F, B } \phi )
  (x)
  =
  \left<
    F(x),
    (\nabla \phi)(x)
  \right>_H
+
    \tfrac{ 1 }{ 2 }
  \tr_H\!\big(
  B(x)
    B(x)^{ * }
    (\text{Hess } \phi)(x) 
  \big)
  .
\end{split}
\end{equation}
For sets $ x $ and $ A $
we denote by
$ \1_A(x) $ the real number given by
\begin{equation}
\1_A(x) =
\begin{cases}
1 \colon x \in A
\\
0 \colon x \notin A
\end{cases}
.
\end{equation}
For sets $ \Omega $ and $ A $  
we denote by
$ \1_A^\Omega \colon \Omega \to \{ 0, 1 \} $
the function which satisfies for all $ x \in \Omega $ that
$ \1_A^\Omega ( x ) = \1_A ( x ) $.
For a set $ X $ we denote by $ \mathcal{P}(X) $ the power set of $ X $, we denote by 
$ \#_X \in \N_0 \cup \{\infty \} $
the number of elements of $ X $,
and we denote by $ \mathcal{P}_0(X) $ the set given by
$ \mathcal{P}_0(X) = \{ \theta \in \mathcal{P}(X) \colon \#_\theta <\infty\} $.
For a normed $ \R $-vector space $ (V, \left \| \cdot \right \|_V) $
with $ \#_V > 1 $,
real numbers
$ n \in \mathbb{N} $,
$ c \in [1,\infty) $,
a set $ B \subseteq \R $,
and an open and convex set $ A \subseteq V $
we denote by
$ \mathcal{C}^n_{ c }( A, B ) $
the set given by
\begin{equation}
\begin{split} 
  \mathcal{C}^n_{c}( A, B )
 & =
  \left\{ \!
    f \in \mathcal{C}^{ n - 1 }( A, B )
    \colon
    \!\!\!
    \begin{array}{c}
    \forall\,x,y\in A,
    i \in \N_0 \cap [0,n)\colon  
\| f^{(i)}(x)-f^{(i)}(y)\|_{L^{(i)}(H,\R)} 
\\
\leq
c
\| x - y \|_H ( 1 + \sup_{r\in [0,1]}|f(rx + (1-r)y) |)^{1-\nicefrac{1}{c}}
    \end{array}
    \!\!\!
  \right\}
\end{split}
\end{equation}
(cf., e.g.,\,(1.12) in Hutzenthaler \& Jentzen~\cite{HutzenthalerJentzenMemoires2015}).
We denote by 
$ ( \cdot ) \wedge ( \cdot )
  \colon \R^2 \to \R $
the function which satisfies
for all $ x, y \in \R $ that 
$ x \wedge y = \min \{ x,y \} $.
For a real number
$ T \in (0,\infty) $ 
we denote by $ \varpi_T $ the set given by
$ \varpi_T
  =
  \{
    \theta \subseteq [0,T] \colon
    \{0, T\} \subseteq \theta \text{ and } \#_\theta < \infty \}$. 
    For a real number $ T \in ( 0, \infty ) $
we denote by 
$ \left | \cdot \right|_T \colon \varpi_T \to [0,T] $
the mapping which satisfies for all
$ \theta \in \varpi_T $ that
\begin{equation}
  \left| \theta \right|_T
  =
  \max\!\Big\{
    x \in (0,\infty)
    \colon
    \big(
      \exists \, a, b \in \theta
      \colon
      \big[
        x = b - a
      \text{ and }
        \theta \cap ( a, b ) = \emptyset
      \big]
    \big)
  \Big\}
  \in (0,T].
\end{equation}
Let us note for every $ T \in (0,\infty) $,
$ \theta \in \varpi_T $
that $ |\theta|_T \in [0,T] $ is the maximum
step size of the partition $ \theta $.
We denote by
$ \lfloor \cdot \rfloor_{ \theta } \colon [0,\infty) \to [0,\infty) $,
$ \theta \in ( \cup_{T\in (0,\infty)} \varpi_T) $,
and
$ \llcorner \cdot \lrcorner_\theta \colon [0,\infty) \to [0,\infty) $,
$ \theta \in ( \cup_{T\in (0,\infty)} \varpi_T) $, 
the mappings which satisfy for all 
$ \theta \in ( \cup_{T\in (0,\infty)} \varpi_T) $,
$ t\in (0,\infty) $
that
$ \lfloor t \rfloor_{ \theta }
=
\max\!\left( [0,t] \cap \theta \right) $, 
$ \llcorner t \lrcorner_\theta
=  \max\!\left( [0,t) \cap \theta \right) $,
and $ \lfloor 0 \rfloor_\theta = \llcorner 0 \lrcorner_\theta  = 0 $.
For a measure space
$ ( \Omega, \mathcal{F}, \mu) $,
a measurable space $ ( S, \mathcal{S} ) $,
a set $ R $,
and a function
$ f \colon \Omega \to R $
we denote by
$ [f]_{\mu, \mathcal{S}} $
the set given by
$ [f]_{\mu, \mathcal{S}} =
\{
g \in \mathcal{M}(\mathcal{F},
\mathcal{S})
\colon
(
\exists\, A \in \mathcal{F} \colon
\mu(A) = 0 \,\,\text{and}\,\,
\{\omega \in \Omega \colon
f(\omega) \neq g(\omega)
\}
\subseteq A
)
\} $.
\section{Exponential moments for time discrete approximation schemes} 
\label{section:2}
\subsection{Factorization lemma for conditional expectations}
\label{subsection:Factorization}
In this subsection we recall in Definitions~\ref{def:cap_stability}--\ref{def:generator},
Lemma~\ref{lemma:delta_closed_system},
Theorem~\ref{thm:Dynkin},
and Lemmas~\ref{lemma:pointwise_approximation}--\ref{lemma:conditional_expectation}
some well known concepts and facts
from measure and probability theory.
In particular,
we recall in Lemma~\ref{lemma:conditional_expectation} below
a well-known factorization property for conditional expectations.
We use this factorization property in the proofs of our later results.
Definitions~\ref{def:cap_stability}--\ref{def:generator},
Lemma~\ref{lemma:delta_closed_system},
Theorem~\ref{thm:Dynkin},
and Lemma~\ref{lemma:pointwise_approximation}
can, e.g., in a very similar form be found in
Section~1 in Klenke~\cite{Klenke2008}
(see
Definition~1.1,
Definition~1.10,
Theorem~1.16,
Theorem~1.18,
Theorem~1.19,
and
Theorem~1.96
in Klenke~\cite{Klenke2008}).
Lemmas~\ref{lemma:conditional_expectation0}--\ref{lemma:conditional_expectation} 
can, e.g., in a very similar form be found in
Chapter~1 in Da Prato \& Zabczyk~\cite{dz92}
(see Proposition 1.12 in Da Prato \& Zabczyk~\cite{dz92}).
\begin{definition}[$ \cap $-Stability]
	\label{def:cap_stability}
	Let
	$ \mathcal{E} $
	be a set.
	Then we say that $ \mathcal{E} $
	is $ \cap $-stable
	if and only if 
	for all $ a, b \in \mathcal{E} $
	it holds that $ a \cap b \in \mathcal{E} $. 
\end{definition}
\begin{definition}[Dynkin system]
	Let $ \Omega $ 
	and
	$ \mathcal{A} $ be sets. 
	Then we say that
	$ \mathcal{A} $
	is a
	Dynkin system on $ \Omega $
	if and only if 
	\begin{enumerate}[(i)]
		\item it holds that 
		$ \Omega \in \mathcal{A} 
		\subseteq
		\mathcal{P}(\Omega) $,
		\item it holds for all $ A \in \mathcal{A} $
		 that
		$ \Omega \backslash A \in \mathcal{A} $, and
		\item it holds for all pairwise disjoint
		sets $ (A_n)_{n\in \N} \subseteq \mathcal{A} $
		that
		$ \cup_{n\in \N} A_n \in \mathcal{A} $.
	\end{enumerate}
\end{definition}
\begin{definition}
	\label{def:generator}
	Let $ \Omega $ and
	$ \mathcal{A}  $
	be sets with $ \mathcal{A} \subseteq \mathcal{P}(\Omega)  $.
	Then we
	denote by
	$ \delta_\Omega( \mathcal{A} ) $
	the set
	given by
	\begin{equation}
	\begin{split}
	\delta_\Omega(\mathcal{A})
	=
	\cap_{ \mathcal{B}
		\in
		\big\{\substack{ 
		\mathcal{C}	
			\text{ is a Dynkin system}
			\\
			\text{on }
			\Omega
			\text{ with }
			\mathcal{C} \supseteq \mathcal{A} 
		}
		\big\}
	}
	\mathcal{B}
	.
	\end{split}
	\end{equation}
\end{definition}
\begin{lemma}
	\label{lemma:delta_closed_system}
	Let $ \Omega $ be a set and let
	$ \mathcal{A} $
	be a Dynkin system on $ \Omega $. 
	Then it holds that
	$ \mathcal{A} $
	is $ \cap $-stable  
	if and only if 
	$ \mathcal{A} $
	is a sigma-algebra on $ \Omega $.
\end{lemma}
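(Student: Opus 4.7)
The statement is the classical characterization of $\cap$-stable Dynkin systems as sigma-algebras, so the plan is the standard textbook argument. There are two directions to establish, and only the backward direction requires genuine work.

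For the forward implication, if $\mathcal{A}$ is a sigma-algebra on $\Omega$, then $\mathcal{A}$ is in particular closed under finite intersections (since $A \cap B$ is a countable intersection of elements of $\mathcal{A}$ obtained by padding with $\Omega$), so $\mathcal{A}$ is $\cap$-stable. This is immediate and needs no more than one line.

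For the backward implication I assume that $\mathcal{A}$ is a $\cap$-stable Dynkin system and have to verify the remaining sigma-algebra axiom, namely closure under arbitrary countable unions. First I would record the auxiliary observation that $\emptyset = \Omega \setminus \Omega \in \mathcal{A}$, using the Dynkin axioms $\Omega \in \mathcal{A}$ and closure under complements. Next I would prove closure under finite unions: given $A, B \in \mathcal{A}$, the complement axiom gives $\Omega \setminus A \in \mathcal{A}$, $\cap$-stability gives $B \setminus A = B \cap (\Omega \setminus A) \in \mathcal{A}$, and the two sets $A$ and $B \setminus A$ are disjoint, so applying the countable disjoint union axiom to the sequence $A, B \setminus A, \emptyset, \emptyset, \ldots$ yields $A \cup B \in \mathcal{A}$. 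By induction this gives closure under any finite union.

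For the key step, given $(A_n)_{n \in \N} \subseteq \mathcal{A}$, I would apply the standard disjointification trick: define $B_1 = A_1$ and, for $n \geq 2$,
\begin{equation}
  B_n = A_n \setminus \bigl( A_1 \cup A_2 \cup \ldots \cup A_{n-1} \bigr).
\end{equation}
Closure under finite unions and complements, combined once more with $\cap$-stability, shows $B_n \in \mathcal{A}$ for every $n \in \N$. The sequence $(B_n)_{n \in \N}$ is pairwise disjoint by construction and satisfies $\cup_{n \in \N} B_n = \cup_{n \in \N} A_n$, so the countable disjoint union axiom of the Dynkin system yields $\cup_{n \in \N} A_n \in \mathcal{A}$. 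Together with closure under complements and the fact that $\Omega \in \mathcal{A}$, this identifies $\mathcal{A}$ as a sigma-algebra on $\Omega$ and completes the proof.

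There is no real obstacle here; the only substantive point is recognising that the disjointification trick uses precisely what $\cap$-stability provides, namely that the finite unions $A_1 \cup \ldots \cup A_{n-1}$ lie in $\mathcal{A}$, so that their complements relative to $A_n$ do as well.
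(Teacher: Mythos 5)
Your proof is correct and follows essentially the same route as the paper: both arguments reduce the countable-union axiom to the Dynkin disjoint-union axiom via the standard disjointification $B_n = A_n \setminus (A_1 \cup \cdots \cup A_{n-1})$, using $\cap$-stability together with closure under complements to see that set differences stay in $\mathcal{A}$. The only cosmetic difference is that the paper writes the disjointified sets as iterated differences $((A_{n+1}\setminus A_n)\setminus A_{n-1})\cdots\setminus A_1$, thereby avoiding your intermediate step of first establishing closure under finite unions.
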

\begin{proof}[Proof of Lemma~\ref{lemma:delta_closed_system}]
	Throughout this proof 
	assume w.l.o.g.\,that 
	$ \mathcal{A} \subseteq \mathcal{P}(\Omega) $
	is
	a
	$ \cap $-stable
	Dynkin system on $ \Omega $
	(otherwise the statement
	of Lemma~\ref{lemma:delta_closed_system}
	is clear). 
	Note that the assumption that
	$ \mathcal{A} $ is a Dynkin system on $ \Omega $
	ensures for all
	$ A \in \mathcal{A} $
	that
	\begin{equation} 
	\label{eq:cap_closed}
	 (\Omega \backslash A) \in \mathcal{A}
	 .
	\end{equation}
	This and the fact that 
	$ \forall \, A, B \in \mathcal{A} \colon
	A \cap B \in \mathcal{A} $
	imply
	that for all
	$ A, B \in \mathcal{A} $
	it holds that
	$ A \backslash B = A \cap ( \Omega \backslash B ) \in \mathcal{A} $.
	Hence, we obtain 
	that for all
	$ (A_n)_{n\in \N} \subseteq \mathcal{A} $
	it holds that
	\begin{equation}
	\begin{split}
	\cup_{n\in \N}
	A_n
	=
	A_1
	\cup
	\big[
	\cup_{n \in \N}
	( 
	(
	\cdots
	(
	(
	A_{n+1} 
	\backslash
	A_n
	)
	\backslash
	A_{n-1}
	)
	\cdots
	)
	\backslash A_1
	) 
	\big]
	\in 
	\mathcal{A}
	.
	\end{split}
	\end{equation}
	Combining
	this,
	the fact that
	$ \Omega \in \mathcal{A} $,
	and~\eqref{eq:cap_closed}
	proves that
	$ \mathcal{A} $
	is a sigma-algebra on $ \Omega $.
	The proof of Lemma~\ref{lemma:delta_closed_system}
	is thus completed.
\end{proof}
\begin{theorem}
	\label{thm:Dynkin}
	Let $ \Omega $ be a set
	and
	let $ \mathcal{A} \in
	\mathcal{P} ( \mathcal{P}(\Omega) ) $
	be
	$ \cap $-stable.
	Then 
	$ \sigma_\Omega(\mathcal{A})
	= \delta_\Omega(\mathcal{A}) $.
\end{theorem}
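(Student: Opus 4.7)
The plan is to prove the two inclusions $\delta_\Omega(\mathcal{A}) \subseteq \sigma_\Omega(\mathcal{A})$ and $\sigma_\Omega(\mathcal{A}) \subseteq \delta_\Omega(\mathcal{A})$ separately. The first inclusion is essentially immediate from the definitions: every sigma-algebra on $\Omega$ is in particular a Dynkin system on $\Omega$, so $\sigma_\Omega(\mathcal{A})$ itself is a Dynkin system on $\Omega$ containing $\mathcal{A}$, and therefore appears in the intersection that defines $\delta_\Omega(\mathcal{A})$.

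For the reverse inclusion, the strategy is to invoke Lemma~\ref{lemma:delta_closed_system}: if I can show that $\delta_\Omega(\mathcal{A})$ is $\cap$-stable, then it is a sigma-algebra on $\Omega$ containing $\mathcal{A}$, and by minimality of $\sigma_\Omega(\mathcal{A})$ this gives $\sigma_\Omega(\mathcal{A}) \subseteq \delta_\Omega(\mathcal{A})$. Thus the entire task reduces to proving $\cap$-stability of $\delta_\Omega(\mathcal{A})$.

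To establish $\cap$-stability, I would use the classical bootstrap trick. For each $B \subseteq \Omega$, introduce the auxiliary system $\mathcal{D}_B = \{ A \subseteq \Omega \colon A \cap B \in \delta_\Omega(\mathcal{A}) \}$. A routine verification shows that whenever $B \in \delta_\Omega(\mathcal{A})$, the collection $\mathcal{D}_B$ is itself a Dynkin system on $\Omega$; the only slightly subtle point is closure under complements, which follows from the identity $(\Omega \setminus A) \cap B = B \setminus (A \cap B)$ together with the preliminary observation that every Dynkin system is closed under proper differences (if $E \subseteq F$ lie in the system, then $F \setminus E = \Omega \setminus (E \cup (\Omega \setminus F))$ is the complement of a disjoint union of members, hence is a member). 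I now apply this in two stages. First, for any $B \in \mathcal{A}$, the $\cap$-stability of $\mathcal{A}$ gives $\mathcal{A} \subseteq \mathcal{D}_B$, and by minimality of $\delta_\Omega(\mathcal{A})$ I conclude $\delta_\Omega(\mathcal{A}) \subseteq \mathcal{D}_B$; this yields $A \cap B \in \delta_\Omega(\mathcal{A})$ for all $A \in \delta_\Omega(\mathcal{A})$ and $B \in \mathcal{A}$. Second, take any $A \in \delta_\Omega(\mathcal{A})$; the previous step shows $\mathcal{A} \subseteq \mathcal{D}_A$, and again by minimality $\delta_\Omega(\mathcal{A}) \subseteq \mathcal{D}_A$, which is precisely the statement that $A \cap A' \in \delta_\Omega(\mathcal{A})$ for all $A' \in \delta_\Omega(\mathcal{A})$.

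The main obstacle (to the extent there is one in this standard argument) is the verification that $\mathcal{D}_B$ is a Dynkin system, and in particular the closure under complements; everything else is bookkeeping with the definitions and a double application of the minimality property of $\delta_\Omega(\mathcal{A})$.
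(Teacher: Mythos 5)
Your proposal is correct and follows essentially the same route as the paper: the bootstrap with the auxiliary systems $\mathcal{D}_B$, the verification that each $\mathcal{D}_B$ is a Dynkin system (with complement-closure being the only nontrivial point), the two-stage application of minimality of $\delta_\Omega(\mathcal{A})$, and the final appeal to Lemma~\ref{lemma:delta_closed_system}. The only cosmetic differences are that you let $\mathcal{D}_B$ range over all subsets of $\Omega$ rather than only over $\delta_\Omega(\mathcal{A})$, and that you spell out the easy inclusion $\delta_\Omega(\mathcal{A}) \subseteq \sigma_\Omega(\mathcal{A})$, which the paper leaves implicit.
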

\begin{proof}[Proof of Theorem~\ref{thm:Dynkin}]
	Throughout
	this proof 
	let
	$ \mathcal{D}_A \subseteq \mathcal{P}(\Omega) $,
	$ A \in \delta_\Omega(\mathcal{A}) $,
	be the sets which satisfy
	for all
	$ A \in \delta_\Omega(\mathcal{A}) $
	that
	$ \mathcal{D}_A
	= \{ B \in \delta_\Omega(\mathcal{A})
	\colon A \cap B \in \delta_\Omega(\mathcal{A})
	\} $.
	Note that for all
	$ A \in \delta_\Omega(\mathcal{A}) $
	it holds that
	$ A \cap \Omega  = A \in \delta_\Omega (\mathcal{A}) $.
	This proves that
	for all $ A \in \delta_\Omega(\mathcal{A}) $
	it holds that
	\begin{equation} 
	\label{eq:condd1}
	\Omega \in \mathcal{D}_A
	.
	\end{equation}
	In the next step we observe 
	that for all
	$ A \in \delta_\Omega (\mathcal{A}) $,
	$ B \in \mathcal{D}_A $ 
	it holds that
	\begin{equation}
	\label{eq:condd2}
	A \cap (\Omega \backslash B)
	= 
	A \backslash ( A \cap B )
	=
	A \cap [ \Omega \backslash ( A \cap B ) ]
	= 
	\Omega \backslash
	[
	( \Omega \backslash A )
	\cup
	( A \cap B ) 
	]
	\in 
	\delta_\Omega(\mathcal{A}) 
	.
	\end{equation} 
	Moreover, note that
	for all
	$ A \in \delta_\Omega(\mathcal{A}) $
	and all pairwise disjoint sets
	$ (B_n)_{n\in \N} 
	\subseteq \mathcal{D}_A $
	it holds that
	\begin{equation}
	\label{eq:condd3}
	A \cap
	\left(
	\cup_{n\in \N} 
	B_n
	\right) 
	=
	\cup_{n\in \N}
	( A \cap B_n )
	\in
	\delta_\Omega(\mathcal{A}).
	\end{equation}
	Combining~\eqref{eq:condd1},
	\eqref{eq:condd2},
	and~\eqref{eq:condd3} proves that for every
	$ A \in \delta_\Omega(\mathcal{A}) $
	it holds that
	$ \mathcal{D}_A $
	is a Dynkin system on $ \Omega $.
	Next
	note 
	that 
	the assumption that $ \mathcal{A} $ is
	$ \cap $-stable 
	implies that
	for all 
	$ A \in \mathcal{A} $
	it holds
	that
	$ \mathcal{A}  \subseteq
	\mathcal{D}_A $.
	This
	and the fact that
	for every $ A \in \delta_\Omega( \mathcal{A} ) $
	it holds that
	$ \mathcal{D}_A $
	is a Dynkin system
	on $ \Omega $
	proves 
	that for all
	$ A \in \mathcal{A} $
	it holds that
	$ \delta_\Omega(\mathcal{A}) \subseteq 
	\delta_\Omega( \mathcal{D}_A)
	=
	\mathcal{D}_A $. 
	This implies that for all
	$ A \in \mathcal{A} $,
	$ B \in \delta_\Omega( \mathcal{A} ) $
	it holds that
	$ A \cap B \in \delta_\Omega( \mathcal{A} )$.
	This ensures that for all
	$ A \in \mathcal{A} $,
	$ B \in \delta_\Omega(\mathcal{A}) $
	it holds that
	$ A \in \mathcal{D}_B $.
	Hence, we obtain that
	for all
	$ B \in \delta_\Omega(\mathcal{A}) $
	it holds that
	$ \mathcal{A} \subseteq \mathcal{D}_B $.
	In particular,
	we obtain that for all
	$ A \in \delta_\Omega( \mathcal{A} ) $
	it holds that
	$ \mathcal{A} \subseteq \mathcal{D}_A $.
	Combining
	this 
	with the fact that
	for every $ A \in \delta_\Omega( \mathcal{A} ) $
	it holds that
	$ \mathcal{D}_A $
	is a Dynkin system on $ \Omega $
	assures
	that
	for all $ A \in \delta_\Omega(\mathcal{A}) $
	it holds that
	$
	\delta_\Omega(\mathcal{A})
	\subseteq 
	\delta_\Omega( \mathcal{D}_A )
	=
	\mathcal{D}_A 
	\subseteq
	\delta_\Omega( \mathcal{A} ) 
	$.
	Therefore,
	we obtain that for all
	$ A, B \in \delta_\Omega(\mathcal{A}) $
	it holds that
	$ A \cap B \in \delta_\Omega( \mathcal{A} ) $.
	Combining this with
	Lemma~\ref{lemma:delta_closed_system}
	completes the proof of Theorem~\ref{thm:Dynkin}.
\end{proof}
\begin{lemma}
	\label{lemma:pointwise_approximation}
	Let $ (\Omega, \mathcal{F}) $
	be a measurable space
	and let
	$ f \in \mathcal{M}(\mathcal{F},
	\mathcal{B}([0,\infty])) $.
	Then there exists a sequence
	$ f_n \in
	\mathcal{M}( \mathcal{F}, \mathcal{B}( [0,\infty) ) ) $, 
	$ n \in \N $,
	which satisfies for all $ n \in \N $,
	$ \omega \in \Omega $ 
	that
	$ \#_{f_n(\Omega)} <  \infty $,
	$ f_n(\omega) \leq f_{n+1}(\omega) $,
	and $ \lim_{m \to \infty} f_m(\omega) = f(\omega) $.
\end{lemma}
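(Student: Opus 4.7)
The plan is to imitate the classical dyadic truncation construction. For each $n \in \N$ I would define $f_n \colon \Omega \to [0,\infty)$ by
\begin{equation}
f_n(\omega) = n \cdot \1_{ \{ f \geq n \} }(\omega) + \sum_{k=0}^{n 2^n - 1} \tfrac{k}{2^n} \cdot \1_{ \{ k/2^n \leq f < (k+1)/2^n \} }(\omega).
\end{equation}

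Four things then need verification. First, measurability of each $f_n$: since $f \in \mathcal{M}(\mathcal{F}, \mathcal{B}([0,\infty]))$, each of the level sets $\{ f \geq n \}$ and $\{ k/2^n \leq f < (k+1)/2^n \}$ lies in $\mathcal{F}$, so $f_n$ is a finite $\R$-linear combination of indicators of $\mathcal{F}$-measurable sets, and hence $f_n \in \mathcal{M}(\mathcal{F}, \mathcal{B}([0,\infty)))$. Second, the level-set decomposition directly exhibits $f_n$ as attaining at most $n 2^n + 1$ distinct values, so $\#_{f_n(\Omega)} < \infty$. Third, pointwise convergence: on $\{ f = \infty \}$ one has $f_n(\omega) = n$ for every $n \in \N$, so $f_n(\omega) \to \infty = f(\omega)$; on $\{ f < \infty \}$, for every integer $n > f(\omega)$ the construction forces $0 \leq f(\omega) - f_n(\omega) < 2^{-n}$, so $f_n(\omega) \to f(\omega)$.

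The remaining step --- and the one I expect to require the most careful bookkeeping --- is the monotonicity $f_n(\omega) \leq f_{n+1}(\omega)$, which I would establish by splitting on the location of $f(\omega)$. If $f(\omega) \geq n+1$, then $f_n(\omega) = n \leq n+1 = f_{n+1}(\omega)$. If $n \leq f(\omega) < n+1$, then $f_n(\omega) = n$, and the unique integer $k$ with $k/2^{n+1} \leq f(\omega) < (k+1)/2^{n+1}$ satisfies $n \cdot 2^{n+1} \leq k \leq (n+1) 2^{n+1} - 1$, so that $f_{n+1}(\omega) = k/2^{n+1} \geq n$. Finally, if $f(\omega) < n$, letting $k \in \{0, 1, \ldots, n 2^n - 1\}$ denote the unique index with $k/2^n \leq f(\omega) < (k+1)/2^n$, the dyadic refinement gives $f_{n+1}(\omega) \in \{ 2k/2^{n+1}, (2k+1)/2^{n+1} \}$, both of which are $\geq k/2^n = f_n(\omega)$. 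No serious obstacle arises; the entire argument reduces to careful accounting of dyadic intervals, and only abstract measurability of $f$ is used.
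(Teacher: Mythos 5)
Your proposal is correct and uses essentially the same dyadic truncation as the paper (your index $k$ corresponds to the paper's $j-1$). In fact you are more thorough than the paper's own proof, which omits the explicit verification of the monotonicity $f_n(\omega)\leq f_{n+1}(\omega)$ and states the convergence on $\{f<\infty\}$ only via the existence of a single $n$ with $0\leq f(\omega)-f_n(\omega)<2^{-n}$, whereas you correctly observe that this estimate holds for every $n>f(\omega)$.
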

\begin{proof}[Proof of Lemma~\ref{lemma:pointwise_approximation}]
	Throughout this proof let
	$ A_n \in \mathcal{P}(\Omega) $,  
	$ n \in \N $,
	and
	$ B_{n,j}\in \mathcal{P}(\Omega) $, 
	$ j \in \N \cap [1, n2^n] $,
	$ n \in \N $,
	be the sets 
	which satisfy
	for all
	$ n \in \N $,
	$ j \in \N \cap [1, n2^n] $
	that
	$ A_n = \{ \omega \in \Omega \colon 
	f(\omega) \in [n,\infty] \} $
	and
	$
	B_{n,j}
	= \{ \omega \in \Omega
	\colon f(\omega) \in [ \nicefrac{(j-1)}{2^n}, \nicefrac{j}{2^n}) \} $
	and let
	$ f_n \colon \Omega \to [0,\infty) $,
	$ n \in \N $,
	be the functions 
	which satisfy for all
	$ n \in \N $,
	$ \omega \in \Omega $ that
	\begin{equation}
	\label{eq:define_simple}
	f_n(\omega)
	=
	n \, \mathbbm{1}_{A_n}(\omega)
	+
	\sum_{j=1}^{n2^n}
	\tfrac{ j-1}{2^n}
	\, 
	\mathbbm{1}_{B_{n,j}}(\omega)
	.
	\end{equation}
	Note that for every
	$ \omega \in \Omega $
	with $ f ( \omega ) \in [0,\infty) $
	it holds that
	there exist 
	$ n \in \N $,
	$ j \in \N \cap [1, n2^n] $
	such that
	$ f(\omega)
	\in [
	\nicefrac{(j-1)}{2^n},
	\nicefrac{j}{2^n}
	)
	$.
	This and~\eqref{eq:define_simple} imply that for every
	$ \omega \in \Omega $
	with $ f ( \omega ) \in [0,\infty) $
	it holds that
	there exists $ n \in \N $
	such that
	$ 0 \leq f(\omega) - f_n(\omega) < 2^{-n} $.
	Hence, we obtain that
	for all $ \omega \in \Omega $
	with 
	$ f(\omega) \in [0,\infty) $
	it holds that
	\begin{equation} 
	\label{eq:limits}
	\limsup_{n\to \infty} 
	| f_n(\omega) - f( \omega) | = 0 
	.
	\end{equation}
	In addition, note that for all
	$ m \in \N $,
	$ \omega \in \Omega $ with
	$ f( \omega) = \infty $
	it holds that 
	$ f_m(\omega) = m $.
	This proves that
	for all $ \omega \in \Omega $ 
	with $ f(\omega) = \infty $
	it holds that
	$ \liminf_{m \to \infty} f_m(\omega) = \infty $.
	Combining
	this and~\eqref{eq:limits}
	completes the proof of
    Lemma~\ref{lemma:pointwise_approximation}.
\end{proof}
\begin{lemma}
	\label{lemma:conditional_expectation0}
	Let $ (\Omega, \mathcal{F}, \P) $ be a probability space, let $ (D, \mathcal{D}) $
	and
	$ (E, \mathcal{E}) $
	be measurable spaces,
	let $ \mathcal{X},\mathcal{Y}\in \mathcal{P}(\mathcal{F}) $ be 
	$ \P $-independent sigma-algebras,
	let 
	$ X \in \mathcal{M}(\mathcal{X}, \mathcal{D}) $,
	$ Y \in \mathcal{M}(\mathcal{Y}, \mathcal{E}) $,
	$ A \in \mathcal{D} \otimes \mathcal{E} $,
	$ \Psi \in \mathbb{M}( D, [0,\infty] ) $,
	and 
	assume for all $ x \in D $ that 
	$ \Psi(x)= \E[ \1^{D \times E}_A (x,Y) ] $.
	Then
	it holds that
	$ \Psi \in \mathcal{M} ( \mathcal{D}, \mathcal{B}( [0,\infty] ) ) $
	and
	\begin{equation}
	\E [ \1^{D \times E}_A (X, Y) | \mathcal{X}] 
	= 
	[\Psi(X)]_{\P|_{\mathcal{X}}, \mathcal{B}([0,\infty])}
	.
	\end{equation}
\end{lemma}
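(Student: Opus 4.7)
I would carry out a two-stage monotone class argument based on Theorem~\ref{thm:Dynkin} applied to the $\cap$-stable collection
\begin{equation*}
\mathcal{R} = \{ C \times F \colon C \in \mathcal{D},\, F \in \mathcal{E} \} \subseteq \mathcal{P}(D \times E),
\end{equation*}
which is a generator of the product sigma-algebra $\mathcal{D} \otimes \mathcal{E}$. The first stage establishes the measurability of $\Psi$ (in fact, of the analogously defined function for every set in $\mathcal{D} \otimes \mathcal{E}$), and the second stage establishes the claimed conditional expectation identity.

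For stage~1, I would introduce the collection $\mathcal{A}_1$ of all $B \in \mathcal{D} \otimes \mathcal{E}$ such that the mapping $D \ni x \mapsto \E[\1_B^{D \times E}(x, Y)] \in [0,\infty]$ is $\mathcal{D}/\mathcal{B}([0,\infty])$-measurable. It is routine to check that $\mathcal{A}_1$ is a Dynkin system on $D \times E$: closure under complements follows from $\1_{(D \times E) \setminus B}^{D \times E} = 1 - \1_B^{D \times E}$, and closure under pairwise disjoint countable unions follows by writing $\1_{\cup_n B_n}^{D \times E} = \sum_n \1_{B_n}^{D \times E}$ and applying the monotone convergence theorem. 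For each rectangle $C \times F \in \mathcal{R}$, one has $\E[\1_{C \times F}^{D \times E}(x,Y)] = \1_C(x)\,\P(Y \in F)$, which is measurable in $x$, so $\mathcal{R} \subseteq \mathcal{A}_1$. Theorem~\ref{thm:Dynkin} then yields $\mathcal{D} \otimes \mathcal{E} = \sigma_{D \times E}(\mathcal{R}) = \delta_{D \times E}(\mathcal{R}) \subseteq \mathcal{A}_1$; in particular, the $\Psi$ of the lemma is measurable.

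For stage~2, for each $B \in \mathcal{D} \otimes \mathcal{E}$ set $\Psi_B(x) := \E[\1_B^{D \times E}(x,Y)]$ (measurable by stage~1) and introduce the collection $\mathcal{A}_2$ of all $B \in \mathcal{D} \otimes \mathcal{E}$ satisfying $\E[\1_B^{D \times E}(X, Y) | \mathcal{X}] = [\Psi_B(X)]_{\P|_{\mathcal{X}}, \mathcal{B}([0,\infty])}$. For a rectangle $C \times F$, the $\mathcal{X}$-measurability of $\1_C(X)$ combined with the $\P$-independence of $\mathcal{X}$ and $\mathcal{Y}$ (applied to the $\mathcal{Y}$-measurable $\1_F(Y)$) gives
\begin{equation*}
\E[\1_C(X)\,\1_F(Y) \,|\, \mathcal{X}] = \1_C(X)\,\E[\1_F(Y)] = \1_C(X)\,\P(Y \in F) = \Psi_{C \times F}(X),
\end{equation*}
so $\mathcal{R} \subseteq \mathcal{A}_2$. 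Linearity of conditional expectation, combined with the identity $\Psi_{(D \times E) \setminus B} = 1 - \Psi_B$, yields closure of $\mathcal{A}_2$ under complements, and the monotone convergence theorem for conditional expectations applied to partial sums of indicators yields closure under pairwise disjoint countable unions; hence $\mathcal{A}_2$ is a Dynkin system on $D \times E$. A second application of Theorem~\ref{thm:Dynkin} then gives $\mathcal{D} \otimes \mathcal{E} \subseteq \mathcal{A}_2$, so in particular $A \in \mathcal{A}_2$, which is the desired identity.

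The argument contains no genuine obstacle; it is essentially bookkeeping around the standard proof that independent random variables have a Fubini-type structure for conditional expectations. The only points requiring care are the $[0,\infty]$-valued measurability (so that monotone convergence and, if needed, the approximation from Lemma~\ref{lemma:pointwise_approximation} may be invoked safely) and the fact that the right-hand side of the asserted identity is a $\P|_{\mathcal{X}}$-almost sure equivalence class, so that the Dynkin-system closure properties must be verified up to $\P$-null sets rather than pointwise.
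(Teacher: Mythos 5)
Your proposal is correct and follows essentially the same route as the paper: a Dynkin-system argument over $\mathcal{D}\otimes\mathcal{E}$ seeded by the $\cap$-stable rectangles, using independence for rectangles, complementation via $\1_{(D\times E)\setminus B}=1-\1_B$, and monotone convergence for disjoint countable unions. The only cosmetic difference is your stage~1: the paper obtains the measurability of $\Psi$ by citing Tonelli's theorem directly, whereas you re-derive it with a second Dynkin argument, which amounts to the same thing.
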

\begin{proof}[Proof of Lemma~\ref{lemma:conditional_expectation0}]
	Throughout this proof  
	let
	$ \gamma_C \colon D \to [0,\infty] $,
	$ C \in \mathcal{D} \otimes \mathcal{E} $,
	be the functions which satisfy for all
	$ C \in \mathcal{D} \otimes \mathcal{E} $,
	$ x \in D $ 
	that
	$ \gamma_C(x) =
	\E[ \1^{D \times E}_C (x,Y)] $ 
	and let
	$ \mathcal{C} \subseteq
	\mathcal{D} \otimes \mathcal{E} $
	be the set
	given by
	$ \mathcal{C}
	=
	\{ C \in \mathcal{D} \otimes 
	\mathcal{E} \colon 
	\E[ \1^{D \times E}_C(X,Y) | \mathcal{X} ]
	=
	[  
	 \gamma_C(X)
	 ]_{\P|_{\mathcal{X}}, \mathcal{B}([0,\infty])}
	\} $.
	Note that Tonelli's theorem and the fact that
	$ D \times E \ni (x, y) \mapsto \1^{D \times E}_A (x, y) \in [0,\infty] $
	is $ ( \mathcal{D} \otimes \mathcal{E} ) / \mathcal{B}( [0,\infty] ) $-measurable
	show that
	\begin{equation} 
	\label{eq:is_measurable}
	\Psi \in \mathcal{M}( \mathcal{D}, \mathcal{B}( [0,\infty] ) ) 
	.
	\end{equation}
	Moreover, observe that 
	for all
	$ x \in D $,
	$ C  \in
	\mathcal{D} \otimes \mathcal{E} $ 
	it holds 
	that
	\begin{equation} 
	\begin{split}
	\gamma_{( D \times E ) \backslash C}(x)
	&
	=
	\E[
	\1^{D \times E}_{D \times E}(x,Y) 
	-
	\1^{D \times E}_{C}(x,Y)
	]
	\\
	&
	=
	\E[
	\1^{D \times E}_{D \times E}(x,Y)
	] 
	-
	\E[
	\1^{D \times E}_{C}(x,Y)
	]
	=
	\gamma_{D \times E}(x) - \gamma_{C}(x) 
	.
	\end{split}
	\end{equation}
	This ensures for all
	$ C \in
	\mathcal{C} $ 
	that
	\begin{equation}
	\begin{split}
	\label{eq:substractionclosed} 
	\E[ \1^{D \times E}_{( D \times E ) \backslash C}(X,Y) | \mathcal{X} ]
	&
	=
	\E[ \1^{D \times E}_{D \times E}(X,Y) - \1^{D \times E}_{C }(X,Y) |
	\mathcal{X} ]
	\\
	&
	=
	\E[ \1^{D \times E}_{D \times E}(X,Y) |
	\mathcal{X} ]
	- 
	\E[ \1^{D \times E}_{C }(X,Y) |
	\mathcal{X} ] 
	\\
	&
	=
[	\gamma_{D \times E}(X) - \gamma_{C }(X)
]_{\P|_{\mathcal{X}}, \mathcal{B}([0,\infty])}
	=
	[ \gamma_{ ( D \times E ) \backslash C }(X)
	]_{\P|_{\mathcal{X}}, \mathcal{B}([0,\infty])}
	.
	\end{split}
	\end{equation}
	Next observe that
	the monotone convergence theorem
	proves
	that
	for all
	$ x \in D $ and all
	pairwise
	disjoint sets
	$ ( C_n)_{n\in \N} \subseteq \mathcal{D}
	\otimes \mathcal{E} $
	it holds
	that
	\begin{equation}
	\begin{split}
	\gamma_{\cup_{n = 1}^\infty C_n }(x)
	&
	=
	\E \big[ 
	\1^{D \times E}_{\cup_{n = 1}^\infty C_n}(x,Y ) 
	\big ]
	=
	\E\big [ \smallsum_{n = 1}^\infty 
	\1^{D \times E}_{C_n}(x,Y) \big]
	\\
	&
	=
	\smallsum_{n = 1}^\infty
	\E[ \1^{D \times E}_{C_n} (x,Y) ]
	=
	\smallsum_{n = 1}^\infty
	\gamma_{C_n}(x).
	\end{split}
	\end{equation}
	The monotone convergence theorem for conditional
	expectations 
	hence
	shows that 
	for all
	pairwise disjoint sets
	$ (C_n)_{n\in \N} \subseteq \mathcal{C} $
	it holds
	that
	\begin{equation}
	\begin{split}
	\label{eq:countablyclosed}
	\E \big[ 
	\1^{D \times E}_{\cup_{n = 1}^\infty C_n} (X,Y) | \mathcal{X} 
	\big ]
	& 
	=
	\E\big[
	\smallsum_{n = 1}^\infty 
	\1^{D \times E}_{C_n}(X,Y)
	| \mathcal{X}
	\big ]
	=
	\smallsum_{n = 1}^\infty
	\E[
	\1^{D \times E}_{C_n}(X,Y)
	| \mathcal{X}
	]
	\\
	&
	=
	\smallsum_{n = 1}^\infty
	[
	\gamma_{C_n}(X)
	]_{\P|_{\mathcal{X}}, \mathcal{B}([0,\infty])} 
	=
	[
	\gamma_{\cup_{n = 1}^\infty C_n}(X)
	]_{\P|_{\mathcal{X}}, \mathcal{B}([0,\infty])} .
	\end{split}
	\end{equation}
	Combining~\eqref{eq:substractionclosed}, \eqref{eq:countablyclosed},
	and
	the fact that
	$ ( D \times E ) \in \mathcal{C} $
	implies that
	$ \mathcal{C} $
	is a Dynkin system on 
	$ D \times E $.
	Moreover, note that for all
	$ \mathbf{D} \in \mathcal{D} $,
	$ \mathbf{E} \in \mathcal{E} $
	it holds that
	\begin{equation}
	\begin{split}
	\label{eq:indicatot}
	\E[ \1^{D \times E}_{\mathbf{D} \times \mathbf{E}}(X,Y) |
	\mathcal{X} ]
	&
	=
	\E[ \1_{ \mathbf{D} }^D( X ) \, \1_{ \mathbf{E} }^E(Y)
	|\mathcal{X} ]
	=
	 \1^D_{ \mathbf{D} }(X) \, \E[ \1^E_{ \mathbf{E} }(Y) |\mathcal{X}]
	\\
	&
	=
	[ \1_{ \mathbf{D} }^D(X) \,  \E[ \1_{ \mathbf{E} }^E(Y) ] ]_{\P|_{\mathcal{X}}, \mathcal{B}([0,\infty])} 
	.
	\end{split}
	\end{equation}
	This ensures that
	$ \{ \mathbf{D} \times \mathbf{E} 
	\in
	\mathcal{P} ( D \times E )
	\colon \mathbf{D} \in \mathcal{D}, \mathbf{E} \in \mathcal{E} \} 
	\subseteq \mathcal{C} $.
	Combining this,
	the fact that
	the set 
	$ \{ \mathbf{D} \times \mathbf{E} 
	\in \mathcal{P} ( D \times E )
	\colon \mathbf{D} \in \mathcal{D}, \mathbf{E} \in \mathcal{E} \} $
	is $ \cap $-stable, 
	and Theorem~\ref{thm:Dynkin}
	(with
	$ \Omega = D \times E $,
	$ \mathcal{A} = \{ \mathbf{D} \times \mathbf{E}
	\in \mathcal{P} ( D \times E ) \colon \mathbf{D} \in \mathcal{D}, \mathbf{E} \in \mathcal{E} \} $
	in the notation of Theorem~\ref{thm:Dynkin}) 
	proves that
	\begin{equation} 
	\begin{split} 
	&
	\mathcal{D} \otimes \mathcal{E}
	=
	\sigma_{D \times E}
	( \{ \mathbf{D} \times \mathbf{E} 
	\in
	\mathcal{P} ( D \times E )
	\colon \mathbf{D} \in \mathcal{D}, \mathbf{E} \in \mathcal{E} \} )
	\\
	&
	=
	\delta_{D \times E}
	( \{ \mathbf{D} \times \mathbf{E} 
	\in
	\mathcal{P} ( D \times E )
	\colon \mathbf{D} \in \mathcal{D}, \mathbf{D} \in \mathcal{E} \} )
	\subseteq
	\delta_{D \times E}
	( \mathcal{C} ) 
	\subseteq  
	\delta_{ D \times E } ( \mathcal{D} \otimes \mathcal{E} )
	=
	\mathcal{D} \otimes \mathcal{E}
	.
	\end{split}
	\end{equation}
	The fact
	that  
	the set
	$ \mathcal{C} $
	is a Dynkin system
	on $ D \times E $ 
	hence
	assures that 
	$ \mathcal{D} \otimes \mathcal{E} 
	= \delta_{ D \times E }( \mathcal{C} ) 
	= \mathcal{C} $.
	Therefore, we obtain 
	that
	for all
	$ C \in \mathcal{D} 
	\otimes \mathcal{E} $  
	it holds that
	\begin{equation} 
	\E[ \1^{ D \times E }_C(X,Y) | \mathcal{X}]
	=
	[ \gamma_C(X)]_{\P|_{\mathcal{X}}, \mathcal{B}([0,\infty])} 
	.
	\end{equation}
	This and~\eqref{eq:is_measurable} 
	complete the proof of Lemma~\ref{lemma:conditional_expectation0}.
\end{proof}
\begin{lemma}
\label{lemma:simple_functions}
	Let $ (\Omega, \mathcal{F}, \P) $ be a probability space, 
	let $ (D, \mathcal{D}) $
	and
	$ (E, \mathcal{E}) $
	be measurable spaces,
	let $ \mathcal{X},\mathcal{Y}\in \mathcal{P}(\mathcal{F}) $ be 
	$ \P $-independent sigma-algebras,
	let 
	$ X \in \mathcal{M}(\mathcal{X}, \mathcal{D}) $,
	$ Y \in \mathcal{M}(\mathcal{Y}, \mathcal{E}) $, 
	$ \Phi \in \mathcal{M}( \mathcal{D} \otimes \mathcal{E}, \mathcal{B} ( [0, \infty] ) ) $,
	$ \Psi \in \mathbb{M}( D, [0,\infty] ) $,
	and 
	assume for all $ x \in D $ that
	$ \#_{ \Phi(D \times E) } < \infty $, 
	$ \Psi(x)= \E[ \Phi(x,Y) ] $.
	Then
	it holds that
	$ \Psi \in \mathcal{M} ( \mathcal{D}, \mathcal{B}( [0,\infty] ) ) $
	and
	\begin{equation}
	\E [ \Phi (X, Y) | \mathcal{X}] 
	= 
	[\Psi(X)]_{\P|_{\mathcal{X}}, \mathcal{B}([0,\infty])}
	.
	\end{equation}
\end{lemma}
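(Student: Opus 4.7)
The plan is to reduce to the indicator-function case already handled by Lemma~\ref{lemma:conditional_expectation0} by writing $\Phi$ as a finite $[0,\infty]$-linear combination of indicators and then invoking linearity of (conditional) expectation for non-negative functions.

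First I would exploit the finite-range hypothesis to decompose $\Phi$. Since $\#_{\Phi(D\times E)} < \infty$, I can enumerate $\Phi(D \times E) = \{a_1, \ldots, a_n\} \subseteq [0,\infty]$ with the $a_i$ distinct, and let $A_i = \Phi^{-1}(\{a_i\}) \in \mathcal{D} \otimes \mathcal{E}$ for each $i \in \{1,\ldots,n\}$ (measurability of each $A_i$ follows from $\Phi \in \mathcal{M}(\mathcal{D}\otimes\mathcal{E},\mathcal{B}([0,\infty]))$ and $\{a_i\} \in \mathcal{B}([0,\infty])$). Then $A_1,\ldots,A_n$ are pairwise disjoint with union $D\times E$, and
\begin{equation*}
\Phi(x,y) = \sum_{i=1}^n a_i \, \1^{D\times E}_{A_i}(x,y) \qquad \text{for all } (x,y) \in D \times E.
\end{equation*}

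Next I would apply Lemma~\ref{lemma:conditional_expectation0} to each $A_i$ separately. This produces, for every $i \in \{1,\ldots,n\}$, a function $\Psi_i \in \mathcal{M}(\mathcal{D},\mathcal{B}([0,\infty]))$ satisfying $\Psi_i(x) = \E[\1^{D\times E}_{A_i}(x,Y)]$ for all $x \in D$ together with the identity $\E[\1^{D\times E}_{A_i}(X,Y) \mid \mathcal{X}] = [\Psi_i(X)]_{\P|_{\mathcal{X}},\mathcal{B}([0,\infty])}$. Linearity of the Lebesgue integral for non-negative functions (with the convention $0\cdot\infty = 0$, which causes no issue since the sum is finite) then gives $\Psi(x) = \E[\Phi(x,Y)] = \sum_{i=1}^n a_i \Psi_i(x)$ for all $x \in D$, which exhibits $\Psi$ as a finite non-negative linear combination of $\mathcal{D}/\mathcal{B}([0,\infty])$-measurable functions, hence $\Psi \in \mathcal{M}(\mathcal{D},\mathcal{B}([0,\infty]))$.

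Finally, I would combine these via linearity of conditional expectation. Writing $\Phi(X,Y) = \sum_{i=1}^n a_i \1^{D\times E}_{A_i}(X,Y)$ and applying the version of linearity valid for $[0,\infty]$-valued random variables yields
\begin{equation*}
\E[\Phi(X,Y) \mid \mathcal{X}] = \sum_{i=1}^n a_i \, \E[\1^{D\times E}_{A_i}(X,Y) \mid \mathcal{X}] = \Bigl[\sum_{i=1}^n a_i \Psi_i(X)\Bigr]_{\P|_{\mathcal{X}},\mathcal{B}([0,\infty])} = [\Psi(X)]_{\P|_{\mathcal{X}},\mathcal{B}([0,\infty])},
\end{equation*}
which is the desired conclusion. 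The only real subtlety is bookkeeping around possible values $a_i = \infty$; this is harmless here because the sum has only finitely many terms and all entries are non-negative, so the standard extended-arithmetic conventions together with Tonelli and monotone linearity of conditional expectation apply without modification.
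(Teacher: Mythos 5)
Your proposal is correct and follows essentially the same route as the paper: decompose $\Phi$ as the finite sum $\sum_{z \in \Phi(D\times E)} z\,\1^{D\times E}_{\Phi^{-1}(\{z\})}$, apply Lemma~\ref{lemma:conditional_expectation0} to each preimage set, and conclude by linearity of the integral and of conditional expectation for non-negative extended-real-valued functions. Your added remark about the harmlessness of values $a_i = \infty$ in a finite non-negative sum is a fair point of care that the paper leaves implicit.
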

\begin{proof}[Proof of Lemma~\ref{lemma:simple_functions}]
Throughout this proof  
	let
	$ \gamma_C \colon D \to [0,\infty] $,
	$ C \in \mathcal{D} \otimes \mathcal{E} $,
	be the functions which satisfy for all
	$ C \in \mathcal{D} \otimes \mathcal{E} $,
	$ x \in D $ 
	that
	$ \gamma_C(x) =
	\E[ \1^{ D \times E }_C (x,Y)] $.
	Note that
	for all
	$ x \in D $,
	$ y \in E $
	it holds that
	\begin{equation}
	\begin{split}
	\label{eq:finite_sum}
	\Phi(x, y) 
	=
	\sum_{z \in \Phi( D \times E )}
	z \,\1^{ D \times E }_{ \Phi^{-1}( \{ z \} ) }(x, y) 
	.
	\end{split}
	\end{equation}
	The assumption that
	$ \Phi \in \mathcal{M}( \mathcal{D} \otimes \mathcal{E}, [0, \infty] ) $
	implies that
	for all $ z \in \Phi( D \times E ) $ it holds that
	$ \Phi^{-1}( \{ z \} ) \in \mathcal{D} \otimes \mathcal{E} $.
	Combining this and Lemma~\ref{lemma:conditional_expectation0}
	(with
	$ ( \Omega, \mathcal{F}, \P ) = ( \Omega, \mathcal{F}, \P ) $,
	$ ( D, \mathcal{D} ) = ( D, \mathcal{D} ) $,
	$ ( E, \mathcal{E} ) = ( E, \mathcal{E} ) $,
	$ \mathcal{X} = \mathcal{X} $,
	$ \mathcal{Y} = \mathcal{Y} $,
	$ X = X $,
	$ Y = Y $,
	$ A = \Phi^{-1}( \{ z \} ) $
	for $ z \in \Phi( D \times E ) $
	in the notation of Lemma~\ref{lemma:conditional_expectation0})
	proves that
	for all $ z \in \Phi( D \times E ) $
	it holds that
	$ ( D \ni x \mapsto 
	\E[ \1^{ D \times E }_{ \Phi^{-1}( \{ z \} ) }(x,Y) ] 
	\in [0,\infty] )  
	\in 
	\mathcal{M}( \mathcal{D}, \mathcal{B}([0,\infty]) ) $
	and
	\begin{equation} 
	\E[ \1^{ D \times E }_{ \Phi^{-1}( \{ z \} ) }(X,Y) | \mathcal{X}]
	=
	[ \gamma_{ \Phi^{-1}( \{ z \} ) }(X)]_{\P|_{\mathcal{X}}, \mathcal{B}([0,\infty ])} 
	.
	\end{equation}
	This and~\eqref{eq:finite_sum}
	show that 
	$ \Psi \in \mathcal{M}( \mathcal{D}, \mathcal{B}([0,\infty]) ) $
	and
	\begin{equation}
	\begin{split}
	&
	\E[\Phi(X,Y) |\mathcal{X}]  
	=
	\E\big[ \smallsum_{z \in \Phi( D \times E )}
	z \, \1^{ D \times E }_{ \Phi^{-1}( \{ z \} ) }(X, Y) \big| \mathcal{X} \big]
	=
	\smallsum_{z \in \Phi( D \times E )} 
	z \,
	\E\big[ 
	\1^{ D \times E }_{ \Phi^{-1}( \{ z \} ) }(X, Y) \big| \mathcal{X} \big]
	\\
	& 
	=
	\smallsum_{z \in \Phi( D \times E )} 
	z \,
	[ \gamma_{ \Phi^{-1}( \{ z \} ) }(X)]_{\P|_{\mathcal{X}}, \mathcal{B}([0,\infty])} 
	= 
	\big[
	\smallsum_{z \in \Phi( D \times E )} 
	z \,
	\gamma_{ \Phi^{-1}( \{ z \} ) }(X)
	\big ]_{\P|_{\mathcal{X}}, \mathcal{B}([0,\infty])} 
	\\
	&
	=
	[ \Psi(X) ]_{\P|_{\mathcal{X}}, \mathcal{B}([0,\infty])}.
	\end{split}
	\end{equation}
This completes the
proof of Lemma~\ref{lemma:simple_functions}.
\end{proof}
\begin{lemma}
	\label{lemma:conditional_expectation}
	Let $ (\Omega, \mathcal{F}, \P) $ be a probability space, let $ (D, \mathcal{D}) $
	and
	$ (E, \mathcal{E}) $
	be measurable spaces,
	let $ \mathcal{X},\mathcal{Y}\in \mathcal{P}(\mathcal{F}) $ be 
	$ \P $-independent sigma-algebras,
	let 
	$ X \in \mathcal{M}(\mathcal{X}, \mathcal{D}) $,
	$ Y \in \mathcal{M}(\mathcal{Y}, \mathcal{E}) $,
	$ \Phi \in \mathcal{M}(\mathcal{D} \otimes \mathcal{E}, \mathcal{B}([0,\infty])) $,
	$ \Psi \in \mathbb{M}( D, [0,\infty] ) $,
	and 
	assume for all $ x \in D $ that 
	$ \Psi(x)= \E[ \Phi(x,Y) ] $.
	Then
	it holds that
	$ \Psi \in \mathcal{M} ( \mathcal{D}, \mathcal{B}( [0,\infty] ) ) $
	and
	\begin{equation}
	\E [ \Phi(X, Y) | X]
	=
	[ \Psi(X) ]_{ \P|_{ \sigma_\Omega(X) }, \mathcal{B}( [0,\infty]) }
	\subseteq
	\E[ \Phi(X, Y) | \mathcal{X}] 
	= 
	[\Psi(X)]_{\P|_{\mathcal{X}}, \mathcal{B}([0,\infty])}
	.
	\end{equation}
\end{lemma}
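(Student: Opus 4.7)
My plan is to reduce to the simple-function case already handled in Lemma~\ref{lemma:simple_functions} by monotone approximation, and then to deduce the $\sigma_\Omega(X)$-version as a measurability refinement.

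First, I would invoke Lemma~\ref{lemma:pointwise_approximation} on the product measurable space $(D \times E, \mathcal{D} \otimes \mathcal{E})$ and the function $\Phi$ to obtain a sequence $\Phi_n \in \mathcal{M}(\mathcal{D} \otimes \mathcal{E}, \mathcal{B}([0,\infty)))$, $n \in \N$, with finite image $\#_{\Phi_n(D \times E)} < \infty$, pointwise monotone increase $\Phi_n \leq \Phi_{n+1}$, and pointwise limit $\lim_{n \to \infty} \Phi_n = \Phi$. Define $\Psi_n \colon D \to [0,\infty]$ by $\Psi_n(x) = \E[\Phi_n(x,Y)]$. Lemma~\ref{lemma:simple_functions} then yields $\Psi_n \in \mathcal{M}(\mathcal{D}, \mathcal{B}([0,\infty]))$ and $\E[\Phi_n(X,Y) \mid \mathcal{X}] = [\Psi_n(X)]_{\P|_\mathcal{X}, \mathcal{B}([0,\infty])}$ for every $n \in \N$.

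Next I would pass to the limit using monotone convergence twice. Applying the (unconditional) monotone convergence theorem pointwise in $x \in D$ gives $\lim_{n \to \infty} \Psi_n(x) = \E[\Phi(x,Y)] = \Psi(x)$, so $\Psi$ is measurable as a pointwise limit of $\mathcal{D}/\mathcal{B}([0,\infty])$-measurable functions. Applying the monotone convergence theorem for conditional expectations to $\Phi_n(X,Y) \uparrow \Phi(X,Y)$ yields $\E[\Phi(X,Y) \mid \mathcal{X}] = \lim_{n \to \infty} \E[\Phi_n(X,Y) \mid \mathcal{X}]$ $\P$-a.s., and combining with the simple-function identity gives $\E[\Phi(X,Y) \mid \mathcal{X}] = [\Psi(X)]_{\P|_\mathcal{X}, \mathcal{B}([0,\infty])}$.

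For the $\sigma_\Omega(X)$-statement, I would observe that since $\Psi \in \mathcal{M}(\mathcal{D}, \mathcal{B}([0,\infty]))$ and $X \in \mathcal{M}(\sigma_\Omega(X), \mathcal{D})$, the composition $\Psi(X)$ is $\sigma_\Omega(X)/\mathcal{B}([0,\infty])$-measurable. For every $A \in \sigma_\Omega(X) \subseteq \mathcal{X}$ the just-established $\mathcal{X}$-identity gives $\int_A \Psi(X)\, d\P = \int_A \Phi(X,Y)\, d\P$, so $\Psi(X)$ is a version of $\E[\Phi(X,Y) \mid X]$, i.e.\ $\E[\Phi(X,Y) \mid X] = [\Psi(X)]_{\P|_{\sigma_\Omega(X)}, \mathcal{B}([0,\infty])}$. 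The set inclusion $[\Psi(X)]_{\P|_{\sigma_\Omega(X)}, \mathcal{B}([0,\infty])} \subseteq [\Psi(X)]_{\P|_\mathcal{X}, \mathcal{B}([0,\infty])}$ then follows automatically from $\sigma_\Omega(X) \subseteq \mathcal{X}$, since every $\sigma_\Omega(X)$-null set lies in $\mathcal{X}$ and has the same $\P$-measure.

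The only mildly delicate point I anticipate is the careful handling of $[0,\infty]$-valued functions under the conditional monotone convergence theorem together with the equivalence-class notation $[\,\cdot\,]_{\mu, \mathcal{S}}$; one has to argue that the a.s.\ limit of the versions $\Psi_n(X)$ coincides $\P$-a.s.\ with $\Psi(X)$, which is immediate from pointwise convergence of $\Psi_n$ to $\Psi$ on all of $D$. The rest is a straightforward monotone-class style passage from simple to general nonnegative measurable integrands.
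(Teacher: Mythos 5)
Your proposal is correct and follows essentially the same route as the paper: approximate $\Phi$ monotonically by finite-image functions via Lemma~\ref{lemma:pointwise_approximation}, apply Lemma~\ref{lemma:simple_functions} to each approximant, and pass to the limit with the (conditional) monotone convergence theorem. The only cosmetic difference is in the final step, where the paper deduces the $\sigma_\Omega(X)$-version from the tower property $\E[\Phi(X,Y)\,|\,X]=\E[\E[\Phi(X,Y)\,|\,\mathcal{X}]\,|\,X]$ while you verify the defining integral identity directly on $\sigma_\Omega(X)\subseteq\mathcal{X}$; both are equally valid.
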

\begin{proof}[Proof of Lemma~\ref{lemma:conditional_expectation}]
	Throughout this proof 
	let 
	$ \phi_n 
	\in \mathcal{M}(
	\mathcal{D} \otimes \mathcal{E},
	\mathcal{B}( [0,\infty] )
	) $,
	$ n \in \N $,
	be functions
	which 
	satisfy
	for all
	$ m \in \N $,
	$ z \in D \times E $ 
	that
	$ \#_{\phi_m(D\times E)} < \infty $,
	$ \phi_m( z ) \leq \phi_{m+1}( z ) $,
	and
	$ \lim_{n\to \infty } \phi_n( z ) = \Phi( z ) $
	and
	let
	$ \psi_n \colon D \to [0,\infty] $, $ n \in \N $,
	be the functions which satisfy
	for all
	$ n \in \N $,
	$ x \in D $
	that
	$ \psi_n(x) = \E[\phi_n(x,Y)] $.
	Note that the monotone convergence theorem ensures for all $ x \in D $ that
	\begin{equation}
	\label{eq:imp_limit}
	\lim\nolimits_{n\to \infty}
	\psi_n(x)
	=
	\lim\nolimits_{n\to \infty}
	\E[\phi_n(x,Y)]
	=
	\E[
	\lim\nolimits_{n\to \infty}
	\phi_n(x,Y)
	]
	=
	\E[ \Phi(x,Y)] = \Psi(x).
	\end{equation}
	Combining  
	the monotone convergence theorem
	for conditional expectations
	and Lemma~\ref{lemma:simple_functions}
	(with
	$ ( \Omega, \mathcal{F}, \P ) = ( \Omega, \mathcal{F}, \P ) $,
	$ ( D, \mathcal{D} ) = ( D, \mathcal{D} ) $,
	$ ( E, \mathcal{E} ) = ( E, \mathcal{E} ) $,
	$ \mathcal{X} = \mathcal{X} $,
	$ \mathcal{Y} = \mathcal{Y} $,
	$ X = X $,
	$ Y = Y $,
	$ \Phi = \phi_n $,
	$ \Psi = \psi_n $
	for $ n \in \N $
	in the notation of 
	Lemma~\ref{lemma:simple_functions})
	hence shows 
	\begin{enumerate}[(i)]
	\item  
	 that
	$ \forall \, n \in \N \colon \psi_n \in \mathcal{M}( \mathcal{D}, \mathcal{B}( [0,\infty] ) ) $
	and
	\item
	 that
	\begin{equation}
	\begin{split}
	&
	\E[\Phi(X,Y) |\mathcal{X}] 
	=
	\E[ \lim\nolimits_{n\to \infty} \phi_n(X,Y) |\mathcal{X}]
	=
	\lim\nolimits_{n\to \infty} \E[\phi_n(X,Y)|\mathcal{X}]
	\\
	& 
	=
	\lim\nolimits_{n\to \infty}  
	[ \psi_n(X) ]_{\P|_{\mathcal{X}}, \mathcal{B}([0,\infty])}
	= 
	[ \lim\nolimits_{n\to \infty}  \psi_n(X) ]_{\P|_{\mathcal{X}}, \mathcal{B}([0,\infty])}
	=
	[ \Psi(X) ]_{\P|_{\mathcal{X}}, \mathcal{B}([0,\infty])}.
	\end{split}
	\end{equation}
	\end{enumerate}
	Combining
    this and~\eqref{eq:imp_limit} proves that  
    $ \Psi \in \mathcal{M}( \mathcal{D}, \mathcal{B}( [0, \infty] ) ) $
    and
	\begin{equation}
	\begin{split}
	\E[\Phi(X,Y) |X]
	&
	=
	\E\big[\E[\Phi(X,Y) |\mathcal{X}]|X\big]
	=
	\E [   \Psi(X)   |X ]
	\\
	&
	=
	[ \Psi( X ) ]_{
	\P |_{ \sigma_\Omega(X) },
	\mathcal{B}( [0, \infty] )
	}
	\subseteq
	[ \Psi(X) ]_{\P|_{\mathcal{X}}, \mathcal{B}([0,\infty])}
	.
	\end{split}
	\end{equation}
	The proof of Lemma~\ref{lemma:conditional_expectation} is thus completed.
\end{proof}
\subsection{From one-step estimates to exponential moments}
In this subsection we establish in
Corollary~\ref{Cor:exp.mom.abstract}
below
exponential integral
properties
for approximation
schemes
(see~\eqref{eq:exp.mom.abstract}
in Corollary~\ref{Cor:exp.mom.abstract})
under a general one-step
condition
on the considered
approximation scheme
(see~\eqref{eq:exp.mom.abstract.assumption}
in Corollary~\ref{Cor:exp.mom.abstract} below).
We will verify this one-step
condition for a specific class
of approximation schemes in Subsection~\ref{subsection:main_lemma} below.
Corollary~\ref{Cor:exp.mom.abstract}
is an extension
of Corollary~2.3
in Hutzenthaler et al.~\cite{HutzenthalerJentzenWang2014}.
\begin{corollary} 
\label{Cor:exp.mom.abstract}
 Let
  $ \left( H, \left< \cdot, \cdot \right> _H, \left \| \cdot \right\|_H \right) $
  and 
  $ \left( \U, \left< \cdot, \cdot \right> _\U, \left\| \cdot \right \|_\U \right) $
  be separable $ \R $-Hilbert spaces, 
  let
  $ T \in (0,\infty) $,
  $ \theta \in \varpi_T $,
  $ \rho, c \in [0,\infty) $,
  $ \V \in \mathcal{M} \big(
  \mathcal{B}(H), \mathcal{B}([0,\infty)) \big) $,
  $ \bar \V \in \mathcal{M} \big(
  \mathcal{B}(H), \mathcal{B}(\R)\big) $,
   $ \Phi \in \mathcal{M} \big(
   \mathcal{B}(H  \times
  [0, T] \times \U),
 \mathcal{B}( H)\big) $,
  $ E \in \mathcal{B}( H ) $,
  $ S \in \mathbb{M}((0,T], L(H)) $,
  let
  $ ( \Omega, \mathcal{ F }, \P, ( \mathcal{ F }_t )_{t \in [ 0, T]} ) $
  be a filtered probability space,
  let
  $ W \colon [0,T]\times \Omega \to \U $
  be an
  $ \operatorname{Id}_\U $-cylindrical 
  $ ( \mathcal{F}_t )_{t \in [0, T]} $-Wiener process 
  with continuous sample paths,
  let $ Y  \in \mathcal{M} \big( 
  \mathcal{B}([0,T]) \otimes \mathcal{F}, \mathcal{B}(H) \big)$ be 
  an $ (\mathcal{F}_t)_{t\in[0,T]} $-adapted
 stochastic process,
 assume for all $ t \in (0,T] $, $ x \in H $ that
  $ \V(S_t x) \leq \V(x) $,
  $ \bar \V(S_t x) \leq \bar \V(x) $,
  and
  \begin{equation}
  \label{eq:process_dynamic}
    Y_t =
     S_{t-\llcorner t \lrcorner_\theta}
    \left[
    \1_{ H \backslash E }( Y_{ \llcorner t \lrcorner_{ \theta } } )
    \cdot   
     Y_{ \llcorner t \lrcorner_{ \theta } }
    +
    \1_E(
      Y_{ \llcorner t \lrcorner_{ \theta } }
    )
    \cdot
    \Phi\!\left(
      Y_{ \llcorner t \lrcorner_{ \theta } } ,
      t - \llcorner t \lrcorner_{ \theta } ,
      W_{ t } - W_{ \llcorner t \lrcorner_{ \theta } }
    \right)
    \right]
    ,
  \end{equation}
 and assume for all 
 $ x \in E $,
 $ t \in  ( 0, |\theta|_T ] $
 that
 $ \smallint_0^T
 \1_{E}(
 Y_{ \lfloor s \rfloor_{ \theta } }
 )
 \,
 | \bar{\V}(Y_s) |
 \, ds 
 +
 \int_0^{|\theta|_T}
 | \bar V(\Phi(x,s,W_s)) | \, ds         
 < \infty $
 and
  \begin{equation}  
  \begin{split} 
  \label{eq:exp.mom.abstract.assumption}
    \E\!\left[
      \exp\!\left(
        \tfrac{
            \V( \Phi( x, t, W_t ) )
        }{
          e^{ \rho t }
        }
        +
        \smallint_0^t
        \tfrac{
          \bar{\V}( \Phi( x, s, W_s ) )
        }{
          e^{ \rho s }
        }
        \, ds
      \right)
    \right]
    \leq e^{c t + \V(x)}
    .
  \end{split}     
  \end{equation}
  Then it holds for all $ t \in [0,T] $ that
  \begin{equation}
  \label{eq:exp.mom.abstract}
    \E\!\left[
      \exp\!\left(
        \tfrac{
          \V( Y_t )
        }{
          e^{ \rho t }
        }
        +
        \smallint_0^t
        \tfrac{
          \mathbbm{1}_E(
            Y_{ \lfloor s \rfloor_{\theta} }
          )
          \,
          \bar{\V}(Y_s)
        }{
          e^{ \rho s }
        }
        \, ds
      \right)
    \right]
    \leq
      e^{ct} \, \E\!\left[e^{{\V}(Y_0)}\right]
    .
\end{equation}
\end{corollary}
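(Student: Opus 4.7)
The plan is to prove the bound by induction on the points of the partition $ \theta = \{ t_0, t_1, \ldots, t_M \} $ with $ 0 = t_0 < t_1 < \ldots < t_M = T $. More precisely, I would establish the one-step conditional estimate
\begin{equation*}
\E\!\left[ \exp\!\left( \tfrac{ \V( Y_t ) }{ e^{ \rho t } } + \smallint_{ t_k }^{ t } \tfrac{ \1_E( Y_{ \lfloor s \rfloor_\theta } ) \bar\V( Y_s ) }{ e^{ \rho s } } \, ds \right) \bigg| \mathcal{F}_{ t_k } \right] \leq \exp\!\left( \tfrac{ \V( Y_{ t_k } ) }{ e^{ \rho t_k } } + c( t - t_k ) \right)
\end{equation*}
for every $ t \in [ t_k, t_{k+1} ] $. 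Multiplying this by the $ \mathcal{F}_{ t_k } $-measurable factor $ \exp( \smallint_0^{ t_k } \1_E( Y_{ \lfloor s \rfloor_\theta } ) \bar\V( Y_s ) / e^{ \rho s } \, ds ) $, taking expectation, and iterating over $ k = 0, 1, \ldots, M - 1 $ then yields the claim $ \E[ \Xi_t ] \leq e^{ ct } \E[ e^{ \V( Y_0 ) } ] $.

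To obtain the one-step estimate I would split into two cases. If $ Y_{ t_k } \notin E $, then \eqref{eq:process_dynamic} gives $ Y_s = S_{ s - t_k } Y_{ t_k } $ on $ ( t_k, t ] $ and the indicator makes the integral vanish; together with the monotonicity assumptions $ \V( S_r x ) \leq \V( x ) $ and $ e^{ -\rho t } \leq e^{ -\rho t_k } $ this immediately gives the bound. If $ Y_{ t_k } \in E $, then $ Y_s = S_{ s - t_k } \Phi( Y_{ t_k }, s - t_k, W_s - W_{ t_k } ) $ on $ ( t_k, t ] $; using the semigroup contractions $ \V( S_r \cdot ) \leq \V( \cdot ) $ and $ \bar\V( S_r \cdot ) \leq \bar\V( \cdot ) $ dominates the conditional expectation by
\begin{equation*}
\E\!\left[ \exp\!\left( \tfrac{ 1 }{ e^{ \rho t_k } } \!\left[ \tfrac{ \V( \Phi( Y_{ t_k }, t - t_k, \tilde W_{ t - t_k } ) ) }{ e^{ \rho( t - t_k ) } } + \smallint_0^{ t - t_k } \tfrac{ \bar\V( \Phi( Y_{ t_k }, u, \tilde W_u ) ) }{ e^{ \rho u } } \, du \right] \right) \bigg| \mathcal{F}_{ t_k } \right],
\end{equation*}
where $ \tilde W_u := W_{ u + t_k } - W_{ t_k } $ is an $ \operatorname{Id}_U $-cylindrical Wiener process independent of $ \mathcal{F}_{ t_k } $.

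From here I would apply Jensen's inequality with the concave power $ x \mapsto x^{ e^{ -\rho t_k } } $ (valid since $ e^{ -\rho t_k } \in ( 0, 1 ] $) to pull the factor $ 1 / e^{ \rho t_k } $ outside the exponential, and then invoke the factorization property Lemma~\ref{lemma:conditional_expectation} with the independent sigma-algebras generated by $ \mathcal{F}_{ t_k } $ and by $ \tilde W $ respectively, to identify the resulting conditional expectation with $ \Psi( Y_{ t_k } ) $, where $ \Psi( x ) $ is precisely the deterministic expectation controlled by \eqref{eq:exp.mom.abstract.assumption}. The one-step assumption then yields $ \Psi( Y_{ t_k } ) \leq e^{ c( t - t_k ) + \V( Y_{ t_k } ) } $ on $ \{ Y_{ t_k } \in E \} $, and raising to the power $ e^{ -\rho t_k } \leq 1 $ combined with $ c( t - t_k ) e^{ -\rho t_k } \leq c( t - t_k ) $ closes the argument.

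The main technical obstacle is the appearance of the factor $ e^{ -\rho t_k } $ that multiplies the whole exponent once we reparametrize time: this prevents a direct use of the hypothesis and must be handled by the concave Jensen step, after which the pairing of independent sigma-algebras in Lemma~\ref{lemma:conditional_expectation} provides the rigorous bridge from the abstract hypothesis on the test inputs $ ( x, t, W_t ) $ to the conditional expectation with the random initial value $ Y_{ t_k } $. The integrability hypothesis involving $ \smallint_0^T \1_E( Y_{ \lfloor s \rfloor_\theta } ) | \bar\V( Y_s ) | \, ds $ and $ \smallint_0^{ | \theta |_T } | \bar\V( \Phi( x, s, W_s ) ) | \, ds $ is precisely what is needed to justify that all the conditional expectations and exponentials above are well-defined and that Fubini/Jensen can be applied inside the conditional expectation.
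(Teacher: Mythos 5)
Your proposal is correct and follows essentially the same route as the paper: a one-step conditional estimate obtained from the semigroup contraction properties, Jensen's inequality with the concave power $x \mapsto x^{e^{-\rho \llcorner t \lrcorner_\theta}}$ to absorb the time-shift factor, and the factorization Lemma~\ref{lemma:conditional_expectation} to reduce to the hypothesis~\eqref{eq:exp.mom.abstract.assumption}, followed by iteration over the partition. The only cosmetic differences are that the paper treats the two cases $Y_{\llcorner t \lrcorner_\theta} \in E$ and $Y_{\llcorner t \lrcorner_\theta} \notin E$ in a single display via indicator functions and delegates the final induction to Lemma~2.2 in Hutzenthaler et al.~\cite{HutzenthalerJentzenWang2014}, which you carry out explicitly.
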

\begin{proof}[Proof of Corollary~\ref{Cor:exp.mom.abstract}]
   We prove Corollary~\ref{Cor:exp.mom.abstract}
   through an application of Lemma~2.2 in Hutzenthaler et al.~\cite{HutzenthalerJentzenWang2014}.
  Assumption~\eqref{eq:exp.mom.abstract.assumption}
  implies that
  for all
  $ t \in (0, |\theta |_T] $, $ x \in H $
  it holds that
  \begin{equation}  \begin{split}
  \label{eq:exp.mom.abstract.assumption2}
    \E\!\left[
      \exp\!\left(
        \tfrac{
          \1_E(x) \, \V( \Phi( x, t, W_t ) )
        }{
          e^{ \rho t }
        }
        +
        \smallint_0^t
        \tfrac{
          \1_E(x)
          \,
          \bar{\V}( \Phi( x, s, W_s ) )
        }{
          e^{ \rho s }
        }
        \, ds
      \right)
    \right]
    \leq
    e^{
      c t
      +
      \1_E( x )
      \,
      \V(x)
    }
    .
  \end{split}
  \end{equation}
  Next note that
  \eqref{eq:process_dynamic}
  ensures that
  for all $ t \in (0, T] $
  it holds
  that
  \begin{equation}
  \begin{split}
   &
     \E\!\left[
       \exp\!\left(
         \tfrac{
           \1_E(Y_{ \llcorner t \lrcorner_{\theta} })
           \,
           \V(
             Y_t
           )
         }{
           e^{ \rho t }
         }
    +
    \smallint_{ \llcorner t \lrcorner_{\theta} }^{ t }
    \tfrac{
      \1_E(Y_{ \llcorner t \lrcorner_{\theta} })
      \,
      \bar{\V}(
        Y_s
      )
    }{
      e^{ \rho s }
    }
    \, ds
    \right)
    \Big| \, (Y_s)_{ s \in [0, \llcorner t \lrcorner_{\theta} ] }
    \right]
  \\ 
  & 
  =
     \E\bigg[
       \exp\!\bigg(
         \tfrac{
           \1_E(Y_{ \llcorner t \lrcorner_{\theta} })
           \,
           \V(S_{t- \llcorner t \lrcorner_\theta}
             \Phi(Y_{ \llcorner t \lrcorner_{\theta} }, t- \llcorner t \lrcorner_{\theta},
             W_{ t } - W_{\llcorner t \lrcorner_{\theta} })
           )
         }{
           e^{ \rho t }
         }
   \\
   &
   \quad
    +
    \smallint_{ \llcorner t \lrcorner_{\theta} }^{ t }
    \tfrac{
      \1_E(Y_{ \llcorner t \lrcorner_{\theta} })
      \,
      \bar{\V}(
      S_{s - \llcorner t \lrcorner_\theta}
        \Phi( Y_{ \llcorner t \lrcorner_{\theta} }, s - \llcorner t \lrcorner_\theta, W_s - W_{\llcorner t \lrcorner_\theta})
      )
    }{
      e^{ \rho s }
    }
    \, ds
    \bigg)
    \,
    \Big| \, (Y_s)_{ s \in [0, \llcorner t \lrcorner_{\theta} ] }
    \bigg]
  \\ 
  & 
  \leq
     \E\bigg[
       \bigg\{
       \exp\! \bigg(
         \tfrac{
           \1_E(Y_{ \llcorner t \lrcorner_{\theta} })
           \,
           \V(
             \Phi(Y_{ \llcorner t \lrcorner_{\theta} }, t- \llcorner t \lrcorner_{\theta},
             W_{ t} -W_{\llcorner t \lrcorner_{\theta} } )
           )
         }{
           e^{ \rho ( t - \llcorner t \lrcorner_{\theta} ) }
         }
   \\
   &
   \quad
    +\smallint_{ 0 }^{ t - \llcorner t \lrcorner_{\theta}}
    \tfrac{
      \1_E(Y_{ \llcorner t \lrcorner_{\theta} })
      \,
      \bar{\V}(
        \Phi( Y_{ \llcorner t \lrcorner_{\theta} }, s, 
        W_{\llcorner t \lrcorner_\theta + s} - W_{\llcorner t \lrcorner_\theta} )
      )
    }{
      e^{ \rho s }
    }
    \, ds
    \bigg)
    \bigg\}^{
      \exp( - \rho \llcorner t \lrcorner_{\theta} )
    }
    \,
    \Big| \, (Y_s)_{ s \in [0, \llcorner t \lrcorner_{\theta} ] }
    \bigg]
	.
  \end{split}
  \end{equation}
  Jensen's
  inequality,
  \eqref{eq:exp.mom.abstract.assumption2},
  and, e.g., 
  Lemma~\ref{lemma:conditional_expectation}
  hence 
  imply for all $ t \in (0, T] $ that
  \begin{equation} 
  \begin{split}
  &
     \E\!\left[
       \exp\!\left(
         \tfrac{
           \1_E(Y_{ \llcorner t \lrcorner_{\theta} })
           \,
           \V(
             Y_t
           )
         }{
           e^{ \rho t }
         }
    +
    \smallint_{ \llcorner t \lrcorner_{\theta} }^{ t }
    \tfrac{
      \1_E(Y_{ \llcorner t \lrcorner_{\theta} })
      \,
      \bar{\V}(
        Y_s
      )
    }{
      e^{ \rho s }
    }
    \, ds
    \right)
    \Big| \; (Y_s)_{ s \in [0, \llcorner t \lrcorner_{\theta} ] }
    \right]
    \\  
  & 
  \leq
       \bigg|
       \,
     \E\bigg[
       \exp \! \bigg(
         \tfrac{
           \1_E(Y_{ \llcorner t \lrcorner_{\theta} })
           \,
           \V(
             \Phi(Y_{ \llcorner t \lrcorner_{\theta} }, t- \llcorner t \lrcorner_{\theta},
             W_{t} - W_{\llcorner t \lrcorner_{\theta} })
           )
         }{
           e^{ \rho ( t - \llcorner t \lrcorner_{\theta} ) }
         }
   \\ 
   & 
   \quad
    +
    \smallint_{ 0 }^{ t - \llcorner t \lrcorner_{\theta}}
    \tfrac{
      \1_E(Y_{ \llcorner t \lrcorner_{\theta} })
      \,
      \bar{\V}( \Phi(Y_{ \llcorner t \lrcorner_{\theta} }, s, 
      W_{\llcorner t \lrcorner_\theta + s} - W_{\llcorner t \lrcorner_\theta} ) )
    }{
      e^{ \rho s }
    }
    \, ds
    \bigg)
    \,
    \Big| \; (Y_s)_{ s \in [0, \llcorner t \lrcorner_{\theta} ] }
    \bigg]
    \bigg|^{
      \exp( - \rho \llcorner t \lrcorner_{\theta} )
    }
  \\  
  & 
  \leq   
       \left|
       \left[
       e^{
         c \left( t - \llcorner t \lrcorner_{\theta} \right)
         +
           \1_E(Y_{ \llcorner t \lrcorner_{\theta} })
           \,
           \V(
             Y_{ \llcorner t \lrcorner_{\theta} }
           )
       }
       \right]_{\P, \mathcal{B}([0,\infty])}
    \right|^{
      e^{ - \rho \llcorner t \lrcorner_{\theta} }
    }
  \\
  & 
  \leq 
  \left[
       \exp\!\left( 
         c  \!\left( t - \llcorner t \lrcorner_{\theta} \right)
         +
         \tfrac{
           \1_E(Y_{ \llcorner t \lrcorner_{\theta} })
           \,
           \V(
             Y_{ \llcorner t \lrcorner_{\theta} }
           )
         }{
           e^{ \rho \llcorner t \lrcorner_{\theta} }
         }
    \right) 
    \right]_{\P, \mathcal{B}([0,\infty])} 
    .
  \end{split}
  \end{equation}
  This and~\eqref{eq:process_dynamic} 
  show  
  for all $ t \in (0,T] $ that
\begin{align} 
\nonumber
\label{eq:finale_estimate}
    &
    \E\!\left[
      \exp\!\left(
        - c t
        +
        \tfrac{
          \V( Y_t )
        }{
          e^{ \rho t }
        }
        +
        \smallint_0^t
        \tfrac{
          \1_E( Y_{ \lfloor r \rfloor_{\theta} } )
          \,
          \bar{\V}(Y_r)
        }{
          e^{ \rho r }
        }
        \,
        dr
      \right)
      \Big|
      \,
      ( Y_r )_{ r \in [0, \llcorner t \lrcorner_{\theta}] }
    \right]
  \\ 
  \nonumber
  &
  =
    \E\!\left[
      \exp\!\left(
        \tfrac{
          \1_E( Y_{ \llcorner t \lrcorner_{\theta} } )
          \,
          \V( Y_t )
        }{
          e^{ \rho t }
        }
        +
        \tfrac{
          \1_{ H \backslash E }( Y_{ \llcorner t \lrcorner_{\theta} } )
          \,
          \V( Y_t )
        }{
          e^{ \rho t }
        }
        +
        \smallint_{ \llcorner t \lrcorner_{\theta} }^t
        \tfrac{
          \1_E( Y_{ \llcorner t \lrcorner_{\theta} } )
          \,
          \bar{\V}(Y_r)
        }{
          e^{ \rho r }
        }
        \,
        dr
      \right)
      \Big|
      \,
      ( Y_r )_{ r \in [0, \llcorner t \lrcorner_{\theta}] }
    \right]
  \\ 
  \nonumber
  & 
  \quad 
  \cdot
   \exp\!\left(
     - c t
     +
     \smallint_0^{ \llcorner t \lrcorner_{\theta} }
     \tfrac{
       \1_E( Y_{ \lfloor r \rfloor_{\theta} } )
       \,
       \bar{\V}( Y_r )
     }{
       e^{ \rho r }
     }
     \, dr
   \right)
  \\ & =
    \E\!\left[
      \exp\!\left(
        \tfrac{
          \1_E( Y_{ \llcorner t \lrcorner_{\theta} } )
          \,
          \V( Y_t )
        }{
          e^{ \rho t }
        }
        +
        \tfrac{
          \1_{ H \backslash E }( Y_{ \llcorner t \lrcorner_{\theta} } )
          \,
          \V(S_{t-\llcorner t \lrcorner_\theta} Y_{ \llcorner t \lrcorner_{\theta} } )
        }{
          e^{ \rho t }
        }
        +
        \smallint_{ \llcorner t \lrcorner_{\theta} }^t
        \tfrac{
          \1_E( Y_{ \llcorner t \lrcorner_{\theta} } )
          \,
          \bar{\V}(Y_r)
        }{
          e^{ \rho r }
        }
        \,
        dr
      \right)
      \Big|
      \,
      ( Y_r )_{ r \in [0, \llcorner t \lrcorner_{\theta}] }
    \right]
  \\ & 
  \nonumber
  \quad 
  \cdot
   \exp\!\left(
     - c t
     +
     \smallint_0^{ \llcorner t \lrcorner_{\theta} }
     \tfrac{
       \1_E( Y_{ \lfloor r \rfloor_{\theta} } )
       \,
       \bar{\V}( Y_r )
     }{
       e^{ \rho r }
     }
     \, dr
   \right)
 \\& 
 \nonumber
 \leq
 \left[
      \exp\!\left(
        c
        \!
        \left( t - \llcorner t \lrcorner_{\theta} \right)
        +
        \tfrac{
          \1_E( Y_{ \llcorner t \lrcorner_{\theta} } )
          \,
          \V( Y_{ \llcorner t \lrcorner_{\theta} } )
        }{
          e^{ \rho \llcorner t \lrcorner_{\theta} }
        }
        +
        \tfrac{
          \1_{ H \backslash E }( Y_{ \llcorner t \lrcorner_{\theta} } )
          \,
          \V( Y_{ \llcorner t \lrcorner_{\theta} } )
        }{
          e^{ \rho t }
        }
        - c t
     +
     \smallint_0^{ \llcorner t \lrcorner_{\theta} }
     \tfrac{
       \1_E( Y_{ \lfloor r \rfloor_{\theta} } )
       \,
       \bar{\V}( Y_r )
     }{
       e^{ \rho r }
     }
     \, dr
   \right)
   \right]_{\P, \mathcal{B}([0,\infty])}
 \\ 
 \nonumber
 & 
 \leq
 \left[
      \exp\!\left(
        - c \llcorner t \lrcorner_{\theta}
        +
        \tfrac{
          \V( Y_{ \llcorner t \lrcorner_{\theta} } )
        }{
          e^{ \rho \llcorner t \lrcorner_{\theta} }
        }
     +
     \smallint_0^{ \llcorner t \lrcorner_{\theta} }
     \tfrac{
       \1_E( Y_{ \lfloor r \rfloor_{\theta} } )
       \,
       \bar{\V}( Y_r )
     }{
       e^{ \rho r }
     }
     \, dr
   \right)
   \right]_{\P, \mathcal{B}([0,\infty])}
   .
\end{align}
  Lemma~2.2 in Hutzenthaler et al.~\cite{HutzenthalerJentzenWang2014}
  and~\eqref{eq:finale_estimate}
  establish~\eqref{eq:exp.mom.abstract}.
  The proof of Corollary~\ref{Cor:exp.mom.abstract} is thus completed.
\end{proof}
\begin{remark}
Let  
$ (\U, \left< \cdot, \cdot \right>_\U, \left \| \cdot \right \|_U) $
be a separable
$ \R $-Hilbert space, let $ T \in (0,\infty) $,
let $ (\Omega, \mathcal{F}, \P) $ be a probability space,
and let $ W \colon [0,T] \times \Omega \to \U $ be an 
$ \operatorname{Id}_\U $-cylindrical $ \P $-Wiener process.
Then $ \dim(\U) < \infty $. 
\end{remark}
\subsection{A one-step estimate for exponential moments}
\label{subsection:main_lemma}
In this subsection we establish in~\eqref{eq:exp.mom.abstract.one-step}
in Lemma~\ref{l:exp.mom.abstract.one-step}
below
an appropriate
exponential
one-step estimate
for a general class
of one-step
approximation schemes.
This exponential one-step estimate
and Corollary~\ref{Cor:exp.mom.abstract}
above 
(cf. \eqref{eq:exp.mom.abstract.one-step} 
in Lemma~\ref{l:exp.mom.abstract.one-step} below
with~\eqref{eq:exp.mom.abstract.assumption} 
in Corollary~\ref{Cor:exp.mom.abstract} above)
will allow us to establish
exponential integrability properties
for some tamed approximation schemes
in Subsection~\ref{subsection:use_big_lemma} below.  
Lemma~\ref{l:exp.mom.abstract.one-step} below
extends
Lemma~2.7 in Hutzenthaler et al.~\cite{HutzenthalerJentzenWang2014}
from finite dimensional stochastic ordinary differential equations
to infinite dimensional stochastic partial differential
equations.
Our proof of Lemma~\ref{l:exp.mom.abstract.one-step}
exploits several elementary/well known
auxiliary lemmas
(see Lemmas~\ref{l:exp.Gauss}--\ref{lemma:limit_Lp} below). 
Lemma~\ref{l:exp.Gauss} below
is a straightforward extension of Lemma~2.5
in Hutzenthaler et al.~\cite{HutzenthalerJentzenWang2014}.
Lemma~\ref{lemma:Rademacher_theorem}
below
follows, e.g., from
Theorem~5.8.12 in Bogachev~\cite{Bogachev}.
%
%
\begin{lemma}
\label{l:exp.Gauss}
 Let
 $ \left( H, \left< \cdot, \cdot \right> _H, \left \| \cdot \right\|_H \right) $
 and 
 $ \left( \U, \left< \cdot, \cdot \right> _\U, \left\| \cdot \right \|_\U \right) $
 be separable $ \R $-Hilbert spaces,
 let 
 $ T \in (0,\infty) $, 
 $ B \in \HS(\U, H) $,
 let
 $ ( \Omega, \mathcal{ F }, \P ) $
 be a probability space,
 and
 let
$ (W_t)_{t\in [0,T]} $
be an
$ \operatorname{Id}_\U $-cylindrical 
 $ \P $-Wiener process.
  Then it holds
  for all $ t \in [0, T] $ that
\begin{equation}
\E \big[ \! \exp\!\big(  \| \smallint\nolimits_0^t B \, dW_s  \|_H \big) \big]
  \leq 2
  \exp\!\big(
    \tfrac{ t }{ 2 } \| B \|^2_{ \HS( \U, H ) }
  \big) .
  \end{equation}
\end{lemma}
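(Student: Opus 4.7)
The plan is to reduce the claim to its finite-dimensional analogue, which is the content of (a mild adaptation of) Lemma~2.5 in~\cite{HutzenthalerJentzenWang2014}, and then pass to the limit via a finite-rank approximation of $B$. The shape of the bound is already dictated by the scalar case: for $Z \sim \mathcal{N}(0,\sigma^2)$ one has $\E[e^{|Z|}] = 2 e^{\sigma^2/2}\Phi(\sigma) \leq 2 e^{\sigma^2/2}$ by completing the square in the Gaussian integral.

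I would first fix an orthonormal basis $(e_k)_{k\in\N}$ of the separable Hilbert space $\U$ and, for each $N\in\N$, define the finite-rank truncation $B_N u := \sum_{k=1}^N \langle e_k, u\rangle_\U\,B e_k \in \HS(\U,H)$. Then $\|B-B_N\|_{\HS(\U,H)} \to 0$, so It\^o's isometry gives $\int_0^t B_N\,dW_s \to \int_0^t B\,dW_s$ in $L^2(\Omega;H)$; extracting a subsequence $(N_j)_{j\in\N}$ yields almost sure convergence of the associated $H$-norms. For each fixed $N$, the stochastic integral $\int_0^t B_N\,dW_s = \sum_{k=1}^N (Be_k)\,W_t^{(k)}$, where $W_t^{(k)} := \langle e_k,W_t\rangle_\U$ are the independent scalar Brownian motions obtained from the cylindrical structure of $W$, takes values in the finite-dimensional subspace $\operatorname{span}(Be_1,\ldots,Be_N)\subseteq H$. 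Via an isometric identification of that subspace with $\R^{\operatorname{rank}(B_N)}$ and Lemma~2.5 in~\cite{HutzenthalerJentzenWang2014}, I obtain
\[
\E\!\left[\exp\!\left(\left\|\int_0^t B_N\,dW_s\right\|_H\right)\right]
\leq 2\exp\!\left(\tfrac{t}{2}\|B_N\|_{\HS(\U,H)}^2\right)
\leq 2\exp\!\left(\tfrac{t}{2}\|B\|_{\HS(\U,H)}^2\right).
\]

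Finally, I would apply Fatou's lemma along the subsequence $(N_j)$ to the nonnegative random variables $\exp(\|\int_0^t B_{N_j}\,dW_s\|_H)$, combined with a.s.\ convergence of norms and continuity of $\exp$, to transfer the above uniform bound to $\int_0^t B\,dW_s$. The main obstacle is hidden in the finite-dimensional input: a crude reduction $\sqrt{\sum_i\mu_i Z_i^2}\leq \sum_i\sqrt{\mu_i}|Z_i|$ followed by independence and the 1D bound would pick up a factor of $2$ per coordinate and thereby destroy the claimed constant, so extracting the sharp factor of $2$ requires a more careful spectral argument (diagonalising $t B B^*$ and exploiting rotational invariance of standard Gaussians). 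Once that finite-dimensional ingredient is granted, the infinite-dimensional extension described above is routine.
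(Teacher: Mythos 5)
Your argument is correct. The paper itself gives no proof of Lemma~\ref{l:exp.Gauss}: it only remarks that the lemma is a straightforward extension of Lemma~2.5 in Hutzenthaler et al.~\cite{HutzenthalerJentzenWang2014}, and your finite-rank truncation combined with It\^o's isometry, the bound $\|B_N\|_{\HS(\U,H)}\leq\|B\|_{\HS(\U,H)}$, and Fatou's lemma along an almost surely convergent subsequence is a valid way of carrying out exactly that extension. (The truncation can even be dispensed with: the argument behind the finite-dimensional lemma --- bound $e^{\|x\|_H}\leq 2\cosh(\|x\|_H)$, expand the series, and control $\E[\|{\smallint_0^t B\,dW_s}\|_H^{2k}]$ by Jensen's inequality via the $2k$-th moment of a scalar Gaussian with variance $t\|B\|^2_{\HS(\U,H)}$ --- uses only that the covariance operator of $\smallint_0^t B\,dW_s$ is trace class with trace $t\|B\|^2_{\HS(\U,H)}$, so it applies verbatim in infinite dimensions and is, incidentally, what produces the single factor $2$ you were worried about.)
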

\begin{lemma}
\label{lemma:compozitum_loc_Lip}
Let $ ( E, d_E ) $, 
$ ( F, d_F ) $, and 
$ ( G, d_G ) $ be metric spaces and let 
$ f \colon E \to F $ and 
$ g \colon F \to G $ be locally Lipschitz continuous functions. Then it holds that
$ g \circ f \colon E \to G $ 
is a locally Lipschitz continuous function.
\end{lemma}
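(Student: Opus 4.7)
The plan is to verify the local Lipschitz property of $g \circ f$ pointwise, by chaining the local Lipschitz estimates of $f$ and $g$ on compatible neighborhoods. Fix an arbitrary $x \in E$. First I would invoke the local Lipschitz hypothesis on $f$ to extract an open ball $B_E(x, r_1) \subseteq E$ and a constant $L_1 \in [0,\infty)$ such that $d_F(f(u), f(v)) \leq L_1 \, d_E(u,v)$ for all $u, v \in B_E(x, r_1)$. Next I would invoke the local Lipschitz hypothesis on $g$ at the point $f(x) \in F$ to obtain an open ball $B_F(f(x), r_2) \subseteq F$ and a constant $L_2 \in [0,\infty)$ such that $d_G(g(a), g(b)) \leq L_2 \, d_F(a, b)$ for all $a, b \in B_F(f(x), r_2)$.

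The key compatibility step is to shrink the neighborhood around $x$ so that its $f$-image lands in $B_F(f(x), r_2)$. Since $f$ is locally Lipschitz at $x$ with constant $L_1$ on $B_E(x, r_1)$, it is in particular continuous at $x$, so I can choose $r_3 \in (0, r_1]$ small enough that $f(B_E(x, r_3)) \subseteq B_F(f(x), r_2)$. (If $L_1 > 0$ one may concretely take $r_3 = \min\{r_1, r_2 / (2 L_1)\}$, while if $L_1 = 0$ then $f$ is constant on $B_E(x, r_1)$ and any $r_3 \in (0, r_1]$ suffices.) Then for all $u, v \in B_E(x, r_3)$ one chains the two estimates to obtain
\begin{equation}
  d_G\!\left( (g \circ f)(u), (g \circ f)(v) \right)
  \leq
  L_2 \, d_F(f(u), f(v))
  \leq
  L_1 L_2 \, d_E(u, v),
\end{equation}
which shows $g \circ f$ is Lipschitz with constant $L_1 L_2$ on the open neighborhood $B_E(x, r_3)$ of $x$. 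Since $x \in E$ was arbitrary, this yields local Lipschitz continuity of $g \circ f$ and completes the proof.

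There is essentially no obstacle in this argument; the only point requiring a moment's care is that the definition of local Lipschitz continuity is being read as ``Lipschitz on some neighborhood of each point,'' which is the standard meaning and ensures in particular that both $f$ and $g$ are continuous, so that the preimage-shrinking argument works. The proof is a routine three-line chain of inequalities once the neighborhoods are chosen compatibly.
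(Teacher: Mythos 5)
Your proposal is correct and follows essentially the same route as the paper's proof: fix $x$, obtain Lipschitz estimates for $f$ near $x$ and for $g$ near $f(x)$, shrink the neighborhood of $x$ (via $\min\{r_1, r_2/(2L_1)\}$, matching the paper's $\min\{\delta_x, \hat\delta_{f(x)}/L_x\}$) so that its image under $f$ lies in the good neighborhood of $f(x)$, and chain the two inequalities. The only cosmetic difference is that you treat the $L_1 = 0$ case separately, whereas the paper takes its local Lipschitz constants in $(0,\infty)$ from the outset.
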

\begin{proof}[Proof of Lemma~\ref{lemma:compozitum_loc_Lip}]
The assumption that $ f \colon E \to F $ 
is locally Lipschitz continuous implies
that for every $ x \in E $
there exist
real numbers
$ \delta_x, L_x \in (0,\infty) $
such that for all
$ x_1, x_2 \in E $
with
$ \max\{ d_E(x, x_1),
		d_E(x, x_2) \} < \delta_x $
it holds that
\begin{equation}
\label{eq:f_Lip}
 d_F(f( x_1), f(x_2) ) \leq
L_x d_E( x_1, x_2).
\end{equation}
Moreover, the assumption that 
$ g \colon F \to G $
is locally Lipschitz continuous
implies that for every $ y \in F $ 
there exist real numbers
$ \hat \delta_y, \hat L_y \in (0,\infty) $
such that for all
$ y_1, y_2 \in F $
with
$ \max\{ d_F(y, y_1 ),
		d_F(y, y_2 ) \} < \hat \delta_y $
		it holds that
\begin{equation}
\label{eq:g_Lip}
 d_G( g(y_1), g(y_2) ) 
		\leq \hat L_y d_F( y_1,  y_2 ) .
		\end{equation}
Next note that~\eqref{eq:f_Lip}
proves for all
$ x, x_1, x_2 \in E $
with
$ 
\max\{ d_E(x, x_1), d_E(x, x_2) \} 
<
\min\{
\delta_x,
\nicefrac{ \hat \delta_{f(x)}}{ L_x }
\}
$
that
$ 
\max\{ d_F( f(x), f(x_1) ),
d_F( f(x), f(x_2) ) \} \leq 
L_x 
\max\{
d_E( x, x_1 ),
d_E( x, x_2 )
\}
<
\hat \delta_{f(x)} $.
Combining this, \eqref{eq:f_Lip}, and~\eqref{eq:g_Lip}
ensures that  
for all $ x, x_1, x_2 \in E $
with
$ 
\max\{ d_E(x, x_1), d_E(x, x_2) \} 
<
\min\{
\delta_x,
\nicefrac{ \hat \delta_{f(x)}}{ L_x }
\}
$ 
it holds
that
\begin{equation}
d_G( g ( f ( x_1 ) ), g ( f ( x_2 ) ) )
\leq 
\hat L_{f(x)} 
d_F( f ( x_1 ), f( x_2 ) )
\leq
  \hat L_{f(x)} L_x d_E( x_1, x_2 ) 
.
\end{equation}
The proof of Lemma~\ref{lemma:compozitum_loc_Lip}
is thus completed.
\end{proof}
\begin{lemma}
\label{lemma:Loc_Lip}
Let $ ( V, \left\| \cdot \right\|_V ) $ be a
normed $ \R $-vector space with $ \#_V > 1 $
and let 
$ c \in [1,\infty) $,
$ n \in \N_0 $, 
$ U \in \mathcal{C}_c^{n+1}(V, \R) $,
$ i \in \N_0 \cap [0,n] $.
Then it holds that 
$ U^{(i)} $ is locally Lipschitz continuous.
\end{lemma}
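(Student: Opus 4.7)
The plan is to fix an arbitrary $ x_0 \in V $ and construct $ \delta \in (0, \infty) $ and $ L \in [0, \infty) $ such that $ U^{(i)} $ is $ L $-Lipschitz continuous on the ball $ B := \{ v \in V \colon \| v - x_0 \|_V < \delta \} $. The defining inequality of $ \mathcal{C}_c^{n+1}(V, \R) $ (applicable to the index $ i \in \N_0 \cap [0,n] $) asserts that
\[
\| U^{(i)}(x) - U^{(i)}(y) \|_{ L^{(i)}(V, \R) }
\leq c \, \| x - y \|_V \bigl( 1 + \sup\nolimits_{r \in [0,1]} | U( r x + (1-r) y) | \bigr)^{ 1 - 1/c }
\]
for all $ x, y \in V $. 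Since $ B $ is convex, any segment between two points of $ B $ is contained in $ B $, and therefore the entire task reduces to exhibiting $ \delta > 0 $ and a finite constant $ K \geq 0 $ for which $ \sup_{ z \in B } | U(z) | \leq K $; once such a bound is available, the displayed inequality immediately yields local Lipschitz continuity of $ U^{(i)} $ on $ B $ with constant $ L = c \, (1+K)^{ 1 - 1/c } $.

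To produce such a uniform bound on $ |U| $, I would exploit the $ i = 0 $ case of the above estimate in a self-referential way. For every $ x \in B $ let $ N_x := \sup_{r \in [0,1]} | U(rx + (1-r) x_0) | $, which is finite because $ U \in \mathcal{C}^n(V, \R) \subseteq \mathcal{C}^0(V, \R) $ is continuous on the compact segment $ \{ rx + (1-r) x_0 \colon r \in [0,1] \} \subseteq V $. For any point $ w = rx + (1-r) x_0 $ with $ r \in [0,1] $ the segment from $ w $ to $ x_0 $ is contained in the segment from $ x $ to $ x_0 $, so that $ N_w \leq N_x $; applying the $ i = 0 $ estimate to the pair $ (w, x_0) $ then yields $ | U(w) | \leq | U(x_0) | + c \, \delta \, (1 + N_x)^{ 1 - 1/c } $. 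Taking the supremum over $ r \in [0,1] $ produces the self-referential bound
\[
N_x \leq | U(x_0) | + c \, \delta \, ( 1 + N_x )^{ 1 - 1/c }.
\]

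Because $ 1 - 1/c \in [0, 1) $, the continuous function $ [0,\infty) \ni t \mapsto t - c \delta (1 + t)^{1 - 1/c} \in \R $ tends to $ + \infty $ as $ t \to \infty $, and the above inequality accordingly forces $ N_x \leq K $ for some finite constant $ K = K( \delta, c, |U(x_0)| ) $ independent of the choice of $ x \in B $. Since $ | U(x) | \leq N_x $, this delivers the desired uniform bound $ \sup_{z \in B} | U(z) | \leq K $ and completes the proof. The main obstacle is precisely the circular form of the inequality for $ N_x $; it can be closed thanks to the strict sublinearity of the exponent $ 1 - 1/c < 1 $, together with the a priori finiteness of $ N_x $ ensured by the continuity of $ U $ on the compact segment.
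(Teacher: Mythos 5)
Your proof is correct. Both your argument and the paper's reduce the lemma to the same core fact, namely that $|U|$ is bounded on a convex neighbourhood of an arbitrary point, after which the defining inequality of $\mathcal{C}^{n+1}_c(V,\R)$ applied to the index $i$ immediately yields the local Lipschitz estimate. Where you differ is in how that local bound is produced. The paper simply invokes continuity of $U$ at $x$ with $\varepsilon = 1$: on the ball of radius $\delta_{x,1}$ one has $\sup_{r\in[0,1]}|U(rx_1+(1-r)x_2)| \leq |U(x)| + 1$, and then $(1+s)^{1-\nicefrac{1}{c}} \leq 1+s$ gives the constant $c(2+|U(x)|)$. You instead bootstrap the $i=0$ case of the defining inequality into the self-referential bound $N_x \leq |U(x_0)| + c\delta(1+N_x)^{1-\nicefrac{1}{c}}$ and close it using the strict sublinearity of the exponent $1-\nicefrac{1}{c} < 1$ together with the a priori finiteness of $N_x$ (which still requires continuity of $U$ on the compact segment, so continuity is not avoided, only used more weakly). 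Your route is more elaborate but buys something the paper's does not: a bound $K(\delta, c, |U(x_0)|)$ valid on a ball of \emph{arbitrary} prescribed radius $\delta$, independent of any modulus of continuity of $U$, i.e.\ it shows $U$ is bounded on bounded convex sets with explicit constants. For the lemma as stated this extra strength is not needed, and the paper's two-line argument is the more economical one, but your reasoning is sound at every step, including the monotonicity $N_w \leq N_x$ for $w$ on the segment from $x_0$ to $x$ and the closure of the circular inequality.
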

\begin{proof}[Proof of Lemma~\ref{lemma:Loc_Lip}]
The fact that $ U $ is continuous proves that for 
every $ (x, \varepsilon) \in V\times (0,\infty) $
there exists 
a real number
$ \delta_{x, \varepsilon} \in (0,\infty) $ such 
that for all
$ v \in V $
with $ \| x - v \|_V < \delta_{x, \varepsilon} $
it holds that
$ | U (x) - U (v) | < \varepsilon $.
This and the triangle inequality prove that for all 
$ x, x_1, x_2 \in V $ 
with 
$ \max\{ \| x-x_1\|_V, \| x- x_2\|_V \} <
\delta_{x, 1} $
it holds that
\begin{equation}
\begin{split}
&
\| U^{(i)}( x_1 ) - U^{(i)}( x_2 ) \|_{L^{(i)}(V, \R)}
\leq
c
\| x_1 - x_2 \|_V
( 1 + \sup\nolimits_{r\in [0,1] } | U ( r x_1 + (1-r) x_2) | )
\\
&
\leq
c
\| x_1 - x_2 \|_V
( 1 + | U(x) | + \sup\nolimits_{r\in [0,1]}
| U( rx_1 + (1-r)x_2) - U(x) |)
\\
&
\leq
c \| x_1 - x_2 \|_V
( 2 + | U(x) | )
.
\end{split}
\end{equation}
The proof of Lemma~\ref{lemma:Loc_Lip}
is thus completed.
\end{proof}
\begin{lemma}
	\label{lemma:Rademacher_theorem}
	Let $ a \in \R $, $ b \in (a,\infty) $
	and let
	$ f \in \mathcal{C}([a,b], \R) $
	be a locally Lipschitz continuous function.
	Then 
	\begin{enumerate}[(i)]
		\item \label{item:differentiability} 
		it holds that $ \{ s \in [a,b] \colon f 
		\text{ is differentiable at } s\} \in \mathcal{B}(\R) $,
		\item \label{item:diff_ae} 
		it holds that $ \mu_\R( [a,b] \backslash 
		\{ s \in [a,b] \colon f 
		\text{ is differentiable at } s\} ) = 0 $,
		and
		\item \label{item:absolutely_continuous}
		it holds that $ f $ is absolutely
		continuous.
	\end{enumerate}
\end{lemma}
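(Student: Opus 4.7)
The plan is to first upgrade local Lipschitz continuity on the compact interval $[a,b]$ to global Lipschitz continuity, and then to deduce the three assertions (differentiability set is Borel, has full measure, and $f$ is absolutely continuous) in turn.

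For the globalization step, I would invoke a standard compactness argument: local Lipschitz continuity yields for each $x \in [a,b]$ constants $\delta_x, L_x \in (0, \infty)$ with $|f(y)-f(z)| \le L_x |y-z|$ whenever $y,z \in (x - \delta_x, x + \delta_x) \cap [a,b]$. Extract a finite subcover of $[a,b]$ by intervals of the form $(x_i - \delta_{x_i}/2, x_i + \delta_{x_i}/2)$ and use a Lebesgue-number argument to produce a single $\delta > 0$ such that any $y,z \in [a,b]$ with $|y-z| < \delta$ lie jointly in one of the chosen local Lipschitz intervals, with common Lipschitz constant $L := \max_i L_{x_i}$. A chain argument (subdividing $[y,z]$ into at most $\lceil (b-a)/\delta \rceil$ pieces of length less than $\delta$) then yields a global Lipschitz estimate $|f(y)-f(z)| \le \tilde L |y-z|$ for all $y,z \in [a,b]$.

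For assertion~(\ref{item:differentiability}) I would use continuity of $f$ to note that each of the four Dini derivatives $D^+ f$, $D_+ f$, $D^- f$, $D_- f$ can be written as a $\limsup$ or $\liminf$ over $h \in \Q \setminus \{0\}$ of the difference quotient, hence is $\mathcal{B}([a,b])/\mathcal{B}([-\infty,\infty])$-measurable. The differentiability set is then the Borel set on which these four functions agree and take a finite common value. For assertion~(\ref{item:diff_ae}), I would invoke Rademacher's theorem as indicated by the paper (Theorem~5.8.12 in Bogachev~\cite{Bogachev}) applied to the globally Lipschitz function on $[a,b]$; alternatively one can decompose $f$ as a difference of monotone functions (adding and subtracting $\tilde L s$) and appeal to Lebesgue's differentiation theorem for monotone functions. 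For assertion~(\ref{item:absolutely_continuous}), the global Lipschitz estimate gives absolute continuity directly: given $\varepsilon > 0$, choose $\delta = \varepsilon / \tilde L$, and for any pairwise disjoint collection $(\alpha_j, \beta_j) \subseteq [a,b]$ with $\sum_j (\beta_j - \alpha_j) < \delta$ we obtain $\sum_j |f(\beta_j) - f(\alpha_j)| \le \tilde L \sum_j (\beta_j - \alpha_j) < \varepsilon$.

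The main obstacle is assertion~(\ref{item:diff_ae}), which is the genuinely nontrivial content (Rademacher's theorem / Lebesgue's monotone differentiation theorem); the paper's citation to Bogachev suggests that the intended treatment simply quotes this fact after the global Lipschitz reduction. The globalization step itself is routine but is the essential bridge that makes the deep a.e.\ differentiability statement applicable.
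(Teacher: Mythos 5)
Your proposal is correct, and it matches the paper's intent: the paper gives no proof of this lemma at all, stating only that it ``follows, e.g., from Theorem~5.8.12 in Bogachev'', which is exactly the deep ingredient you quote for assertion~(ii). Your routine reductions --- globalizing the Lipschitz constant on the compact interval via a finite subcover and chain argument, obtaining Borel measurability of the differentiability set from the (rational-increment) Dini derivatives, and reading off absolute continuity directly from the global Lipschitz bound --- are all sound and fill in precisely the details the paper leaves to the reference.
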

\begin{lemma}
\label{lemma:equivalent0}
Let $ (H, \left< \cdot, \cdot \right>_H, \left \| \cdot \right \|_H) $
be an $ \R $-Hilbert space
with $ \#_H > 1 $
 and let $ c  \in [1, \infty) $,
$ n \in \N_0 $,
$ x, y\in H $,
$ V \in \mathcal{C}^{n+1}_{c}(H, [0, \infty)) $.
Then
\begin{enumerate}[(i)]
	\item \label{eq:bounded_derivative} it holds
for all 
$ t \in \{ s \in [0, 1] \colon 
 	\R \ni u \mapsto V(x+uy) \in \R\,\text{is differentiable at } s
 	\} $ 
that
$ | \tfrac{\partial}{\partial t} V(x+ty)|\leq c\|y\|_H(1+V(x+ty))^{1-\nicefrac{1}{c}} $
and
\item \label{eq:bounded_derivatives} it holds for all
$ i \in \N \cap [0,n] $,
$ z_1,\ldots, z_i \in H $,
$ t \in \{ s \in [0, 1] \colon 
\R \ni u \mapsto V^{(i)}(x+uy)(z_1,\ldots, z_i) \in \R \text{ is differentiable at } s
\} $  
that
\begin{equation}
\left|
\tfrac{\partial}{\partial t}
\!\left(V^{(i)} (x+ty) (z_1,\ldots, z_i)\right)
\right|
\leq
c \| z_1\|_H \cdots \| z_i\|_H  \| y \|_H
( 1 + V(x +ty) )^{1-\nicefrac{1}{c}}.
\end{equation}
\end{enumerate}
\end{lemma}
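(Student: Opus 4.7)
The plan is to derive both estimates directly from the Lipschitz-type inequality built into the definition of $\mathcal{C}^{n+1}_c(H,[0,\infty))$, by writing the derivative as a limit of difference quotients and using continuity of $V$ to pass to the limit in the sup term. Since (i) is the $i=0$ case of (ii), I would prove (ii) first and then specialize.

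Fix $i \in \N_0 \cap [0,n]$, $z_1,\ldots,z_i \in H$, and $t \in [0,1]$ such that $s \mapsto V^{(i)}(x+sy)(z_1,\ldots,z_i)$ is differentiable at $t$. The defining inequality of $\mathcal{C}^{n+1}_c$ applied to the pair $(x+(t+h)y,\, x+ty)$ yields, after observing that $r(x+(t+h)y)+(1-r)(x+ty) = x + (t+rh)y$, the estimate
\begin{equation*}
\| V^{(i)}(x+(t+h)y) - V^{(i)}(x+ty) \|_{L^{(i)}(H,\R)}
\leq c |h|\, \|y\|_H \bigl( 1 + \sup\nolimits_{r\in[0,1]} V(x + (t + rh) y) \bigr)^{1-\nicefrac{1}{c}}
\end{equation*}
for every $h \in \R \setminus \{0\}$ with $|h|$ small. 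Combining this with the standard multilinear bound $|A(z_1,\ldots,z_i)| \leq \|A\|_{L^{(i)}(H,\R)} \|z_1\|_H \cdots \|z_i\|_H$ and dividing by $|h|$ gives a pointwise upper bound on the difference quotient of $s \mapsto V^{(i)}(x+sy)(z_1,\ldots,z_i)$ at $t$. Letting $h \to 0$, the left-hand side converges to $|\partial_t (V^{(i)}(x+ty)(z_1,\ldots,z_i))|$ by the differentiability hypothesis, while the right-hand side converges to $c\, \|y\|_H\, \|z_1\|_H \cdots \|z_i\|_H\, (1 + V(x+ty))^{1-\nicefrac{1}{c}}$; this proves (ii). Taking $i = 0$ (with no $z_j$'s) immediately gives (i).

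The only step that requires any real care is the passage to the limit of the sup on the right-hand side. Continuity of $V$ (which holds since $V \in \mathcal{C}^{n}(H,[0,\infty))$ with $n \geq 0$, cf.\ also Lemma~\ref{lemma:Loc_Lip}) together with the fact that the segment $\{x + (t + rh) y : r \in [0,1]\}$ collapses uniformly to the singleton $\{x + ty\}$ as $h \to 0$ ensures $\sup_{r\in[0,1]} V(x+(t+rh)y) \to V(x+ty)$; the continuity of $z \mapsto z^{1 - \nicefrac{1}{c}}$ on $[1,\infty)$ then handles the outer nonlinearity. Everything else is a direct unpacking of the $\mathcal{C}^{n+1}_c$ definition, so I do not anticipate any genuine obstacle.
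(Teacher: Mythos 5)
Your proposal is correct and follows essentially the same route as the paper's proof: apply the defining inequality of $\mathcal{C}^{n+1}_c(H,[0,\infty))$ to the pair $x+(t+h)y$, $x+ty$, use the multilinear operator-norm bound, divide by $|h|$, and pass to the limit in the supremum via (local Lipschitz) continuity of $V$ as in Lemma~\ref{lemma:Loc_Lip}. The only cosmetic difference is the order (the paper proves item~(\ref{eq:bounded_derivative}) first and then repeats the argument for item~(\ref{eq:bounded_derivatives}), whereas you treat~(\ref{eq:bounded_derivative}) as the $i=0$ instance of~(\ref{eq:bounded_derivatives})), which is immaterial.
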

\begin{proof}[Proof of Lemma~\ref{lemma:equivalent0}]
First of all,
note that
the assumption that $ V \in \mathcal{C}_c^{n+1}(H, [0,\infty)) $
ensures that for all 
$ t \in [0, 1] $, 
$ h \in \R $
it holds
that
\begin{equation}
\label{eq:basic_estimate}
\begin{split}
|V(x+ty) - V(x+(t+h)y)|
&
\leq
c
|h| \|y\|_H [ 1 + \sup\nolimits_{r\in [0,1]}V(x + (t + (1 - r) h) y)]^{1-\nicefrac{1}{c}}
\\
&
=
c
|h| \|y\|_H [ 1 + \sup\nolimits_{r\in [0,1]}V(x + (t + r h) y)]^{1-\nicefrac{1}{c}}.
\end{split}
\end{equation}
Next observe that Lemma~\ref{lemma:Loc_Lip} ensures for all 
$ t \in [0, 1] $ 
that 
\begin{equation}
\label{eq:lim_estimate}
  \limsup\nolimits_{ ( \R \backslash \{0\} )\ni h \to 0 } | \sup\nolimits_{ r \in [0,1] } V( x + ( t + r h ) y ) - V( x + t y ) | = 0 .
  \end{equation}
Combining this with~\eqref{eq:basic_estimate} proves~\eqref{eq:bounded_derivative}.
In the next step observe that for all 
$ i \in \N \cap [0, n] $,
$ z_1,\ldots, z_i\in H\backslash \{0\} $,
$ t \in [0, 1] $, $ h \in \R $  
it holds that
\begin{equation}
\begin{split}
&
\tfrac{
	 |
	 V^{(i)}(x+ty) (z_1,\ldots, z_i)
	-
	 V^{(i)}(x+(t+h)y) (z_1,\ldots, z_i) 
	|
}{
\|z_1\|_H\cdots \|z_i\|_H
}
\leq
	\| V^{(i)}(x+ty)
	-
	V^{(i)}(x+(t+h)y) \|_{L^{(i)}(H,\R)}
\\
&
\leq
c
|h| \| y\|_H
[ 1 + \sup\nolimits_{r\in [0,1]}V(x+ (t+ (1-r) h) y) ]^{1-\nicefrac{1}{c}}
\\
&
=
c
|h| \| y\|_H
[ 1 + \sup\nolimits_{r\in [0,1]}V(x+ (t+ r h) y) ]^{1-\nicefrac{1}{c}}
.
\end{split}
\end{equation}
This 
and~\eqref{eq:lim_estimate} 
establish~\eqref{eq:bounded_derivatives}.
The proof of Lemma~\ref{lemma:equivalent0} is thus completed.
\end{proof}
\begin{lemma}
\label{lemma:equivalent}
Let $ (H, \left< \cdot, \cdot \right>_H, \left \| \cdot \right \|_H) $
be an $ \R $-Hilbert space
with $ \#_H > 1 $
 and let $ c \in [1,\infty) $,
$ x, y\in H $, $V \in \mathcal{C}^1_{c }(H, [0, \infty)) $.
Then  
$ 1 + V(x+y)\leq  2^{c-1}(1+V(x)+\|y\|_H^c) $.
\end{lemma}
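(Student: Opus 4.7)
The plan is to reduce the statement to an elementary ODE-type estimate along the line segment from $x$ to $x+y$. More precisely, I will define the auxiliary function $g \colon [0,1] \to [1,\infty)$ by $g(t) = 1 + V(x+ty)$ for $t \in [0,1]$ and argue that $t \mapsto g(t)^{1/c}$ is $1$-Lipschitz continuous with Lipschitz constant $\|y\|_H$; the claim then follows by applying the elementary inequality $(a+b)^c \leq 2^{c-1}(a^c + b^c)$ for $a,b \in [0,\infty)$, $c \in [1,\infty)$.

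First I would observe that Lemma~\ref{lemma:Loc_Lip} (applied to the $\mathcal{C}^1_c$ function $V$) ensures that $V \colon H \to [0,\infty)$ is locally Lipschitz continuous. Composing with the affine map $[0,1] \ni t \mapsto x + ty \in H$ and invoking Lemma~\ref{lemma:compozitum_loc_Lip}, the function $g$ is locally Lipschitz continuous on $[0,1]$, hence by Lemma~\ref{lemma:Rademacher_theorem} it is absolutely continuous and differentiable Lebesgue-a.e.\ on $[0,1]$. On the full-measure set where $g$ is differentiable, item~\eqref{eq:bounded_derivative} of Lemma~\ref{lemma:equivalent0} yields
\begin{equation*}
|g'(t)| = \Big| \tfrac{\partial}{\partial t} V(x+ty) \Big| \leq c\,\|y\|_H\,(1+V(x+ty))^{1-\nicefrac{1}{c}} = c\,\|y\|_H\,g(t)^{1-\nicefrac{1}{c}}.
\end{equation*}

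The key step is now to pass to $h(t) = g(t)^{1/c}$. Since $g \geq 1 > 0$, the function $t \mapsto t^{1/c}$ is smooth on the range of $g$, so the chain rule (for absolutely continuous compositions with smooth functions) shows that $h$ is absolutely continuous on $[0,1]$ and that for a.e.\ $t \in [0,1]$
\begin{equation*}
|h'(t)| = \tfrac{1}{c}\,g(t)^{\nicefrac{1}{c}-1}\,|g'(t)| \leq \tfrac{1}{c}\,g(t)^{\nicefrac{1}{c}-1} \cdot c\,\|y\|_H\,g(t)^{1-\nicefrac{1}{c}} = \|y\|_H.
\end{equation*}
Integrating from $0$ to $1$ (using the fundamental theorem of calculus for absolutely continuous functions) yields $h(1) \leq h(0) + \|y\|_H$, that is, $(1 + V(x+y))^{\nicefrac{1}{c}} \leq (1+V(x))^{\nicefrac{1}{c}} + \|y\|_H$.

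Finally I would raise both sides to the $c$-th power and use the convexity-based inequality $(a+b)^c \leq 2^{c-1}(a^c + b^c)$ for $a,b \in [0,\infty)$ (applied with $a = (1+V(x))^{\nicefrac{1}{c}}$ and $b = \|y\|_H$) to conclude $1 + V(x+y) \leq 2^{c-1}(1 + V(x) + \|y\|_H^c)$. The only genuine subtlety is the passage from the a.e.\ pointwise bound on $g'$ to a bona fide integrated inequality, and this is precisely what Lemma~\ref{lemma:Rademacher_theorem}\eqref{item:absolutely_continuous} is designed to handle; everything else is bookkeeping.
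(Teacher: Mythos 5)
Your proof is correct and follows the same basic strategy as the paper's: both work along the segment with $f(t)=V(x+ty)$, establish local Lipschitz continuity and hence absolute continuity of this function via Lemma~\ref{lemma:compozitum_loc_Lip}, Lemma~\ref{lemma:Loc_Lip}, and Lemma~\ref{lemma:Rademacher_theorem}, and obtain the a.e.\ differential inequality $|\tfrac{\partial}{\partial t}(1+f(t))|\leq c\,\|y\|_H\,(1+f(t))^{1-\nicefrac{1}{c}}$ from item~\eqref{eq:bounded_derivative} in Lemma~\ref{lemma:equivalent0}. The one point of divergence is the final comparison step: the paper closes the argument by citing Lemma~2.11 in Hutzenthaler \& Jentzen~\cite{HutzenthalerJentzenMemoires2015}, which converts that differential inequality directly into $1+f(t)\leq 2^{c-1}[1+f(0)+t^c\|y\|_H^c]$, whereas you prove this comparison from scratch by passing to $h=g^{\nicefrac{1}{c}}$, verifying $|h'|\leq\|y\|_H$ a.e., integrating, and then applying $(a+b)^c\leq 2^{c-1}(a^c+b^c)$. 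Your inlined argument is sound --- the composition $g^{\nicefrac{1}{c}}$ is indeed absolutely continuous since $g\geq 1$ has compact range on $[0,1]$, so $s\mapsto s^{\nicefrac{1}{c}}$ is Lipschitz there --- and it makes the lemma self-contained at the cost of a few extra lines, while the paper's version is shorter but leans on the external reference.
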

\begin{proof}[Proof of Lemma~\ref{lemma:equivalent}]
Throughout this proof 
let $ f \colon \R \to \R $
be the function which satisfies 
for all $ t \in \R $ that
$ f(t)=  V (x+ty) $.
Next observe that 
item~\eqref{eq:bounded_derivative}
in Lemma~\ref{lemma:equivalent0}
implies  
for all
$ t \in \{ s \in [0,1] \colon 
 	\R \ni u \mapsto f(u) \in \R \text{ is differentiable at }s
 	\} $ that
\begin{equation}
\begin{split}
|
\tfrac{\partial}{\partial t}
( 1+ f(t) )|
\leq
c \| y \|_H
( 1 + f(t) )^{1-\nicefrac{1}{c}}
. 
\end{split}
\end{equation}
Lemma~2.11 in Hutzenthaler \& Jentzen~\cite{HutzenthalerJentzenMemoires2015},
Lemma~\ref{lemma:compozitum_loc_Lip},
Lemma~\ref{lemma:Loc_Lip}, and Lemma~\ref{lemma:Rademacher_theorem} 
hence prove for all $ t \in [0,1] $ that
\begin{equation}
\begin{split}
\label{eq:function_estimate}
1 + f (t)
\leq
2^{c-1}
\left[
 1 + f(0) 
+
  t^c \|  y\|_H^c
\right]
.
\end{split}
\end{equation}
This implies that 
$
1 + V(x+y)
\leq
2^{c-1}
\left[
1 + V(x)
+ 
\| y\|_H^c
\right]
.
$
The proof of Lemma~\ref{lemma:equivalent} is thus completed.
\end{proof}
\begin{lemma}
\label{lemma:another_growth_estimate}
Let $ (H, \left< \cdot, \cdot \right>_H, \left \| \cdot \right \|_H) $
be an $ \R $-Hilbert space 
with $ \#_H > 1 $
and let $ c \in [1,\infty) $, $ n\in \N_0 $, 
$ x, y\in H $, 
$ V \in \mathcal{C}^{n+1}_{ c}(H, [0, \infty)) $.
Then 
\begin{equation}
\max\nolimits_{i \in \{0, 1, \ldots, n\}}
\|\V^{(i)}(x) - \V^{(i)}(y)  
\|_{L^{(i)}(H, \R)}
\leq
c\,
2^{c-1} 
\| x-y\|_H
\!
\left(
1+ \V(x) 
+
\| x-y\|_H^{c-1} 
\right) .
\end{equation}
\end{lemma}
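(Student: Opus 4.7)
The plan is to apply the defining growth inequality of the class $\mathcal{C}^{n+1}_c(H,[0,\infty))$ to the iterated derivatives $V^{(i)}$ for $i \in \{0,1,\ldots,n\}$, and then to control the arising supremum $\sup_{r\in[0,1]} V(rx + (1-r)y)$ via Lemma~\ref{lemma:equivalent}. Concretely, the hypothesis $V \in \mathcal{C}^{n+1}_c(H,[0,\infty))$ gives for every $i \in \{0,1,\ldots,n\}$ that
\begin{equation}
\|V^{(i)}(x)-V^{(i)}(y)\|_{L^{(i)}(H,\R)} \leq c\,\|x-y\|_H \bigl(1+\sup\nolimits_{r\in[0,1]} V(rx+(1-r)y)\bigr)^{1-\nicefrac{1}{c}},
\end{equation}
where the absolute value around $V$ disappears because $V$ is nonnegative.

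Next I would rewrite $rx+(1-r)y = x+(1-r)(y-x)$ and apply Lemma~\ref{lemma:equivalent} with $y$ replaced by $(1-r)(y-x)$; this yields, uniformly in $r \in [0,1]$,
\begin{equation}
1+V(rx+(1-r)y) \leq 2^{c-1}\bigl(1+V(x)+(1-r)^c\|x-y\|_H^c\bigr) \leq 2^{c-1}\bigl(1+V(x)+\|x-y\|_H^c\bigr).
\end{equation}
Raising both sides to the power $1-\nicefrac{1}{c} \in [0,1)$ and using the elementary subadditivity $(a+b)^{1-\nicefrac{1}{c}} \leq a^{1-\nicefrac{1}{c}}+b^{1-\nicefrac{1}{c}}$ for $a,b \in [0,\infty)$, the term $\|x-y\|_H^{c(1-\nicefrac{1}{c})} = \|x-y\|_H^{c-1}$ appears as desired.

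To finish I would absorb the remaining factors: since $1+V(x)\geq 1$ and $1-\nicefrac{1}{c} \in [0,1]$, the monotonicity bound $(1+V(x))^{1-\nicefrac{1}{c}} \leq 1+V(x)$ is available; similarly, since $(c-1)(1-\nicefrac{1}{c}) = \nicefrac{(c-1)^2}{c} \leq c-1$, one has $(2^{c-1})^{1-\nicefrac{1}{c}} \leq 2^{c-1}$. Chaining these inequalities produces
\begin{equation}
\bigl(1+\sup\nolimits_{r\in[0,1]} V(rx+(1-r)y)\bigr)^{1-\nicefrac{1}{c}} \leq 2^{c-1}\bigl(1+V(x)+\|x-y\|_H^{c-1}\bigr),
\end{equation}
and multiplying by $c\,\|x-y\|_H$ gives the claimed estimate, uniformly in $i \in \{0,1,\ldots,n\}$. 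I do not expect any genuine obstacle here; the only bookkeeping point is to keep the exponent $1-\nicefrac{1}{c}$ under control by using the two trivial monotonicity/subadditivity inequalities above, so that the final exponent on $\|x-y\|_H$ reduces from $c$ to $c-1$ and the prefactor simplifies to $c\,2^{c-1}$.
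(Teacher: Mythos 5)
Your argument is correct, and it reaches the stated bound by a more direct route than the paper. You invoke the defining inequality of the class $\mathcal{C}^{n+1}_c(H,[0,\infty))$ verbatim, which immediately yields
$\|V^{(i)}(x)-V^{(i)}(y)\|_{L^{(i)}(H,\R)}\leq c\,\|x-y\|_H\,(1+\sup_{r\in[0,1]}V(rx+(1-r)y))^{1-\nicefrac{1}{c}}$
for all $i\in\{0,1,\ldots,n\}$ (the range $\N_0\cap[0,n+1)$ in the definition is exactly this set, and $|V|=V$ by nonnegativity). The paper instead reconstitutes essentially the same estimate from scratch: it represents $V^{(i)}(y)-V^{(i)}(x)$ as an integral of the derivative along the segment, which requires the auxiliary machinery of Lemmas~\ref{lemma:compozitum_loc_Lip}, \ref{lemma:Loc_Lip}, and \ref{lemma:Rademacher_theorem} (local Lipschitz continuity and Rademacher-type absolute continuity along the segment) together with the pointwise derivative bounds of Lemma~\ref{lemma:equivalent0}, and only then bounds the integrand uniformly. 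From that point on the two arguments coincide: both apply Lemma~\ref{lemma:equivalent} to get $1+V(x+(1-r)(y-x))\leq 2^{c-1}(1+V(x)+\|x-y\|_H^c)$ and then reduce the exponent; your decomposition into subadditivity of $t\mapsto t^{1-\nicefrac{1}{c}}$ plus the monotonicity bounds $(1+V(x))^{1-\nicefrac{1}{c}}\leq 1+V(x)$ and $(2^{c-1})^{1-\nicefrac{1}{c}}\leq 2^{c-1}$ is exactly the paper's single inequality $(a+b)^r\leq a+b^r$ for $a\geq 1$, $b\geq 0$, $r\in[0,1]$, written in two steps. What your route buys is brevity and independence from the differentiability lemmas; what the paper's route buys is the slightly sharper intermediate bound with the integral average $\int_0^1(1+V(x+r(y-x)))^{1-\nicefrac{1}{c}}\,dr$ in place of the supremum, though this sharpness is discarded in the final estimate, so nothing is lost by your approach.
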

\begin{proof}[Proof of Lemma~\ref{lemma:another_growth_estimate}]
Note that
Lemma~\ref{lemma:compozitum_loc_Lip},
Lemma~\ref{lemma:Loc_Lip}, Lemma~\ref{lemma:Rademacher_theorem},
item~\eqref{eq:bounded_derivative} 
in Lemma~\ref{lemma:equivalent0},
 Lemma~\ref{lemma:equivalent},
 and the fact that $ \forall\, r \in [0,1] $, 
 $ a\in [1,\infty) $, 
 $ b \in [0,\infty) \colon (a+b)^r \leq a + b^r $
 prove that
\begin{equation}
\label{derivativeUu1}
\begin{split}
& 
 | \V(y) - \V(x) |  
\leq  
\int_{ 
 	\{ s \in [0,1] \colon
 	\R \ni u \mapsto V(x+u(y-x)) \in 
 	\R  \text{ is differentiable at } s
 	\}
 	 }
\big|
\tfrac{\partial}{
\partial r} 
\V 
( x + r(y-x) )   
\big|
\,dr
\\
&
\leq
c \| x-y\|_H 
\!
\int_0^1 \! [ 1 + \V(x + r(y-x)) ]^{1-\nicefrac{1}{c}} 
\, dr
\leq
c\,
2^{c-1}  
\| x - y\|_H
\left(
  1+ \V(x)  
+
\|y-x\|_H^{c-1}
\right) 
\!
.
\end{split}
\end{equation}
Moreover, 
Lemma~\ref{lemma:compozitum_loc_Lip},
Lemma~\ref{lemma:Loc_Lip},
Lemma~\ref{lemma:Rademacher_theorem},
item~\eqref{eq:bounded_derivatives} 
in
Lemma~\ref{lemma:equivalent0},
 Lemma~\ref{lemma:equivalent},
 and the fact that $ \forall\, r \in [0,1] $, 
 $ a\in [1,\infty) $, 
 $ b \in [0,\infty) \colon (a+b)^r \leq a + b^r $
 ensure that
 for all 
  $ i \in \N \cap [0,n] $,
  $ z_1, \ldots, z_i\in H\backslash\{0\} $ 
  it holds that
\begin{equation}
\begin{split}
&
\tfrac{ 
 |   (\V^{(i)}(y) - \V^{(i)}(x) )(z_1,\ldots, z_i) | 
}
{ 
\| z_1\|_H\cdots \|z_i\|_H
}
\leq
\tfrac{1}{\|z_1\|_H\cdots \| z_i\|_H}
\\
&
\cdot
\!
 \int_{ 
 	\{ s \in [0,1] \colon
 	\R \ni u \mapsto V^{(i)}(x+u(y-x)) (z_1,\ldots, z_i) \in 
 	\R \text{ is differentiable at } s
 	\}
 	 }
 	 \!\!
\big|
\tfrac{\partial}{
\partial r}
\big(
\V^{(i)} 
( x + r(y-x) ) (z_1,\ldots, z_i)
\big)
\big|
\,dr
\\
&
\leq
c \| x-y\|_H 
\!
\int_0^1 \! [ 1 + \V(x + r(y-x)) ]^{1-\nicefrac{1}{c}} 
\, dr
\leq
c\,
2^{c-1}  
\| x - y\|_H
\left(
  1+ \V(x)  
+
\|y-x\|_H^{c-1}
\right) 
\!
.
\end{split}
\end{equation}
Combining this with~\eqref{derivativeUu1}
completes the proof of Lemma~\ref{lemma:another_growth_estimate}.
\end{proof}
\begin{lemma}
	\label{lemma:equivalent000}
	Let $ (H, \left< \cdot, \cdot \right>_H, \left \| \cdot \right \|_H) $
	be an $ \R $-Hilbert space 
	with $ \#_H > 1 $
	and let 
	$ c\in [1, \infty) $,
	$ n\in \N_0 $,
	$ x\in H $,  
	$ V \in \mathcal{C}^{n+1}_{c}(H, [0, \infty)) $.
	Then 
	$ \max_{ i\in \{0,1, \ldots, n \} } \| V^{(i)} (x) \|_{L^{(i)}(H,\R)} \leq
	c\, ( 1 + V(x) ) $.
\end{lemma}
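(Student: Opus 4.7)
The plan is to split the claim into the trivial case $i = 0$ and the genuine content $i \in \{1, 2, \ldots, n\}$. For $i = 0$ one has $\|V^{(0)}(x)\|_{L^{(0)}(H, \R)} = V(x) \leq c(1 + V(x))$ immediately from $c \geq 1$ and $V \geq 0$.

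For $i \in \{1, \ldots, n\}$, the idea is to pass to the limit in the defining Lipschitz-type inequality of $\mathcal{C}^{n+1}_c(H, [0,\infty))$ applied at the index $i - 1$ (note $i - 1 \in \N_0 \cap [0, n+1)$). Concretely, for fixed $z_1, \ldots, z_i \in H \setminus \{0\}$ and $h \in \R \setminus \{0\}$, evaluating that inequality on the pair $x, x + h z_i$ at the tuple $(z_1, \ldots, z_{i-1})$ yields
\begin{equation*}
\bigl| (V^{(i-1)}(x + h z_i) - V^{(i-1)}(x))(z_1, \ldots, z_{i-1}) \bigr| \leq c |h| \, \|z_i\|_H \|z_1\|_H \cdots \|z_{i-1}\|_H \Bigl( 1 + \textstyle\sup_{r \in [0,1]} V(x + (1 - r) h z_i) \Bigr)^{1 - \nicefrac{1}{c}}.
\end{equation*}
Dividing by $|h| \|z_1\|_H \cdots \|z_i\|_H$ and sending $h \to 0$, the left-hand side converges to $|V^{(i)}(x)(z_1, \ldots, z_i)| / (\|z_1\|_H \cdots \|z_i\|_H)$, because $V \in \mathcal{C}^{n+1}_c(H, [0, \infty)) \subseteq \mathcal{C}^n(H, [0, \infty))$ guarantees that $V^{(i)}$ exists as the Fréchet derivative of $V^{(i-1)}$. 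On the right-hand side, continuity of $V$ (which follows from Lemma~\ref{lemma:Loc_Lip}) together with compactness of $[0, 1]$ yields $\lim_{h \to 0} \sup_{r \in [0, 1]} V(x + (1 - r) h z_i) = V(x)$, so the right-hand side tends to $c \, \|z_i\|_H \|z_1\|_H \cdots \|z_{i-1}\|_H (1 + V(x))^{1 - \nicefrac{1}{c}}$.

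Finally, combining with $(1 + V(x))^{1 - \nicefrac{1}{c}} \leq 1 + V(x)$, which follows from $1 - \nicefrac{1}{c} \in [0, 1]$ and $1 + V(x) \geq 1$, and taking the supremum over nonzero $z_1, \ldots, z_i$, produces $\|V^{(i)}(x)\|_{L^{(i)}(H,\R)} \leq c(1 + V(x))$, as desired. I do not anticipate any real obstacle; the one step that deserves a short justification is commuting the limit $h \to 0$ with the supremum over $r \in [0, 1]$, but this is immediate from continuity of $V$ and compactness of the segment $\{x + s z_i : s \in [0, |h|]\}$ for small $|h|$.
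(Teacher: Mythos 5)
Your proof is correct and follows essentially the same route as the paper's: both rest on the defining Lipschitz-type inequality of $\mathcal{C}^{n+1}_c(H,[0,\infty))$ together with the observation that $\sup_{r\in[0,1]}V(rx+(1-r)y)\to V(x)$ as $y\to x$. The only difference is cosmetic: you obtain that convergence from plain continuity of $V$ and make the difference-quotient limit for $V^{(i)}(x)$ explicit, whereas the paper derives the convergence quantitatively from Lemma~\ref{lemma:another_growth_estimate} and leaves the final passage from the local Lipschitz bound to the derivative bound implicit.
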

\begin{proof}[Proof of Lemma~\ref{lemma:equivalent000}]
	Lemma~\ref{lemma:another_growth_estimate} 
	proves for all $ y\in H $, $ t\in [0,1] $ 
	that
\begin{equation}
|\V (tx + (1-t)y) - \V (x) |
\leq
c\,
2^{c-1}
\| x-y\|_H
\!
\left(
1+ \V(x) 
+
\| x-y\|_H^{c-1} 
\right)
.
\end{equation}	
	This implies for all $ \varepsilon \in (0,\infty) $,
	$ y \in H $ 
with $ \|x-y\|_H < \varepsilon $ 
that
 \begin{equation}
  |\sup\nolimits_{r\in [0,1]}\V (rx + (1-r)y) - \V (x) |
 \leq
 c\,
 2^{c-1}
 \varepsilon 
 \left(
 1+ \V(x) 
 +
 \varepsilon^{c-1} 
 \right)
 .
 \end{equation}	
	Hence, we obtain that
	\begin{equation}
	\label{eq:continuous}
	\limsup\nolimits_{y\to x}
	\left|
	\sup\nolimits_{r\in [0,1]}
	V(rx + (1-r)y)
	-V(x)
	\right|
	= 0.
	\end{equation}
 Moreover, the assumption that    
	$ V \in \mathcal{C}_c^{n+1}(H, [0,\infty)) $
	assures for all 
	$ i \in \N_0 \cap [0,n] $, $ y \in H $
	that
	\begin{equation}
	\| V^{(i)}(x)- V^{(i)}(y)\|_{L^{(i)}(H, \R)}\leq
	c \| x -y \|_H
	(1 + \sup\nolimits_{r\in [0,1]} V(rx + (1-r)y))^{1-\nicefrac{1}{c}}.
	\end{equation}
Combining this with~\eqref{eq:continuous}
completes
	the proof of Lemma~\ref{lemma:equivalent000}.
\end{proof}
\begin{lemma}
\label{lemma:limit_Lp}
Let $ (V, \left \| \cdot \right \|_V) $
be a normed $ \R $-vector space,
let $ (\Omega, \mathcal{F}, \mu) $ be a finite measure space,
and let $ X \in \mathcal{M}(\mathcal{F}, \mathcal{B}(V)) $.
Then  
$ \liminf_{p\to \infty} \| X\|_{\mathcal{L}^p(\mu; V)} 
=
\limsup_{p\to \infty} \| X\|_{\mathcal{L}^p(\mu; V)}
= \| X \|_{\mathcal{L}^\infty(\mu; V)} $.
\end{lemma}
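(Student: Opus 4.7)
The plan is to prove the lemma by establishing a two-sided sandwich estimate. Write $M := \|X\|_{\mathcal{L}^\infty(\mu;V)} \in [0,\infty]$ and reduce to the case $\mu(\Omega) \in (0,\infty)$: if $\mu(\Omega) = 0$, then every $\mathcal{L}^p$-norm of $X$ and $M$ itself are zero, so the claim is immediate. It thus suffices to establish the two inequalities $M \leq \liminf_{p\to\infty}\|X\|_{\mathcal{L}^p(\mu;V)}$ and $\limsup_{p\to\infty}\|X\|_{\mathcal{L}^p(\mu;V)} \leq M$, the intermediate $\liminf \leq \limsup$ being trivial.

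For the upper bound, first suppose $M < \infty$. Since $\{\omega\in\Omega \colon \|X(\omega)\|_V > M\} = \bigcup_{n\in\N}\{\omega\in\Omega \colon \|X(\omega)\|_V > M + \nicefrac{1}{n}\}$ is a countable union of $\mu$-null sets, one obtains $\|X(\omega)\|_V \leq M$ for $\mu$-almost every $\omega \in \Omega$. Integrating the $p$-th power then yields $\int_\Omega \|X(\omega)\|_V^p\,\mu(d\omega) \leq M^p\,\mu(\Omega)$ and hence $\|X\|_{\mathcal{L}^p(\mu;V)} \leq M\,[\mu(\Omega)]^{\nicefrac{1}{p}}$ for every $p \in (0,\infty)$. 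Since $\mu(\Omega) \in (0,\infty)$ forces $\lim_{p\to\infty}[\mu(\Omega)]^{\nicefrac{1}{p}}=1$, this delivers $\limsup_{p\to\infty}\|X\|_{\mathcal{L}^p(\mu;V)} \leq M$; in the case $M = \infty$ the bound is automatic.

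For the lower bound, fix any $c \in [0, M)$ and consider the super-level set $A_c := \{\omega \in \Omega \colon \|X(\omega)\|_V > c\} \in \mathcal{F}$. The defining infimum in $\|\cdot\|_{\mathcal{L}^\infty(\mu;V)}$ together with $c < M$ forces $\mu(A_c) > 0$. Then $\|X\|_{\mathcal{L}^p(\mu;V)}^p \geq \int_{A_c}\|X(\omega)\|_V^p\,\mu(d\omega) \geq c^p\,\mu(A_c)$, whence $\|X\|_{\mathcal{L}^p(\mu;V)} \geq c\,[\mu(A_c)]^{\nicefrac{1}{p}}$. Using $\mu(A_c)\in(0,\infty)$ to send $p\to\infty$ produces $\liminf_{p\to\infty}\|X\|_{\mathcal{L}^p(\mu;V)} \geq c$, and taking the supremum over $c \in [0,M)$ (letting $c\to\infty$ if $M=\infty$) yields $\liminf_{p\to\infty}\|X\|_{\mathcal{L}^p(\mu;V)} \geq M$.

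No step is expected to pose a genuine obstacle; the result is classical. The only points requiring any care are the case split on whether $\mu(\Omega)$ or $M$ is zero or infinite, the almost-everywhere bound $\|X(\cdot)\|_V \leq M$ extracted from the countable-union observation, and interpreting the paper's $\mathcal{L}^\infty$ definition as the essential supremum of the real-valued function $\omega \mapsto \|X(\omega)\|_V$ on $\Omega$.
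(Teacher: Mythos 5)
Your proof is correct. The lower bound is exactly the paper's argument: restrict to the super-level set $A_c=\{\|X\|_V>c\}$ for $c$ below the essential supremum, note $\mu(A_c)>0$, and use $[\mu(A_c)]^{\nicefrac{1}{p}}\to 1$. For the upper bound you argue differently but equally validly: the paper invokes the interpolation inequality $\|X\|_{\mathcal{L}^p(\mu;V)}\leq \|X\|_{\mathcal{L}^\infty(\mu;V)}^{(p-q)/p}\,\|X\|_{\mathcal{L}^q(\mu;V)}^{q/p}$ and lets $p\to\infty$ with $q$ fixed, whereas you use the almost-everywhere bound $\|X(\cdot)\|_V\leq M$ together with $[\mu(\Omega)]^{\nicefrac{1}{p}}\to 1$. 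Your version is marginally more elementary (no H\"older step) and makes the role of the finiteness of $\mu$ explicit in both halves; the paper's version avoids having to extract the a.e.\ bound from the definition of the essential supremum. Your explicit handling of the degenerate cases $\mu(\Omega)=0$ and $M\in\{0,\infty\}$ covers the reductions the paper disposes of with its ``w.l.o.g.\ $\|X\|_{\mathcal{L}^\infty(\mu;V)}>0$'' remark.
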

\begin{proof}[Proof of Lemma~\ref{lemma:limit_Lp}]
Throughout this proof assume
w.l.o.g.\,that $ \|X\|_{\mathcal{L}^\infty(\mu; V)} > 0 $ 
and  
let $ A_\delta \subseteq \Omega $,
$ \delta \in (0,\infty) $,
be the sets with the property that for all 
$ \delta \in (0,\infty) $ 
it holds that
$ A_\delta = \{ \omega \in \Omega \colon \|X(\omega)\|_V \geq   \delta\} $. 
Next observe that for all $ p \in (0,\infty) $, 
$ \delta \in (0, \|X\|_{\mathcal{L}^\infty(\mu; V)} ) $ it holds that
\begin{equation}
\label{eq:limi_estimate}
\| X \|_{\mathcal{L}^p(\mu; V)}
\geq 
\| X \1_{A_\delta} \|_{\mathcal{L}^p(\mu; V)}
\geq
\|  \delta \, \1_{A_\delta} \|_{\mathcal{L}^p(\mu; \R)}
=
\delta \, [\mu(A_\delta)]^{\nicefrac{1}{p}}.
\end{equation}
Hence, we obtain for all 
$ \delta \in (0, \|X\|_{\mathcal{L}^\infty(\mu; V)} ) $ that
$ \liminf_{p\to \infty} \| X \|_{\mathcal{L}^p(\mu; V)} \geq \delta $.
This shows that
$ \liminf_{p\to \infty} \| X\|_{\mathcal{L}^p(\mu; V)}
\geq \| X \|_{\mathcal{L}^\infty(\mu; V)} $.
Moreover, note that for all $ p\in (0,\infty), $ $ q \in (0,p) $
it holds that
\begin{equation}
\begin{split}
&
\| X \|_{\mathcal{L}^p(\mu;V)} 
=
\left(
\int_\Omega  
\| X(\omega) \|_V^q
\| X(\omega) \|_V^{p-q}\,\mu(d\omega)
\!
\right)^{\!\!\frac{1}{p}}
\leq
\| X \|_{\mathcal{L}^\infty(\mu; V)}^{(p-q)/p}
\|  
X \|_{\mathcal{L}^q(\mu; V)}^{q/p}. 
\end{split}
\end{equation} 
This implies that
$ \limsup_{p\to \infty} \| X \|_{\mathcal{L}^p(\mu;V)} 
\leq
\| X \|_{\mathcal{L}^\infty(\mu; V)} $.
The
proof of Lemma~\ref{lemma:limit_Lp}
is thus completed.
\end{proof}
\begin{lemma}
\label{l:exp.mom.abstract.one-step}
Let 
$ \left( H, \left< \cdot, \cdot \right> _H, \left \| \cdot \right\|_H \right) $
and 
$ \left( \U, \left< \cdot, \cdot \right> _\U, \left\| \cdot \right \|_\U \right) $
be separable $ \R $-Hilbert spaces with $ \#_H > 1 < \#_U $,
let
$ \varsigma, h \in (0,\infty) $, 
$ c, \ga, \gb \in [1,\infty) $,
$ \rho, \delta \in [0,\infty) $, 
$ \gc \in [0,\nicefrac{1}{2}] $,
$ x \in H $,
$ F \in \mathcal{M} \big( \mathcal{B}( H), \mathcal{B}(H) \big) $,
$ B \in \mathcal{M} \big( \mathcal{B}( H), \mathcal{B}(\HS(\U,H)) \big) $,
$ \bar{\V}
\in \mathcal{C}(H, \R) $,
$ \V \in \mathcal C^3_c(   H, [0,\infty) ) $,
$ \Phi \in
\mathcal{C}^{  1, 2 }( [0,h] \times \U,  H ) $,
let
$ ( \Omega, \mathcal{ F }, \P) $
be a probability space,
let
$ W \colon [0,h] \times \Omega \to \U $
be an
$ \operatorname{Id}_\U $-cylindrical
$ \P $-Wiener process 
with continuous sample paths,
   assume for all
$ r \in [1, \infty) $, 
  $ s \in (0, h] $,  
  $ y, z \in  H $ 
  that
  $ | \bar{ \V }(y) - \bar{ \V }( z ) |
  \leq
    c
    \left( 1 +
      | \V (y) |^{ \ga } +
      | \V (z) |^{ \ga }
    \right)
    \| y - z \|_H $,
  $ | \bar{ \V }(y) |
  \leq
    c
    \left( 1 +
    \left| \V ( y ) \right|^{ \gb } \right) $, 
$ V(x)  \leq ch^{-\varsigma} $,
     $ \max\{ \| F(x)\|_H, \| B(x) \|_{\HS(\U,H)} \}\leq c h^{-\delta} $,
     $ \Phi( 0, 0 ) = x $, 
 and 
  \begin{equation}
  \label{eq:generator.exponential}
    ( \mathcal{G}_{ F, B } \V )(x)
    +
    \tfrac{1}{2} \left\|B(x)^{*}(\nabla \V)(x)\right\|_{\U}^2
    +
    \bar{\V}(x) 
  \leq
    \rho \V(x),
    \end{equation}
    \begin{equation}
    \max \! \big\{  
     \|
       (
        \tfrac{\partial}{\partial s} \Phi
       )(  s , W_s ) -
      F(x)
     \|_{
      \mathcal{L}^4( \P; H )
    },
     \|
       (
        \tfrac{ \partial }{ \partial y } \Phi
       )(  s, W_s ) - B(x)
     \|_{
      \mathcal{L}^8(
        \P; \HS( \U, H )
      )
    } 
    \big\}
    \leq c s^{ \gc } ,
  \label{eq:assumption.on.phi.sigma}
  \end{equation}
  \begin{equation}
    \big\|
      \smallsum\nolimits_{u\in \mathbb{\U}}
    \big(
       (
      \tfrac{\partial^2}{\partial y^2}
      \Phi
      )
      (s, W_s)
      \big)(u,u)
    \big\|_{\mathcal{L}^{4}(\P;H)}
     \leq c s^{ \gc },
     \label{eq:assumption.on.phi.sigma2}
     \end{equation}
  \begin{equation}
    \left\|
      \Phi(s,W_s) - x
    \right\|_{\mathcal{L}^r(\P; H)}
     \leq
     c 
     \min\!\big\{
      1,
      \big\|
         \| F(x)  \|_H  s    
        +
            \| 
          B(x) W_s
            \|_H
			 \big\|_{ \mathcal{L}^r( \P; \R ) }    
     \big\}
     \label{eq:assumption.on.phi.increment}
.
  \end{equation}
  Then it holds for all  
  $ t \in (0, h ] $ 
  that
  \begin{equation}
  \begin{split}
  \label{eq:exp.mom.abstract.one-step}
&
    \E\!\left[
      \exp\!\left(
        \tfrac{
        \V( \Phi(t,W_t) )}{e^{  \rho t }}
        +
        \smallint_0^t
        \tfrac{
        \bar{\V}( \Phi(s, W_s ) )}{e^{ \rho s }}
        \, ds
      \right)
    \right] 
\\
&
  \leq
    e^{ \V(x) }      
\left[
1
+
    \int_0^t
 \exp\!\left(
      \frac{
        [2c]^{
         (2c+6) \gb 
        }
        s
      }{ 
          \min\{s^{  
          2\delta + \max\{2, \gb\}   \varsigma 
        }, 1\} 
      }
  \right)
%
%
  \frac{  
    [2c]^{
    (2c + 13) \ga 
    }
    \left[
      \max\{s, \rho, 1\}
    \right]^4  
  }{
    \left[
      \min\{ s, 1\}
          \right]^{
    \varsigma + \varsigma \ga +   4 \delta  - \gc
    }
  }
  \, ds 
  \right] 
    .
  \end{split}
  \end{equation}
\end{lemma}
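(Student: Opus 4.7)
The plan is to apply It\^o's formula to the exponential semimartingale
$$
  M_t := \exp\!\left(
    \tfrac{\V(\Phi(t,W_t))}{e^{\rho t}}
    + \int_0^t \tfrac{\bar{\V}(\Phi(s,W_s))}{e^{\rho s}}\,ds
  \right),
$$
expand its drift using the $\mathcal{C}^{1,2}$-regularity of $\Phi$ and the $\mathcal{C}^3$-regularity of $\V$, and then split the resulting drift into a ``frozen'' part (where $\Phi(s,W_s)$, $(\partial_s\Phi)(s,W_s)$, $(\partial_y\Phi)(s,W_s)$, $(\partial_y^2\Phi)(s,W_s)$ are replaced by $x$, $F(x)$, $B(x)$, $0$) and a ``residual'' $\mathcal{E}_s$. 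A direct computation combined with the assumed Lyapunov inequality \eqref{eq:generator.exponential} shows the frozen drift is $\le M_s\bigl[\tfrac12\|B(x)^*(\nabla\V)(x)\|_\U^2(e^{-2\rho s}-e^{-\rho s})\bigr]\le 0$; integrating $dM_s$ and taking expectations then yields
$$
  \E[M_t] \le e^{\V(x)} + \int_0^t \E[\,M_s\,\mathcal{E}_s\,]\,ds.
$$

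The next step is to bound $\E[M_s\,\mathcal{E}_s]$ for each $s\in(0,t]$ via H\"older's inequality, separating $\|M_s\|_{\mathcal{L}^p(\P;\R)}$ from $\|\mathcal{E}_s\|_{\mathcal{L}^q(\P;\R)}$ with some conjugate exponents $p,q$. For the $\mathcal{L}^p$-norm of $M_s$, I use Lemma~\ref{lemma:equivalent} to write $\V(\Phi(s,W_s)) \le 2^{c-1}(1+\V(x)+\|\Phi(s,W_s)-x\|_H^c)$ and the analogous polynomial-in-$\|\Phi(s,W_s)-x\|_H$ control on $\bar{\V}$ coming from the assumption $|\bar{\V}|\le c(1+\V^{\gb})$; the assumption \eqref{eq:assumption.on.phi.increment} bounds $\Phi(s,W_s)-x$ by the Gaussian-like quantity $\|F(x)\|_H s+\|B(x)W_s\|_H$, so Lemma~\ref{l:exp.Gauss} converts the resulting polynomial moment into an exponential moment. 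After inserting the a-priori bounds $\V(x)\le ch^{-\varsigma}$ and $\|F(x)\|_H\vee\|B(x)\|_{\HS(\U,H)}\le ch^{-\delta}$, this is exactly the source of the factor $\exp\!\big(\frac{[2c]^{(2c+6)\gb}s}{\min\{s^{2\delta+\max\{2,\gb\}\varsigma},1\}}\big)$.

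For the $\mathcal{L}^q$-norm of $\mathcal{E}_s$, the residual is a sum of products of (i) a coefficient-error factor like $(\partial_s\Phi)(s,W_s)-F(x)$, $(\partial_y\Phi)(s,W_s)-B(x)$, $\sum_{u\in\mathbb{U}}(\partial_y^2\Phi)(s,W_s)(u,u)$, $\V(\Phi(s,W_s))-\V(x)$, or $\bar{\V}(\Phi(s,W_s))-\bar{\V}(x)$, and (ii) a derivative of $\V$ evaluated at $\Phi(s,W_s)$. The coefficient errors are controlled by \eqref{eq:assumption.on.phi.sigma}--\eqref{eq:assumption.on.phi.increment} (producing the $s^{\gc}$ factor and the $h^{-\delta}$ powers), while the derivative-of-$\V$ factors are controlled by Lemmas~\ref{lemma:another_growth_estimate} and~\ref{lemma:equivalent000} (giving the $(1+\V(x))^{\ga}$-type growth). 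Assembling these with the uniform bounds $\V(x)\le ch^{-\varsigma}$ and $\|F(x)\|_H\vee\|B(x)\|_{\HS}\le ch^{-\delta}$, and then putting four such error-terms through Cauchy--Schwarz (compatible with the $\mathcal{L}^4$/$\mathcal{L}^8$ norms available in \eqref{eq:assumption.on.phi.sigma}--\eqref{eq:assumption.on.phi.sigma2}), reproduces the polynomial factor $[2c]^{(2c+13)\ga}[\max\{s,\rho,1\}]^4 / [\min\{s,1\}]^{\varsigma+\varsigma\ga+4\delta-\gc}$.

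The main obstacle is bookkeeping under the constraint that the Lyapunov hypothesis \eqref{eq:generator.exponential} is only assumed \emph{at the starting point $x$} (not pathwise along $\Phi(\cdot,W_\cdot)$): this forces every residual term to be explicitly of approximation-error type, quantified by some positive power of $s$ or $s^{\gc}$, so that the integral $\int_0^t\E[M_s\,\mathcal{E}_s]\,ds$ is convergent even though the individual integrand blows up as $s\downarrow 0$ at a rate governed by $\varsigma,\delta$. A secondary difficulty is the infinite-dimensional nature of $\U$: it forces the appearance of the trace-type term $\sum_{u\in\mathbb{U}}(\partial_y^2\Phi)(s,W_s)(u,u)$ in the It\^o expansion, whose control is precisely the content of \eqref{eq:assumption.on.phi.sigma2}.
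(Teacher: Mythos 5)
Your proposal is correct and follows essentially the same route as the paper's proof: Itô's formula applied to the exponential functional of $\Phi(\cdot,W_\cdot)$, a frozen-at-$x$ versus residual decomposition of the drift with the frozen part killed by \eqref{eq:generator.exponential} (including the sign observation $e^{-2\rho s}\leq e^{-\rho s}$), Hölder's inequality to separate the $\mathcal{L}^2$-norm of the exponential factor (controlled via Lemma~\ref{lemma:equivalent}, \eqref{eq:assumption.on.phi.increment}, and Lemma~\ref{l:exp.Gauss}) from the residual terms (controlled via \eqref{eq:assumption.on.phi.sigma}--\eqref{eq:assumption.on.phi.increment} together with Lemmas~\ref{lemma:another_growth_estimate} and~\ref{lemma:equivalent000}). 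The only detail you elide is the localization by stopping times and the Fatou/monotone-convergence step needed to make the stochastic integral's expectation vanish, which is standard and is handled in the paper via the stopping times $\tau_n$.
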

%
%
%
%
\begin{proof}[Proof of Lemma~\ref{l:exp.mom.abstract.one-step}]
Throughout this proof
let $ \mathbb{\U} \in \mathcal{P}(\U) $ be an orthonormal basis of $ \U $,
let $ Y \colon [0, h]\times \Omega \to H $ be the
function which satisfies
for all $ s \in [0,h] $ 
that
   $ Y_s = \Phi(s, W_s) $,
   and  
let $ \tau_{n}\colon \Omega \to [0,h] $, $ n\in \N $,
  be the functions which satisfy for all $ n \in \N $ that 
$ \tau_n = \inf ( \{s\in[0,h]\colon 
 \| W_s \|_\U > n \}\cup\{h\}  ) $.  
Next observe that
$ \P( Y_0 = x ) =
\P ( \Phi( 0, W_0 ) = x )
=
\P ( \Phi( 0, 0 ) = x ) = 1 $.
   It\^o's formula hence implies 
  for all $ t \in [0,h] $  
  that
  \begin{align}
  \label{eq:after.Ito}
  \nonumber
    &
    \left[
    \exp\!\left(e^{-\rho t}\V(Y_t)+\int_0^t e^{-\rho r}\bar{\V}(Y_r)\,dr\right)
    - e^{\V(x)}
    \right]_{\P, \mathcal{B}(\R)}
\\
\nonumber
&
=
\int_0^t
\left[
    \exp\!\left( e^{-\rho s}\V(Y_s)+\int_0^s e^{-\rho r}\bar{\V}(Y_r)\,dr\right)
    e^{-\rho s}
      \V'(Y_s)\big(\tfrac{\partial}{\partial y}\Phi\big)
         (s,W_s)\right]dW_s
    \\
    &
    + 
    \Bigg[
    \int_0^t  
    \exp\!\left(\! e^{-\rho s}\V(Y_s)+ \int_0^s  e^{-\rho r}\bar{\V}(Y_r)\,dr \right) 
    e^{-\rho s}
    \bigg[ 
      \bar{\V}(Y_s)
      -\rho \V (Y_s)
      \\
      \nonumber
      &
      + \V'(Y_s)\big( \tfrac{\partial}{\partial s}\Phi \big)
         (s,W_s)
    %
      +\tfrac{1}{2}
      \tr_U\!
      \left(    
      \big[\big(\tfrac{\partial}{\partial y}\Phi\big)(s,W_s)\big]^*
       (\Hess\V)(Y_s) 
      \big(
      \tfrac{\partial}{\partial y}\Phi\big)(s,W_s)
      \right)
    \\
    & 
    \nonumber
    +
    \tfrac{1}{2e^{\rho s}}
\big\|
    \big[
     \big(\tfrac{\partial}{\partial y}\Phi \big)(s,W_s)\big]^*
    (\nabla \V)(Y_s)
       \big\|_\U^2
      +  
      \tfrac{ 1 }{ 2 } 
      \smallsum\limits_{ u\in \mathbb{\U} }  
      \V'( Y_s ) 
      \big(
      \big( \tfrac{ \partial^2 }{ \partial y^2 } \Phi \big)( s, W_s )
      \big)(u,u)
    \bigg]
    \,ds
    \Bigg]_{\P, \mathcal{B}(\R)}
    .
  \end{align}
Therefore, we obtain
  for all
  $ t \in [0, h ] $, 
  $ n \in \N $ 
  that
  \begin{equation}  
   \begin{split}
    &
    \E\!\left[
      \exp\!\left(
        e^{ - \rho ( t \wedge \tau_n ) }
        \V( Y_{ t \wedge \tau_n } )
        +
        \int_0^{ t \wedge \tau_n }
        e^{ - \rho r }
        \bar{\V}( Y_r )
        \, dr
      \right)
    \right]
    -
    e^{ \V(x) }
  \\ 
  & 
  =
   \E\Bigg[
    \int_0^{t\wedge\tau_n} 
    \exp\!\left(e^{-\rho s} \V(Y_s)
                   +\int_0^{s}e^{-\rho r}\bar{\V}(Y_r)\,dr
             \right)
    e^{-\rho s}
     \bigg(
      \bar{\V}(Y_s)
      -\rho \V(Y_s)
    \\
    & 
      + \V'(Y_s)\big(\tfrac{\partial}{\partial s}\Phi\big)
         (s,W_s)
      +\tfrac{1}{2}
      \tr_U
      \!
      \left(    
      \big[
      \big(
      \tfrac{\partial}{\partial y}\Phi\big)(s,W_s)\big]^*
      (\Hess\V) (Y_s) 
      \big(
      \tfrac{\partial}{\partial y}\Phi\big)(s,W_s)
      \right)
    \\
    &  
    +
    \tfrac{1}{2e^{\rho s}}
\big\|
    \big[
    \big(\tfrac{\partial}{\partial y}\Phi\big)(s,W_s)\big]^*
    (\nabla \V)(Y_s)
       \big\|_\U^2
      +  
      \tfrac{ 1 }{ 2 } 
      \sum_{ u\in \mathbb{\U} }
        \V'( Y_s ) 
        \big(
      \big( \tfrac{ \partial^2 }{ \partial y^2 } \Phi \big)(  s, W_s )
      \big)
       (u,u)
    \bigg)  
    \, 
    ds
    \Bigg]
    .
  \end{split}    
   \end{equation}
  This implies for all $ t \in [0, h ] $, $ n \in \N $ that
  \begin{equation}  
  \begin{split}
    &
    \E\!\left[\exp\!\left(e^{-\rho (t\wedge\tau_n)}\V(Y_{t\wedge\tau_n})
                   +\int_0^{t\wedge\tau_n}e^{-\rho r}\bar{\V}(Y_r)\,dr
    \right)\right]
    -e^{ \V (x)}
 \\ 
 & 
 =
   \E\Bigg[
    \int_0^{t\wedge\tau_n} 
    \exp\!\left(e^{-\rho s}\V(Y_s)
                   +\smallint_0^{s}e^{-\rho r}\bar{\V}(Y_r)\,dr
    \right)
    e^{-\rho s}
    \bigg(
      (\mathcal{G}_{F,B}\V)(x)
      \\
      &
      \quad
      +\tfrac{1}{2 e^{ \rho s} }
      \left\| B(x)^* (\nabla \V)(x)\right\|_\U^2 
      +\bar{ \V }(x)
      -\rho \V (x)
      + \bar{ \V }(Y_s)-\bar{ \V }(x)
       \\
       & 
       \quad
      -\rho( \V (Y_s)- \V (x))
      +\tfrac{1}{2}
       \tr_U\!
      \left( 
      \big[
      \big(
      \tfrac{\partial}{\partial y}\Phi\big)(s,W_s)\big]^*
      (\Hess\V) (Y_s) 
      \big(
      \tfrac{\partial}{\partial y}\Phi\big)(s,W_s)
      \right)
      \\
      &
      \quad
       + \V'(Y_s)
       \big(
       \tfrac{ \partial }{ \partial s } \Phi
       \big)(s,W_s)
       -
       \V'(x) F(x)
       -
       \tfrac{1}{2}
	   \tr_U\!   
       \big(
       B(x)^*
 	   (\Hess \V) (x) 
 	   B(x)
       \big)
    \\
    & 
    \quad
      +
      \tfrac{1}{2 e^{ \rho s}}
      \big\|
  		\big[
         \big(\tfrac{\partial}{\partial y}\Phi\big)(s,W_s)
         \big]^*
         (\nabla \V )(Y_s)
       \big\|_{\U}^2
      -
      \tfrac{1}{2 e^{\rho s} } \left\|
      B(x)^*
         (\nabla \V)(x)
       \right\|_\U^2
      \\
      &
     \quad
      +
      \tfrac{1}{2}
      \sum\nolimits_{u\in \mathbb{\U} }     
      \V'( Y_s )
      \big( 
      \big( \tfrac{\partial^2}{\partial y^2} \Phi \big)( s, W_s )
      \big)
      (u, u)
    \bigg) 
    \,
    ds
    \Bigg].
\end{split}
\end{equation}
Assumption~\eqref{eq:generator.exponential} hence proves
for all $ t \in [0,h] $,  
$ n \in \N $ that
\begin{equation}
\begin{split}
&
    \E\!\left[\exp\!\left(e^{-\rho (t\wedge\tau_n)}\V(Y_{t\wedge\tau_n})
                   +\int_0^{t\wedge\tau_n}e^{-\rho r}\bar{\V}(Y_r)\,dr
    \right)\right]
    -e^{ \V (x)}  
\\ 
& 
\leq
   \E\Bigg[
    \int_0^{t\wedge\tau_n} 
    \exp\!\left(e^{-\rho s} \V (Y_s)
                   +\smallint_0^{s}e^{-\rho r}\bar{\V}(Y_r)\,dr
        \right)
    \\
    & 
    \quad
    \cdot\bigg(
      \left|\bar{\V}(Y_s)- \bar{\V}(x)\right|
      +\rho | \V(Y_s)- \V(x)|
      +\left| \V'(Y_s)\big(\tfrac{\partial}{\partial s}\Phi\big) (s,W_s)
            - \V'(x)F(x)\right|
    \\
    & 
    \quad
      +\tfrac{1}{2}
      \bigg|
      \tr_U\!
      \left(     
      \big[
      \big(
      \tfrac{\partial}{\partial y}\Phi\big)(s,W_s)\big]^*
       (\Hess \V )(Y_s) 
      \big(
      \tfrac{\partial}{\partial y}\Phi\big)(s,W_s)
      \right)
     \\
     &
     \quad
       -
	   \tr_U\!      
       \big(
       B(x)^*
 	    (\Hess \V)(x) 
 	   B(x)
       \big) 
       \bigg|
       +  
       \tfrac{ 1 }{ 2 } 
       \Big|
       \sum\nolimits_{ u\in \mathbb{\U} } 
         \V'( Y_s ) 
         \big(
       \big( \tfrac{ \partial^2 }{ \partial y^2 } \Phi \big)(  s, W_s )
       \big)(u,u)
       \Big|
    \\
    & 
    \quad
    +
    \tfrac{1}{2e^{ \rho s}} 
    \left|
\big\|
    \big[
    \big(\tfrac{\partial}{\partial y}\Phi\big)(s,W_s)\big]^*
    (\nabla \V)(Y_s)
       \big\|_\U^2
       -
       \left\|
       B(x)^*
       (\nabla \V)(x)\right\|_\U^2
       \right|
    \bigg) 
    \,
    ds
    \Bigg]
    .
  \end{split}     
  \end{equation}
  Fatou's lemma therefore shows for 
  all $ t \in [0,h] $ that
  \begin{equation}
  \begin{split}
  \label{eq:after.Fubini}
    &
    \E\!\left[\exp\!\left(e^{-\rho t}\V(Y_t)
                   +\int_0^{t}e^{-\rho r}\bar{\V}(Y_r)\,dr
    \right)\right]
      -e^{ \V(x)}
    \\
    &
    \leq
    \liminf_{n\to\infty}\E\!\left[\exp\!\left(e^{-\rho (t\wedge\tau_n)}
     \V(Y_{t\wedge\tau_n})
                   +\int_0^{t\wedge\tau_n}e^{-\rho r}\bar{\V}(Y_r)\,dr
    \right)\right]
    -e^{ \V(x)}
   \\
   &
   \leq
   \E\Bigg[
    \int_0^{t} 
    \exp\!\left( \V(Y_s)
                   +\smallint_0^{s} 
                   e^{-\rho r} \bar{ \V}(Y_r)\,dr
    \right)
    \bigg(
      \left|\bar{\V}(Y_s)- \bar{\V}(x)\right|
      +\rho| \V(Y_s) - \V(x)|
      \\
      &
      \quad
      +\left| \V'(Y_s)\big(\tfrac{\partial}{\partial s}\Phi\big) (s,W_s)
            - \V'(x)F(x)\right|
    +
    \tfrac{1}{2}
      \bigg|\!
	   \tr_U\!     
       \big(
       B(x)^*
 	   (\Hess  \V)(x) 
 	   B(x)
       \big)        
    \\
    &
    \quad
      -
      \tr_U\!
      \left(     
      \big[
      \big(
      \tfrac{\partial}{\partial y}\Phi\big)(s,W_s)\big]^*
       (\Hess  \V)(Y_s) 
      \big(
      \tfrac{\partial}{\partial y}\Phi\big)(s,W_s)
      \right)
      \!
      \bigg|
    \\
    &
    \quad
      +\tfrac{1}{2e^{ \rho s}} 
    \Big|
\big\|
    \big[
    \big(\tfrac{\partial}{\partial y}\Phi\big)(s,W_s)\big]^*
    (\nabla \V)(Y_s)
       \big\|_\U^2
       -
       \left\|
       B(x)^*
       (\nabla \V)(x)\right\|_\U^2
       \Big|
       \\
       &
       \quad
      +  
      \tfrac{ 1 }{ 2 } 
      \Big|
      \sum\nolimits_{ u\in \mathbb{\U} } 
        \V'( Y_s ) 
        \big(
      \big( \tfrac{ \partial^2 }{ \partial y^2 } \Phi \big)(  s, W_s )
      \big)(u,u)
      \Big|
      \,
    \bigg)    
    \,ds
    \Bigg].
  \end{split}
  \end{equation}
  Tonelli's theorem and H\"older's inequality hence
  imply for all $ t \in [0,h] $ that
  \begin{equation}
  \begin{split}
  \label{eq:after.Fubini1}
  &
  \E\!\left[\exp\!\left(e^{-\rho t}\V(Y_t)
                   +\int_0^{t}e^{-\rho r}\bar{\V}(Y_r)\,dr
    \right)\right]
      -e^{ \V(x)}
  \\ 
  &
   \leq
    \int_0^t
    \left\|
    \exp\!\left(\V(Y_s)
                   +\int_0^{s} 
                   e^{-\rho r} \bar{\V}(Y_r)\,dr
    \right)
    \right\|_{\mathcal{L}^2(\P;\R)}
    \Big(
      \rho\| \V(Y_s)-\V(x)\|_{\mathcal{L}^2(\P;\R)}
      \\
      &
      \quad
      +\left\|\V'(Y_s)\big(
      \tfrac{\partial}{\partial s}\Phi\big) (s,W_s)
            - \V'(x)F(x)\right\|_{\mathcal{L}^2(\P;\R)}
      +
      \tfrac{1}{2} 
      \Big\|
      \!
	   \tr_U\!     
       \big(
       B(x)^*
 	   (\Hess  \V)(x) 
 	   B(x)
       \big)
       \\
       &
       \quad
       -
        \tr_U\!
      \Big(    
      \big[
      \big(
      \tfrac{\partial}{\partial y}\Phi\big)(s,W_s)\big]^*
       (\Hess \V)(Y_s)  
      \big(
      \tfrac{\partial}{\partial y}\Phi\big)(s,W_s) 
      \Big)
       \Big\|_{\mathcal{L}^2(\P;\R)}
   \\
   &
   \quad
      +
      \tfrac{1}{2e^{ \rho s} } 
      \left\|
        \big\|
        \big[
         \big(\tfrac{\partial}{\partial y}\Phi\big)(s,W_s)\big]^*
         (\nabla \V)(Y_s)
       \big\|_{\U}^2
      -\left\|B(x)^* (\nabla \V)(x)\right\|_\U^2
      \right\|_{\mathcal{L}^2(\P;\R)}
  \\
  & 
  \quad
      +
      \tfrac{1}{2}
        \left\|
        \sum\nolimits_{ u\in \mathbb{\U} } 
       \V'( Y_s ) 
       \big(
      \big( \tfrac{ \partial^2 }{ \partial y^2 } 
      \Phi \big)(  s, W_s )
      \big)
       (u,u)
        \right\|_{
          \mathcal{L}^2( \P; \R )
        }
      +
      \left\|\bar{\V}(Y_s)
      - 
      \bar{\V}(x)\right\|_{\mathcal{L}^2(\P;\R)}
    \Big) \,ds
    .
    \end{split}
\end{equation}
  Next we estimate the $ \mathcal{L}^2(\P; \R) $-semi-norms
  on the right-hand side of \eqref{eq:after.Fubini1} separately.
Lemma~\ref{lemma:another_growth_estimate}
implies for all 
$ y, z \in H $, 
$ i \in \{0,1,2\} $
that
\begin{equation}
\label{eq:derivativeU1}
\|
\V^{(i)}(y) 
- 
\V^{(i)}(z)  
\|_{L^{(i)}(   H, \R)}
\leq
c \, 2^{c-1} 
\| y - z \|_H
\left(
1+  \V(y) 
+
\| y - z \|_H^{c-1}
\right)
.
\end{equation}
  The assumption that
  $ \forall\, y \in H \colon
    | \bar{\V}(y) |
  \leq
    c \left( 1 +  | \V(y)  |^{ \gb } \right) $
  and H\"older's inequality hence
  prove for all 
  $ s \in (0, h] $
  that
\begin{equation}
\begin{split}
 \label{eq:estimate.fist.term} 
    &
    \E\!\left[ \exp\!\left(\!2\V(Y_s)
                   +
                   2
                   \int_0^s
                    e^{-\rho r}
                   \bar{\V}(Y_r)\,dr
          -2 \V(x)\right)\right]
          \\
          &
    \leq 
    \E\!\left[ \exp\!\left(\!
        2 \, |  \V( Y_s ) - \V(x)  |
            + 2 \smallint_0^s \left| \bar{\V}( Y_r ) \right| dr
         \! \right)\!\right]
    \\
    & 
    \leq
    \E\!\left[
      \exp\!\left(\!
         c\,2^c
        \big[ 1 +
           \V(x) 
          +
           \|
            Y_s - x
           \|_H^{  c - 1 }
        \big]
         \|
          Y_s - x
        \|_H
        +
        2 c \smallint_0^s \left( 1 + |\V( Y_r )|^{\gb} \right) dr
     \right)
   \right]
   \\
    &
    \leq
    \left\|
      \exp\!\left(
         c\,2^c
        \big[ 1 +
           \V(x)  
          +
           \|
            Y_s - x
           \|_H^{  c - 1  }
        \big]
         \|
          Y_s - x
         \|_H \right) \right\|_{\mathcal{L}^1(\P; \R)}
        \\
        &
        \quad
        \cdot
        \Big\| \! \exp\!\Big(   2 c  \smallint_0^s \big(
        1 + |\V( Y_r )|^{\gb} \big)\, dr \Big)\Big\|_{\mathcal{L}^{\infty}(\P; \R)}
    \\ 
    &
    \leq
    \E \! \left[
      \exp\!\left(
           c\,2^c
        \big[ 1 +
           \V(x) 
          +
           \|
            Y_s - x
           \|_H^{  c - 1  } 
        \big]
        \left\|
          Y_s - x
        \right\|_H  \right)
    \right]
        \exp\!\Big( 
          2 c  \smallint_0^s  \big[ 1 + \| \V( Y_r )
          \|_{\mathcal{L}^{\infty}(\P; \R)}^{\gb} \big]
        \, dr  \Big) .
\end{split}
\end{equation}
Next we estimate the two factors
  on the right-hand side of \eqref{eq:estimate.fist.term} separately.
  Observe that~\eqref{eq:assumption.on.phi.increment}
  ensures that for all
  $ r \in [1,\infty) $,
  $ s \in (0,h] $
  it holds that
  $ \| \Phi( s, W_s ) - x \|_{\mathcal{L}^r(\P; H)} \leq c $.
  Combining this with Lemma~\ref{lemma:limit_Lp}
  establishes that for all $ s \in (0,h] $
  it holds that
  $ \| \Phi( s, W_s ) - x \|_{{\mathcal{L}}^\infty( \P; H ) } \leq c $.
  H\"older's inequality, 
Tonelli's theorem,
   and~\eqref{eq:assumption.on.phi.increment}
   therefore
  show that
  for all
  $ s \in (0,h] $ 
  it holds
  that
\begin{equation}
\begin{split}
  &
  \E\!\left[
      \exp\!\left(
        c\,2^c
        \big[ 1 +
            \V(x) 
          +
           \|
            Y_s - x
           \|_H^{  c - 1  } 
        \big] 
         \left \|
          Y_s - x
         \right \|_H \right)
  \right]
  %
  \\ 
  & 
  =
  \E\!\left[
    \sum_{n=0}^{\infty} \tfrac{( 2^c c )^{ n}}{n !}
        \big[ 1 +
           \V(x) 
          +
           \|
            \Phi(s, W_s ) - x
           \|_H^{  c - 1  } 
        \big]^n
        \left\|
          \Phi(s,W_s) - x
        \right\|_H^n
  \right]
  %
  \\ 
  & 
  =
  \sum_{ n = 0 }^{ \infty }
  \left\| \tfrac{( 2^c c )^{ n}}{n !}
        \big[ 1 +
           \V(x) 
          +
          \left\|
            \Phi(s, W_s ) - x
          \right\|_H^{ c - 1  }\!
        \big]^n
        \left\|
          \Phi(s, W_s) - x
        \right\|_H^n
        \right\|_{\mathcal{L}^1(\P; \R)}
  \\ 
  & 
  \leq
  \sum_{ n = 0 }^{ \infty }
  \tfrac{( 2^c c)^{  n}}{n !}
        \big[ 1 +
           \V(x) 
          +
          \left\|
            \Phi(s, W_s ) - x
          \right\|_{\mathcal{L}^\infty(\P; H)}^{  c - 1  }\!
        \big]^n
        \left\|
          \Phi(s, W_s) - x
        \right\|_{\mathcal{L}^n(\P; H)}^n 
  \\ 
  & 
 \leq
  \sum_{ n = 0 }^{ \infty }
    \tfrac{( 2^c c )^{  n}}{n !}
        \big[ 1 +
           \V(x) 
          +
          c^{c-1} 
        \big]^n
        \left\|
          \Phi(s, W_s) - x 
        \right\|_{\mathcal{L}^n(\P; H)}^n
  %
  \\ 
  & 
 \leq
  \sum_{ n = 0 }^{ \infty }
  \tfrac{ ( 2^c c )^{   n } }{ n! }
  \left[
    c \big( 1 +  \V(x)  
    \big)  
    +
    c^c 
  \right]^n
  \E\big[
     (
     \| F(x)  \|_H s 
    + 
     \|B(x) W_s  \|_H
     )^n
  \big]
  \\ 
  & 
  =
  \E\big[
  \!
    \exp\!\big(
       c\,2^c
       \big[
        c  ( 1 +   \V(x)  
         )  
    +
    c^c 
       \big]
     (
    \| F(x)   \|_H s 
    + 
    \|B(x) W_s\|_H
     )
    \big)
  \big]
  .
\end{split} 
\end{equation}
The assumption that 
$ \forall\, s\in (0,h] \colon \! \max\{\|F(x)\|_H, \|B(x)\|_{\HS(\U,H)} \} \leq c h^{-\delta} \leq c s^{-\delta} $,
  the fact that 
  $ \forall\, a, b \in [0,\infty)\colon  (a+b)^c\leq 2^{c-1} (a^c + b^c) $,
  and Lemma~\ref{l:exp.Gauss} hence show
  that for all 
  $ s \in (0,h] $ 
  it holds
  that
\begin{equation}
\begin{split}
  &
  \E\!\left[ \exp\!\left(
         c\,2^c
        \big[
          1 +
          \V(x) 
          +
           \|
            Y_s - x
           \|_H^{  c - 1  }
        \big]
         \|
          Y_s - x
         \|_H \right)
  \right]
  \\ 
  & 
  \leq
  \E\big[\! \exp\!\big(
       c^2\,2^c
      \left[ 
      1 
      + 
       V(x)   
    +
    c^{c-1} 
      \right]
       \left(
     \| F(x)  \|_H  s 
    + 
     \|B(x) W_s  \|_H
    \right)
    \big)
  \big]
  \\ 
  & 
  \leq
  \E\big[ 
  \!
    \exp\!\big(
       c^2\,2^c
      \left[ 
      1
      + 
      c [\min \{  s, 1\}]^{-\varsigma} 
      +
      c^{c-1}  
      \right]
  %
      \left(
    \| F(x)  \|_H   s 
    + 
     \|B(x) W_s \|_H
    \right)
    \big)
  \big]
  \\ 
  & 
  \leq
  \E\!\left[
    \exp\!\left(
    3
    \cdot
      2^c c^{ c+2} 
      [\min\{  s, 1\}]^{  -   \varsigma }
       \left(
     \| F(x)  \|_H  s 
    + 
     \| B(x) W_s  \|_H
    \right)
    \right)
  \right]
 %
  \\ 
  & 
 =
  \exp\!\left(
   3
    \cdot
      2^c c^{ c+2} 
      [\min\{  s,1 \}]^{ -  \varsigma }
     \| F(x) \|_H  s
  \right)
  \E\!\left[
    \exp\!\left(
     3
    \cdot
      2^c c^{ c+2} 
      [\min\{s, 1\}]^{-  \varsigma }
       \| B(x)W_s  \|_H
    \right)
  \right]
  \\ & \leq
  \exp\!\left(
   3
    \cdot
      2^c c^{ c+2} 
      [\min\{s, 1\}]^{-   \varsigma }
     \|F(x) \|_H   s
  \right)
  2
  \exp\!\left(
    \tfrac{
       9
    \cdot
      2^{2c} c^{2c+4}  
       \| B(x)  \|^2_{\HS(\U, H)}
      s
    }{
      2
      \,
      [\min\{ s, 1\}]^{2  \varsigma }
    }
  \right)
  \\ 
  & 
  \leq
  2
  \exp\!\left(
     9
    \cdot
      2^{2c-1} c^{2c+4} 
      s
      [\min\{s, 1\}]^{-2 \varsigma }
    \left[
       \| F(x)  \|_H
      +
       \| B(x)   \|^2_{ \HS( \U, H ) }
    \right]
  \right)
  \\ 
  & 
  \leq
  2 \exp\!\left(
      9
    \cdot
      2^{2c-1} c^{2c+4} 
      s
      [\min\{ s, 1\}]^{-2  \varsigma }
      \left[
        c s^{-\delta} + c^2 \! s^{-2\delta}
      \right]
    \right)
    \\
    &
    \leq
    2 \exp\!\left(
       9
    \cdot
      2^{2c} c^{2c + 6} 
      s
      [\min\{ s, 1\}]^{  - 2 \delta -  2 \varsigma }
    \right)
   .
  \label{eq:estimate.fist.term1}
\end{split}
\end{equation}
 In the next step we 
 combine~\eqref{eq:assumption.on.phi.increment}, \eqref{eq:derivativeU1}, and Lemma~\ref{lemma:limit_Lp}
  to obtain 
  for all
  $ s \in (0,h] $ 
  that
\begin{equation}
\begin{split}
\label{eq:before.estimate.fist.term2}
&
  \|
    \V( Y_s )
  \|_{ \mathcal{L}^{ \infty }( \P; \R )
  }
\leq
  \V(x) + \| \V( Y_s ) - \V(x) \|_{ \mathcal{L}^{ \infty }( \P; \R ) }
\\ &
  \leq
      \V(x) + c\,2^{c-1}  
      \left\|
        \left( 1 +
           \V(x)  
         +
         \| Y_s - x \|_H^{  c - 1  }
        \right)
         \| Y_s - x
         \|_H
      \right\|_{
        \mathcal{L}^{ \infty }( \P; \R ) }
\\ 
& 
\leq
      \V(x) + c\,2^{c-1} 
      \left[
          c +
         c^2 [\min \{ s, 1\}]^{-\varsigma} 
        +  c^c  
      \right]
  \leq
    cs^{-\varsigma}
    + 
    c\,2^{c-1} 
    \left[
        2 c^2  
         [\min\{ s, 1\}]^{ - \varsigma}
        +
        c^c  
      \right]
      \\ 
      &
   \leq
   2^{c+1}c^{ c + 2}
         [\min \{s, 1\}]^{  - \varsigma} 
         .    
\end{split}
\end{equation}
Therefore, we obtain for all
  $ s \in (0,h] $ 
  that
\begin{equation}  
\begin{split}
\label{eq:estimate.fist.term2}
2 c \smallint_0^s \big[ 1 + \|\V( Y_r )\|_{\mathcal{L}^{\infty}(\P; \R)}^{\gb}\big] \, dr
 & 
 \leq
 2cs +
 2cs \big(
  2^{c+1}c^{ c+2}
         [\min \{ s, 1\}]^{  - \varsigma} \big)^{\gb}
         \\
         &
         \leq
 2^{2+(c+1)\gb}c^{1+( c+2)\gb}
         [\min \{ s,1\}]^{-\gb  \varsigma }          s
         .
\end{split}
\end{equation}
Combining this with~\eqref{eq:estimate.fist.term}
and \eqref{eq:estimate.fist.term1} ensures that for all
  $ s \in (0,h] $ 
  it holds
that
  \begin{equation}
  \begin{split}
    &
    \E\!\left[\exp\!\left(2 \V (Y_s)
                   +2\smallint_0^{s}e^{-\rho r}\bar{\V}(Y_r)\,dr
          -2 \V (x)\right)\right]
\\ 
&
  \leq
  2 \exp\!\left(
       9
    \cdot
      2^{2c} c^{2c+6} 
      s
      [\min\{ s, 1\}]^{ - 2 \delta - 2  \varsigma }
    \right)
%
    \exp\!\left(
     2^{2+(c+1)\gb} c^{1+( c+2)\gb}
         [\min \{ s,1 \}]^{-\gb \varsigma }          s
    \right)
  \\ 
  & 
  \leq
    2
    \exp\!\left(
      \left[
          9
    \cdot
      2^{2c} c^{2c+6} 
        +
       2^{2+(c+1)\gb} c^{1+( c + 2)\gb}
      \right]
     \!
      s
     [\min\{ s, 1\}]^{ -2\delta -  \max\{2,\gb\}  \varsigma }
    \right)
  \\ 
  &
   \leq
    2
    \exp\!\left(
      2^{ 2c\gb + 4 }  
      c^{ 2 c \gb  + \gb + 5 } 
       s  
     [\min\{ s, 1\}]^{ -2\delta -  \max\{2,\gb\}   \varsigma }
    \right)
    .
  \end{split}
  \end{equation}
  Hence,
  we obtain for all
  $ s \in (0,h] $ 
  that
  \begin{equation}  \begin{split}
  \label{eq:L2.expU}
    \left\|
      \exp\!\left(
        \V ( Y_s )
        +
        \smallint_0^s
        \tfrac{
          \bar{\V}( Y_r )
        }{
          e^{\rho r}
        }
        \, dr
      \right)
    \right\|_{\mathcal{L}^2(\P;\R)}\!
  \leq
    \sqrt{ 2 }
    \exp\!\left(
      \frac{
        2^{ 2 c \gb + 3 }  
        c^{ 2 c \gb  + \gb + 5 } 
       s  }
       {
     [\min\{ s,1\}]^{  2\delta +  \max\{2,\gb\}  \varsigma }
     }
  \right)
  e^{ \V (x) } 
  .
  \end{split}
  \end{equation}
  Moreover,
  \eqref{eq:assumption.on.phi.increment},
  the assumption that 
  $ \forall\,s\in (0,h] \colon \!\max\{\|F(x)\|_H $, 
  $ \|B(x)\|_{\HS(\U,H)} \}  \leq c h^{-\delta} \leq c s^{-\delta} $,
  the triangle inequality,
  and the 
  Burkholder-Davis-Gundy type inequality in Lemma~7.7 in Da Prato \& Zabczyk~\cite{dz92}
  assure that for all
  $ r \in [2,\infty) $,
  $ s \in (0,h] $ 
  it holds that
\begin{equation} 
\begin{split}
  \label{eq:increment.Y}
    &
     \| Y_s - x \|_{\mathcal{L}^r(\P;H)}
    =
     \|\Phi(s, W_s)-x \|_{\mathcal{L}^r(\P;H)}
    \leq 
    c
    \big \|
      \| F(x)  \|_H  s 
    + 
     \|B(x) W_s  \|_H
    \big \|_{
      \mathcal{L}^r( \P; \R )
    }
  \\ 
  & 
  \leq
    c   
    \big( 
       \| F(x)  \|_H  s
      + 
      \sqrt{s r (r - 1 )/2}
      \,
       \|
        B(x)
       \|_{
        \HS( \U, H )
      } 
    \big)
  \leq
    c  
    \big( 
      cs^{1 - \delta}
      + 
      c
      \sqrt{ r (r - 1 )/2}
      \,
      s^{\nicefrac{1}{2} - \delta} 
    \big)
    \\
    &
    \leq
	c^2 r
	s^{\nicefrac{1}{2}-\delta }
	\max\{\sqrt{s},1\}
	\leq
	c^2 r [\min \{s, 1\}]^{\nicefrac{1}{2}-\delta  }
	\max\{s,1\}
    .
\end{split}
 \end{equation}
  Combining this with \eqref{eq:assumption.on.phi.increment},
  \eqref{eq:derivativeU1},  
  and H\"older's inequality implies that for all
  $ r \in [2,\infty) $,
  $ i \in \{ 0, 1, 2 \} $,
  $ s \in (0, h ] $ 
  it holds
  that
  \begin{equation}  
  \begin{split}  
  \label{eq:increment.Ui}
    &
      \big\|
        \V^{ (i) }( Y_s ) - \V^{ (i) }( x )
      \big\|_{
        \mathcal{L}^r( \P; L^{ (i) }( H, \R) )
      }
      \\
      &
    \leq
    \left\|
       c \, 2^{c-1}  
    \left( 1 + 
      \V( x ) 
    +
    \left\| Y_s - x
    \right\|_H^{c-1}
    \right)
    \left\| Y_s - x \right\|_H
     \right\|_{\mathcal{L}^r(\P;\R)}
    \\
    &
    \leq
    c \, 2^{c-1}
    \left( \left\| Y_s - x
    \right\|_{\mathcal{L}^r(\P; H)} + 
      \V( x ) 
     \left\| Y_s - x
    \right\|_{\mathcal{L}^r(\P; H)}
    +
    \left\| Y_s - x
     \right\|_{\mathcal{L}^{rc}(\P; H)}^c
    \right)
  \\
  &
    \leq
    c \, 2^{c-1} 
    \left(
      1 +   
        c 
        s^{-
          \varsigma  
        }
      +
    \left\|
      Y_s - x
     \right\|^{ c - 1 }_{\mathcal{L}^{rc}(\P; H)}
    \right)
    \left\| Y_s - x
     \right\|_{\mathcal{L}^{rc}(\P; H)}
 \\ 
 & 
 \leq
    c^4 \, 2^{c-1}
     \left[ 
         2 c 
       [\min\{ s, 1\}]^{-\varsigma}
       +
       c^{ c-1 }
     \right] 
		  r   [\min \{s, 1\}]^{\nicefrac{1}{2}-\delta  }
		\max\{s,1\} 
 \\ 
  & 
   \leq
   r
   \,
   c^{
   	c + 4
   }
   \,
   2^{ c+1}
   \left[ \min\{s, 1\}
   \right]^{ \nicefrac{1}{2}  - \delta -\varsigma
   } 
   \max\{s, 1\} 
   .
  \end{split}
  \end{equation}
%
%
%
  H\"{o}lder's inequality,
  the assumption that
  $ \forall\,s\in (0,h] \colon \!\max\{\|F(x)\|_H, \|B(x)\|_{\HS(\U,H)} \}  \leq c h^{-\delta} \leq c s^{-\delta} $, 
  \eqref{eq:assumption.on.phi.sigma},
  and Lemma~\ref{lemma:equivalent000}
  hence
  show for all
  $ s \in (0, h ] $ 
  that
  \begin{equation}
  \begin{split}
  \label{eq:second.summand}
      &
      \left\|
        \V'( Y_s )
        \big(
          \tfrac{ \partial }{ \partial s } \Phi
        \big)( s, W_s ) -
        \V'(x)  F(x)
      \right\|_{
        \mathcal{L}^2( \P; \R )
      }
      \leq
      \left\|
        \V'( Y_s ) - \V'(x)
      \right\|_{
        \mathcal{L}^2( \P; L( H,\R) )
      }
      \left\| F(x) \right\|_H
    \\
     & 
     \quad
     +
      \left\|
        \left\|
          \V'( Y_s )
        \right\|_{L( H,\R)}
        \left\|
          \left(
            \tfrac{ \partial }{ \partial s } \Phi
          \right)\!( s, W_s ) -
          F(x)
        \right\|_H
      \right\|_{
        \mathcal{L}^2( \P; \R )
      }
    \\ 
    & 
    \leq
      \left\|
        \V'(x)
      \right\|_{L(  H,\R)}
      \left\|
        \left(
          \tfrac{ \partial }{ \partial s } \Phi
        \right)\!( s, W_s) -
        F(x)
      \right\|_{
        \mathcal{L}^2( \P; H )
      }
    \\ 
    & 
    \quad
        +
        \left\|
          \V'( Y_s ) - \V'( x )
        \right\|_{
          \mathcal{L}^4( \P; L(  H, \R ) )
        }
        \left[
          \left\| F(x) \right\|_H
          +
          \left\|
            \left(
              \tfrac{ \partial }{ \partial s } \Phi
            \right)\!( s, W_s )
            -
            F(x)
          \right\|_{
            \mathcal{L}^4( \P; H )
          }
        \right]
  \\ &
      \leq
        c^2
        \left[1 + \V(x)\right]  
           s^{\gc}
    +
     2^{ c + 3}
    \,
     c^{
      c+4
     }
     \left[ \min\{s, 1\}
     \right]^{
          \nicefrac{1}{2}  - \delta  -\varsigma
     } 
    \left[
      c s^{- \delta}
      + c s^{ \gc }
    \right]     
   \max\{s,1\} 
    \\
    &
    \leq
     c^2
        \left[1 + cs^{- \varsigma } \right] 
            s^{\gc}
    +
      2^{ c + 4}
    \,
     c^{
       c + 5
     }
     \left[ \min\{s, 1\}
     \right]^{
          \nicefrac{1}{2}   - 2\delta  -\varsigma
     }     
   [\max\{s,1\}]^{1+\gc}
\\
&
\leq
    2^{ c + 5}
    \,
     c^{
         c + 5
     }
     \left[ \min\{s, 1\}
     \right]^{
          \gc - 2\delta  -\varsigma
     }     
   [\max\{s,1\}]^{1+\gc}
    .
  \end{split}
  \end{equation}
  In addition,
  observe that
  \eqref{eq:increment.Ui},  
  H\"{o}lder's inequality,
  \eqref{eq:assumption.on.phi.sigma},
  Lemma~\ref{lemma:equivalent000},
  and
  the assumption that
  $ \forall\,s \in (0,h] \colon \!\max\{\|F(x)\|_H, \|B(x)\|_{\HS(\U,H)} \}  \leq c h^{-\delta} \leq c s^{-\delta} $
  ensure that for all
  $ s \in (0, h ] $ 
  it holds
  that
  \begin{equation}
  \label{eq:UprimePhiy}
  \begin{split}
    &
    \big\|
      \V'( Y_s ) 
      \big(
        \tfrac{ \partial }{ \partial y } \Phi
      \big)(  s, W_s )
      -
      \V'(x)
       B(x)
    \big\|_{
      \mathcal{L}^4( \P; L(\U, \R) )
    }
  \\ 
  &
  \leq
    \left\|
      \left\|
        \V'( Y_s )
      \right\|_{
        L(   H, \R )
      }
      \big\|
        \big(
          \tfrac{ \partial }{ \partial y } \Phi
        \big)(  s, W_s )
        -
        B( x )
      \big\|_{
        \HS( \U, H )
      }
    \right\|_{
      \mathcal{L}^4( \P; \R )
    }
  \\ &  
    +
    \left\|
      \V'( Y_s ) - \V'(x)
    \right\|_{
      \mathcal{L}^4( \P; L(  H,\R) )
    }
    \left\|
      B(x)
    \right\|_{
      \HS( \U, H )
    }
  \\ &
  \leq
    \left\|
      \V'(x)
    \right\|_{
      L(  H, \R )
    }
    \big\|
      \big(
        \tfrac{ \partial }{ \partial y } \Phi
      \big)( s, W_s )
      -
      B( x )
    \big\|_{
      \mathcal{L}^4( \P; \HS(\U, H ) )
    }
  \\ & 
    +
    \left\|
      \V'( Y_s ) - \V'(x)
    \right\|_{
      \mathcal{L}^8( \P; L(  H,\R) )
    }
    \left[
      \left\|
        B(x)
      \right\|_{
        \HS( \U, H )
      } 
      + 
        \big\|
          \big(
            \tfrac{ \partial }{ \partial y } \Phi
          \big)( s, W_s ) -
          B(x)
        \big\|_{
          \mathcal{L}^8( \P; \HS( \U, H ) )
        }
      \right]
  \\ & \leq
        c
        \left[
          1 +
          \V(x)
        \right] 
        c s^{\gc}
   +
    2^{ c + 4}
     c^{
       c + 4
     }
     \left[ \min\{s, 1\}
     \right]^{
          \nicefrac{1}{2} -   \delta  -\varsigma
     }
           [
            c
            s^{- \delta}
            +
            c s^{ \gc }
             ]
          \max\{s, 1\}
  \\
   & 
   \leq
        c^2
        s^{ \gc }
        \left[
          1 + c s^{ - \varsigma }
        \right]
 +
      2^{ c + 5}
     c^{
        c + 5
     }
     \left[ \min\{s, 1\}
     \right]^{
           \nicefrac{1}{2}  - 2\delta  -\varsigma
     }
      [\max\{s,1\} ]^{1+ \gc} 
     \\
   & 
   \leq
    2^{ c + 6}
     c^{
         c+ 5
     }
     \left[ \min\{s, 1\}
     \right]^{
          \gc -    2\delta  -\varsigma
     }
      [\max\{s,1\} ]^{ 1+ \gc} 
       .
  \end{split}
  \end{equation}
Moreover, note
that
  for all
  $ A_1, A_2 \in \HS(\U, H) $, 
  $ B_1, B_2 \in L(H) $
  it holds
  that
  \begin{align}
  \label{eq:matrix_identity}
    & 
    | \tr_U\!\left( A_1^* B_1 A_1 - A_2^* B_2 A_2\right) |
    =
    \nonumber
    \bigg|
    \sum_{u \in \mathbb{\U}} 
    \left<
    \left(A_1^{*}B_1A_1-A_2^{*}B_2A_2\right)u,u \right>_\U
    \bigg|
    \\
    \nonumber
    &
    =
    \bigg|
    \sum_{u \in \mathbb{\U}} 
    \left<
    B_1A_1u, A_1 u\right>_H
    -
    \sum_{u\in \mathbb{\U}}
    \left< B_2A_2 u, A_2 u \right>_H
    \bigg|
    =
    |
     \left< A_1 ,B_1A_1 \right>_{\HS(\U,H)}- \left< A_2  ,B_2A_2 \right>_{\HS(\U,H)}
     |
  \\ &
  \nonumber
    =
    |
    \left < A_1-A_2,B_1A_1 \right>_{\HS(\U,H)}
    +
    \left < A_2,B_1(A_1-A_2) \right>_{\HS(\U,H)}
    +
    \left < A_2,(B_1-B_2)A_2 \right>_{\HS(\U,H)}
    |
  \\ & 
  \leq
  \left\|
  A_1 - A_2
  \right\|_{
  	\HS( \U, H )
  }
  \left\|
  B_1 A_1
  \right\|_{
  	\HS( \U, H )
  }
  +
  \left\|
  A_2
  \right\|_{\HS(\U,H)}
  \left\|
  B_1
  ( A_1 - A_2 )
  \right\|_{\HS(\U,H)}
  \\ &
  \quad
    +
    \left\|
    A_2
    \right\|_{
    	\HS( \U, H )
    }
    \left\|
    ( B_1 - B_2 ) A_2
    \right\|_{
    	\HS( \U, H )
    }
  \nonumber
  \\ &
  \nonumber
  \leq
    \left\|
      A_1 - A_2
    \right\|_{
      \HS( \U, H )
    }
      \left\|
        B_1
      \right\|_{
        L( H )
      }
    \left[
      \left\|
        A_1
      \right\|_{
        \HS( \U, H )
      }
      +
      \left\|
        A_2
      \right\|_{\HS(\U,H)}
    \right]
    +
    \left\|
    B_1 - B_2
    \right\|_{
    	L( H )
    }
    \left\|
    A_2
    \right\|_{
    	\HS( \U, H )
    }^2
  \\ &
  \nonumber
  \leq 
    \left[
      \left\|
        A_1 - A_2
      \right\|_{
        \HS( \U, H )
      }^2
      +
      2
      \left\|
        A_1 - A_2
      \right\|_{
        \HS( \U, H )
      }
      \left\|
        A_2
      \right\|_{
        \HS(\U,H)
      }
    \right]
    \left[
      \left\|
        B_1 - B_2
      \right\|_{
        L( H )
      }
      +
      \left\|
        B_2
      \right\|_{
        L( H )
      }
    \right]
    \\
    \nonumber
    &
    \quad
    +
     \left\|
     B_1 - B_2
     \right\|_{
     	L( H )
     }
     \left\|
     A_2
     \right\|_{
     	\HS( \U, H )
     }^2 
  \end{align}
  %
  (cf., e.g., (62) in Hutzenthaler et al.~\cite{HutzenthalerJentzenWang2014}).
  Next we apply~\eqref{eq:matrix_identity}  
  (with
  $ A_1 =
    ( \tfrac{ \partial }{ \partial y } \Phi
    )(s, W_s) $,
  $ A_2 = B(x) $,
  $ B_1 = ( \Hess \V )( Y_s ) $,
  and
  $ B_2 = (\Hess \V )( x ) $
  for $ s \in [0,h] $  
  in the notation of~\eqref{eq:matrix_identity}),
  we take expectations, we apply H\"older's inequality, 
  we apply Lemma~\ref{lemma:equivalent000},
  we use
  the assumption that
  $ \forall\,s\in (0,h] \colon \! \max\{\|F(x)\|_H, \|B(x)\|_{\HS(\U,H)} \}  \leq c h^{-\delta} \leq c s^{-\delta} $,
  and we apply 
  \eqref{eq:assumption.on.phi.sigma}
  and \eqref{eq:increment.Ui} to obtain
  that
  for all
  $ s \in (0,h] $ 
  it holds that
  \begin{align} 
  \label{eq:fourth.summand}
    \nonumber
    & \left\|
      \tr_U
      \!\left(\!
      \big[ 
       \big( 
         \tfrac{ \partial }{ \partial y } \Phi
        \big)( s, W_s )
       \big]^* 
       (\operatorname{Hess}\V)
       ( Y_s ) 
         \big( \tfrac{\partial}{\partial y}\Phi\big)(s,W_s)
       -
      B(x)^*
      ( \operatorname{Hess} \V )(x)
      B(x)
      \!
     \right)
     \right\|_{\mathcal{L}^2(\P;\R)}
  \\
  \nonumber
  & \leq 
    \Big\|   
      \big\|
        \big(
          \tfrac{ \partial }{ \partial y } \Phi
        \big)( s, W_s)
        -
        B(x)
      \big\|_{
        \HS( \U, H )
      }^2
  + 
      2 \big\|
        \big(
          \tfrac{ \partial }{ \partial y }
        \Phi 
        \big)(s,W_s)
        -
        B(x)
      \big\|_{\HS(\U,H)}
      \|
        B(x)
       \|_{
        \HS(\U,H)
      }
    \Big\|_{
      \mathcal{L}^4( \P; \R )
    }
    \\
    \nonumber
    &
    \quad
    \cdot
    \big[
    \|
        ( \Hess \V )( x )
       \|_{
        L(   H )
      } 
  +
      \left\|
        ( \Hess \V )( Y_s )
        -
        ( \Hess \V )( x )
      \right\|_{
        \mathcal{L}^4( \P; L( H )
        )
      } 
    \big]
    \\
    &
    \nonumber
    \quad
    +
    \|
    ( \Hess \V )( Y_s )
    -
    ( \Hess \V )( x )
    \|_{
    	\mathcal{L}^2( \P; L(   H )
    	)
    }
    \left\|
    B(x)
    \right\|_{
    	\HS( \U, H )
    }^2
 \\   
 \nonumber
 & \leq
  \big[
    c^2 s^{2\gc}
    +
    2
    c
    s^{\gc}
    c s^{- \delta}
  \big]
  \big[ 
    c
    (
      1 + \V(x)
    )
    +
      2^{ c + 3} 
     c^{
         c +4 
     }
      [ \min\{s, 1\} ]^{
          \nicefrac{1}{2} -   \delta  -\varsigma
     }
      \! \max\{s,1\} 
  \big]
  \\
  &
  \quad
  +
  2^{ c + 2}
  c^{
  	c + 4
  }
  [ \min\{s, 1\} ]^{
  	\nicefrac{1}{2}  - \delta  -\varsigma
  }
  \!\max\{s,1\} 
  c^2 s^{-2 \delta}
 \\  
 &
 \nonumber
 \leq 
  3  c^2 
  [\min \{ s, 1\}]^{\gc - \delta}
  [\max\{s,1\}]^{2\gc} 
  \big[
  2   c^2[\min\{s, 1\}
  ]^{-\varsigma}
  +
      2^{ c + 3} 
     c^{
       c + 4
     }
      [ \min\{s, 1\} ]^{
          \nicefrac{1}{2} -  \delta  -\varsigma
     }
     \!
      \max\{s,1\} 
  \big]
  \\
  \nonumber
  &
  \quad
  +
   2^{ c + 2} 
   c^{
   	c + 6
   } 
   [ \min\{s, 1\} ]^{
   	\nicefrac{1}{2} -   3 \delta  -\varsigma
   }
   \! \max\{s,1\}  
  \\
  \nonumber
  &
 \leq 
  3   c^2  
  [\max\{s,1\}]^{1+2\gc}  
  \big[
  2  c^2  [\min \{s, 1\}]^{\gc - \delta -\varsigma}
  +
      2^{ c + 3} 
     c^{
      c + 4
     }
      [ \min\{s, 1\}
      ]^{
          \nicefrac{1}{2} + \gc -    2\delta  -\varsigma
     } 
  \big]
  \\
  \nonumber
  &
  \quad
  +
   2^{ c + 2} 
   c^{
   	c + 6  
   }
   [ \min\{s, 1\} ]^{
   	\nicefrac{1}{2} -  3\delta  -\varsigma
   }
   \!\max\{s,1\} 
  \\
  &
  \nonumber
  \leq
  2^{ c + 5} 
  c^{ c + 6}  
  [\max\{s,1\}]^{1+2\gc} 
     [ \min\{s, 1\}
      ]^{
           \gc -    3\delta  -\varsigma
} 
  . 
  \end{align}
  Furthermore, the fact that 
  $ \forall\,a,b\in \U\colon  |
      \| a \|_\U^2 - \| b \|_\U^2
     |
    \leq
    \| a - b \|_\U
    \left(
      2 \left\| b \right\|_\U
      +
       \left\| a - b
       \right\|_\U
    \right) $,
  H\"older's inequality, \eqref{eq:UprimePhiy}, 
  Lemma~\ref{lemma:equivalent000},
  and the assumption that
  $ \forall\,s\in (0,h] \colon\! \max\{\|F(x)\|_H $, 
  $ \|B(x)\|_{\HS(\U,H)} \}  \leq c h^{-\delta} \leq c s^{-\delta} $
  show that for all
  $ s \in (0,h] $ 
  it holds that
  \begin{align}  
  \label{eq:Uprime.Phiy.square}
  \nonumber
    &
      \left\|
        \big\|
          \big[
            \big(
              \tfrac{ \partial }{ \partial y } \Phi
            \big)(  s, W_s)
          \big]^{ * }
          ( \nabla \V )( Y_s )
        \big\|_\U^2
        -
        \left\|
          B(x)^*
          ( \nabla \V )( x )
        \right\|_\U^2
      \right\|_{
        \mathcal{L}^2( \P; \R )
      }
  \\ 
  & 
  \nonumber
  \leq
    \big\|
      \big[
        \big(
          \tfrac{ \partial }{ \partial y } \Phi
        \big)( s, W_s )
      \big]^*
      ( \nabla \V )( Y_s )
      -
      B(x)^*
      (\nabla \V )(x)
    \big\|_{
      \mathcal{L}^4( \P; \U )
    }
    \Big\|
     2
      \!
      \left\|
        B(x)^*
      \right\|_{
        \HS( H, \U)
      }
      \!
      \left\|
        ( \nabla \V )( x )
      \right\|_H
      \\
      &
      \quad
      +
      \big\| 
        \big[
          \big( 
            \tfrac{ \partial }{ \partial y } \Phi
          \big)( s, W_s)
        \big]^* 
        (\nabla \V) ( Y_s )
        -
        B(x)^* 
        ( \nabla \V )( x )
      \big\|_\U
    \Big\|_{
      \mathcal{L}^4( \P; \R )
    }
  \\ 
  \nonumber
  & 
  \leq
    2^{ c + 6}
     c^{
        c + 5
     }
      [ \min\{s, 1\}
      ]^{
          \gc -    2\delta  -\varsigma
     }
      [\max\{s,1\}]^{1+\gc} 
	\\
	&
	\nonumber
	\quad
	\cdot
	\big[
     2 c^2
     s^{- \delta}
      [
       1 + c s^{ - \varsigma }
       ]
       +
       2^{ c + 6}
     c^{
         c + 5
     }
     [ \min\{s, 1\}
     ]^{
          \gc -   2\delta  -\varsigma
     }
      [\max\{s,1\}]^{1+\gc} 
   \big]
  \\ 
  & 
  \nonumber
  \leq
   2^{2c + 13} 
     c^{
       2 c + 10
     }
      [ \min\{s, 1\} ]^{
          \gc - 4 \delta - 2 \varsigma  
     }
     [\max\{s,1\}]^{2+ 2\gc}
.    
  \end{align}
  In addition, note that
  H\"older's inequality,
  Lemma~\ref{lemma:equivalent000},
  \eqref{eq:assumption.on.phi.sigma2},
  and \eqref{eq:increment.Ui} imply 
  for all
  $ s \in (0,h] $ 
  that
  \begin{equation}  
  \begin{split}  
  \label{eq:third.summand}
  &
    \left\|
    \smallsum_{u\in \mathbb{\U}} 
      \V'(Y_s)
      \big(
      \big(
      \tfrac{\partial^2}{\partial y^2}
      \Phi
      \big)
      (s, W_s)
      \big)(u,u)
    \right\|_{
      \mathcal{L}^2( \P; \R )
    }
    \\
    &
  \leq
    \left\|
         \V'(Y_s)
    \right\|_{\mathcal{L}^{4}(\P;L(  H,\R))}
    \left\|
      \smallsum_{u\in \mathbb{\U}}
    \left(
      \big(
      \tfrac{\partial^2}{\partial y^2}
      \Phi
      \big)
      (s, W_s)
      \right)\!(u,u)
    \right\|_{\mathcal{L}^{4}(\P;H)}
  \\ 
  & 
  \leq
    \left(
      \left\|
        \V'(Y_s)-\V'(x)
      \right\|_{
        \mathcal{L}^4( \P; L(  H, \R) )
      }
      +
      \left\|
        \V'(x)
      \right\|_{
        L(   H, \R )
      }
    \right)
    \left\|
      \sum_{u\in \mathbb{\U}}
      \!
    \left(
      \big(
      \tfrac{\partial^2}{\partial y^2}
      \Phi
      \big)
      (s, W_s)
      \right)\!(u,u)
    \right\|_{\mathcal{L}^{4}(\P;H)}
  \\
   & 
   \leq
    \left(
     2^{ c + 3} 
     c^{
       c + 4
     }
      [ \min\{s, 1\}
      ]^{
          \nicefrac{1}{2} -    \delta  -\varsigma
     }
     \!
      \max\{s,1\} 
      +
      c
      [
        1 +
        \V( x )
       ] 
    \right)
     c s^{ \gc }
  \\ &
  \leq
   \left(
     2^{ c + 3} 
     c^{
      c + 4 
     }
     [ \min\{s, 1\}
      ]^{
          \nicefrac{1}{2} -    \delta  -\varsigma
     }
     \!
      \max\{s,1\} 
      +
      2 c^2
       [
        \min\{s, 1\}
       ]^{ - \varsigma }
    \right)
     c s^{ \gc }
  \\ 
  &
  \leq
  2^{c + 4}
     c^{
        c + 5
     } 
     [
      \min\{s, 1\}
     ]^{ 
         \gc  
         - \delta - \varsigma  
    } 
    [\max\{s,1\}]^{1+\gc}
      .
  \end{split}     
  \end{equation}
%
  %
 %
  Moreover, the assumption that 
  $ \forall\,y, z \in H \colon
     | \bar{\V}( y ) - \bar{\V}( z ) |
  \leq
    c
    \left( 1 +
      | \V (y) |^{ \ga}
      +
      | \V (z) |^{ \ga }
    \right)
    \| y - z \|_H $,
  \eqref{eq:before.estimate.fist.term2},
  and \eqref{eq:increment.Y}
  show
  for all
  $ s \in (0,h] $ 
  that
\begin{equation}  \begin{split}
\label{eq:estimate.Ubar}
   &
   \left\|
     \bar{\V}( Y_s ) - \bar{\V}(x)
   \right\|_{\mathcal{L}^2(\P;\R)}
 \leq
   \left\|
     c
     \left( 1 +
       \left| \V(x) \right|^{ \ga}
       +
       \left| \V(Y_s) \right|^{\ga}
     \right)
     \left\|
       Y_s - x
     \right\|_H
   \right\|_{
     \mathcal{L}^2( \P; \R)
   }
   \\ 
   & 
   \leq
   c
   \,
   \big( 1 +
     |\V(x)|^{\ga}
     +
     \left\| \V( Y_s) \right\|^{ \ga }_{
       \mathcal{L}^{ \infty }( \P; \R )
     }
   \big)
   \| Y_s - x \|_{\mathcal{L}^2(\P; H)}
   \\
   &
    \leq
    c
    \,
    \big(
      1 +
      | \V(x) |^{ \ga }
      +
      \big[
        2^{ c + 1}c^{ c + 2} [\min \{s, 1\}]^{  - \varsigma}
      \big]^{ \ga }
    \big)
    2c^2 [\min \{s, 1\}]^{\nicefrac{1}{2}-\delta  }
  \max\{s,1\}
 \\ 
 & 
 \leq
 2c^3 [\min \{s, 1\}]^{\nicefrac{1}{2}-\delta  }
  \max\{s,1\}
    \Big[
      1 +
      c^{ \ga }
      s^{ - \varsigma \ga }
      +
      2^{\ga( c +1)}c^{\ga( c+2)} [\min \{s, 1\}]^{  - \varsigma\ga}
    \Big]
 \\ 
 & 
 \leq
 2^{\ga( c +1) + 2}c^{\ga(  c +  2)+3}
  \max\{s,1\}
 [\min \{s, 1\}]^{\nicefrac{1}{2} -\delta - \varsigma \ga}
    .
  \end{split}     
  \end{equation}
In the next step
we insert~\eqref{eq:L2.expU},
  \eqref{eq:increment.Ui},
  \eqref{eq:second.summand},
  \eqref{eq:fourth.summand},
  \eqref{eq:Uprime.Phiy.square},
  \eqref{eq:third.summand},
  and
  \eqref{eq:estimate.Ubar}
  into~\eqref{eq:after.Fubini1}
  to obtain for all
  $ t \in (0,h] $ 
  that
  \begin{equation}  
  \begin{split}
    &
    \E\!\left[
      \exp\!\left(
        e^{ - \rho t }
        \V( Y_t )
        +
        \smallint\nolimits_0^t
          e^{ - \rho r }
          \bar{\V}( Y_r )
        \, dr
      \right)
    \right]
    - e^{ \V(x) }
   \\
   &
   \leq
    \int_0^t
  \sqrt{ 2 }
    \exp\!\left(
      \tfrac{
        2^{(2c+3) \gb } \, c^{(2c+6)\gb} 
       s  }
       {
     [\min\{s, 1\}]^{2\delta + \max\{2,\gb\}   \varsigma }
     }
  \right)
  e^{ \V (x) }
%
  \bigg[
   \rho
   \,
  2^{ c + 2}  
     \,
     c^{
       c + 4
     }
     \left[ \min\{s, 1\}
     \right]^{
          \nicefrac{1}{2}   - \delta  -\varsigma
     }
      \max\{s,1\} 
  \\ 
  & 
  \quad
    +
   2^{ c+5}
    \,
     c^{
         c + 5
     }
     \left[ \min\{s, 1\}
     \right]^{
            \gc -   2\delta  -\varsigma
     }     
   [\max\{s,1\}]^{1+\gc}
    \\ 
    & 
    \quad
      +
      \tfrac{1}{2}\cdot
    2^{c+5}
    \,
  c^{ c + 6}
  \left[\max\{s,1\} \right]^{1+2\gc} 
     \left[ \min\{s, 1\}
     \right]^{
           \gc  -   3\delta  -\varsigma
}
    \\
    & 
    \quad
      +
      \tfrac{1}{2}\cdot
 2^{2c+13}
    \,
     c^{
       2c+10
     }
     \left[ \min\{s, 1\}
     \right]^{
          \gc - 4 \delta - 2 \varsigma  
     }
     [\max\{s,1\}]^{2+ 2\gc}
    \\ 
    & 
    \quad +
    \tfrac{1}{2}\cdot
     2^{c+4}
     \,
     c^{
        c + 5
     } 
    \left[
      \min\{s, 1\}
    \right]^{
         \gc 
         - \delta - \varsigma 
    }
    [\max\{s,1\}]^{1+\gc}
  \\ 
  & 
  \quad
    +
 2^{\ga(c+1) + 2} \, c^{\ga( c+2)+3}
  \max\{s,1\}
 \left[\min \{s, 1\} \right]^{\nicefrac{1}{2}  -\delta - \varsigma \ga}
  \bigg]
    \, ds
  \\
   & 
   \leq
    e^{ \V(x) } 
    \int_0^t 
    \sqrt{ 2 }
    \exp\!\left(
      \tfrac{
       2^{(2c+3) \gb } \, c^{(2c+6)\gb} 
        s
      }{
        \left[
          \min\{s, 1\}
        \right]^{  
          2\delta + \max\{2, \gb\}  \varsigma
        }
      }
  \right)
%
      \max\{\rho, 1\}
      \,
       c^{
       	(2c + 10 ) \ga
       }
       \, 
    2^{(2c+\nicefrac{25}{2}) \ga }
  \\ 
  & 
  \quad
  \cdot
   \left [
      \max\{s,1\}
     \right ]^{  2 + 2\gc }
     \left[
      \min\{ s, 1\}
     \right]^{
        \gc - 4 \delta
        - \varsigma  - \varsigma \ga
    }
    \,
    ds .
  \end{split}
  \end{equation}
%
%
%
This implies for all
  $ t \in (0,h] $ that
  \begin{equation} 
   \begin{split}
  \label{eq:estimate.last.term1}
&
    \E\!\left[
      \exp\!\left(
        \tfrac{
        \V(Y_t)}{e^{  \rho t }}
        +
        \smallint_0^t
        \tfrac{
        \bar{\V}(Y_r)}{e^{ \rho r }}
        \, dr
      \right)
    \right]
\\
&
  \leq
    e^{ \V(x) }
    \Bigg[
    1
    +
    \max\{ \rho, 1 \}
    \,
    2^{
    	(2c + 13) \ga 
    }
    \!
    \int_0^t  
%
  \tfrac{
   \exp \left(
      \frac{
        2^{(2c+3) \gb }\, c^{(2c+6)\gb}
        s
      }{
        \left[
          \min\{s, 1\}
        \right]^{   
          2\delta + \max\{2, \gb\}   \varsigma 
        }
      }
  \right)
      c^{
        (2c+10) \ga 
      }
    \left[
      \max\{s,1\}
    \right]^{
       2 + 2\gc 
    }
  }{
    \left[
      \min\{ s, 1\}
          \right]^{
    \varsigma + \varsigma \ga
     + 4 \delta
    - \gc
    }
  }
  \, ds
  \Bigg]
  .
  \end{split}
  \end{equation}
  The proof of
  Lemma~\ref{l:exp.mom.abstract.one-step} is thus completed.
\end{proof}
\subsection{Exponential moments for tamed approximation schemes} 
\label{subsection:use_big_lemma}
In this subsection we apply
Lemma~\ref{Cor:exp.mom.abstract}
and Lemma~\ref{l:exp.mom.abstract.one-step}
above
to establish
in Proposition~\ref{proposition:Euler.bounded.increments}
below
exponential moment bounds for 
an appropriate
tamed
exponential Euler-type approximation scheme
(cf., e.g.,   \cite{HutzenthalerJentzenKloeden2012,
HutzenthalerJentzenMemoires2015,
Sabanis2013ECP,
Sabanis2014Arxiv,
HutzenthalerJentzenWang2014}
for related schemes in the case of finite
dimensional SODEs and,
e.g., 
\cite{MasterRyan,
GyongySabanisSiska2016,
JentzenPusnik2015,
BeckerJentzen2016,
HutzenthalerJentzenSalimova2016}
for related schemes in the case of infinite dimensional SPDEs).
\begin{prop}
\label{proposition:Euler.bounded.increments}
 Let
 $ \left( H, \left< \cdot, \cdot \right> _H, \left \| \cdot \right\|_H \right) $
 and 
 $ \left( \U, \left< \cdot, \cdot \right> _\U, \left\| \cdot \right \|_\U \right) $
 be separable $ \R $-Hilbert spaces with $ \#_H > 1 < \#_U $,
  let $ T \in (0,\infty) $, 
  $ \rho \in [0,\infty) $,
  $ \delta \in [0, \nicefrac{1}{14}) $,
  $ c, \gamma \in [1,\infty) $, 
    $ \varsigma \in \big( 0,  \tfrac{1 - 14 \delta}{2 + 2\gamma }  \big) $,
  $ F \in \mathcal{M} \big( \mathcal{B}( H), \mathcal{B}(H) \big) $,
$ B \in \mathcal{M} \big( \mathcal{B}( H), \mathcal{B}(\HS(\U,H)) \big) $,
  $ \V \in \mathcal{C}_{c}^3(  H, [0,\infty) ) $,
  $ \bar{\V}\in \mathcal{C}( H, \R) $,
	$ S \in \mathbb{M}( (0,T], L(  H)) $,    
  $ D \in \mathbb{M}( (0,T ], \mathcal{B}(H) ) $,
   let
   $ ( \Omega, \mathcal{ F }, \P, ( \mathcal{ F }_t )_{t \in [ 0, T]} ) $
   be a filtered probability space, 
   let
   $ W \colon [0,T] \times \Omega \to \U $
   be an
   $ \operatorname{Id}_\U $-cylindrical $ ( \mathcal{F}_t )_{t \in [0, T]} $-Wiener process 
   with continuous sample paths,
assume for all
$ h \in (0,T] $,
$ x, y \in H $
that
 $ \V(S_h x )\leq \V(x) $,
  $ \bar \V(S_h x )\leq \bar \V(x) $,
  $ | \bar{\V}(x) - \bar{\V}(y) |
  \leq
  c \left( 1 + |\V(x)|^{\gamma} + |\V(y)|^{\gamma}\right)
  \| x - y \|_H $,  
   $ |\bar{\V}(x)|    
   \leq c \left( 1 + \left|\V(x)\right|^{\gamma} \right) $,
   and
   $ D_h  \subseteq \{ v \in H \colon \V(v) \leq c h^{ -\varsigma } \} $,
  assume for all $ h \in (0,T] $, 
  $ x \in D_h $ 
   that
  $ \max\{ \|F(x)\|_H, $  $\|B(x)\|_{\HS(\U,H)} \}
    \leq c h^{-\delta} $ 
  and  
  $ (\mathcal{G}_{F,B}\V)(x)
     +\frac{1}{2} \,\|B(x)^{*}(\nabla \V)(x) \|_\U^2
     +\bar{ \V }(x)
   \leq
    \rho  \V(x) $,
    and
    let
    $ Y^{ \theta } \colon
    [0,T] \times \Omega \to  H $,
    $ \theta \in \varpi_T $,
    be $ (\mathcal{F}_t)_{t\in [0,T]} $-adapted
    stochastic processes 
    with continuous sample paths
    which satisfy
    for all
    $ \theta \in \varpi_T $,
    $ t \in ( 0, T ] $
    that
    \begin{equation} 
    \begin{split}  
    \label{eq:thm:Y}
    Y_t^{ \theta }
    &
    =
    S_{t-\llcorner  t \lrcorner_\theta}
    \!
    \left(
    Y_{
    	\llcorner  t \lrcorner_{ \theta }
    }^{ \theta }
    +
    \1_{ D_{ |\theta|_T } }\!(
    Y_{ \llcorner  t \lrcorner_{ \theta } }^{ \theta }
    )\!
    \left[
    F(
    Y_{ \llcorner  t \lrcorner_{ \theta } }^{ \theta }
    ) \,
    (
    t - \llcorner  t \lrcorner_{ \theta }
    ) + 
    \tfrac{
    	B( Y_{ \llcorner  t \lrcorner_\theta }^\theta )
    	(W_t - W_{ \llcorner  t \lrcorner_\theta}) 
    }{
    1 + 
     \| 
    B( Y_{ \llcorner  t \lrcorner_\theta }^\theta )
    (W_t - W_{ \llcorner  t \lrcorner_\theta }) 
     \|_H^2
}
\right]
\right)
.
\end{split}
\end{equation}
  Then 
  \begin{enumerate}[(i)]
  	\item it holds that
\begin{equation}  
\begin{split}  
\label{eq:exp.mom.quadratic.exponent} 
    \limsup_{
      \left | \theta \right |_T
      \searrow 0
    }
    \sup_{ t \in [0,T] }
    \E\!\left[
      \exp\!\left(
        \tfrac{
          \V( Y_t^{ \theta } )
        }{
          e^{ \rho t }
        }
        +
        \smallint_0^t
          \tfrac{
            \1_{ D_{  | \theta  |_T } }\!(
              Y_{ \lfloor s \rfloor_{ \theta } }^{ \theta }
            )
            \,
            \bar{\V}( Y_s^{ \theta } )
          }{
            e^{ \rho s }
          }
        \, ds
      \right)
    \right]
    \leq
      \limsup_{
        \left | \theta
        \right |_T
        \searrow 0
      }
      \E \big[
        e^{
          \V( Y_0^{ \theta } )
        }
       \big]
  \end{split}
  \end{equation}
  and 
  \item \label{item:Exp.Mom.Bound} it holds for all $ \theta \in \varpi_T $ that
\begin{equation}
\begin{split}
 \label{eq:Exp.Mom.Bound}
&
\sup_{t\in [0,T]}\!\!
  \E\Big[
  \!
       \exp\Big(  
         \tfrac{
           \V( Y^{ \theta }_t )
         }{ e^{ \rho t } } 
         +  
         \smallint_0^t
           \tfrac{
             \1_{ D_{ |\theta|_T } }\!(
               Y^{ \theta }_{\lfloor s \rfloor_{ \theta } }
             )
             \,
             \bar{\V}( Y^{ \theta }_s )
           }{
             e^{ \rho s }
           }
           \,
          ds 
             \Big) 
          \Big]
          \\ 
          &
  \leq
    \exp\!\left(
    \tfrac{    
      \exp \left(\!
                  2
                [720 \max\{T, \rho, 1\}   c^3]^{ 
                    (720 c^3\!\max\{T, 1\} + 7)
                    \gamma
                }
          \right)
    }{
       [
        \min\{ |\theta|_T, 1\}
      ]^{       
            \varsigma + \varsigma \gamma
          + 7 \delta -\nicefrac{1}{2}
      }
    }
    \right)
     \E \big[ 
     e^{ \V( Y_0^{ \theta } ) }
     \big]
     .
\end{split}
\end{equation}
\end{enumerate}
\end{prop}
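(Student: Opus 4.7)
The plan is to apply Corollary~\ref{Cor:exp.mom.abstract}, verifying its one-step hypothesis~\eqref{eq:exp.mom.abstract.assumption} by means of Lemma~\ref{l:exp.mom.abstract.one-step}. Fix $\theta \in \varpi_T$, write $h = |\theta|_T$, and set $E = D_h$ in Corollary~\ref{Cor:exp.mom.abstract}. For $x \in D_h$ I would introduce the smooth one-step map $\Phi_x \in \mathcal{C}^{1,2}([0,h]\times \U, H)$ defined by
\begin{equation*}
\Phi_x(s,w) = x + F(x)\,s + \tfrac{B(x)\,w}{1+\|B(x)w\|_H^2},
\end{equation*}
so that $\Phi_x(0,0)=x$, $(\tfrac{\partial}{\partial s}\Phi_x) \equiv F(x)$ and $(\tfrac{\partial}{\partial w}\Phi_x)(0,0) = B(x)$. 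A direct Taylor expansion of the tamed summand yields $(\tfrac{\partial}{\partial w}\Phi_x)(s,w) - B(x) = O(\|B(x)\|_{\HS(\U,H)}\|B(x)w\|_H^2)$ and $\smallsum_{u\in \mathbb{\U}}\big((\tfrac{\partial^2}{\partial w^2}\Phi_x)(s,w)\big)(u,u) = O(\|B(x)\|_{\HS(\U,H)}^2\|B(x)w\|_H)$; bounding these in $L^p(\P)$ via Lemma~\ref{l:exp.Gauss} together with the hypothesis $\|B(x)\|_{\HS(\U,H)} \leq c\,h^{-\delta}$ produces~\eqref{eq:assumption.on.phi.sigma} and~\eqref{eq:assumption.on.phi.sigma2} with $\gc = \nicefrac{1}{2}$ (the extra $\sqrt{s}$ absorbed via $s \leq \sqrt{T}\sqrt{s}$). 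The increment condition~\eqref{eq:assumption.on.phi.increment} follows from $\|\Phi_x(s, W_s) - x\|_H \leq \|F(x)\|_H\,s + \|B(x)W_s\|_H\wedge 1$, and the generator inequality~\eqref{eq:generator.exponential} for $\V$ at $x \in D_h$ is exactly the proposition's hypothesis.

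Lemma~\ref{l:exp.mom.abstract.one-step} then supplies, for each $x \in D_h$ and $t \in (0,h]$, the integral form of~\eqref{eq:exp.mom.abstract.assumption} with $\ga = \gb = \gamma$ and $\gc = \nicefrac{1}{2}$. For $x \notin D_h$, the recursion~\eqref{eq:thm:Y} degenerates to $Y_t^\theta = S_{t-\llcorner t\lrcorner_\theta}\,Y_{\llcorner t\lrcorner_\theta}^\theta$, matching the first branch of~\eqref{eq:process_dynamic}, and the monotonicity hypotheses $\V(S_h x)\leq \V(x)$, $\bar\V(S_h x) \leq \bar\V(x)$ fill in what Corollary~\ref{Cor:exp.mom.abstract} needs. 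An application of Corollary~\ref{Cor:exp.mom.abstract} (with the indicated $\Phi$, $E = D_h$, $S$ and $\V, \bar\V$) then delivers
\begin{equation*}
\sup_{t\in[0,T]} \E\!\left[\exp\!\left(\tfrac{\V(Y_t^\theta)}{e^{\rho t}} + \smallint_0^t \tfrac{\1_{D_h}(Y_{\lfloor s\rfloor_\theta}^\theta)\,\bar \V(Y_s^\theta)}{e^{\rho s}}\,ds\right)\right] \leq e^{\hat c(h)\,T}\,\E\!\left[e^{\V(Y_0^\theta)}\right]
\end{equation*}
for a constant $\hat c(h) \in [0,\infty)$ read off in closed form from~\eqref{eq:exp.mom.abstract.one-step}. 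Rewriting $\hat c(h)\,T$ as the right-hand side of~\eqref{eq:Exp.Mom.Bound} produces item~\eqref{item:Exp.Mom.Bound}. For~\eqref{eq:exp.mom.quadratic.exponent}, the assumption $\varsigma < \tfrac{1-14\delta}{2+2\gamma}$ makes the exponent $\varsigma + \varsigma\gamma + 7\delta - \nicefrac{1}{2}$ of $\min\{|\theta|_T,1\}$ appearing in~\eqref{eq:Exp.Mom.Bound} strictly negative, so the inner exponential prefactor on the right-hand side of~\eqref{eq:Exp.Mom.Bound} tends to $1$ as $|\theta|_T \searrow 0$, yielding the limsup claim.

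The main obstacle is not the structural application of the two key results but the constant bookkeeping needed to land exactly on the closed form~\eqref{eq:Exp.Mom.Bound}. One must carefully propagate the singular factor $[\min\{s,1\}]^{-(\varsigma+\varsigma\gamma+4\delta-\gc)}$ and the double-exponential prefactor $\exp\!\big([\min\{s,1\}]^{-(2\delta+\max\{2,\gamma\}\varsigma)}\,s\big)$ from the integrand of~\eqref{eq:exp.mom.abstract.one-step} through the $s \mapsto h$ integration, and combine them with the $h^{-\delta}$ factors arising at each place where $\|F(x)\|_H$ or $\|B(x)\|_{\HS(\U,H)}$ enters, including the taming-error contributions in $(\tfrac{\partial}{\partial w}\Phi_x)(s,w) - B(x)$ and $\smallsum_{u \in \mathbb{\U}}\big((\tfrac{\partial^2}{\partial w^2}\Phi_x)(s,w)\big)(u,u)$, so as to arrive at precisely the singular exponent $\varsigma + \varsigma\gamma + 7\delta - \nicefrac{1}{2}$ and at the double-exponential constant with base $720$ featuring in~\eqref{eq:Exp.Mom.Bound}.
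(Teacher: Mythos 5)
Your proposal is correct and follows essentially the same route as the paper's proof: the paper likewise introduces the tamed one-step map $\Phi_h^x(s,y) = x + F(x)\,s + B(x)y/(1+\|B(x)y\|_H^2)$, verifies hypotheses \eqref{eq:assumption.on.phi.sigma}--\eqref{eq:assumption.on.phi.increment} via the Burkholder--Davis--Gundy-type inequality together with $\max\{\|F(x)\|_H,\|B(x)\|_{\HS(\U,H)}\}\le c h^{-\delta}$, applies Lemma~\ref{l:exp.mom.abstract.one-step} followed by Corollary~\ref{Cor:exp.mom.abstract}, and finishes with exactly the constant bookkeeping you defer. The only cosmetic differences are that the paper takes $\gc=\nicefrac{1}{2}-3\delta$ rather than $\gc=\nicefrac{1}{2}$ (which is precisely where the $7\delta$ in the exponent of \eqref{eq:Exp.Mom.Bound} originates), and that it obtains item (i) directly from the fact that the one-step constant $\varrho_h$ tends to $0$ as $h\searrow 0$ rather than by passing to the limit in the closed-form bound of item (ii).
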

\begin{proof}[Proof of Proposition~\ref{proposition:Euler.bounded.increments}]
Throughout this proof 
 let
 $ \hat c \in [1,\infty) $
 and
 $ \varrho_h \in (0,\infty) $,
 $ h \in (0,T] $,
 be the real numbers 
 which satisfy 
 for all 
 $ s \in (0,T] $ 
 that
  $ \hat{c} =  360 c^3\!\max\{T, 1\} $
  and
 \begin{equation} 
 \begin{split}
 \varrho_s
 &
 =
 \exp\!\left(
 \tfrac{ 
 	[2\hat c]^{
 		( 2 \hat c + 6)
 		\gamma
 	}
 	\,
 	s
 }{ 
 \min\{ s^{  
 	2\delta + \max\{2, \gamma\}  \varsigma
 },
  1\} 
}
\right)
\tfrac{
	[ 2 \hat c]^{
		(2 \hat c + 13 ) \gamma
	}  
	[
	\max\{s,\rho,1\}
	]^4
}{
\left[
\min\{ s, 1\}
\right]^{  \varsigma  + \varsigma \gamma +
		7 \delta
	- \nicefrac{1}{2}  
}
}
,
\end{split} 
\end{equation}
let $ \psi \colon H \to H $
be the mapping
which satisfies 
for all $ x \in H $ that
$ \psi(x) = \frac{x}{1+\| x \|_H^2} $,
and let
$ \Psi \colon   H\times [0,T] \times \U \to  H $
and
$ \Phi_h^x \colon
 [0,h]\times\U \to H $, $ (x,h) \in H \times (0,T] $,
  be the mappings
  which satisfy
  for all    
  $ h \in (0,T] $,
  $ x \in H $,  
  $ s \in [0,h] $, $ y \in  \U $ that
  \begin{equation}
  \begin{split}
    \Psi (x,s,y) 
             =
             x+
    		 F(x)s
    		 +
             \tfrac{B(x)y}
             {1+ 
             \left\| B(x) y\right\|_H^2}
             \qquad
             \text{and}
             \qquad
             \Phi_h^x( s, y) = \Psi(x, s, y)
             .
             \end{split}
  \end{equation}
  We now verify step by step
  the assumptions of
  Lemma~\ref{l:exp.mom.abstract.one-step}.
First,
  note that
  for all
  $ h \in ( 0, T] $,
  $ x\in H $
  it holds
  that
  \begin{equation}
  \label{eq:trivial}
   \Phi^x_h(0,0) = x 
   .
   \end{equation}
  Furthermore, observe that
  for all $ h \in (0,T] $, 
  $ x \in  H $,
  $ s \in  (0, h] $,
    $ y \in \U $ it holds that
  \begin{equation}  
  \begin{split}
  \label{eq:cond_1}
    &\left(\tfrac{\partial}{\partial s}\Phi_h^x\right) \! ( s,y)
    =  F(x) 
    .
  \end{split}     
  \end{equation}
Next we 
note that
$ \psi \in \mathcal{C}^2(H,H) $ and we observe that for all 
  $ z, u, v \in H $ it holds that
  \begin{equation} 
  \begin{split}
  \label{eq:first_der}
  &
    \psi' (z) u
    = 
      \tfrac{u}{1 +  
      \|z\|_H^2}
      -\tfrac{
        2  
        z \left< z,u\right>_H}
            { (1 + 
            \|z\|_H^2 )^2} 
      \end{split}
      \end{equation}
and
  \begin{equation} 
  \label{eq:second_derivative_psi}
  \begin{split}
    &
    \psi''(z) (u, v)
  = 
      -
      \tfrac{
        2 
         [
          u
          \left< z, v \right>_H
          +
          v
          \left< z, u \right>_H
          +
          z
          \left< u, v \right>_H
         ]
      }{
         ( 1 +   
        \|z\|_H^2  )^2
      }
      +
      \tfrac{
        8 
        z \left< z, u \right>_H  \left< z,v\right>_H
      }{
         (1+ 
        \|z\|_H^2 )^3
      }   
      .
  \end{split} 
  \end{equation}
Moreover,
  note that~\eqref{eq:first_der} ensures for all
  $ h \in (0, T] $, $ x \in H $, $ s \in (0, h] $, 
   $ y, u \in \U $ 
  that
  \begin{equation}  
  \begin{split}
  \label{eq:der_y}
    &
    \big(\tfrac{\partial}{\partial y}\Phi_h^x\big)( s,y)u
    =  
    \tfrac{B(x)u}
    {1+ 
    \|B(x)y\|_H^2}
      -\tfrac{
        2   
        B(x) y \left< B(x)y,B(x) u \right>_H 
        }
            { (1+ 
            \|B(x)y\|_H^2 )^2} 
    .
  \end{split}     
  \end{equation}
  The Cauchy-Schwarz inequality hence implies for all 
  $ h \in (0, T] $, $ x \in H $,
  $ s \in (0, h] $, 
   $ y \in \U $ 
  that
  \begin{equation}  
  \begin{split}
  &
    \big\|
      \big(\tfrac{\partial}{\partial y}\Phi_h^x\big)( s,y)-B(x)
    \big\|_{\HS(\U,H)}
    \\
    &
   \leq
     \left\| \tfrac{ B(x) }{ 1 + \| B(x) y \|_H^2} - B(x) \right\|_{\HS(U,H)}
     +
     \left\| \tfrac{2 B(x) y \langle B(x)y, B(x)(\cdot) \rangle_H}{ ( 1+ \| B(x) y\|_H^2)^2}
     \right\|_{\HS(U,H)}
    \\
    &
    \leq
    \left\|
    \left( \tfrac{1}{1+\| B(x) y \|_H^2} - 1 \right) B(x) \right\|_{\HS(U,H)}
    +
    \tfrac{
    \| 2 B(x) y \|_H \| B(x) y \|_H \| B(x) \|_{\HS(U,H)}
    }{ ( 1 + \| B(x) y \|_H^2 )^2 }
	\\
	&
	\leq
	\left| \tfrac{1}{1+\| B(x) y \|_H^2} - 1   \right|
	\| B(x) \|_{\HS(U,H)}
	+
	2 \| B(x) y \|_H^2 \| B(x) \|_{\HS(U, H)}
   .
  \end{split}     
  \end{equation}
  This ensures for all $ h \in (0,T] $,
  $ x \in H $,
  $ s \in (0,h] $, 
  $ y \in U $
  that 
  \begin{equation}
  \begin{split}
  &
  \big\|
  \big( \tfrac{\partial}{\partial y } \Phi_h^x \big)( s,y) - B(x)
  \big\|_{\HS(U,H)}
  \\
  &
  \leq
  \left| 1 - \tfrac{1}{1+\| B(x) y \|_H^2}   \right|
	\| B(x) \|_{\HS(U,H)}
	+
	2 \| B(x) y \|_H^2 \| B(x) \|_{\HS(U, H)}
\\
&
=
\tfrac{ \| B(x) y \|_H^2 \| B(x) \|_{\HS(U,H)} }{ 1 + \| B(x) y \|_H^2}
+
2 \| B(x) y \|_H^2 \| B(x) \|_{\HS(U,H)}
\leq
3 \| B(x) y \|_H^2 \| B(x) \|_{\HS(U, H)} .
  \end{split}
  \end{equation}
  The
  Burkholder-Davis-Gundy type inequality in Lemma~7.7 in Da Prato \& Zabczyk~\cite{dz92}
  therefore proves that for all 
  $ h \in (0,T] $, $ x \in D_h $,
  $ s \in (0, h] $
  it holds that
  \begin{equation}  
  \begin{split}
  &
    \big\|\big(\tfrac{\partial}{\partial y}\Phi_h^x\big)( s,W_s)-B(x)
    \big\|_{\mathcal{L}^8(\P;\HS(\U,H))}
   \leq
	3 \| B(x) \|_{\HS(U,H)}
	\| \| B(x) W_s \|_H^2 \|_{\mathcal{L}^8(\P; \R)}
	\\
	&
	=
	3 \| B(x) \|_{\HS(U,H)}
	\|  B(x) W_s   \|_{\mathcal{L}^{16}(\P; H)}^2 
	\leq
	3 \| B(x) \|_{\HS(U,H)}
	\big( \tfrac{ 16 \cdot 15}{2} \big)
	\| B(x) \|_{\HS(U, H)}^2 s
	\\
	&
	=
	360 \| B(x) \|_{\HS(U,H)}^3 s
	\leq
	360 (ch^{-\delta})^3 s
	\leq
	 360 
   c^3 s^{1-3 \delta}
    .
  \end{split}    
  \end{equation}
  This and~\eqref{eq:cond_1} show that for all 
  $ h \in (0,T] $, $ x \in D_h $,
  $ s \in (0,h] $  
  it holds that
  \begin{equation}
  \begin{split}
  \label{eq:cond1}
  &
   \max\!\Big\{ \big\|
  \big(\tfrac{\partial}{\partial s}\Phi_h^x\big) ( s,W_s) - F(x) \big\|_{\mathcal{L}^4(\P;H)},
  \big\|\big(\tfrac{\partial}{\partial y}\Phi_h^x\big)( s,W_s)-B(x)
    \big\|_{\mathcal{L}^8(\P;\HS(\U,H))}
  \Big \}
  \\
  &
  \leq
  360 c^3 s^{1-3\delta}
  \leq
  \hat c s^{\nicefrac{1}{2} - 3 \delta}
  .
  \end{split}
  \end{equation}
  Next observe that~\eqref{eq:second_derivative_psi} implies that for all $ z, u \in H $
  it holds that
  \begin{equation} 
  \begin{split}
  &
    \psi''(z) (u, u)
=
      \tfrac{
        8  
        z
         |
          \left< z, u \right>_H
         |^2
      }{
        \left(1+ 
        \|z\|_H^2\right)^3
      }
      -
      \tfrac{
      2  
        \left[
          2 u
          \left< z, u \right>_H
          +
          z
          \left\| u \right\|_H^2
        \right]
      }{
        \left( 1 +  
        \|z\|_H^2 \right)^2
      } 
    .
  \end{split} 
  \end{equation}
  Therefore, we obtain that 
  for all
  $ x \in H $, 
    $ y, u \in \U $
  it holds
  that
\begin{equation}
  \begin{split}
  \label{eq:der_yy} 
   \big( \tfrac{ \partial^2 }{ \partial y^2 }
     \psi (B(x)y )\big)
    (u,u)
&
    = 
    \psi '' \big( B(x) y \big) \big( B( x )u, B( x )u \big)
    \\ 
    & 
    =
    \tfrac{ 
      8  
       B(x) y
      |
        \left<  B(x) y, B(x)u
        \right>_H
       |^2
    }{
      \left(
        1 +  
        \left\|
         B(x) y
        \right\|_H^2
      \right)^3
    }
    -
    \tfrac{ 
      2  
      \big[
      2
        B(x)u
        \left<
          B(x) y ,
          B(x) u
        \right>_H
        +
       B(x) y 
        \|B(x)u \|_H^2
      \big]
   }{
     \left(1 +  
      \|B(x) y\|_H^2\right)^2
   }
  .
  \end{split}    
   \end{equation}   
  The Cauchy-Schwarz inequality hence shows for all
  $ x \in H $,
  $ y, u \in \U $ 
  that
  \begin{equation}
  \begin{split}
    \left\| \big( 
      \tfrac{ \partial^2 }{ \partial y^2 }
      \psi  ( B(x) y  )\big)
     (u,u)
    \right\|_H
    &
    \leq
    \tfrac{
      8  
      \left\|
         B(x) y
      \right\|_{H}^3
      \left\|
        B( x )u
      \right\|^2_H
    }{
      \left(
        1 + 
        \left\|
         B(x) y
        \right\|_H^2
      \right)^3
    }
    +
    \tfrac{
      6 
      \left\|
        B(x) y
      \right\|_H
      \left\|
        B( x ) u
      \right\|^2_H
   }{
     \left(1+ 
     \|B(x) y\|_H^2\right)^2
   }
\\
&
=
\left[
\tfrac{ 
	8 \| B(x) y \|_H^2
	+
	6 (1 + \| B(x) y \|_H^2 )
}
{
	\left( 1 + \| B(x) y \|_H^2 \right)^3
	}
\right]
\!
\| B(x) y \|_H
\| B(x) u \|_H^2
\\
&
\leq
6  
\|B(x)y\|_H
        \|
          B( x )u
        \|^2_H
  .
  \end{split}     
  \end{equation}
  This, the triangle inequality, and the Burkholder-Davis-Gundy type inequality in Lemma~7.7 in Da Prato \& Zabczyk~\cite{dz92} imply that for all
 $ h \in (0, T] $, $ x \in D_h $,
  $ s \in (0, h] $ it holds that
\begin{equation}
  \begin{split}
  \label{eq:cond2}
  &
   \left\|
      \sum\nolimits_{u\in \mathbb{\U}}
    \big(
      \big(
      \tfrac{\partial^2}{\partial y^2}
      \Phi_h^x
      \big)
      ( s, W_s)
      \big)(u,u)
    \right\|_{\mathcal{L}^{4}(\P;H)}
  \leq
 6 
 \!
  \left\|B(x)W_s \right\|_{\mathcal{L}^4(\P; H)}
  \sum\nolimits_{u\in \mathbb{\U}}
  \|
  B(x) u
  \|_H^2
  \\
  &
  \leq
 6  
  \|B(x)\|_{\HS(\U,H)}^3
  \sqrt{6s} 
  \leq
  6 \sqrt{6}  
  c^3 s^{\nicefrac{1}{2}-3 \delta}
  \leq 
  \hat c s^{\nicefrac{1}{2}-3 \delta}
  .
  \end{split}
  \end{equation}
  Next observe that for all $ h \in (0,T] $,
  $ x \in D_h $, 
   $ s \in (0, h] $, 
   $ r \in [1, \infty) $
   it holds that
  \begin{equation}
  \begin{split}
  \label{eq:cond3} 
&
    \left\|\Phi_h^x( s,W_s)-x \right\|_{\mathcal{L}^r(\P; H)}
     \leq
     \left\|
     \| F(x) \|_H s
     +
		\tfrac{ \| B(x) W_s \|_H }{
		1 +  
		 \| B(x) W_s \|_H^2 }     
     \right\|_{\mathcal{L}^r(\P; \R)} 
     \\ 
     & 
 \leq 
 \min \! \big\{
 \big\| \| F(x) \|_H s + \tfrac{1}{2} \big\|_{\mathcal{L}^r(\P; \R)},
 \big\|
 \| F(x) \|_H s + \| B(x) W_s \|_H
 \big\|_{\mathcal{L}^r(\P; \R)}
  \big\} 
 \\ 
 &
\leq
\min \! \big\{  
\tfrac{1}{2} + ch^{-\delta} s,
\big\|
\| F(x) \|_H s + \| B(x) W_s \|_H
\big\|_{\mathcal{L}^r(\P; \R)}
\big\}
\\
&
\leq
\min \! \big\{  
\tfrac{1}{2} + cT^{1-\delta},
\big\|
\| F(x) \|_H s + \| B(x) W_s \|_H
\big\|_{\mathcal{L}^r(\P; \R)}
\big\}
\\
&
\leq
\hat c \min \! \big\{  
1,
\big\|
\| F(x) \|_H s + \| B(x) W_s \|_H
\big\|_{\mathcal{L}^r(\P; \R)}
\big\}
     .     
  \end{split}
  \end{equation}
Moreover, note that
the fact that $ \psi \in \mathcal{C}^2(H, H) $
implies that for all $ h \in (0,T] $, $ x \in H $ 
it holds that
$ \Phi_{h}^x \in \mathcal{C}^{1,2}
(   [0,h]\times\U,  H) $.
Combining  
this,
\eqref{eq:trivial},
 \eqref{eq:cond1}, \eqref{eq:cond2},
and \eqref{eq:cond3}
allows us to apply
Lemma~\ref{l:exp.mom.abstract.one-step}
(with 
$ \varsigma = \varsigma $,
$ h = h $,
$ c = \hat c $,
$ \gamma_0 = \gamma $,
$ \gamma_1 = \gamma $,
$ \rho = \rho $,
$ \delta = \delta $,
$ \gamma_2 = \nicefrac{1}{2}-3\delta $, 
$ x = x $,
$ F = F $,
$ B = B $,
$ \bar V = \bar V $,
$ V = V $, 
$ \Phi = \Phi_h^x $
for $ x \in D_h $, 
$ h \in (0,T] $ 
in the notation of Lemma~\ref{l:exp.mom.abstract.one-step})
to obtain that
for all $ h\in(0,T] $,  
$ x \in D_h $,
$ t\in  (0,h] $ it holds that
\begin{equation}  
\begin{split}
\label{eq:prepare_big_lemma}
&
\E\!\left[
\exp\!\left(
\tfrac{
	\V(
	\Phi_h^x( t, W_t)
	)
}{
e^{\rho t}
}
+
\smallint_0^t
\tfrac{ \bar{\V}(\Phi_h^x( s,W_s))}{e^{\rho s}}
\, ds
\right)
\right]
\leq
\left(
1 + \smallint\nolimits_0^t \varrho_s \, ds
\right)
e^{ \V( x ) } 
.
\end{split}     
\end{equation}
  Next note that
  the estimates
  $ 1 - 2 \delta - \max\{2, \gamma \} \varsigma \geq 0 $
  and
  $ \nicefrac{1}{2} 
  -  \varsigma
  -   \varsigma   \gamma  
  - 7 \delta 
  > 0 $
  ensure that the function 
  $ (0,T] \ni h \mapsto \varrho_h \in (0,\infty) $
  is non-decreasing
  and that
  $ \limsup_{ h \searrow 0 } \varrho_h = 0 $.
  Combining
  this
  with~\eqref{eq:prepare_big_lemma}
  implies
  that
  for all $ h\in(0,T] $, 
  $ x \in D_h $,
  $ t\in  (0,h] $ it holds that
  \begin{equation}  
  \begin{split}
    &
    \E\!\left[
      \exp\!\left(
        \tfrac{
          \V(
            \Phi_h^x( t, W_t)
          )
        }{
          e^{\rho t}
        }
        +
        \smallint_0^t
        \tfrac{ \bar{\V}(\Phi_h^x( s,W_s))}{e^{\rho s}}
        \, ds
      \right)
    \right]
    \leq
    \left(
      1 + \smallint\nolimits_0^t \varrho_s \, ds
    \right)
    e^{ \V( x ) }
    \leq
    ( 1 + \varrho_h t)
    \, e^{ \V(x) } 
    .
  \end{split}     
  \end{equation}
  This ensures for all
  $ \theta \in \varpi_T $,
  $ x \in D_{|\theta|_T} $,
  $ t \in (0, |\theta|_T] $
  that
  \begin{equation}  
  \begin{split}
    &
    \E\!\left[\exp\!\left(
      \tfrac{ 
        \V(
          \Phi_{|\theta|_T}^x (t,W_t)
        )
      }{
        e^{\rho t}
      }
        +
        \smallint_0^t \tfrac{   
        \bar{\V}(\Phi_{ |\theta|_T }^x (s,W_s))}{e^{\rho s}}\,ds
        \right)\right]
     \leq e^{\varrho_{ |\theta|_T} t + \V(x)}.
  \end{split}     
  \end{equation}
  Hence, we obtain for all
  $ \theta \in \varpi_T $,
  $ x \in D_{|\theta|_T} $,
  $ t \in (0, |\theta|_T] $ 
  that
   \begin{equation}  
   \begin{split}
   &
   \E\!\left[\exp\!\left(
   \tfrac{ 
   	\V(
   	\Psi (x,t,W_t)
   	)
   }{
   e^{\rho t}
}
+
\smallint_0^t \tfrac{  
	\bar{\V}(\Psi (x,s,W_s))}{e^{\rho s}}\,ds
\right)\right]
\leq e^{\varrho_{ |\theta|_T} t + \V(x)}.
\end{split}     
\end{equation}
  Corollary~\ref{Cor:exp.mom.abstract}
  (with 
  $ T = T $,
  $ \theta = \theta $,
  $ \rho = \rho $,
  $ c = \varrho_{|\theta|_T} $,
  $ V = V $,
  $ \bar V = \bar V $,
  $ \Phi = \Psi $,
  $ E = D_h $,
  $ S = S $,
  $ W = W $,
  $ Y = Y^\theta $ 
  for 
  $ \theta \in \varpi_T $,
  $ h \in (0,T] $
  in the notation of Corollary~\ref{Cor:exp.mom.abstract})
therefore
  yields that for all  
  $ \theta \in \varpi_T $,
  $ t \in [0,T] $ 
  it holds that
  \begin{equation}  
  \begin{split}
  \label{eq:first.exp.mom.bound}
    &
    \E\!\left[\exp\!\left( \tfrac{\V(Y^{\theta}_{t})}{e^{\rho t}}
          +\smallint_0^{t}
                         \tfrac{\1_{D_{|\theta|_T}}
                         (Y^{\theta}_{\lfloor s \rfloor_{\theta}})
                         \, \bar{\V}(Y^{\theta}_s)}{e^{\rho s}}\,ds
                 \right)
          \right]
    \leq
      e^{\varrho_{|\theta|_T} t }
      \,
      \E \! \left[e^{{\V}(Y^{\theta}_0)}\right].
  \end{split}    
   \end{equation}
  This assures that for all $ \theta \in \varpi_T $ it holds that
  \begin{equation}  
  \begin{split}
  \label{eq:first.exp.mom.bound.before.final}
    &
    \sup_{t\in[0,T]} \E\!\left[\exp\!\left( \tfrac{\V(Y^{\theta}_{t})}{e^{\rho t}}
          +\smallint_0^{t}
                         \tfrac{\1_{D_{|\theta|_T}}
                         (Y^{\theta}_{\lfloor s\rfloor_{\theta}}) \, \bar{\V}(Y^{\theta}_s)}{e^{\rho s}}\,ds
                 \right)
          \right]
    \leq
      e^{\varrho_{|\theta|_T} T }
      \,
      \E \! \left[e^{{\V}(Y^{\theta}_0)}\right].
  \end{split}     
  \end{equation}
  This and
  the fact that
  $ \limsup_{h \searrow 0}\varrho_h = 0 $ 
  establish~\eqref{eq:exp.mom.quadratic.exponent}. It thus remains to prove~\eqref{eq:Exp.Mom.Bound}.
  For this observe that the fact that  $ \forall \, x \in [ 2^{20}, \infty ) \colon x \leq \exp\!\left( x^{ 1 / 4 } \right) $
  and the fact that
  $ \forall \, \theta \in \varpi_T \colon
  ( |\theta|_T )^{1 - 2\delta - \max \{ 2, \gamma\} \varsigma}
  \leq \max\{ 1, T\} $
  show that for all $ \theta \in \varpi_T $
  it holds that
  \begin{equation} 
  \begin{split}
  \label{eq:vartheta.h.estimate}
    \varrho_{|\theta|_T}    T 
   &
=
       \exp\!\left(
      \tfrac{ 
        [ 720 c^3 \!\max\{ T, 1 \}  ]^{
          ( 720 c^3 \!\max\{T, 1\} + 6)  \gamma
        } 
        |\theta|_T
      }{
        \left[
          \min\{  |\theta|_T, 1\}
        \right]^{  
          2\delta + \max\{2, \gamma\}  \varsigma
        }
      }
  \right)
\tfrac{
      [ 720 c^3 \! \max\{T, 1\}  ]^{
       ( 720 c^3 \! \max\{T, 1\} + 13 ) \gamma 
      }  
       [
            \max\{|\theta|_T, \rho, 1\}
            ]^4
            T
}{
     [
      \min\{ |\theta|_T, 1\}
     ]^{ 
         \varsigma  + \varsigma \gamma + 7 \delta
        - \nicefrac{1}{2}  
    }
    }
  \\
  &
  \leq
   \exp\!\left(
        [720 c^3\!\max\{T, 1\} ]^{
              ( 720 c^3\!\max\{T, 1\} + 7 )
            \gamma
        }
  \right)
  \tfrac{    
      [720  c^3\!\max\{ T, \rho, 1\}   ]^{
          ( 720 c^3 \! \max\{T, 1\} + 18)  \gamma
      } 
  }{
    \left[
      \min\{ |\theta|_T, 1\}
    \right]^{
        \varsigma
      + \varsigma \gamma
      + 7 \delta
      - \nicefrac{1}{2}
    }
  }
  \\
  &
  \leq
   \exp\!\left(
          2
        [720  c^3\!\max\{T, \rho, 1\} ]^{ 
           (720 c^3\!\max\{T, 1\} + 7)
            \gamma
        }
  \right)
  \tfrac{    
   1
  }{
    [
      \min\{ |\theta|_T, 1\}
    ]^{       
        \varsigma + \varsigma \gamma + 7 \delta
        -\nicefrac{1}{2}
    }
  }
      .
  \end{split}
  \end{equation}
  Combining \eqref{eq:first.exp.mom.bound.before.final} with \eqref{eq:vartheta.h.estimate} 
establishes \eqref{eq:Exp.Mom.Bound}.   
  The proof of  Proposition~\ref{proposition:Euler.bounded.increments} is thus completed.
\end{proof}
\section{Exponential moments
 for space-time-noise discrete approximation schemes}
 \label{section:3}
 In Proposition~\ref{proposition:Euler.bounded.increments}
 in Section~\ref{section:2}
 above
 we established
 exponential
 moment bounds
 for a class
 of time
 discrete approximation schemes.
 In this section we extend this result
 in Theorem~\ref{theorem:full_discrete_scheme_moments}
 and Corollary~\ref{Corollary:full_discrete_scheme_convergence} below to obtain
 exponential moments 
 for a class of space-time-noise
 discrete approximation schemes.
Theorem~\ref{theorem:full_discrete_scheme_moments}
below
proves exponential moment bounds
for numerical approximations of
SPDEs
whose coefficients
satisfy a general Lyapunov-type condition.
Corollary~\ref{Corollary:full_discrete_scheme_convergence}
below
specialises 
Theorem~\ref{theorem:full_discrete_scheme_moments}
to the case
where the considered Lyapunov-type
function is an affine linear transformation
of the squared Hilbert space norm.
Our proof of Theorem~\ref{theorem:full_discrete_scheme_moments}
uses two well-known
auxiliary lemmas
(see Lemma~\ref{lemma:switch_modification}
and Lemma~\ref{lemma:Change_variables} below).
\subsection{Setting}
\label{setting:exponential_moments_full_discrete}
 Let
 $ \left( H, \left< \cdot, \cdot \right> _H, \left \| \cdot \right\|_H \right) $
 and 
 $ \left( \U, \left< \cdot, \cdot \right> _\U, \left\| \cdot \right \|_\U \right) $
 be separable $ \R $-Hilbert spaces,
 let 
 $ \H \subseteq H $
 be a
 non-empty orthonormal basis of $ H $, 
 let $ \mathbb U \subseteq U $
 be a non-empty orthonormal basis of $ U $,  
 let 
 $ T \in (0,\infty) $, 
 $ \gamma \in [0,\infty) $,
 $ \delta \in [0,\nicefrac{1}{14}) $,
 $ \lambda \in \mathbb{M}( \H, \R ) $
 satisfy that
 $ \sup ( \operatorname{im}(\lambda)   ) < 0 $,
 let
 $ ( \Omega, \mathcal{ F }, \P, ( \mathcal{F}_t )_{t\in [0,T]} ) $
 be a filtered probability space, 
 let
$ (W_t)_{t\in [0,T]} $
be an
$ \operatorname{Id}_\U $-cylindrical 
$ ( \mathcal{F}_t )_{t \in [0, T]} $-Wiener process,
let
$ A \colon D( A ) \subseteq H \to H $ 
be the linear operator 
which satisfies for all 
$ v \in D(A) $
that
$ D(A) = \big \{
w \in H \colon \sum_{ h \in \H} \left | \lambda_h \left< h, w \right>_H \right | ^2 <  \infty
\big \} $
and  
$ A v = \sum_{ h \in \H} \lambda _h \left< h, v \right> _H h $,
let
$ (H_r, \left< \cdot, \cdot \right>_{H_r}, \left \| \cdot \right \|_{H_r} ) $, $ r \in \R $, be a family of interpolation spaces 
associated to $ - A $
(see, e.g., Definition 3.6.30 in~\cite{j16}),
let
$ \xi \in \mathcal{M}( \mathcal{F}_0, \mathcal{B}(H_\gamma) ) $,
$ F \in \mathcal{M}(  \mathcal{B}(H_\gamma), \mathcal{B} (H) ) $,
$ B \in \mathcal{M}( \mathcal{B}(H_\gamma), \mathcal{B}(\HS(U, H ) )) $, 
$ D =
( D_h^I )_{(I,h) \in \mathcal{P}(\H) \times (0,T] }
\in \mathbb{M} ( \mathcal{P}(\H)\times (0,T], \mathcal{B}(H_\gamma) ) $,
and let $ P_I \in L(H) $, 
$ I \in \mathcal{P} (\H) $, 
and
$ \hat P_J \in L(U) $, 
$ J \in \mathcal{P} (\mathbb{U}) $,
be the linear operators which satisfy
for all $ I \in \mathcal{P}(\H) $, 
$ J \in \mathcal{P}(\mathbb{U}) $,
$ x \in H $, 
$ y \in U $
that
$ P_I(x) = \sum_{h \in I} \left< h, x \right>_H h $
and
$ \hat P_J (y) = \sum_{u\in J}
\left< u, y \right>_U u $.
\subsection{Exponential moments for tamed approximation schemes}
\begin{lemma}[cf., e.g.,  Lemma~1 in Da Prato et al.~\cite{DaPratoJentzenRoeckner2012}]
	\label{lemma:switch_modification}
	Let $ ( \Omega, \mathcal{F}, \mu ) $
	be a sigma-finite measure space
	and
	let $ T \in (0,\infty) $,
	$ Y, Z \in \mathcal{M}(\mathcal{F} \otimes \mathcal{B}([0,T]), \mathcal{B}(\R)) $ 
	satisfy for all $ t \in [0,T] $
	that
	$ 	
	\mu(  Y_t \neq Z_t  ) 
	= 
	\mu \big( \int_0^T  | Y_s |\,ds = \infty \big)
	=
	0 $.
	Then   
	$ \mu \big( 
	\Omega \backslash \big\{ 
	\omega \in \Omega \colon
	\int_0^T | Y_s (\omega) | + | Z_s(\omega) | \, ds < \infty
	\text{ and }
	\int_0^T Y_s(\omega)\, ds = \int_0^T Z_s(\omega) \, ds 
	\big \} \big) = 0 $.
\end{lemma}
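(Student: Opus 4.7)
The plan is to apply Tonelli's theorem twice, leveraging the joint measurability of $Y$ and $Z$. First I would consider the ``bad set'' $E := \{(\omega, s) \in \Omega \times [0,T] : Y_s(\omega) \neq Z_s(\omega)\}$; since $Y - Z$ is $(\mathcal{F} \otimes \mathcal{B}([0,T]))/\mathcal{B}(\R)$-measurable, one has $E \in \mathcal{F} \otimes \mathcal{B}([0,T])$. Applying Tonelli in the order ``$\mu$ first, then $\mu_{[0,T]}$'' gives
\[
(\mu \otimes \mu_{[0,T]})(E) \;=\; \int_0^T \mu(\{\omega \in \Omega : Y_s(\omega) \neq Z_s(\omega)\}) \, \mu_{[0,T]}(ds) \;=\; 0,
\]
by the hypothesis that $\mu(Y_s \neq Z_s) = 0$ for every $s \in [0,T]$.

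Second, reversing the order of integration (Tonelli again, using sigma-finiteness of $\mu$) I would conclude that for $\mu$-a.e.\ $\omega$ the $\omega$-section $\{s \in [0,T] : Y_s(\omega) \neq Z_s(\omega)\}$ has Lebesgue measure zero. Let $G_1 \in \mathcal{F}$ denote the set of such ``good'' $\omega$, so that $\mu(\Omega \setminus G_1) = 0$. For every $\omega \in G_1$ the nonnegative Borel functions $|Y_\cdot(\omega)|$ and $|Z_\cdot(\omega)|$ coincide $\mu_{[0,T]}$-a.e., hence
\[
\int_0^T |Y_s(\omega)|\,ds \;=\; \int_0^T |Z_s(\omega)|\,ds \;\in\; [0,\infty].
\]
Combined with the second hypothesis, the set $G := G_1 \cap \{\omega \in \Omega : \int_0^T |Y_s(\omega)|\,ds < \infty\}$ then satisfies $\mu(\Omega \setminus G) = 0$, and $\int_0^T (|Y_s(\omega)| + |Z_s(\omega)|)\,ds < \infty$ for every $\omega \in G$.

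Finally, for each $\omega \in G$ both $Y_\cdot(\omega)$ and $Z_\cdot(\omega)$ are Lebesgue integrable on $[0,T]$ and coincide $\mu_{[0,T]}$-a.e., so $\int_0^T Y_s(\omega)\,ds = \int_0^T Z_s(\omega)\,ds$. Hence $G$ is contained in the set appearing in the statement, and the claim follows. I anticipate no genuine obstacle: the argument is essentially a careful double application of Tonelli, and the only point requiring attention is the joint measurability of $E$ (which is automatic from the stated joint measurability of $Y$ and $Z$) together with the sigma-finiteness of $\mu \otimes \mu_{[0,T]}$ (which is inherited from sigma-finiteness of $\mu$ and finiteness of $\mu_{[0,T]}$).
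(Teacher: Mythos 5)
Your proposal is correct and follows essentially the same route as the paper: both arguments rest on a double application of Tonelli's theorem to conclude that for $\mu$-almost every $\omega$ the paths $Y_\cdot(\omega)$ and $Z_\cdot(\omega)$ agree $\mu_{[0,T]}$-almost everywhere, and then combine this with the integrability hypothesis on $Y$. The only cosmetic difference is that you integrate the indicator of the exceptional set $E$ whereas the paper integrates $|Y_s - Z_s|$ directly (and finishes via the triangle inequality), but these carry the same information since a nonnegative function has zero integral if and only if it vanishes almost everywhere.
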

\begin{proof}[Proof of Lemma~\ref{lemma:switch_modification}]
	First, note that the Tonelli theorem implies that
	\begin{equation}
	\label{eq:Tonelli_first}
	\int_\Omega
	\left(  
	\smallint\limits_0^T
	|Y_s  - Z_s  | \,ds 
	\right) d\mu
	=
	\int_0^T
	\left(
	\smallint\limits_\Omega  
	| Y_s - Z_s |\,d\mu
	\right)  ds
	=
	0
	.
	\end{equation}
	This shows that 
  $ \mu  \big(    \smallint_0^T
	|Y_s  - Z_s  | \,ds  > 0     \big) = 0 $. 
	Therefore,
	we obtain that 
	$ \mu\big( \int_0^T
	|Y_s  - Z_s  | \,ds  = \infty \big) = 0 $.
	This
	and the assumption that
	$ \mu \big( \int_0^T | Y_s |\, ds = \infty 
	\big) = 0 $
	proves that
	\begin{equation}
	\begin{split}
	&
	\mu\Big(
	\smallint_0^T | Y_s | \, ds
	+
	\smallint_0^T | Y_s - Z_s | \, ds
	= \infty
	\Big)
	=
	\mu\Big(
	\Big\{
	\smallint_0^T |Y_s|\, ds = \infty
	\Big\}
	\cup
	\Big\{
	\smallint_0^T | Y_s - Z_s | \, ds
	= \infty
	\Big\}
	\Big)
	\\
	&
	\leq
	\mu\Big( 
	\smallint_0^T |Y_s|\, ds = \infty 
	\Big)
	+
	\mu
	\Big( 
	\smallint_0^T | Y_s - Z_s | \, ds
	= \infty 
	\Big)
	= 
	0.
	\end{split}
	\end{equation}
	The triangle
	inequality hence proves that 
	\begin{equation}
	\label{eq:finite_after}
	\mu\Big(
	\smallint_0^T | Z_s | \, ds = \infty \Big)
	\leq
	 \mu\Big(
	 \int_0^T | Z_s - Y_s |\, ds
	 +
	 \int_0^T | Y_s |\,ds = \infty
	 \Big)
	 =
	 0.
	\end{equation}
Next note 
that~\eqref{eq:Tonelli_first}
ensures that
\begin{equation}
\begin{split}
\int_\Omega  
\left|\smallint\limits_0^T
\1_{ \{ \int_0^T | Y_u - Z_u | \, du < \infty \} }
(
Y_s  - Z_s
)  \,ds 
\right| d\mu
&
\leq
\int_\Omega
\left(  
\smallint\limits_0^T
|Y_s  - Z_s  | \,ds 
\right) d\mu
=
0
.
\end{split}
\end{equation}
Hence,
we obtain that
\begin{equation} 
\mu\Big( 
\smallint_0^T
\1_{ \{ \int_0^T | Y_u - Z_u | \, du < \infty \} }
(
Y_s  - Z_s
)  \,ds 
\neq 0 
\Big)
=
0
.
\end{equation}
This shows that
\begin{equation}
	\label{eq:ensure_equality}
\mu\Big( 
\smallint_0^T
\1_{ \{ \int_0^T | Y_u | + | Z_u | \, du < \infty \} }
(
Y_s  - Z_s
)  \,ds 
\neq 0 
\Big)
=
0
.
\end{equation}
Combining~\eqref{eq:finite_after}
and~\eqref{eq:ensure_equality}
completes
the proof of 
Lemma~\ref{lemma:switch_modification}.
\end{proof}	
\begin{lemma}
\label{lemma:Change_variables}
Let 
$ ( H, \langle \cdot, \cdot \rangle_H,
\left \| \cdot \right \|_H) $
and
$ ( U, \langle \cdot, \cdot \rangle_U,
\left \| \cdot \right \|_U) $
be separable $ \R $-Hilbert spaces,
let $ T \in (0,\infty) $,
let $ Q \in L(U) $ be a non-negative and symmetric linear operator,
let
$ R \in \HS( Q^{\nicefrac{1}{2}} ( U ),H) $,
let
$ ( \Omega, \mathcal{F}, \P, 
(\mathcal{F}_t)_{t\in [0,T]} ) $
be a filtered probability space,
let $ (W_t)_{t\in [0,T]} $
be a $ Q $-cylindrical
$ (\mathcal{F}_t)_{t\in [0,T]} $-Wiener process,
let
$ \mathcal{G}_t \subseteq \mathcal{F} $, 
$ t \in [0,T] $,
satisfy for all $ t \in [0,T] $ that
$ \mathcal{G}_t =
\sigma_\Omega( \mathcal{F}_t
\cup \{
C \in \mathcal{F} \colon
\P(C)
=
0
\}) $,
and let
$ \tilde W \colon [0,T] \times \Omega \to H $
be a stochastic process with continuous sample paths 
which satisfies for all 
$ t \in [0,T] $ that
$ [ \tilde W_t ]_{\P, \mathcal{B}(H)} = \int_0^t R \, dW_s $.
Then it holds that
$ \tilde W $
is an $ R   R^* $-standard
$ (\mathcal{G}_t^+)_{t\in [0,T]} $-Wiener process.
\end{lemma}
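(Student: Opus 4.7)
The plan is to verify one-by-one the defining properties of an $RR^*$-standard Wiener process for $\tilde W$ with respect to the filtration $(\mathcal{G}_t^+)_{t \in [0,T]}$, namely: $\tilde W_0 = 0$ $\P$-a.s., continuity of sample paths (already assumed), $(\mathcal{G}_t^+)$-adaptedness, and the fact that for $0 \leq s \leq t \leq T$ the increment $\tilde W_t - \tilde W_s$ is independent of $\mathcal{G}_s^+$ and centred Gaussian with covariance operator $(t-s) R R^*$. Since $R \in \HS(Q^{\nicefrac{1}{2}}(U), H)$, the operator $R R^* \in L_1(H)$ is non-negative, symmetric, and trace class, so this notion makes sense.

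First I would appeal to the standard theory of the Hilbert-space stochastic integral with respect to a $Q$-cylindrical Wiener process (cf., e.g., Da Prato \& Zabczyk~\cite{dz92}, Chapter~4) to establish that the process $X \colon [0,T] \times \Omega \to H$ given by $X_t = \smallint_0^t R \, dW_s$ is a continuous $(\mathcal{F}_t)$-martingale whose increments $X_t - X_s = \smallint_s^t R \, dW_u$ (for $0 \leq s \leq t \leq T$) are independent of $\mathcal{F}_s$ and centred Gaussian in $H$ with covariance $(t-s) R R^*$. Since by assumption $[\tilde W_t]_{\P, \mathcal{B}(H)} = \smallint_0^t R \, dW_s$, there exists for every $t \in [0,T]$ a $\P$-null set $N_t \in \mathcal{F}$ with $\tilde W_t(\omega) = X_t(\omega)$ for all $\omega \in \Omega \backslash N_t$, so that $\tilde W_t$ and $X_t$ have the same distribution and the joint distribution of finitely many increments of $\tilde W$ agrees with that of the corresponding increments of $X$. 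This transfers the Gaussian distribution and the independence of $\mathcal{F}_s$ from $X_t - X_s$ to $\tilde W_t - \tilde W_s$, and gives $\tilde W_0 = 0$ $\P$-a.s.

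Next I would upgrade the filtration from $(\mathcal{F}_t)$ to $(\mathcal{G}_t^+)$ in two steps. Enlarging by $\P$-null sets preserves independence, since every $A \in \mathcal{G}_s$ coincides $\P$-a.s.\ with some $A' \in \mathcal{F}_s$, so $\tilde W_t - \tilde W_s$ remains independent of $\mathcal{G}_s$; this also makes $\tilde W$ adapted to $(\mathcal{G}_t)$ because $\tilde W_t$ agrees $\P$-a.s.\ with the $\mathcal{F}_t$-measurable $X_t$. For the passage to the right-continuous hull $\mathcal{G}_s^+ = \cap_{r \in (s,T]} \mathcal{G}_r$, observe that for each $r \in (s,t)$ the already established property gives that $\tilde W_t - \tilde W_r$ is independent of $\mathcal{G}_r$, and hence of $\mathcal{G}_s^+$; sample path continuity of $\tilde W$ yields $\lim_{r \searrow s} \| (\tilde W_t - \tilde W_r) - (\tilde W_t - \tilde W_s)\|_H = 0$ $\P$-a.s., so $\tilde W_t - \tilde W_s$ is the $\P$-a.s.\ limit of random variables independent of $\mathcal{G}_s^+$ and is therefore itself independent of $\mathcal{G}_s^+$. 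Combining this with the distributional statement from step~1 completes the verification.

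The content here is essentially bookkeeping rather than a novel estimate: the main delicate point is carefully distinguishing $\tilde W$ from its $\mathcal{F}_t$-measurable modification $X$ while ensuring that the augmentation by null sets and the right-continuity limit preserve the required independence; I expect no technical obstacle beyond this careful handling of null sets and the standard right-continuity argument.
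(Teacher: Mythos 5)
Your proposal is correct and follows the same overall strategy as the paper (verify the defining properties of an $R R^*$-standard Wiener process directly), but the two sub-steps are handled differently, in opposite directions. For the covariance, the paper does not merely cite the standard theory: it computes $\tfrac{1}{t-s}\,\E[\langle v,\tilde W_t-\tilde W_s\rangle_H\langle w,\tilde W_t-\tilde W_s\rangle_H]=\langle v,RR^*w\rangle_H$ explicitly via It\^o's isometry and a decomposition of $U$ into orthonormal bases of $\operatorname{Kern}(Q^{\nicefrac{1}{2}})$ and its orthogonal complement, which in particular makes precise that $R^*$ is the adjoint with respect to the $Q^{\nicefrac{1}{2}}(U)$-inner product; you outsource this to Da Prato \& Zabczyk, which is legitimate but hides the one genuinely computational point of the lemma. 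For the filtration, the roles reverse: the paper cites Proposition~6.1.16 in~\cite{j16} to conclude that $W$ itself remains a $Q$-cylindrical Wiener process with respect to the augmented right-continuous filtration $(\mathcal{G}_t^+)_{t\in[0,T]}$, after which adaptedness of $\tilde W$ and independence of $\sigma_\Omega(\tilde W_t-\tilde W_s)$ from $\mathcal{G}_s^+$ are immediate, whereas you give a self-contained two-step argument (null-set augmentation preserves independence; then $\tilde W_t-\tilde W_s$ is the $\P$-a.s.\ limit as $r\searrow s$ of the increments $\tilde W_t-\tilde W_r$, each independent of $\mathcal{G}_r\supseteq\mathcal{G}_s^+$). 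Your limit argument is correct and more elementary; the paper's route is shorter but leans on an external result. Both proofs are valid.
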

\begin{proof}[Proof of Lemma~\ref{lemma:Change_variables}]
Throughout this proof 
let  
$ \mathbb{U}_0 \subseteq U $
be an orthonormal basis of $ \operatorname{Kern}( Q^{\nicefrac{1}{2}} ) $
and
let
$ \mathbb{U}_1 \subseteq U $
be an orthonormal basis of $ \operatorname{Kern}( Q^{\nicefrac{1}{2}} )^\perp $.
Next
note that for all
$ v, w \in H $,
$ s \in [0, T) $,
$ t \in (s, T] $
it holds that
\begin{equation}
\begin{split}
&
\E[ \langle v, \tilde W_t - \tilde W_s \rangle_H
\langle w, \tilde W_t - \tilde W_s \rangle_H ]
=
\E\bigg[ \bigg\langle v, \smallint_s^t R \, dW_r 
\bigg \rangle_{\! \! H}
\bigg\langle w, \smallint_s^t R \, dW_r 
\bigg \rangle_{\! \! H} \bigg ]
\\
&
=
\E\bigg[
\bigg(  \smallint_s^t  \langle v,  R \, dW_r   \rangle_H
\bigg)
\bigg(
\smallint_s^t  \langle w,  R \, dW_r   \rangle_H 
\bigg) \bigg ]
\\
&
=
\E\bigg[
\bigg(  \smallint_s^t  \langle R^* v,   dW_r   \rangle_{ Q^{1/2}(U) }
\bigg)
\bigg(
\smallint_s^t  \langle R^* w, dW_r   \rangle_{ Q^{1/2}(U) } 
\bigg) \bigg ]
.
\end{split}
\end{equation}
It\^o's isometry  
hence shows 
for all
$ v, w \in H $,
$ s \in [0, T) $,
$ t \in (s, T] $
that
\begin{equation}
\begin{split}
\label{eq:covariance}
&
\tfrac{1}{(t-s)}
\,
\E[ \langle v, \tilde W_t - \tilde W_s \rangle_H
\langle w, \tilde W_t - \tilde W_s \rangle_H ]
\\
&
= 
\big\langle
( Q^{\nicefrac{1}{2}}(U) \ni z
\mapsto
   \langle R^* v, z  \rangle_{ Q^{1/2}(U) } \in \R ),
   ( Q^{\nicefrac{1}{2}}(U) \ni z
   \mapsto
 \langle R^* w,  z \rangle_{ Q^{1/2}(U) } \in \R )   
 \big \rangle_{\HS( Q^{ 1 / 2 }(U), \R)}
 \\
 &
 = 
 \big\langle
 ( U \ni z
 \mapsto
 \langle R^* v, Q^{\nicefrac{1}{2}} z  \rangle_{ Q^{1/2}(U) } \in \R ),
 ( U \ni z
 \mapsto
 \langle R^* w, Q^{\nicefrac{1}{2}} z \rangle_{ Q^{1/2}(U) } \in \R )   
 \big \rangle_{\HS( U, \R)}
 \\
 &
 = 
 \sum\nolimits_{ u \in \mathbb{U}_0 \cup \mathbb{U}_1 }
  \langle R^* v,  Q^{\nicefrac{1}{2}} u \rangle_{ Q^{1/2}(U) }
  \langle R^* w,  Q^{\nicefrac{1}{2}} u \rangle_{ Q^{1/2}(U) } 
 \\
 &
 = 
 \sum\nolimits_{ u \in \mathbb{U}_1 }
 \langle R^* v,  Q^{\nicefrac{1}{2}} u \rangle_{ Q^{1/2}(U) }
 \langle R^* w,  Q^{\nicefrac{1}{2}} u \rangle_{ Q^{1/2}(U) } 
\\
&
=
\sum\nolimits_{ u \in \mathbb{U}_1 }
\langle   Q^{-\nicefrac{1}{2}} (  R^* v ),   
Q^{-\nicefrac{1}{2}}
( Q^{\nicefrac{1}{2}}
u ) \rangle_U
\langle   Q^{-\nicefrac{1}{2}} ( R^* w ),   
Q^{-\nicefrac{1}{2}}
( Q^{\nicefrac{1}{2}}
u )  \rangle_U 
\\
&
=
\sum\nolimits_{ u \in \mathbb{U}_1 }
\langle   Q^{-\nicefrac{1}{2}} (  R^* v ), 
u  \rangle_U
\langle   Q^{-\nicefrac{1}{2}} ( R^* w ),   
u  \rangle_U 
\\
&
=
\sum\nolimits_{ u \in \mathbb{U}_0 \cup \mathbb{U}_1 }
\langle   Q^{-\nicefrac{1}{2}} (  R^* v ), 
u  \rangle_U
\langle   Q^{-\nicefrac{1}{2}} ( R^* w ),   
u  \rangle_U 
\\
&
=
\langle
Q^{ - \nicefrac{1}{2} }
(R^* v),
Q^{-\nicefrac{1}{2}}
( R^* w)
\rangle_U
=
\langle
R^* v, R^* w 
\rangle_{Q^{\nicefrac{1}{2}}(U)}
=
\langle v, R R^* w \rangle_H.
\end{split}
\end{equation}
Next observe that 
the assumption that
$ W $
is a $ Q $-cylindrical
$ ( \mathcal{F}_t )_{t\in[0,T]} $-Wiener
process
and,
e.g.,
Proposition 6.1.16 in~\cite{j16}
ensure that
$ ( \Omega, \mathcal{F}, \P,
( \mathcal{G}^+_t)_{t\in [0,T]} ) $
is a stochastic basis
and that $ W $ is a
$ Q $-cylindrical
$ (\mathcal{G}_t^+)_{t\in [0,T]} $-Wiener
process.
This 
implies
that
for all
$ s \in [0,T) $, 
$ t \in (s,T] $
it holds that
$ \sigma_\Omega( \tilde W_t- \tilde W_s ) $
and
$ \mathcal{G}_s^+ $ 
are $ \P $-independent
and that
$ \tilde W $ is 
$ (\mathcal{G}_r^+ )_{r \in [0,T]} $-adapted.
Combining this
with~\eqref{eq:covariance} 
completes the proof of Lemma~\ref{lemma:Change_variables}.
\end{proof}
\begin{theorem}
\label{theorem:full_discrete_scheme_moments}
Assume the setting in Subsection~\ref{setting:exponential_moments_full_discrete},
  let $ \rho \in [0,\infty) $,  
  $ c, \iota \in [1, \infty) $,   
 $ \varsigma \in \big( 0, \tfrac{1 - 14 \delta}{ 2 + 2\iota } \big) $,
  $ \V \in \mathcal{C}_{c}^3( H, [0,\infty) ) $,
  $ \bar V \in \mathcal{C}(H, \R) $,  
assume for all $ h \in (0,T] $, $ x, y \in H $ that
 $ \V( e^{h A} x ) \leq \V(x) $,
    $ \bar \V( e^{h A} x ) \leq \bar \V(x) $,
     $ | \bar{\V}(x) - \bar{\V}(y) |
     \leq
     c \left( 1 + |\V(x)|^{\iota} + |\V(y)|^{\iota}\right)
     \| x - y \|_H $,
     $
     |\bar{\V}(x)|    
     \leq c \left( 1 + \left|\V(x)\right|^{\iota} \right) $,
    and
    $ \sup_{I\in \mathcal{P}_0 (\H)  } 
   \E\!\left[
   e^{
   	\V( P_I \xi )
   }
   \right] < \infty $,
  assume for all $ I \in \mathcal{P}_0 (\H) $, 
  $ J \in \mathcal{P}_0( \mathbb{U} ) $,
  $ h \in (0,T] $, 
  $ x \in D_h^I $ 
   that 
  $ D_h^I \subseteq \{ v \in H \colon \V(v) 
  \leq ch^{- \varsigma } \} $,
  $ \max\{\| P_I F(x)\|_{H } $,
$ \|P_I B(x) \hat P_J \|_{\HS(\U,H )} \}
    \leq c h^{-\delta} $,  
    and
$ ( \mathcal{G}_{P_I F,P_I B  \hat P_J } V )( x ) 
     +\frac{1}{2} \, \|(P_I B( x ) \hat P_J )^{*}(\nabla \V)( x ) \|_\U^2
     +\bar{ \V }( x )
   \leq
    \rho \V( x ) $,
  and 
  let
  $ Y^{\theta,I, J} \colon [0, T] \times \Omega \to P_I(H_\gamma) $,
  $ \theta \in \varpi_T $, 
  $ I\in \mathcal{P}_0 (\H) $, 
  $ J \in \mathcal{P}_0( \mathbb{U} ) $,
  be $ (\mathcal{F}_t)_{t\in [0,T]} $-adapted
stochastic processes  
with continuous sample paths
which satisfy for all
 $ \theta \in \varpi_T $, 
 $ I\in \mathcal{P}_0 (\H) $,
 $ J \in \mathcal{P}_0( \mathbb{U} ) $,
$ t \in (0,T] $
that
$ Y^{\theta, I, J}_0 = P_I(\xi) $ and
\begin{equation}
\begin{split}
 \label{eq:scheme} 
[ Y_t^{\theta, I, J} ]_{\P, \mathcal{B}(P_I(H_\gamma))} 
& 
= 
\left[
e^{(t-\llcorner t \lrcorner_\theta)A} 
\big( 
Y_{
	\llcorner t \lrcorner_\theta
}^{\theta, I, J} 
+
\1_{ D_{ | \theta |_T }^I } \! (
Y_{ \llcorner t \lrcorner_{ \theta }}^{\theta, I, J}
) 
P_I
F(
Y_{ \llcorner t \lrcorner_\theta }^{\theta, I, J}
)  \,
(
t - \llcorner t \lrcorner_\theta
) 
\big) 
\right]_{\P, \mathcal{B}(P_I(H_\gamma))}  
\\
&
\quad
+
\tfrac{
	\int_{ \llcorner t \lrcorner_\theta }^t
	e^{(t-\llcorner t \lrcorner_\theta )A} 
	\,
		\1_{ D_{ | \theta |_T }^I }\!\! (
		Y_{ \llcorner t \lrcorner_\theta }^{\theta, I, J}
		)
		\,
	P_I
	B( Y_{ \llcorner t \lrcorner_\theta }^{\theta, I, J} )
	\hat P_J
	\, dW_s
}{
1 + 
 \|
\int_{\llcorner t \lrcorner_\theta}^t 
P_I
B( Y_{ \llcorner t \lrcorner_\theta }^{\theta, I, J} )
\hat P_J
\, dW_s
 \|_H^2
} 
.
\end{split}
\end{equation}
Then it holds that
\begin{equation}  
\begin{split}  
\label{eq:exp.mom.full}
&
    \limsup_{ 
      \left | \theta \right |_T
      \searrow 0
    }    
    \sup_{ I \in \mathcal{P}_0 (\H) }
    \sup_{ J \in \mathcal{P}_0( \mathbb{U} ) }
    \sup_{ t \in [0,T]}
    \!
    \E\!\left[
      \exp\!\left( 
        \tfrac{
          \V( Y_t^{ \theta, I, J} )
        }{
          e^{ \rho t }
        }
        +
        \smallint_0^t
        \1_{ D_{  | \theta  |_T }^I}\!(
        Y_{ \lfloor s \rfloor_{ \theta } }^{ \theta, I, J }
        )
        \,
          \tfrac{ 
            \bar{\V}( Y_s^{ \theta, I, J } )
          }{
            e^{ \rho s }
          }
        \, ds 
      \right) 
    \right]
    \\
    &
    \leq 
      \sup_{I\in \mathcal{P}_0 (\H) } 
      \E\!\left[
        e^{
          \V( P_I \xi )
        }
      \right]
 \\
  &
  \leq
  \sup_{\theta \in \varpi_T}
  \sup_{I \in \mathcal{P}_0 (\H) }
  \sup_{ J \in \mathcal{P}_0( \mathbb{U} ) } 
  \sup_{t\in [0,T]}
  \E\!\left[
       \exp\!\left(
         \tfrac{  
        V ( Y^{ \theta, I, J }_t )
         }
         {
         e^{ \rho t }
         }
         +
         \smallint_0^t
         \1_{ D_{  | \theta  |_T }^I}\!(
         Y_{ \lfloor s \rfloor_{ \theta } }^{ \theta, I, J }
         ) 
         \,
             \tfrac{ 
           \bar{\V}( Y^{ \theta, I, J}_s )
           }
           {e^{   \rho s }}
           \,
          ds
       \right)
     \right]
< \infty
    .
\end{split}    
 \end{equation}
\end{theorem}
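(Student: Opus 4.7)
The plan is to fix finite subsets $I \in \mathcal{P}_0(\H)$, $J \in \mathcal{P}_0(\mathbb{U})$ with $I, J \neq \emptyset$ (the case $I = \emptyset$ gives $Y^{\theta,\emptyset,J} \equiv 0$ and is trivial) and to reduce Theorem~\ref{theorem:full_discrete_scheme_moments} to Proposition~\ref{proposition:Euler.bounded.increments} applied on the finite-dimensional Hilbert spaces $(P_I(H), \langle\cdot,\cdot\rangle_H)$ and $(\hat P_J(U), \langle\cdot,\cdot\rangle_U)$. The first task is to rewrite the scheme~\eqref{eq:scheme} in the form of~\eqref{eq:thm:Y}. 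The key observation is that the integrand $e^{(t-\llcorner t\lrcorner_\theta)A}\,\1_{D_{|\theta|_T}^I}(Y_{\llcorner t\lrcorner_\theta}^{\theta,I,J})\,P_I B(Y_{\llcorner t\lrcorner_\theta}^{\theta,I,J})\,\hat P_J$ appearing under the It\^o integral is constant in $s \in [\llcorner t\lrcorner_\theta, t]$, and the operator $e^{hA}$ commutes with $P_I$ since $I \subseteq \H$ consists of eigenvectors of $A$. Combining Lemma~\ref{lemma:switch_modification} with the It\^o isometry therefore shows that, up to $\P$-indistinguishability,
\begin{equation*}
\int_{\llcorner t\lrcorner_\theta}^t e^{(t-\llcorner t\lrcorner_\theta)A} P_I B(Y_{\llcorner t\lrcorner_\theta}^{\theta,I,J}) \hat P_J \, dW_s = e^{(t-\llcorner t\lrcorner_\theta)A} P_I B(Y_{\llcorner t\lrcorner_\theta}^{\theta,I,J}) \hat P_J (\tilde W_t - \tilde W_{\llcorner t\lrcorner_\theta}),
\end{equation*}
where $\tilde W$ is a continuous modification of $\hat P_J W$ which, by Lemma~\ref{lemma:Change_variables}, is a standard Wiener process on the finite-dimensional space $\hat P_J(U)$. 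Factoring $e^{(t-\llcorner t\lrcorner_\theta)A}$ out of both the drift and the tamed-diffusion terms then recasts~\eqref{eq:scheme} into exactly the form of~\eqref{eq:thm:Y} with $S_h$ replaced by $e^{hA}|_{P_I(H)}$, $F$ replaced by $P_I F|_{P_I(H)}$, $B$ replaced by $P_I B(\cdot) \hat P_J|_{P_I(H)}$, $W$ replaced by $\tilde W$, and the exponent $\gamma$ of Proposition~\ref{proposition:Euler.bounded.increments} replaced by $\iota$.

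Next I would verify each hypothesis of Proposition~\ref{proposition:Euler.bounded.increments} in this restricted setup. The semigroup invariance of $V$ and $\bar V$ is inherited from the theorem's assumption on $e^{hA}$; the Lipschitz and growth bounds on $\bar V$ hold a fortiori on $P_I(H) \subseteq H$; the operator-norm bounds on $P_I F$ and $P_I B(\cdot) \hat P_J$ and the generator inequality on $D_h^I$ are part of the theorem's assumptions; and the restrictions $V|_{P_I(H)}$, $\bar V|_{P_I(H)}$ remain in $\mathcal{C}_c^3(P_I(H),[0,\infty))$ and $\mathcal{C}(P_I(H), \R)$, respectively. Proposition~\ref{proposition:Euler.bounded.increments}\eqref{item:Exp.Mom.Bound} then yields for every $\theta \in \varpi_T$ the bound
\begin{equation*}
\sup_{t \in [0,T]} \E\!\left[\exp\!\left(\tfrac{V(Y_t^{\theta,I,J})}{e^{\rho t}} + \int_0^t \tfrac{\1_{D_{|\theta|_T}^I}(Y_{\lfloor s\rfloor_\theta}^{\theta,I,J}) \bar V(Y_s^{\theta,I,J})}{e^{\rho s}} \, ds\right)\right] \leq e^{\Lambda(|\theta|_T)} \, \E\!\left[e^{V(P_I \xi)}\right],
\end{equation*}
where $\Lambda(|\theta|_T) \in [0,\infty)$ depends only on $c, \delta, \varsigma, \iota, \rho, T$ (not on $I$ or $J$) and satisfies $\lim_{|\theta|_T \searrow 0} \Lambda(|\theta|_T) = 0$; this last fact follows from the hypothesis $\varsigma < \tfrac{1-14\delta}{2+2\iota}$, which gives $\varsigma + \varsigma\iota + 7\delta - \tfrac12 < 0$. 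Taking the supremum over $I \in \mathcal{P}_0(\H)$ and $J \in \mathcal{P}_0(\mathbb{U})$ and combining with the hypothesis $\sup_{I \in \mathcal{P}_0(\H)} \E[e^{V(P_I\xi)}] < \infty$ and Proposition~\ref{proposition:Euler.bounded.increments}\eqref{eq:exp.mom.quadratic.exponent} then yields both the $\limsup$ estimate and the finite-$\sup$ estimate in~\eqref{eq:exp.mom.full}.

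The main technical obstacle is the first step: rigorously identifying the stochastic integral in~\eqref{eq:scheme} with the Wiener-increment expression, and thereby setting up the reduction to the finite-dimensional scheme~\eqref{eq:thm:Y}. This requires careful handling of the $[\,\cdot\,]_{\P,\mathcal{B}(P_I(H_\gamma))}$ equivalence-class notation used in~\eqref{eq:scheme}, an application of Lemma~\ref{lemma:Change_variables} to produce the standard finite-dimensional Wiener process $\tilde W$ on $\hat P_J(U)$, and an application of Lemma~\ref{lemma:switch_modification} to justify the almost-sure replacement of the integrand under the It\^o integral by its constant value. Once this reduction is in place, the verification of the hypotheses and the invocation of Proposition~\ref{proposition:Euler.bounded.increments} are routine.
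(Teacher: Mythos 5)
Your proposal follows essentially the same route as the paper: reduce \eqref{eq:scheme} to the time-discrete scheme \eqref{eq:thm:Y}, use Lemma~\ref{lemma:Change_variables} to obtain a standard Wiener process on the finite-dimensional projected noise space, use Lemma~\ref{lemma:switch_modification} to pass between modifications inside the time integral in the exponent, and then invoke item~\eqref{item:Exp.Mom.Bound} of Proposition~\ref{proposition:Euler.bounded.increments} together with $\varsigma + \varsigma\iota + 7\delta - \nicefrac{1}{2} < 0$. The only substantive difference is cosmetic: the paper keeps the state space equal to $H$ and extends $F$ and $B$ by zero outside $H_\gamma$, whereas you restrict everything to $P_I(H) \subseteq H_\gamma$ and $\hat P_J(U)$; both work.

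There is one small but genuine gap. You dispose of the degenerate cases by assuming $I, J \neq \emptyset$ and justifying only the exclusion of $I = \emptyset$. The case $J = \emptyset$ with $I \neq \emptyset$ is \emph{not} trivial: the diffusion term vanishes but the tamed drift does not, and you cannot apply Proposition~\ref{proposition:Euler.bounded.increments} with $U = \hat P_\emptyset(U) = \{0\}$ because the hypothesis $\#_U > 1$ fails (and Lemma~\ref{lemma:Change_variables} would produce a degenerate process). The paper circumvents this by fixing an auxiliary element $u_0 \in \mathbb{U}$ and working throughout with $\hat P_{J \cup \{u_0\}}$ in place of $\hat P_J$, which leaves the scheme unchanged (since $B$ is always composed with $\hat P_J$) while guaranteeing a noise space of dimension at least one. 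You should either adopt this device or treat $J = \emptyset$ by a separate (easy) argument. A second, purely expository point: Lemma~\ref{lemma:switch_modification} is not the tool for replacing the integrand of the It\^o integral by its constant value (that identification is immediate for elementary integrands); it is needed to identify $\smallint_0^t \1_{D}(\cdot)\,\bar V(Y_s)\,ds$ with the corresponding integral of the modification $\tilde Y$ in the exponential moment expression.
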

\begin{proof}[Proof of Theorem~\ref{theorem:full_discrete_scheme_moments}]
Throughout this proof 
let 
$ \mathcal{G}_t \subseteq \mathcal{F} $,
$ t \in [0,T] $,
be the sets which satisfy for all $ t \in [0,T] $ that
\begin{equation} 
\mathcal{G}_t =  
\sigma_\Omega 
\big(
\mathcal{F}_t
\cup 
\{ C \in \mathcal{F}
\colon \P(C) = 0 \}
\big),
\end{equation} 
let
$ u_0 \in \mathbb{U} $,
let
$ W^J \colon [0,T] \times \Omega 
\to \hat P_{J \cup \{u_0\} }(U) $, 
$ J \in \mathcal{P}_0( \mathbb{U} ) $,
be 
stochastic processes 
with continuous sample paths
which satisfy
for all 
$ J \in \mathcal{P}_0( \mathbb{U} ) $,
$ t \in [0,T] $ that
$ [W_t^J]_{\P, \mathcal{B}( U )}
=
\int_0^t \hat P_{J \cup \{u_0 \} } \, dW_s $
and
$ W_0^J = 0 $,
let
$ \tilde F_I \colon H \to H $,
$ I \in \mathcal{P}_0 (\H) $,
and
$ \tilde B_{I,J} \colon H \to \HS( \hat P_{J \cup \{u_0\} }(U),H) $,
$ I \in \mathcal{P}_0 (\H) $,
$ J \in \mathcal{P}_0( \mathbb{U} ) $,
be the functions 
which satisfy for all 
$ I \in \mathcal{P}_0 (\H) $,
$ J \in \mathcal{P}_0( \mathbb{U} ) $,
$ x \in H $,
$ u \in \hat P_{J \cup \{u_0\} }(U) \subseteq U $
that
\begin{equation}
\begin{split}
\tilde F_I(x)
=
\begin{cases}
P_I(F(x)) &\colon x \in H_\gamma
\\
0 &\colon x \in H \backslash H_\gamma
\end{cases}
\qquad
\text{and}
\qquad
\tilde B_{I,J}(x)u
=
\begin{cases}
P_I( B(x) \hat P_J u) & \colon x \in H_\gamma
\\
0 &\colon x \in H \backslash H_\gamma
\end{cases}
,
\end{split}
\end{equation}
and
let
 $ \tilde Y^{\theta,I,J} \colon [0, T] \times \Omega \to H $,
 $ \theta \in \varpi_T $, 
 $ I\in \mathcal{P}_0 (\H) $, 
 $ J \in \mathcal{P}_0( \mathbb{U} ) $,
 be  
 the 
 functions
 which satisfy
 for all
  $ \theta \in \varpi_T $, 
  $ I\in \mathcal{P}_0 (\H) $, 
  $ J \in \mathcal{P}_0( \mathbb{U} ) $,
 $ t \in [0,T] $
 that
 $ \tilde Y_0^{\theta, I, J} = P_I(\xi) $
 and
 \begin{equation}
 \begin{split} 
 \label{eq:scheme.tilde}
 &
 \tilde Y^{\theta,I,J}_t  
 \\
 & 
 =
  e^{(t - \llcorner t \lrcorner_\theta) A}  
  \bigg( 
  \tilde Y^{\theta, I, J}_{\llcorner t \lrcorner_\theta}
 +  
 \1_{  D_{|\theta|_T}^I  }
 \!  (\tilde Y_{\llcorner t \lrcorner_{\theta}}^{\theta, I, J} )
 \bigg[
  \tilde F_I ( \tilde Y^{\theta,I,J}_{\llcorner t \lrcorner_{\theta}} )
  \,
 (t - \llcorner t \lrcorner_\theta) 
 +  
 \tfrac{
 	 \tilde B_{I,J}( \tilde Y^{\theta,I,J}_{\llcorner t \lrcorner_{\theta}}  ) (W^J_t - W^J_{\llcorner t \lrcorner_\theta} )}
 {
 	1 +   \|  \tilde B_{I,J}( \tilde Y^{\theta,I,J}_{\llcorner t \lrcorner_{\theta}}  ) (W^J_t - W^J_{\llcorner t \lrcorner_\theta} )\|_{H }^2
 }  
 \bigg]
 \bigg).
 \end{split}
 \end{equation} 
In the next step observe that, e.g., 
Theorem~2.4 in Chapter V in Parthasarathy~\cite{Parthasarathy67}
ensures that
$ \mathcal{B}(H_\gamma) \subseteq \mathcal{B}(H) $.
This implies that for all
$ I \in \mathcal{P}_0 (\H) $,
$ J \in \mathcal{P}_0( \mathbb{U} ) $,
$ h \in (0,T] $
it holds that   
\begin{equation} 
\label{eq:condition_measurability}
D_h^I \in \mathcal{B}(H),
\quad
 \tilde F_I \in \mathcal{M}( \mathcal{B}(H), \mathcal{B}(H) ),
 \quad
 \text{and}
 \quad
\tilde B_{I,J} \in \mathcal{M}( \mathcal{B}(H), \mathcal{B}(\HS( \hat P_{J \cup \{u_0\}}(U), H)) )
.
\end{equation}
In addition, note that, e.g., Proposition~6.1.16 in~\cite{j16}
ensures that 
$ ( \mathcal{G}_t^+ )_{t\in [0,T]} $
is a normal filtration
on 
$ ( \Omega, \mathcal{F}, \P ) $
and that
$ W $ is an $ \operatorname{Id}_U $-cylindrical
$ ( \mathcal{G}_t^+ )_{t\in [0,T]} $-Wiener process.
Lemma~\ref{lemma:Change_variables}  
(with
$ H = \hat P_{J \cup \{u_0\}}(U) $,
$ U = U $,
$ R = ( U \ni u \mapsto \hat P_{J \cup \{u_0\}}(u) \in \hat P_{J \cup \{u_0\}}(U) ) $,
$ Q = \operatorname{Id}_U $,
$ ( \mathcal{F}_t)_{t\in [0,T]} 
= ( \mathcal{F}_t)_{t\in [0,T]} $,
$ W = W $,
$ \tilde W = W^J $
for $ J \in \mathcal{P}_0(\mathbb{U}) $
in the notation of Lemma~\ref{lemma:Change_variables})
hence assures that
for all $ J \in \mathcal{P}_0(\mathbb{U}) $
it holds that
$ W^J $ is an
$ (
( U \ni u \mapsto \hat P_{J \cup \{u_0\}} (u) \in\hat P_{J \cup \{u_0\}}(U) )
( U \ni u \mapsto \hat P_{J \cup \{u_0\}}(u) \in\hat P_{J \cup \{u_0\}}(U) )^*
 ) $-standard
$ ( \mathcal{G}_t^+ )_{t\in [0,T]} $-Wiener process.
This 
shows that for all
$ J \in \mathcal{P}_0( \mathbb{U} ) $
it holds that
$ W^J $ is an 
$ \operatorname{Id}_{\hat P_{J \cup \{u_0\}}(U)} $-standard
$ (\mathcal{G}_t^+)_{t\in [0,T]} $-Wiener process 
with continuous sample paths. 
Combining  
the fact that
for all
$ \theta \in \varpi_T $, 
$ I\in \mathcal{P}_0 (\H) $, 
$ J \in \mathcal{P}_0( \mathbb{U} ) $ 
it holds that  
$ \tilde Y^{\theta, I, J} $
is a $ ( \mathcal{G}_t^+  )_{t\in [0,T]} $-adapted
stochastic process with continuous sample paths,
the fact 
that  
$ \forall \, I \in \mathcal{P}_0 (\H) $,
$ h \in (0,T] \colon D_h^I \subseteq \{ v \in H \colon \V(v) 
\leq ch^{- \varsigma } \} $,
\eqref{eq:condition_measurability},
and
item~\eqref{item:Exp.Mom.Bound} of
Proposition~\ref{proposition:Euler.bounded.increments}
(with 
$ H = H $, 
$ U = \hat P_{J \cup \{u_0\}}(U) $,
$ T = T $,
$ \rho = \rho $,
$ \delta = \delta $,
$ c = c $,
$ \gamma = \iota $,
$ \varsigma = \varsigma $,
$ F = \tilde F_I $,
$ B = \tilde B_{I,J} $, 
$ V = V $,
$ \bar V = \bar V $,
$ S = 
 ( 
(0,T]
\ni t
\mapsto
(
H \ni x \mapsto
e^{tA} 
x
\in H
)
\in L(H)  
 ) $, 
$ D_h = D_h^I $, 
$ ( \mathcal{F}_t)_{t\in [0,T]}
= ( \mathcal{G}_t^+ )_{t\in [0,T]} $,
$ W = W^J $, 
$ Y^\theta = \tilde Y^{\theta, I, J} $ 
for 
$ h \in (0,T] $,
$ \theta \in \varpi_T $,
$ I \in \mathcal{P}_0 (\H) $,
$ J \in \mathcal{P}_0( \mathbb{U} ) $  
in the notation of Proposition~\ref{proposition:Euler.bounded.increments})
hence proves that
for all $ \theta \in \varpi_T $,
$ I \in \mathcal{P}_0(\H) $,
$ J \in \mathcal{P}_0(\mathbb{U}) $
it holds that
\begin{equation}
\begin{split}
&
\sup_{t\in [0,T]}\!\!
\E\Big[
\!
\exp\Big(  
\tfrac{
	\V( \tilde Y^{ \theta, I, J }_t )
}{ e^{ \rho t } } 
+  
\smallint_0^t
\1_{ D_{ |\theta|_T }^I }\!(
\tilde Y^{ \theta, I, J }_{\lfloor s \rfloor_{ \theta } }
	)
	\,
\tfrac{ 
	\bar{\V}( \tilde Y^{ \theta, I, J }_s )
}{
e^{ \rho s }
}
\,
ds 
\Big) 
\Big]
\\ 
&
\leq
\exp\!\left(
\tfrac{    
	\exp \left(\!
	2
	[720 \max\{T, \rho, 1\}   c^3]^{ 
		(720 c^3\!\max\{T, 1\} + 7)
		\iota
	}
	\right)
}{
[
\min\{ |\theta|_T, 1\}
]^{       
	 \varsigma + \varsigma \iota
	+ 7 \delta -\nicefrac{1}{2}
}
}
\right)
\E \big[ 
e^{ \V( Y_0^{ \theta, I, J } ) }
\big]
.
\end{split}
\end{equation}
This, the fact that
$ \nicefrac{1}{2} 
- \varsigma 
- \varsigma \iota
- 7 \delta > 0 $,
and the assumption
that
$ \sup_{I\in \mathcal{P}_0(\H) } 
\E[ e^{V(P_I(\xi))} ] < \infty $
imply that
\begin{equation}  
\begin{split}  
\label{eq:exp.mom.full.tilde}
&
\limsup_{ 
	\left | \theta \right |_T
	\searrow 0
}    
\sup_{ I \in \mathcal{P}_0 (\H) }
\sup_{ J \in \mathcal{P}_0( \mathbb{U} ) }
\sup_{ t \in [0,T]} 
\E\!\left[
\exp\!\left( 
\tfrac{
	\V( \tilde Y_t^{ \theta, I, J} )
}{
e^{ \rho t }
}
+
\smallint_0^t
	\1_{ D_{  | \theta  |_T }^I} \!(
	\tilde Y_{ \lfloor s \rfloor_{ \theta } }^{ \theta, I, J }
	)
	\,
\tfrac{ 
	\bar{\V}( \tilde Y_s^{ \theta, I, J } )
}{
e^{ \rho s }
}
\, ds 
\right) 
\right]
\\
&
\leq 
\sup_{ I\in \mathcal{P}_0 (\H) } 
\E\!\left[
e^{
	\V( P_I \xi )
}
\right]
\\
& 
\leq
\sup_{\theta \in \varpi_T}
\sup_{I \in \mathcal{P}_0 (\H) }
\sup_{J \in \mathcal{P}_0( \mathbb{U} ) } 
\sup_{t\in [0,T]}
\E\!\left[
\exp\!\left(
\tfrac{  
	V ( \tilde Y^{ \theta, I, J }_t )
}
{
	e^{ \rho t }
}
+
\smallint_0^t
\1_{ D_{  | \theta  |_T }^I} \!(
\tilde Y_{ \lfloor s \rfloor_{ \theta } }^{ \theta, I, J }
)
\,
\tfrac{
	\bar{\V}(\tilde Y^{ \theta, I, J}_s )
}
{e^{   \rho s }}
\,
ds
\right)
\right]
< \infty
.
\end{split}    
\end{equation}
Furthermore, note that~\eqref{eq:scheme}
and~\eqref{eq:scheme.tilde}
ensure that for all 
$ \theta \in \varpi_T $,
$ I \in \mathcal{P}_0 (\H) $,
$ J \in \mathcal{P}_0( \mathbb{U} ) $,
$ t \in [0,T] $
it holds that
$ [ Y_t^{\theta, I, J}]_{\P, \mathcal{B}(P_I(H_\gamma))}
=
[ \tilde Y_t^{\theta, I, J}]_{\P, \mathcal{B}(P_I(H_\gamma))} $.
Combining this, Lemma~\ref{lemma:switch_modification},
and~\eqref{eq:exp.mom.full.tilde}  establishes~\eqref{eq:exp.mom.full}.
The proof of Theorem~\ref{theorem:full_discrete_scheme_moments}
is thus completed.
\end{proof}
\begin{corollary}
\label{Corollary:full_discrete_scheme_convergence}
Assume the setting in Subsection~\ref{setting:exponential_moments_full_discrete},
let 
$ \vartheta \in
[ \sup_{x\in H_\gamma}\| B(x) \|_{\HS(U, H )}^2,
\infty] \cap \R $,
$ b_1, b_2 \in [0,\infty) $,
$ \varepsilon \in (0,\infty) $, 
$ \varsigma \in \big( 0, \tfrac{ 1 - 14 \delta  }{ 4}  \big) $,   
$ c\in [ 
2\max\{ 1, 
\varepsilon b_1,
\varepsilon \sqrt{\vartheta}, \varepsilon \},\infty) $,  
assume that
$ \E[e^{\varepsilon \| \xi \|_{H}^2}]<\infty $,
assume for all $ h \in (0,T] $, 
$ I \in \mathcal{P}_0(\H) $, 
$ J \in \mathcal{P}_0(\mathbb{U}) $, 
$ x \in D_h^I $ that
$ D_h^I \subseteq \{ v \in H \colon \sqrt{\vartheta} +   \varepsilon\|  v \|_H^2 \leq  c  h^{- \varsigma }  \} $,
$ \max\{\| P_I F(x) \|_H,
\|  P_I B(x) \hat P_J \|_{\HS(U, H  )}
\} 
\leq c h^{-\delta} $,
and
$ \left< x, P_I F(x) \right>_H
\leq b_1 + b_2 \|   x\|_H^2 $,
 and
 let
 $ Y^{\theta, I, J }\colon 
 [0, T] \times \Omega \to P_I(H_\gamma) $,
 $ \theta \in \varpi_T $, 
 $ I \in \mathcal{P}_0(\H) $, 
 $ J \in \mathcal{P}_0(\mathbb{U}) $,
 be
 $ (\mathcal{F}_t)_{t\in [0,T]}$-adapted
stochastic processes 
with continuous sample paths
which satisfy for all 
$ \theta \in \varpi_T $, 
$ I \in \mathcal{P}_0(\H) $, 
$ J \in \mathcal{P}_0(\mathbb{U}) $,
$ t\in (0,T] $ 
that
$ Y^{\theta, I, J}_0 = P_I(\xi) $ and
\begin{equation}
\begin{split} 
[ Y_t^{\theta, I, J} ]_{\P, \mathcal{B}(P_I(H_\gamma))} 
& 
= 
\left[
e^{(t-\llcorner t \lrcorner_\theta)A} 
\big( 
Y_{
	\llcorner t \lrcorner_\theta
}^{\theta, I, J} 
+
\1_{ D_{ | \theta |_T }^I } 
\!(
Y_{ \llcorner t \lrcorner_{ \theta }}^{\theta, I, J}
) 
P_I
F(
Y_{ \llcorner t \lrcorner_\theta }^{\theta, I, J}
) \,
(
t - \llcorner t \lrcorner_\theta
) 
\big) 
\right]_{\P, \mathcal{B}(P_I(H_\gamma))}  
\\
&
\quad
+
\tfrac{
	\int_{ \llcorner t \lrcorner_\theta }^t
		e^{(t - \llcorner t \lrcorner_\theta )A} 
		\,
	\1_{ D_{ | \theta |_T }^I } \!\!(
	Y_{ \llcorner t \lrcorner_\theta }^{\theta, I, J}
	)\,
	P_I
	B( Y_{ \llcorner t \lrcorner_\theta }^{\theta, I, J} )
	\hat P_J
	\, dW_s
}{
1 + 
 \|
\int_{ \llcorner t \lrcorner_\theta}^t 
P_I
B( Y_{  \llcorner t \lrcorner_\theta }^{\theta, I, J} )
\hat P_J
\, dW_s
 \|_H^2
} 
.
\end{split}
\end{equation}
  Then it holds that
  \begin{equation}
  \begin{split} 
  \label{eq:exponential_moments} 
  &
  \limsup_{|\theta|_T \searrow 0}
  \sup_{I \in \mathcal{P}_0(\H)}
  \sup_{J \in \mathcal{P}_0(\mathbb{U})} 
  \sup_{t\in [0,T]}
  \E\!\left[
       \exp \! \left (
         \tfrac{  
        \sqrt{\vartheta} +
         \varepsilon 
        \| Y^{ \theta, I, J}_t  \|_H^2
         }
         {
         e^{ 2 (b_2 + \varepsilon \vartheta ) t }
         }
         -  
         \smallint_0^t
           \,
\1_{ D_{  | \theta  |_T }^I}\!(
              Y_{ \lfloor s \rfloor_{ \theta } }^{ \theta, I, J }
            )
            \,
            \tfrac{
            \varepsilon 
         ( 2 b_1 + \vartheta )
         }
         {
           e^{  2 (b_2 + \varepsilon \vartheta )  s }
           }
          \, ds
       \right)
     \right]
\\
&
\leq 
\E\big[e^{ \sqrt{\vartheta} + \varepsilon \| \xi \|_H^2} \big] 
\\
&
\leq
  \sup_{ \theta \in \varpi_T }
  \sup_{ I \in \mathcal{P}_0(\H) }
  \sup_{ J \in \mathcal{P}_0(\mathbb{U})} 
  \sup_{t\in [0,T]}
  \E\!\left[ 
       \exp \! \left(
         \tfrac{   
         \sqrt{\vartheta} +
       \varepsilon \| Y^{ \theta, I, J}_t \|_H^2
         }
         {
         e^{ 2(b_2 + \varepsilon \vartheta) t }
         }
         -  
         \smallint_0^t
           \,
\1_{ D_{  | \theta  |_T }^I}\!(
              Y_{ \lfloor s \rfloor_{ \theta } }^{ \theta, I, J}
            ) 
            \,
            \tfrac{
            \varepsilon 
         ( 2 b_1 + \vartheta )
         }
         {
            e^{   2(b_2 + \varepsilon \vartheta) s }  
            }
          \, ds
       \right) 
     \right]
     \\
     &
     \leq
     e^{ \sqrt{ \vartheta } }
     \sup_{ \theta \in \varpi_T }
  \sup_{ I \in \mathcal{P}_0(\H) }
  \sup_{ J \in \mathcal{P}_0(\mathbb{U})} 
  \sup_{t\in [0,T]}
  \E\!\left[ 
       \exp \! \left(
         \tfrac{    
       \varepsilon \| Y^{ \theta, I, J}_t \|_H^2
         }
         {
         e^{ 2(b_2 + \varepsilon \vartheta) t }
         } 
       \right) 
     \right]
     < \infty.
  \end{split}
  \end{equation}
\end{corollary}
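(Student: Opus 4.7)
The plan is to invoke Theorem~\ref{theorem:full_discrete_scheme_moments} with the specific choice $\iota = 1$, Lyapunov function $\V(x) = \sqrt{\vartheta} + \varepsilon\|x\|_H^2$, constant correction $\bar\V(x) = -\varepsilon(2b_1 + \vartheta)$, and exponential rate $\rho = 2(b_2 + \varepsilon\vartheta)$. With these choices the exponent in the conclusion of Theorem~\ref{theorem:full_discrete_scheme_moments} matches the exponent in~\eqref{eq:exponential_moments}, so the proof reduces to verifying the hypotheses of that theorem for this particular quadruple.

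First I would verify the regularity and monotonicity conditions. Direct computation gives $\V'(x)y = 2\varepsilon\langle x, y\rangle_H$, $\V''(x)(y,z) = 2\varepsilon\langle y,z\rangle_H$, and $\V^{(3)} \equiv 0$; the condition $\V \in \mathcal{C}_c^3(H, [0,\infty))$ for $i = 0$ reduces to $\varepsilon|\langle x+y, x-y\rangle_H| \leq c\|x-y\|_H(1 + \sup_{r\in[0,1]}\V(rx+(1-r)y))^{1-\nicefrac{1}{c}}$, which I would deduce from $\|x\|_H + \|y\|_H \leq 2\sup_r\|rx+(1-r)y\|_H$ together with $c \geq 2\sqrt{\varepsilon}$ (implied by $c \geq 2\max\{1,\varepsilon\}$), while the $i \in \{1,2\}$ cases are immediate. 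The contractivity $\V(e^{hA}x) \leq \V(x)$ follows from $\|e^{hA}x\|_H \leq \|x\|_H$, which in turn uses $\sup(\operatorname{im}(\lambda)) < 0$, and $\bar\V \circ e^{hA} = \bar\V$ is trivial since $\bar\V$ is constant. The growth condition $|\bar\V(x)| \leq c(1 + \V(x))$ unfolds to $2\varepsilon b_1 + \varepsilon\vartheta \leq c(1 + \V(x))$, which I would derive from $\V(x) \geq \sqrt{\vartheta}$ combined with $c \geq 2\varepsilon b_1$ and $c\sqrt{\vartheta} \geq 2\varepsilon\vartheta$, both contained in the standing hypothesis on $c$. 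The Lipschitz condition on $\bar\V$ and the inclusion $D_h^I \subseteq \{\V \leq ch^{-\varsigma}\}$ are respectively trivial and a direct rewriting of the hypothesis, and the integrability of the initial data is controlled by $\E[e^{\V(P_I\xi)}] \leq e^{\sqrt{\vartheta}}\E[e^{\varepsilon\|\xi\|_H^2}] < \infty$.

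The main obstacle is verifying the Lyapunov-type inequality
\begin{equation*}
  (\mathcal{G}_{P_I F, P_I B \hat P_J}\V)(x) + \tfrac{1}{2}\|(P_I B(x)\hat P_J)^{*}(\nabla\V)(x)\|_U^2 + \bar\V(x) \leq \rho \V(x).
\end{equation*}
Using $\nabla\V(x) = 2\varepsilon x$ and $\operatorname{Hess}\V(x) = 2\varepsilon \operatorname{Id}_H$, the generator term expands to $2\varepsilon\langle P_I F(x), x\rangle_H + \varepsilon\|P_I B(x)\hat P_J\|_{\HS(U,H)}^2$. Since $x \in P_I(H_\gamma)$ one has $\langle P_I F(x), x\rangle_H = \langle F(x), x\rangle_H \leq b_1 + b_2\|x\|_H^2$ by the dissipativity assumption, and $\|P_I B(x)\hat P_J\|_{\HS}^2 \leq \|B(x)\|_{\HS}^2 \leq \vartheta$ by the monotonicity of projections in the $\HS$-norm. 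The self-adjointness of the projections gives $(P_I B(x)\hat P_J)^{*}(2\varepsilon x) = 2\varepsilon\hat P_J B(x)^{*} x$, so $\tfrac{1}{2}\|(P_I B(x)\hat P_J)^{*}(\nabla\V)(x)\|_U^2 \leq 2\varepsilon^2\|B(x)\|_{\HS}^2\|x\|_H^2 \leq 2\varepsilon^2\vartheta\|x\|_H^2$. Summing these three contributions and using $\bar\V(x) = -\varepsilon(2b_1 + \vartheta)$ cancels the constant terms and yields $\text{LHS} \leq 2\varepsilon(b_2 + \varepsilon\vartheta)\|x\|_H^2 \leq 2(b_2 + \varepsilon\vartheta)(\sqrt{\vartheta} + \varepsilon\|x\|_H^2) = \rho\V(x)$.

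Having checked every hypothesis, Theorem~\ref{theorem:full_discrete_scheme_moments} delivers the first two inequalities in~\eqref{eq:exponential_moments}, with $\sup_{I\in\mathcal{P}_0(\H)}\E[e^{\V(P_I\xi)}]$ as the intermediate bound, itself dominated by $e^{\sqrt{\vartheta}}\E[e^{\varepsilon\|\xi\|_H^2}] < \infty$. For the third inequality I would observe that $-\smallint_0^t \1_{D_{|\theta|_T}^I}(Y^{\theta,I,J}_{\lfloor s\rfloor_\theta})\tfrac{\varepsilon(2b_1+\vartheta)}{e^{\rho s}}\,ds \leq 0$ and $\sqrt{\vartheta}/e^{\rho t} \leq \sqrt{\vartheta}$, so the exponent in the second expression is bounded above by $\sqrt{\vartheta} + \varepsilon\|Y^{\theta,I,J}_t\|_H^2/e^{\rho t}$; factoring $e^{\sqrt{\vartheta}}$ outside the expectation gives the claimed bound, and finiteness of the resulting supremum is inherited from the second inequality.
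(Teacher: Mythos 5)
Your proposal is correct and follows essentially the same route as the paper: it takes $V(x)=\sqrt{\vartheta}+\varepsilon\|x\|_H^2$, $\bar V \equiv -\varepsilon(2b_1+\vartheta)$, $\iota=1$, $\rho=2(b_2+\varepsilon\vartheta)$, verifies $V\in\mathcal{C}^3_{2\max\{1,\varepsilon\}}(H,[0,\infty))$ and the Lyapunov inequality by the same expansion of the generator, and then applies Theorem~\ref{theorem:full_discrete_scheme_moments}. The only (harmless) difference is that you spell out the final chain of inequalities (dropping the nonpositive integral and extracting $e^{\sqrt{\vartheta}}$) in more detail than the paper does.
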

\begin{proof}[Proof of Corollary~\ref{Corollary:full_discrete_scheme_convergence}]
Throughout  this  
proof  
let $ V \colon H \to [0,\infty) $
and
$ \bar V \colon H \to \R $
be the functions with the property that 
for all $ x \in H $ it holds
that
$ V(x) = \sqrt{\vartheta} + \varepsilon \|  x \|_H^2 $
and
$ \bar V (x) = - 2\varepsilon b_1 - \varepsilon \vartheta $.
First of all, observe that for all $ x, y \in H $ it holds that
$ | V(x) - V(y) | \leq   
2 \sqrt{\varepsilon}
\|   x - y   \|_H ( 1 + 
\sup_{r\in [0,1]}|V(rx + (1-r)y) |)^{\nicefrac{1}{2}} $, 
$ \| V'(x) - V'(y) \|_{L^{(1)}(H, \R)} 
\leq
2   \varepsilon \|  x - y   \|_H $,
and
$ |V''(x) - V''(y) \|_{L^{(2)}(H, \R)} = 0 $.
Hence, we obtain that 
\begin{equation}
\label{eq:fun_regularity}
V \in \mathcal{C}^3_{ 2 \max\{1, \varepsilon\}}(H, [0,\infty)). 
\end{equation}
Next note that the assumption  that
$ \forall\, h\in (0,T] $, 
$ I \in \mathcal{P}_0(\H) $, 
$ J \in \mathcal{P}_0(\mathbb{U}) $,
$ x \in D_h^I   \colon 
\langle x,  P_I F(x)\rangle_{H } 
\leq b_1 + b_2 \| x \|_H^2 $ 
shows that for all $ h \in (0,T] $, 
$ I \in \mathcal{P}_0(\H) $, 
$ J \in \mathcal{P}_0(\mathbb{U}) $,
$ x \in D_h^I $ it holds that
\begin{equation}
\begin{split}
&
(\mathcal{G}_{P_I F, P_I B  \hat P_J}\V)(x)
     +\tfrac{1}{2}\big \|(P_I B(x) \hat P_J )^{*}(\nabla \V)(x)\big \|_\U^2
     +\bar{ \V }(x)
    \\
& 
=
  2 \varepsilon
  \left<   x,   P_I F( x ) \right>_H
  +
  \varepsilon
  \sum\nolimits_{u \in \mathbb{U} } 
  \langle  P_I B(x) \hat P_J  u, P_I B(x) \hat P_J u \rangle_U 
  \\
  &
  \quad
  +
  2 \varepsilon^2
  \|
     (P_I B(x) \hat P_J )^*
     x
  \|^2_\U
  -2\varepsilon b_1
  - \varepsilon \vartheta
\\
&
=
2 \varepsilon
\left<    x,    P_I F( x ) \right>_H
+
\varepsilon
\|   P_I B(x) \hat P_J   \|_{\HS(U,H)}^2
+
2 \varepsilon^2
\|
(P_I B(x) \hat P_J )^*
  x
\|^2_\U
-2\varepsilon b_1
- \varepsilon \vartheta
\\
&
\leq
2\varepsilon
 (
 b_2  
 +
 \varepsilon
 \vartheta ) \| x \|_H^2
 \leq
 2
 (b_2 + \varepsilon\vartheta)V(x)
  .
\end{split}
\end{equation}
Combining
this,
\eqref{eq:fun_regularity},
the fact that
$ \sup_{I \in \mathcal{P}_0(\H) }
\E[ e^{V(P_I \xi)}]
\leq
e^{\sqrt{\vartheta}}
\,
\E[ e^{\varepsilon \| \xi \|_H^2 } ] $,
the assumption that
$ \E[ e^{\varepsilon \| \xi \|_H^2 } ] < \infty $,
the fact that
$ \forall \, x \in H \colon
| \bar V(x) | \leq c ( 1 + | V(x) | ) $,
and Theorem~\ref{theorem:full_discrete_scheme_moments}
(with 
$ \rho = 2 (b_2+\varepsilon \vartheta) $,
$ c = c $, 
$ \iota = 1 $,
$ \delta = \delta $,
$ \varsigma = \varsigma $,
$ V = V $,
$ \bar V = \bar V $,   
$ Y^{\theta, I, J} = Y^{\theta, I, J} $
for $ \theta \in \varpi_T $,
$ I \in \mathcal{P}_0(\H) $,
$ J \in \mathcal{P}_0(\mathbb{U}) $
in the notation of Theorem~\ref{theorem:full_discrete_scheme_moments})
establishes~\eqref{eq:exponential_moments}.
The proof of Corollary~\ref{Corollary:full_discrete_scheme_convergence}
is thus completed.
\end{proof}
\section{Examples}
\label{Section:4}
In this section we illustrate 
Corollary~\ref{Corollary:full_discrete_scheme_convergence}
by some examples.
In particular, we prove in the case of
a class of stochastic Burgers equations (see Subsection~\ref{sec:Burgers}),
stochastic Kuramoto-Sivashinsky equations (see Subsection~\ref{sec:Kuramoto}), 
and
two-dimensional stochastic Navier-Stokes equations (see Subsection~\ref{sec:2DNavier})
that a certain
tamed
and space-time-noise
discrete approximation scheme
(see~\eqref{scheme:full_discrete} below)
has bounded exponential moments.
\subsection{Setting}
\label{setting:Examples}
Let $ d \in \N $, 
$ \D =(0,1)^d $, 
$ \eta, \gamma \in [0,\infty) $, 
$ T, \varepsilon \in (0,\infty) $,   
$ \delta \in (0, \nicefrac{1}{18}) $,  
$ (U, \langle \cdot, \cdot \rangle_U,
\left \| \cdot \right \|_U)  =
(L^2(\mu_\D; \R^d), 
\langle \cdot, \cdot 
\rangle_{L^2(\mu_\D;\R^d)},
\left \| \cdot \right \|_{L^2(\mu_\D; \R^d)}
) $, 
let $ \mathbb{U} \subseteq U $ be an orthonormal basis of $ U $,
let $ H \subseteq U $ be a closed subvector space
of $ U $,
  let 
$ \H \subseteq H $
be a non-empty orthonormal basis of $ H $, 
  let
  $ \lambda \in \mathbb{M}( \H, \R) $
  satisfy that
  $ \sup ( \operatorname{im}(\lambda) ) < 0 $,
  let
 $ ( \Omega, \mathcal{ F }, \P, ( \mathcal{F}_t)_{t\in [0,T]} ) $
 be a  
 filtered probability space,
 let
$ (W_t)_{t\in [0,T]} $
be an
$ \operatorname{Id}_\U $-cylindrical 
$( \mathcal{F}_t)_{t\in [0,T]} $-Wiener process,
let $ Q \in L(U) $
be a non-negative
symmetric trace class operator, 
let
$ A \colon D( A ) \subseteq H \to H $ 
be the linear operator 
which satisfies for all $ v \in D(A) $ that
$ D(A) = \big \{
w \in H \colon \sum_{ h\in \H} \left | \lambda_h \left<  h, w \right>_H \right | ^2 <  \infty
\big\} $
and 
$ A v = \sum_{ h\in \H } \lambda_h \left<  h, v \right> _H  h $,
let
$ (H_r, \left< \cdot, \cdot \right>_{H_r}, \left \| \cdot \right \|_{H_r} ) $, $ r\in \R $, be a family of interpolation spaces associated to 
$ - A $
(see, e.g., Definition 3.6.30 in~\cite{j16}),
let  
$ r\in \mathcal{M}( \mathcal{B}(H_\gamma), \mathcal{B}([0,\infty) ) ) $,
$ b \in \mathcal{M}( \mathcal{B}(\D  \times  \R^d), \mathcal{B}(\R^{d \times d})) $ 
satisfy 
$  
\sup\nolimits_{x \in \D, y\in \R^d, z \in \R^d \backslash \{ y \} }
\big(
 \| b(x,y) \|_{ \R^{ d \times d } }
+
\tfrac{ \| b(x,y) - b(x,z) \|_{ \R^{ d \times d } } }
{\| y - z \|_{\R^d}}
\big)
< \infty
 $,
let
$ \vartheta =   
\tr_U(Q) 
(\sup_{x\in \D, y\in \R^d }
\|b(x,y)\|_{ \R^{ d \times d } }^2)
$,
$ c \in [2 \max\{1, 
\varepsilon \sqrt{\vartheta}, 
\varepsilon  \}, \infty) $,
let 
$ P_I \in L(H) $,
$ I\in \mathcal{P} (\H) $,
and   
$ \hat P_J \in L(U) $,
$ J \in \mathcal{P} (\mathbb{U}) $,
be the linear operators which satisfy
for all 
$ I\in \mathcal{P}(\H) $, 
$ J \in \mathcal{P}(\mathbb{U}) $, 
$ v \in H $,
$ w \in U $
that
$ P_I(v) =\sum_{h \in I} \left< h ,v \right>_H h $ 
and
$ \hat P_J (w) = \sum_{u\in \mathbb{U}}
\left< u, w \right>_U u $,
for every
$ I \in \mathcal{P}_0(\H) $,
$ h\in (0,T] $ 
let
$ D_h^I \in \mathcal{P}(H_\gamma) $
be the set given by 
$ D_h^I =\{ x\in P_I(H_\gamma) \colon r(x) \leq
c h^{-\delta}\} $,
let $ R \in L(U) $  
be the orthogonal projection
of $ U $ on $ H $,
for every 
$ n \in \N $,
$ v \in W^{1,2}(\D, \R^n ) $
let
$ \partial v = ( \partial_1 v, \ldots, \partial_d v) 
 \in L^2(\mu_\D; \R^{n \times d})  $
be the 
vector which satisfies for all
$ i \in \{ 1,\ldots, d\} $,
$ \phi  
\in 
\mathcal{C}_{cpt}^\infty(\D, \R^n) $
that
$ \langle \partial_i v, 
[ \phi ]_{\mu_\D, \mathcal{B}(\R^n)} \rangle_{L^2(\mu_\D; \R^n)}
=
-
\langle v , 
[ \frac{\partial}{\partial x_i} \phi  ]_{\mu_\D, \mathcal{B}(\R^n)}
\rangle_{L^2(\mu_\D; \R^n)} $,
let $ F \in \mathbb{M}(H_\gamma, H) $,
$ B \in \mathbb{M}(H_\gamma, \HS(U,H)) $,
$ \xi \in \mathcal{M}(\mathcal{F}_0, \mathcal{B}(H_\gamma)) $
satisfy
for all 
$ u \in U $, 
$ v \in \mathcal{M}(\mathcal{B}(\D), \mathcal{B}(\R^d) ) $,
$ w \in [ H_\gamma 
\cap  W^{1,2}( \D, \R^d) 
\cap
L^\infty( \mu_{\mathcal{D}}; \R^d) ] $
with
$ [ v ]_{\mu_\D, \mathcal{B}(\R^d)} \in  H_\gamma $
that
$ \E[ e^{\varepsilon \| \xi \|_H^2} ] < \infty $,
  $ B(
  [ v ]_{\mu_{\D}, \mathcal{B}(\R^d)}
  )
  u
  = R
  \big(
  [
  \{ b(x, v(x))  \}_{x\in \D} 
  ]_{\mu_{\D}, \mathcal{B}(\R^{d \times d})} 
  ( \sqrt{Q} u ) \big) $,
  and
  $ F(w) =
  R( 
  \eta w
  - 
  \sum_{i=1}^d w_i \partial_i w 
  ) $,
and
let 
$ Y^{\theta,I,J}\colon [0, T] \times \Omega \to P_I(H) $,
$ \theta \in \varpi_T $, 
$ I\in \mathcal{P}_0(\H) $, 
$ J \in \mathcal{P}_0(\mathbb{U}) $,
be  
$ (\mathcal{F}_t)_{t\in [0,T]} $-adapted
stochastic processes
with continuous sample paths
which satisfy for all 
$ t \in (0,T] $, 
$ \theta \in \varpi_T $, 
$ I \in \mathcal{P}_0 (\H) $, 
$ J \in \mathcal{P}_0(\mathbb{U}) $ 
that
$ Y^{\theta, I, J}_0 = P_I(\xi) $ and
\begin{equation}
\begin{split}
\label{scheme:full_discrete}
[ Y_t^{\theta, I, J} ]_{\P, \mathcal{B}(P_I(H_\gamma))} 
& 
= 
\left[
e^{(t-\llcorner t \lrcorner_\theta)A} 
\left( 
Y_{
	\llcorner t \lrcorner_\theta
}^{\theta, I, J} 
+
\,
\1_{  \{ 
	r (Y^{\theta,I,J}_{\llcorner t \lrcorner_{\theta}}  )
	\leq 
	c
	[|\theta|_T]^{-\delta}   \}}
P_I
F(
Y_{ \llcorner t \lrcorner_\theta }^{\theta, I, J}
) \,
(
t - \llcorner t \lrcorner_\theta
) 
\right) 
\right]_{\P, \mathcal{B}(P_I(H_\gamma))}  
\\
&
\quad
+
\tfrac{
	\int_{ \llcorner t \lrcorner_\theta }^t
		e^{(t-\llcorner t \lrcorner_\theta )A}  
		\,
	\1_{  \{ 
		r (Y^{\theta,I,J}_{ \llcorner t \lrcorner_{\theta}}  )
		\leq 
		c
		[|\theta|_T]^{-\delta}   \}}
	\,
	P_I
	B( Y_{ \llcorner t \lrcorner_\theta }^{\theta, I, J} )
	\hat P_J
	\, dW_s
}{
1 + 
\|
\int_{ \llcorner t \lrcorner_\theta}^t 
P_I
B( Y_{ \llcorner t \lrcorner_\theta }^{\theta, I, J} )
\hat P_J
\, dW_s
\|_H^2
} 
.
\end{split}
\end{equation}
\subsection{Properties of the nonlinearities}
In this subsection we establish
a few elementary
properties
for the nonlinearities $ F $ and $ B $
in Subsection~\ref{setting:Examples}
(see Lemma~\ref{lemma:F_well_defined},
Lemma~\ref{lemma:estimate_B_1},
Lemma~\ref{lemma:zero_coercivity},
Lemma~\ref{lemma:zero_coercivity2},
and Corollary~\ref{corollary_continuous} below).
To do so, we also recall in this subsection
some well-known
properties of the involved
Sobolev and interpolation spaces 
(see Lemmas~\ref{lemma:Linfty}--\ref{lemma:zero_coercivity} below).
\begin{lemma}
	\label{lemma:F_well_defined}
	Assume the setting in Subsection~\ref{setting:Examples} 
	and let  
	$ v, w \in [ H_\gamma 
	\cap  W^{1,2}( \D, \R^d) 
	\cap
	L^\infty( \mu_\D; \R^d) ] $.
	Then 
	it holds that
	\begin{equation}
	\begin{split} 
	\| F(v)  \|_{L^2(\mu_\D; \R^d)}
	& 
	\leq
	\eta 
	\| v \|_{ L^2(\mu_\D; \R^d) } 
	+
	d  
	\| v \|_{ L^\infty( \mu_\D; \R^d)}
	\|  \partial v \|_{ L^2(\mu_\D; \R^{d\times d} )}
	< \infty
	\end{split}
	\end{equation}
	and 
	\begin{equation}
	\begin{split}
	&
	\| F(v) - F(w) \|_{L^2(\mu_\D; \R^d)} 
	\leq
	\eta
	\| v - w \|_{ L^2(\mu_\D; \R^d) } 
	\\
	&
	+
	d
	\big(
	\| \partial v \|_{L^2(\mu_\D; \R^{d\times d} )} 
	\| v - w \|_{L^\infty(\mu_\D; \R^d)}
	+ 
	\| w \|_{L^\infty(\mu_\D; \R^d)}
	\| \partial( v - w ) \|_{L^2(\mu_\D; \R^{d\times d} )} 
	\big)
	<
	\infty 
	.
	\end{split}
	\end{equation}
\end{lemma}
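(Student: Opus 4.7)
The plan is to exploit the contractivity of the orthogonal projection $R$, the triangle inequality, and H\"older's inequality together with a standard telescoping identity for the quadratic nonlinearity. First I would record that, since $R \in L(U)$ is an orthogonal projection, one has $\|R\|_{L(U)} \le 1$, and that the definition $F(w) = R(\eta w - \sum_{i=1}^d w_i \, \partial_i w)$ therefore gives, for every $v \in [H_\gamma \cap W^{1,2}(\mathcal{D},\R^d) \cap L^\infty(\mu_\mathcal{D};\R^d)]$, the bound
\begin{equation*}
\|F(v)\|_{L^2(\mu_\mathcal{D};\R^d)}
\leq \eta \, \|v\|_{L^2(\mu_\mathcal{D};\R^d)}
+ \smallsum_{i=1}^d \|v_i \, \partial_i v\|_{L^2(\mu_\mathcal{D};\R^d)} .
\end{equation*}
Then I would apply the elementary pointwise inequality $\|v_i(x) \, \partial_i v(x)\|_{\R^d} \leq \|v(x)\|_{\R^d} \|\partial_i v(x)\|_{\R^d}$ and H\"older's inequality in the form $\|f g\|_{L^2} \leq \|f\|_{L^\infty} \|g\|_{L^2}$ to obtain $\|v_i \, \partial_i v\|_{L^2(\mu_\mathcal{D};\R^d)} \leq \|v\|_{L^\infty(\mu_\mathcal{D};\R^d)} \|\partial_i v\|_{L^2(\mu_\mathcal{D};\R^d)} \leq \|v\|_{L^\infty(\mu_\mathcal{D};\R^d)} \|\partial v\|_{L^2(\mu_\mathcal{D};\R^{d\times d})}$. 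Summing over $i \in \{1,\ldots,d\}$ and using that $v \in W^{1,2}(\mathcal{D},\R^d) \cap L^\infty(\mu_\mathcal{D};\R^d)$ guarantees that all the appearing norms are finite then yields the first assertion.

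For the second assertion I would apply the same argument to the difference $F(v) - F(w)$, but first rewrite the quadratic term via the standard telescoping identity
\begin{equation*}
\smallsum_{i=1}^d \bigl( v_i \, \partial_i v - w_i \, \partial_i w \bigr)
= \smallsum_{i=1}^d (v_i - w_i) \, \partial_i v
+ \smallsum_{i=1}^d w_i \, \partial_i(v - w) .
\end{equation*}
Combining this with $\|R\|_{L(U)} \leq 1$, the triangle inequality, and the same H\"older estimate as above would give
\begin{equation*}
\begin{split}
\|F(v) - F(w)\|_{L^2(\mu_\mathcal{D};\R^d)}
&\leq \eta \, \|v - w\|_{L^2(\mu_\mathcal{D};\R^d)}
+ d \, \|v - w\|_{L^\infty(\mu_\mathcal{D};\R^d)} \|\partial v\|_{L^2(\mu_\mathcal{D};\R^{d\times d})} \\
&\quad + d \, \|w\|_{L^\infty(\mu_\mathcal{D};\R^d)} \|\partial(v - w)\|_{L^2(\mu_\mathcal{D};\R^{d\times d})},
\end{split}
\end{equation*}
and the finiteness of the right hand side again follows from $v, w \in W^{1,2}(\mathcal{D},\R^d) \cap L^\infty(\mu_\mathcal{D};\R^d)$.

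The argument is essentially mechanical; no step is genuinely hard. The only point requiring a little care is making sure that the product $v_i \, \partial_i v$ is the pointwise product of a bounded function with an $L^2$ function (so that it indeed lives in $L^2$), which is immediate once one recalls the definition of the weak partial derivative $\partial_i v \in L^2(\mu_\mathcal{D};\R^d)$ from Subsection~\ref{setting:Examples} and the assumption $v \in L^\infty(\mu_\mathcal{D};\R^d)$. With this observation the H\"older and triangle inequality chain above is completely routine.
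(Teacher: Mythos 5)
Your proposal is correct and follows essentially the same route as the paper's proof: contractivity of the orthogonal projection $R$, the triangle inequality, the telescoping decomposition $v_i\,\partial_i v - w_i\,\partial_i w = (v_i-w_i)\,\partial_i v + w_i\,\partial_i(v-w)$, and H\"older's inequality. The only cosmetic difference is that the paper combines the finite sums over $i$ via a Cauchy--Schwarz step whereas you bound each summand by $\|v\|_{L^\infty}\|\partial v\|_{L^2}$ and sum directly; both yield the stated constant $d$.
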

\begin{proof}[Proof of Lemma~\ref{lemma:F_well_defined}]
	Note that
	the triangle inequality and
	H\"older's inequality imply
	that
	\begin{equation}
	\begin{split} 
	\| F(v)  \|_{ L^2(\mu_\D; \R^d) } 
	& \leq
	\eta
	\| v \|_{ L^2(\mu_\D; \R^d) } 
	+
	\big\|
	\smallsum_{j=1}^d
	v_j \partial_j v
	\big\|_{L^2(\mu_\D; \R^d)} 
	\\
	&
	\leq
	\eta
	\| v \|_{ L^2(\mu_\D; \R^d) } 
	+
	\smallsum_{j=1}^d \| v_j \|_{L^\infty( \mu_\D; \R ) }
	\| \partial_j v
	\|_{ L^2(\mu_\D; \R^d) }  
	\\
	&
	\leq
	\eta
	\| v \|_{ L^2(\mu_\D; \R^d) } 
	+
	\sqrt{
		\smallsum_{j=1}^d
		\| v_j \|_{L^\infty( \mu_\D; \R ) }^2
	}
	\sqrt{
		\smallsum_{j=1}^d
		\| \partial_j v
		\|_{ L^2(\mu_\D; \R^d) }^2
	} 
	\\
	& 
	\leq
	\eta
	\| v \|_{ L^2(\mu_\D; \R^d) } 
	+
	d 
	\| v \|_{ L^\infty( \mu_\D; \R^d)}
	\| \partial v \|_{ L^2(\mu_\D;  \R^{d\times d} ) }
	.
	\end{split}
	\end{equation}
	In addition, observe
	that
	\begin{equation}
	\begin{split}
	& 
	\| F(v) - F(w) \|_{L^2(\mu_\D; \R^d)}
	-
	\eta
	\| v - w \|_{ L^2(\mu_\D; \R^d) } 
	\leq  
	\big \| \smallsum_{j=1}^d (\partial_j v) v_j  -  \smallsum_{j=1}^d (\partial_j w) w_j 
	\big \|_{L^2(\mu_\D; \R^d)}
	\\ 
	&
	\leq  
	\,
	\big \|
	\smallsum_{j=1}^d (\partial_j v) (v_j - w_j)
	\big \|_{L^2(\mu_\D; \R^d)}
	+
	\,
	\big \| \smallsum_{j=1}^d 
	(\partial_j v - \partial_j w )  w_j
	\big \|_{L^2(\mu_\D; \R^d)}
	\\
	& 
	\leq 
	\smallsum_{j=1}^d 
	\| ( \partial_j v ) ( v_j - w_j ) 
	\|_{L^2(\mu_\D; \R^d)}
	+
	\smallsum_{j=1}^d
	\|
	(\partial_j v - \partial_j w ) w_j
	\|_{L^2(\mu_\D; \R^d)}
	\\
	& 
	\leq
	\smallsum_{j=1}^d 
	\|\partial_j v  
	\|_{L^2(\mu_\D; \R^d)}
	\| v_j - w_j 
	\|_{L^\infty(\mu_\D; \R )}
	+ 
	\smallsum_{j=1}^d
	\|
	\partial_j v - \partial_j w  
	\|_{L^2(\mu_\D; \R^d)}
	\|
	w_j
	\|_{L^\infty(\mu_\D; \R )}
	.
	\end{split}
	\end{equation}
	H\"older's inequality hence proves  
	that
	\begin{equation}
	\begin{split}
	& 
	\| F(v) - F(w) \|_{L^2(\mu_\D; \R^d)}  
	\leq
	\sqrt{
		\smallsum_{j=1}^d 
		\|\partial_j v  
		\|_{L^2(\mu_\D; \R^d)}^2
	}
	\sqrt{
		\smallsum_{j=1}^d
		\| v_j - w_j 
		\|_{L^\infty(\mu_\D; \R )}^2
	}
	\\
	&
	\quad
	+
	\sqrt{ 
		\smallsum_{j=1}^d
		\|
		\partial_j v - \partial_j w  
		\|_{L^2(\mu_\D; \R^d)}^2
	}
	\sqrt{
		\smallsum_{j=1}^d
		\|
		w_j
		\|_{L^\infty(\mu_\D; \R )}^2
	}
	+
	\eta
	\| v - w \|_{ L^2(\mu_\D; \R^d) } 
	\\ 
	&
	\leq
	d
	\big(
	\| \partial v \|_{L^2(\mu_\D; \R^{d\times d} )} 
	\| v - w \|_{L^\infty(\mu_\D; \R^d)}
	+ 
	\| w \|_{L^\infty(\mu_\D; \R^d)}
	\| \partial(v-w) \|_{L^2(\mu_\D; \R^{d\times d} )} 
	\big)
	\\
	&
	\quad
	+
	\eta
	\| v - w \|_{ L^2(\mu_\D; \R^d) } 
	.
	\end{split}
	\end{equation}
	The proof of Lemma~\ref{lemma:F_well_defined}
	is thus completed.
\end{proof}
\begin{lemma}
\label{lemma:estimate_B_1}
Assume the setting in Subsection~\ref{setting:Examples}.
Then it holds for all 
$ v, w \in H_\gamma $ that 
\begin{equation}
\begin{split}
&
\| B( v ) \|_{\HS( U, H )} 
\leq 
\left(
\sup\nolimits_{x\in \D, y\in \R^d}
\| b(x,y) \|_{  \R^{d \times d}  } 
\right)
\sqrt{ \operatorname{trace}_U(Q) }
=
\sqrt{\vartheta}
 < \infty
\end{split}
\end{equation} 
and
\begin{equation}
\begin{split} 
&
\| 
B( v )
-
B( w ) 
\|_{\HS( U, H )}
\\
&
\leq 
\Big(
\sup\nolimits_{x \in \D, y\in \R^d, z \in \R^d \backslash \{ y \} }
\tfrac{ \| b(x,y) - b(x,z) \|_{ \R^{ d \times d } } }{\| y - z \|_{\R^d}}
\Big)
\| v - w \|_{ L^\infty(\mu_\D; \R^d) }
\sqrt{ \operatorname{trace}_U(Q) } 
.
\end{split}
\end{equation} 
\end{lemma}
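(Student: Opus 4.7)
The plan is to factor $B(v)$ as a composition and then apply the standard ideal property of Hilbert--Schmidt operators. Specifically, define, for $v \in H_\gamma$, the multiplication operator $M_v \colon U \to U$ by the formula $M_v u = [\{b(x, v(x)) u(x)\}_{x \in \D}]_{\mu_\D, \mathcal{B}(\R^d)}$ (where a measurable representative of $v$ is fixed). Then the definition of $B$ in Subsection~\ref{setting:Examples} gives $B(v) = R \circ M_v \circ Q^{\nicefrac{1}{2}}$, and similarly $B(v) - B(w) = R \circ (M_v - M_w) \circ Q^{\nicefrac{1}{2}}$. Since $R \in L(U)$ is an orthogonal projection, $\|R\|_{L(U)} \leq 1$; since $Q$ is non-negative, symmetric, and trace class, $Q^{\nicefrac{1}{2}} \in \HS(U,U)$ with $\|Q^{\nicefrac{1}{2}}\|_{\HS(U,U)}^2 = \operatorname{trace}_U(Q)$.

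The key estimate is the operator-norm bound for $M_v$ and $M_v - M_w$. For any $u \in U = L^2(\mu_\D; \R^d)$, the pointwise inequality $\| b(x, v(x)) u(x) \|_{\R^d} \leq \|b(x,v(x))\|_{\R^{d \times d}} \|u(x)\|_{\R^d}$ together with the assumption $\sup_{x \in \D,\, y \in \R^d} \|b(x,y)\|_{\R^{d\times d}} < \infty$ gives, after squaring and integrating over $\D$,
\begin{equation*}
  \| M_v u \|_U^2 \leq \bigl( \sup\nolimits_{x \in \D,\, y \in \R^d} \|b(x,y)\|_{\R^{d \times d}} \bigr)^2 \| u \|_U^2,
\end{equation*}
and analogously, using the pointwise Lipschitz bound on $b$,
\begin{equation*}
  \| (M_v - M_w) u \|_U^2 \leq \bigl( \sup\nolimits_{x \in \D,\, y \in \R^d,\, z \in \R^d \setminus \{y\}} \tfrac{\| b(x,y) - b(x,z) \|_{\R^{d \times d}}}{\|y - z\|_{\R^d}} \bigr)^2 \| v - w \|_{L^\infty(\mu_\D; \R^d)}^2 \| u \|_U^2.
\end{equation*}

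The final step is the ideal property $\|A T C\|_{\HS(U,H)} \leq \|A\|_{L(U,H)} \|T\|_{\HS(U,U)} \|C\|_{L(U)}$ for $A \in L(U,H)$, $T \in \HS(U,U)$, $C \in L(U)$. Applying this with $A = R$, $T = M_v \circ Q^{\nicefrac{1}{2}}$ (or $T = (M_v - M_w) \circ Q^{\nicefrac{1}{2}}$ for the second inequality) and combining the two operator-norm estimates above with $\|Q^{\nicefrac{1}{2}}\|_{\HS(U,U)} = \sqrt{\operatorname{trace}_U(Q)}$ yields both assertions directly. No step is a genuine obstacle; the only mild subtlety is the measurability of $x \mapsto b(x, v(x)) u(x)$, which follows from the Borel measurability of $b$ and of a representative of $v$.
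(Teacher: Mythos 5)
Your proof is correct and is essentially the paper's argument in disguise: the paper carries out the same factorization through the multiplication operator and the contraction $R$, but verifies the Hilbert--Schmidt bound by summing $\|B(v)u\|_H^2$ explicitly over the orthonormal basis $\mathbb{U}$ rather than invoking the ideal property $\|ATC\|_{\HS}\leq\|A\|_{L}\|T\|_{\HS}\|C\|_{L}$ by name. Unpacking that ideal property gives exactly the paper's chain of inequalities, so the two proofs coincide in substance.
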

\begin{proof}[Proof of Lemma~\ref{lemma:estimate_B_1}]
First of all, note for all   
$ v  \in \mathcal{M}( \mathcal{B}(\D), \mathcal{B}(\R^d) ) $
with $ [ v ]_{\mu_\D, \mathcal{B}(\R^d)} \in H_\gamma $
that
\begin{equation}
\begin{split} 
\label{eq_needded1}
&
\| B( [ v ]_{\mu_\D, \mathcal{B}(\R^d) } ) \|_{\HS( U, H )}^2
=
\sum\nolimits_{u \in \mathbb{U}}
\| B( [ v ]_{\mu_\D, \mathcal{B}(\R^d) } ) u\|_H^2
\\
&
\leq
\sum\nolimits_{ u \in \mathbb{U} }
\big\| 
[
\{ b(x, v(x)) \}_{x\in \D}
]_{\mu_\D, \mathcal{B}(\R^{d \times d})}
( Q^{\nicefrac{1}{2}} u )
\big\|_U^2
\\
&
\leq
\left(
\sup\nolimits_{x\in \D, y\in \R^d}
\| b(x,y) \|_{ \R^{ d \times d } }^2
\right)
\sum\nolimits_{u \in \mathbb{U} } 
\|  
Q^{\nicefrac{1}{2}} u 
\|_U^2 
=
\left(
\sup\nolimits_{x \in \D, y\in \R^d}
\| b(x,y) \|_{ \R^{ d \times d } }^2
\right)
 \operatorname{trace}_U(Q).
\end{split}
\end{equation} 
Next observe for all 
$ v, w \in \mathcal{M}( \mathcal{B}(\D), \mathcal{B}(\R^d) ) $
with 
$ [ v ]_{\mu_\D, \mathcal{B}(\R^d)}, 
[ w ]_{\mu_\D, \mathcal{B}(\R^d)} 
\in  H_\gamma $
that
\begin{equation}
\begin{split} 
\label{eq_needded2}
&
\| 
B( [ v ]_{\mu_\D, \mathcal{B}(\R^d) } )
-
B( [ w ]_{\mu_\D, \mathcal{B}(\R^d) } ) 
\|_{\HS( U, H )}^2
\\
&
\leq
\sum\nolimits_{ u \in \mathbb{U} }
\big\| 
[
\{ b(x, v(x)) - b ( x, w(x) ) \}_{x \in \D}
]_{\mu_\D, \mathcal{B}(\R^{d \times d})}
( Q^{\nicefrac{1}{2}} u )
\big\|_U^2
\\
&
\leq  
\|  
\{ b(x,v(x))-b(x,w(x))
\}_{ x \in \mathcal{D} }  
\|_{
\mathcal{L}^\infty( \mu_{\mathcal{D}}; \R^{ d \times d } ) }^2  
\sum\nolimits_{u \in \mathbb{U} } 
\|  Q^{\nicefrac{1}{2}} u \|_U^2 
\\
&
\leq
\Big[
\sup\nolimits_{x \in \D, y\in \R^d, z \in \R^d \backslash \{ y \} }
\tfrac{ 
	\| b(x,y) - b(x,z) \|_{ \R^{ d \times d } }
	}{\| y - z \|_{\R^d}}
\Big]^2
\| v - w \|_{ \mathcal{L}^\infty(\mu_\D; \R^d) }^2
\operatorname{trace}_U(Q)
.
\end{split}
\end{equation} 
Combining~\eqref{eq_needded1}
and~\eqref{eq_needded2}
completes the proof of Lemma~\ref{lemma:estimate_B_1}.
\end{proof}
\begin{lemma}
	\label{lemma:Linfty}
	Assume the setting in Subsection~\ref{setting:Examples}
	and
	let
	$ \rho \in [0,\infty) $,
	$ v \in H_\rho $.
	Then 
	\begin{equation}
	\begin{split}
	\label{eq:estimate_infty_norm}
	&
	\| v \|_{L^\infty( \mu_\D; \R^d)} 
	\leq
	\| v \|_{H_\rho}
	\bigg(
	\sup_{ h \in \H } 
	\| h \|_{L^\infty( \mu_\D; \R^d)}
	\bigg)
	\Bigg[
	\sum_{ h \in \H }
	| \lambda_h |^{-2\rho}
	\Bigg]^{\nicefrac{1}{2}}
.
\end{split}
\end{equation}
\end{lemma}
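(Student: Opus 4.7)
The plan is to expand $v$ in the orthonormal basis $\H$ of $H$ and then bound the $L^\infty$ norm by the absolute sum of the expansion, interpolating through the $H_\rho$ norm via Cauchy--Schwarz. Explicitly, first I would assume without loss of generality that the right-hand side of~\eqref{eq:estimate_infty_norm} is finite (otherwise the estimate is trivial), and I would recall from the construction of the interpolation spaces associated to $-A$ (e.g., Definition~3.6.30 in~\cite{j16}) that, since $\H$ is an orthonormal basis of eigenvectors of $-A$ with eigenvalues $\{-\lambda_h\}_{h\in\H}\subseteq (0,\infty)$, it holds that
\begin{equation*}
  v = \sum_{h\in\H}\langle h,v\rangle_H\, h
  \qquad\text{and}\qquad
  \|v\|_{H_\rho}^{2} = \sum_{h\in\H}|\lambda_h|^{2\rho}|\langle h,v\rangle_H|^{2}.
\end{equation*}

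Next I would estimate the pointwise absolute value of the partial sums. For every finite $I\subseteq\H$ the triangle inequality and the Cauchy--Schwarz inequality yield
\begin{equation*}
  \Big\|\sum_{h\in I}\langle h,v\rangle_H\, h\Big\|_{L^{\infty}(\mu_\D;\R^{d})}
  \leq \Big(\sup_{h\in\H}\|h\|_{L^{\infty}(\mu_\D;\R^{d})}\Big)
  \sum_{h\in I}|\lambda_h|^{\rho}|\langle h,v\rangle_H|\cdot |\lambda_h|^{-\rho},
\end{equation*}
and another application of Cauchy--Schwarz to the finite sum on the right produces exactly the desired bound in terms of $\|v\|_{H_\rho}$ and $[\sum_{h\in\H}|\lambda_h|^{-2\rho}]^{1/2}$, uniformly in $I$. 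Because this bound is uniform and finite, the series $\sum_{h\in\H}\langle h,v\rangle_H\, h$ converges absolutely in $L^{\infty}(\mu_\D;\R^{d})$; its $L^{\infty}$-limit is thus an $L^{\infty}$-representative of the $H$-limit $v$, so taking $I\nearrow\H$ gives~\eqref{eq:estimate_infty_norm}.

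The only potential subtlety is that the expansion $v=\sum_{h\in\H}\langle h,v\rangle_H\, h$ \emph{a priori} only converges in $H\subseteq L^{2}(\mu_\D;\R^{d})$, whereas the conclusion is stated for the $L^{\infty}$-seminorm of (an equivalence class of) a function. The argument above handles this in a clean way: one first obtains the estimate for the $L^{\infty}$-norm of the finite partial sums (where no measurability issue arises), deduces absolute convergence of the series in $L^{\infty}(\mu_\D;\R^{d})$ from the finiteness of the right-hand side, and then identifies the two limits by uniqueness in $L^{2}(\mu_\D;\R^{d})$. This identification is the only nontrivial step; the remainder is a two-line Cauchy--Schwarz computation.
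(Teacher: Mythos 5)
Your proposal is correct and follows essentially the same route as the paper: expand $v$ in the eigenbasis $\H$, bound $\sum_{h\in\H}\|\langle h,v\rangle_H\,h\|_{L^\infty(\mu_\D;\R^d)}$ by inserting $|\lambda_h|^{\rho}|\lambda_h|^{-\rho}$ and applying the Cauchy--Schwarz inequality, which is exactly the paper's one-display argument. The only difference is that you make explicit the identification of the $L^\infty$-limit of the partial sums with the $L^2$-limit $v$, a step the paper leaves implicit; this is a welcome clarification rather than a deviation.
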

\begin{proof}[Proof of Lemma~\ref{lemma:Linfty}]
	Note that H\"older's inequality proves  
	that
	\begin{equation}
	\begin{split}
	\sum_{ h \in \H }
	\| 
	\langle h, v \rangle_H
	\,
	h 
	\|_{L^\infty( \mu_\D; \R^d)}
	&
	\leq
	\bigg(
	\sup_{ h \in \H } 
	\| h 
	\|_{L^\infty( \mu_\D; \R^d)}
	\bigg)
	\sum_{ h \in \H }
	| \langle h, v \rangle_H |
	\\
	& 
	\leq
	\bigg(
	\sup_{ h \in \H } 
	\| h 
	\|_{L^\infty( \mu_\D; \R^d)}
	\bigg)
	\sum_{ h \in \H }
	|  \lambda_h |^\rho
	\,
	| \langle h, v \rangle_H |
	\,
	| \lambda_h |^{-\rho}
	\\
	&
	\leq
	\| v \|_{H_\rho}
	\bigg(
	\sup_{h \in \H } 
	\| h
	\|_{L^\infty( \mu_\D; \R^d)}
	\bigg) 
	\Bigg[
	\sum_{ h \in \H }
	|   \lambda_h |^{-2\rho}
	\Bigg]^{\nicefrac{1}{2}}
.
\end{split}
\end{equation}
This 
completes the proof
 of Lemma~\ref{lemma:Linfty}.
\end{proof}
\begin{lemma}
	\label{lemma:weak_exists}
		Assume the setting in Subsection~\ref{setting:Examples},
		let 
		$ \rho \in [0,\infty) $,   
		and assume
		for all
		$ j \in \{1, \ldots, d \} $,
		$ v, w \in \H $
		that   
		$ \H \subseteq W^{1,2} (\D, \R^d) $, 
		$ \sup_{h \in \H  }
		\big(
		\|  \partial_j h  \|_U
		\, | \lambda_h |^{-\rho} 
		\big) < \infty $, 
		$ \langle \partial_j v,  
		\partial_j  w \rangle_U
		\, \1_{ \H \backslash \{ v \} }(w) = 0 $.
		Then 
\begin{enumerate}[(i)]
	\item \label{item:Hrho_inclusion}	
	it holds that 
		$ H_\rho \subseteq W^{1,2}( \D, \R^d ) $,
	\item
	\label{item:conv_partial_L2} it holds for all
	$ u \in H_\rho $,
	$ j \in \{1, \ldots, d \} $
	that
		\begin{equation} 
		\| \partial_j u \|_U 
		=
		\Big(
		\sum\nolimits_{ h \in \H }
		\| \langle h, u \rangle_H \partial_j h \|_U^2
		\Big)^{\!\nicefrac{1}{2}}
		\leq
		\Big(
		\sup\nolimits_{ h \in \H } 
		\tfrac{ \| \partial_j h \|_{U} } 
		{ |  \lambda_h |^{ \rho}  }
		\Big)
		\| u \|_{H_\rho}
		< \infty 
		\end{equation}
		and
		$ \partial_j u = \sum_{h \in \H}
		\langle h, u \rangle_H
		\partial_j h $, and
	\item  it holds for all
	$ u \in H_\rho $ that
	\begin{equation}
	\begin{split}
	\label{eq:estimate_partialL2} 
	& 
	\| \partial u \|_{L^2(\mu_\D; \R^{d\times d} ) } 
	\leq
	\bigg[
		\smallsum\limits_{j=1}^d
		\| \partial_j u \|_{L^2(\mu_\D; \R^d) }^2
	\bigg]^{\nicefrac{1}{2}}
	\!
	\leq
	\sqrt{d}
	\bigg[
	\sup\limits_{h \in \H}
	\sup\limits_{ j \in  \{1,\ldots, d\} }
	\tfrac{
		\| \partial_j h \|_U
	}{
	|  \lambda_h |^{ \rho}
}
\bigg]
\| u \|_{H_\rho}
< \infty.
\end{split}
\end{equation}
\end{enumerate}
\end{lemma}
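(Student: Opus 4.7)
\medskip

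The plan is to build the weak partial derivatives of an arbitrary $u \in H_\rho$ by passing to the limit in partial sums of its orthonormal expansion, using the orthogonality hypothesis on $\{\partial_j h\}_{h\in\H}$ to get absolute convergence in $U$, and then verifying weak differentiability by testing against $\mathcal{C}^\infty_{cpt}(\D,\R^d)$.

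First, fix $u \in H_\rho$ and write $c_h = \langle h, u\rangle_H$, so that $u = \sum_{h\in\H} c_h h$ in $H$ with $\|u\|_{H_\rho}^2 = \sum_{h\in\H} |\lambda_h|^{2\rho} |c_h|^2 < \infty$. Enumerate finite subsets $I_n \subseteq \H$ with $I_n \uparrow \H$ and set $u_n := \sum_{h\in I_n} c_h h$. Since $\H \subseteq W^{1,2}(\D,\R^d)$, each $u_n$ lies in $W^{1,2}(\D,\R^d)$ with $\partial_j u_n = \sum_{h \in I_n} c_h \partial_j h$. For fixed $j \in \{1,\ldots,d\}$, the orthogonality assumption $\langle \partial_j v, \partial_j w\rangle_U \mathbbm{1}_{\H\setminus\{v\}}(w) = 0$ gives
\begin{equation}
\Big\|\smallsum_{h\in I_n\setminus I_m} c_h \partial_j h\Big\|_U^2
= \smallsum_{h\in I_n\setminus I_m} |c_h|^2 \|\partial_j h\|_U^2
\leq \Big[\sup_{h\in\H}\tfrac{\|\partial_j h\|_U}{|\lambda_h|^\rho}\Big]^2 \smallsum_{h\in I_n\setminus I_m} |c_h|^2 |\lambda_h|^{2\rho},
\end{equation}
so $(\partial_j u_n)_{n\in\N}$ is Cauchy in $U$ by the summability of $|\lambda_h|^{2\rho}|c_h|^2$. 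Let $S_j \in U$ be its limit; the same computation in the limit $n\to\infty$, $m = 0$ yields $\|S_j\|_U^2 = \sum_{h\in\H} |c_h|^2 \|\partial_j h\|_U^2 \leq [\sup_{h\in\H}\|\partial_j h\|_U/|\lambda_h|^\rho]^2 \|u\|_{H_\rho}^2$, which is the inequality in item~\eqref{item:conv_partial_L2}.

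Next I would identify $S_j$ as the weak partial derivative $\partial_j u$. For $\phi \in \mathcal{C}^\infty_{cpt}(\D,\R^d)$, the convergence $u_n \to u$ in $H \hookrightarrow L^2(\mu_\D;\R^d) = U$ and $\partial_j u_n \to S_j$ in $U$ combined with integration by parts for the smooth (in fact, $W^{1,2}$) functions $u_n$ give
\begin{equation}
\langle u, [\tfrac{\partial}{\partial x_j}\phi]_{\mu_\D,\mathcal{B}(\R^d)}\rangle_U
= \lim_{n\to\infty}\langle u_n, [\tfrac{\partial}{\partial x_j}\phi]_{\mu_\D,\mathcal{B}(\R^d)}\rangle_U
= -\lim_{n\to\infty}\langle \partial_j u_n, [\phi]_{\mu_\D,\mathcal{B}(\R^d)}\rangle_U
= -\langle S_j, [\phi]_{\mu_\D,\mathcal{B}(\R^d)}\rangle_U.
\end{equation}
By the definition of $\partial_j$ recalled in Subsection~\ref{setting:Examples}, this proves that $u \in W^{1,2}(\D,\R^d)$ with $\partial_j u = S_j = \sum_{h\in\H} c_h \partial_j h$, establishing items~\eqref{item:Hrho_inclusion} and~\eqref{item:conv_partial_L2} simultaneously. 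Item~(iii) then follows directly by summing the bound in~\eqref{item:conv_partial_L2} over $j \in \{1,\ldots,d\}$ and using $\sqrt{\sum_{j=1}^d a_j^2} \leq \sqrt{d}\max_j a_j$.

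The main subtlety is justifying the passage from strong $U$-convergence of $\partial_j u_n$ to the weak derivative identification; this rests on the basic fact that in the test-function pairing, $H$-convergence of $u_n$ to $u$ implies $U$-convergence (since $H$ is a closed subspace of $U$), and then the standard closability of weak differentiation closes the argument. Everything else is a routine orthogonality-plus-interpolation-space computation with no serious technical obstruction.
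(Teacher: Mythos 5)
Your proposal is correct and follows essentially the same route as the paper's proof: orthogonality of $\{\partial_j h\}_{h\in\H}$ together with the bound $\|\partial_j h\|_U \leq (\sup_{h}\|\partial_j h\|_U|\lambda_h|^{-\rho})\,|\lambda_h|^{\rho}$ gives convergence of $\sum_{h}\langle h,u\rangle_H\,\partial_j h$ in $U$ with the stated norm identity, and the weak-derivative identification is then obtained by testing against $\mathcal{C}^\infty_{cpt}(\D,\R^d)$ and using the defining property of $\partial_j$ on the basis elements. The only difference is presentational: you make the limit over finite partial sums and the Cauchy argument explicit, whereas the paper interchanges the infinite sum with the inner product directly, justified by the same square-summability estimate.
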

\begin{proof}[Proof of Lemma~\ref{lemma:weak_exists}]
Note
that
for all
$ u \in H_\rho $,
$ j \in \{1,\ldots, d \} $
it holds that
\begin{equation}
\begin{split}
\label{eq:inDomain}
\sum\nolimits_{h \in \H }   
\| \langle h, u \rangle_H \partial_j h \|_{U}^2
&
\leq
\Big(
\sup\nolimits_{ h \in \H }
\tfrac{ \| \partial_j h \|_{U}^2 } 
{ |  \lambda_h |^{2\rho}  }
\Big)
\sum\nolimits_{ h \in \H }  
|  \lambda_h |^{2\rho}
| \langle h, u \rangle_H |^2 
\\
&
=
\Big(
\sup\nolimits_{ h \in \H }
\tfrac{ \| \partial_j h \|_{U}^2 } 
{ |  \lambda_h |^{2\rho}  }
\Big)
\| u \|_{H_\rho}^2
<
\infty.
\end{split}
\end{equation}
The fact that
for all
$ j \in \{1, \ldots, d \} $,
	$ v, w \in \H $ 
 with
	$ v \neq w $
	it holds that 
	$ \langle \partial_j v,  
	\partial_j w \rangle_U = 0 $
hence shows that 
 for all
 $ u \in H_\rho $,
$ \phi \in \mathcal{C}_{cpt}^\infty(\D, \R^d) $,
$ j \in \{1,\ldots, d \} $
it holds that
\begin{equation}
\begin{split}
&
\langle u, [ \tfrac{\partial}{\partial x_j} \phi ]_{\mu_\D, \mathcal{B}(\R^d)} \rangle_U
=
\left< \sum_{ h \in \H }
\langle h, u \rangle_U
h,
[ \tfrac{\partial}{\partial x_j} \phi ]_{\mu_\D, \mathcal{B}(\R^d)} 
\right>_{\!\! U}
=
\sum_{ h \in \H }
\langle h, u \rangle_U
\langle h, 
[ \tfrac{\partial}{\partial x_j} \phi ]_{\mu_\D, \mathcal{B}(\R^d)}
 \rangle_U
\\
&
=
-
\sum_{ h \in \H }
\langle h, u \rangle_U
\langle \partial_j h, 
[ \phi ]_{\mu_\D, \mathcal{B}(\R^d)}
 \rangle_U
 =
 -
 \left<
 \sum_{ h \in \H }  
 \langle h, u \rangle_U 
  \partial_j h, 
 [ \phi ]_{\mu_\D, \mathcal{B}(\R^d)}
 \right>_{\!\! U}
.
\end{split}
\end{equation}
This 
and~\eqref{eq:inDomain}
complete the proof of Lemma~\ref{lemma:weak_exists}.
\end{proof}
\begin{lemma}[Weak product rule (cf., e.g., Proposition 7.1.11 in Atkinson \& Han~\cite{AtkinsonWeimin2009})]
\label{lemma:chain_rule}
Let $ d \in \N $,
$ u, v \in  
[ W^{1,2}( (0,1)^d, \R)
\cap L^\infty( \mu_{(0,1)^d}; \R) ] $,  
$ j \in \{1,\ldots, d \} $.
Then it holds that  
$ u \cdot v 
\in  
[ W^{1,2}( (0,1)^d, \R)
\cap L^\infty( \mu_{(0,1)^d}; \R) ] $ 
and 
$ \partial_j(u v) 
= 
u \, \partial_j v
+
v \, \partial_j u $.
\end{lemma}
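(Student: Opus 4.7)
The plan is to prove Lemma~\ref{lemma:chain_rule} by verifying the three required properties: $uv \in L^\infty(\mu_{(0,1)^d}; \R)$, $uv \in W^{1,2}((0,1)^d, \R)$, and the product-rule identity $\partial_j(uv) = u\,\partial_j v + v\,\partial_j u$. The first is immediate since $\|uv\|_{L^\infty} \leq \|u\|_{L^\infty} \|v\|_{L^\infty} < \infty$, which in turn yields $uv \in L^2$ because $(0,1)^d$ has finite Lebesgue measure. Define the candidate weak derivative $g_j := u\,\partial_j v + v\,\partial_j u$, and note that H\"older's inequality gives
\begin{equation*}
\|g_j\|_{L^2} \leq \|u\|_{L^\infty}\|\partial_j v\|_{L^2} + \|v\|_{L^\infty}\|\partial_j u\|_{L^2} < \infty,
\end{equation*}
so it remains to show that this $g_j$ is actually the weak $j$-th partial derivative of $uv$.

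The key step is a mollification argument. First I would fix a nonnegative symmetric standard mollifier $\eta \in \mathcal{C}_{cpt}^\infty(\R^d, [0,\infty))$ with $\int_{\R^d}\eta\, d\mu_{\R^d} = 1$ and set $\eta_\epsilon(x) = \epsilon^{-d}\eta(x/\epsilon)$. Given a test function $\phi \in \mathcal{C}_{cpt}^\infty((0,1)^d, \R)$ with compact support $K \subseteq (0,1)^d$, I would choose $\epsilon_0 > 0$ so small that the $\epsilon_0$-neighborhood $K_{\epsilon_0}$ of $K$ still lies in $(0,1)^d$. Then for $0 < \epsilon < \epsilon_0$, the mollified functions $u_\epsilon := \eta_\epsilon * u$ and $v_\epsilon := \eta_\epsilon * v$ (extending $u,v$ by zero outside $(0,1)^d$) are smooth on $K_{\epsilon_0}$ and satisfy the uniform bounds $\|u_\epsilon\|_{L^\infty(K_{\epsilon_0})} \leq \|u\|_{L^\infty}$, $\|v_\epsilon\|_{L^\infty(K_{\epsilon_0})} \leq \|v\|_{L^\infty}$, while the standard mollifier properties for Sobolev functions give $u_\epsilon \to u$ and $v_\epsilon \to v$ in $W^{1,2}(K_{\epsilon_0}, \R)$ and (along a subsequence) almost everywhere as $\epsilon \searrow 0$.

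With the classical product rule for smooth functions, integration by parts on $K_{\epsilon_0}$ (using $\phi$ compactly supported in $K$) yields
\begin{equation*}
  -\int_{(0,1)^d} u_\epsilon(x)\, v_\epsilon(x)\, \tfrac{\partial}{\partial x_j}\phi(x)\, dx
  = \int_{(0,1)^d} \big(u_\epsilon(x)\,\partial_j v_\epsilon(x) + v_\epsilon(x)\,\partial_j u_\epsilon(x)\big)\,\phi(x)\, dx.
\end{equation*}
I would pass to the limit $\epsilon \searrow 0$ term by term using the dominated convergence theorem: on the left-hand side $u_\epsilon v_\epsilon \to uv$ a.e.\ and is uniformly bounded by $\|u\|_{L^\infty}\|v\|_{L^\infty}$, and on the right-hand side the two summands are dominated by integrable majorants of the form $\|u\|_{L^\infty}|\partial_j v_\epsilon|\,|\phi| + \|v\|_{L^\infty}|\partial_j u_\epsilon|\,|\phi|$, where the $L^2$-convergence of $\partial_j u_\epsilon, \partial_j v_\epsilon$ combined with the uniform $L^\infty$-bound on $u_\epsilon, v_\epsilon$ produces $L^1$-convergence of the products against $\phi$. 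The limit identity reads
\begin{equation*}
-\langle uv, [\tfrac{\partial}{\partial x_j}\phi]_{\mu_{(0,1)^d}, \mathcal{B}(\R)}\rangle_{L^2(\mu_{(0,1)^d};\R)}
= \langle g_j, [\phi]_{\mu_{(0,1)^d}, \mathcal{B}(\R)}\rangle_{L^2(\mu_{(0,1)^d};\R)},
\end{equation*}
which is exactly the definition of $\partial_j(uv) = g_j$ given in Subsection~\ref{setting:Examples}.

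The main obstacle is the passage to the limit on the right-hand side: one cannot straightforwardly invoke dominated convergence because the $\partial_j u_\epsilon$ have no uniform pointwise bound, only uniform $L^2$ bounds. The fix is to combine $L^2$-convergence of $\partial_j v_\epsilon \to \partial_j v$ with uniform-in-$\epsilon$ $L^\infty$-boundedness plus a.e.\ convergence of $u_\epsilon \to u$ (which together yield $L^2$-convergence of $u_\epsilon \to u$ on $K_{\epsilon_0}$ by dominated convergence), so that the product $u_\epsilon\, \partial_j v_\epsilon$ converges to $u\,\partial_j v$ in $L^1$ by the standard ``$L^\infty$--bounded times $L^2$--convergent'' argument (and symmetrically for the other summand). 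This furnishes all the ingredients and completes the proof.
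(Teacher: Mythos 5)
Your proof is correct and follows essentially the same strategy as the paper's: approximate $u$ and $v$ by smooth functions, apply the classical product rule and integration by parts against the compactly supported test function, and pass to the limit using the fact that a product of an $L^\infty$-bounded, a.e.\ convergent sequence with an $L^2$-convergent sequence converges in $L^1$. The only difference is in how the smooth approximants are produced: the paper invokes the global density of restrictions of $\mathcal{C}^\infty([0,1]^d,\R)$-functions in $W^{1,2}((0,1)^d,\R)$ and handles the limits as iterated double limits, whereas you mollify locally near the support of the test function, which is marginally more self-contained (no boundary regularity of the domain is needed) but otherwise equivalent.
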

\begin{proof}[Proof of Lemma~\ref{lemma:chain_rule}]
Throughout this proof let
$ \tilde u_n,
\tilde v_n
\in
\mathcal{C}^\infty([0,1]^d, \R) $,
$ n \in \N $,
and
$ u_n, v_n \in  
W^{1,2}((0,1)^d, \R) 
$,
$ n \in \N $, 
satisfy
for all
$ n \in \N $
that
 $ u_n = 
 [ \tilde u_n |_{(0,1)^d}]_{\mu_{(0,1)^d}, \mathcal{B}(\R)} $,
 $ v_n = 
 [ \tilde v_n |_{(0,1)^d}]_{\mu_{(0,1)^d}, \mathcal{B}(\R)} $,
 and
$ \limsup_{m \to \infty} 
\big( 
\| u  - 
  \tilde u_m  \|_{W^{1,2}(  (0,1)^d , \R)} 
+
\| v  - 
 \tilde v_m  \|_{W^{1,2}( (0,1)^d, \R)} 
\big) = 0 $
(see, e.g., Theorem 7.3.2 in Atkinson \& Han~\cite{AtkinsonWeimin2009}).
Observe that for all
$ f, g \in L^2(\mu_{(0,1)^d}; \R ) $,
$ h \in L^\infty(\mu_{(0,1)^d}; \R ) $ it holds that
\begin{equation}
\begin{split}
\| (f - g) h \|_{L^2(\mu_{(0,1)^d}; \R ) }
\leq
\| f - g \|_{L^2(\mu_{(0,1)^d}; \R ) }
\| h \|_{L^\infty(\mu_{(0,1)^d}; \R ) }
< \infty .
\end{split}
\end{equation} 
Hence, we obtain for all
$ \phi \in \mathcal{C}_{cpt}((0,1)^d, \R) $,
$ i \in \{1,\ldots, d \} $
that
\begin{equation}
\begin{split}
&
-
\Big\langle u v, 
\big[ \tfrac{\partial}{\partial x_i} \phi
\big ]_{\mu_{(0,1)^d}, \mathcal{B}(\R)} 
\Big\rangle_{\! L^2(\mu_{(0,1)^d}; \R ) }
=
-
\lim_{n \to \infty}
\Big \langle 
u_n v,  
\big[ 
\tfrac{\partial}{\partial x_i} \phi 
\big ]_{\mu_{(0,1)^d}, \mathcal{B}(\R)}
\Big \rangle_{\! L^2(\mu_{(0,1)^d}; \R ) }
\\
&
=
-
\lim_{n \to \infty}
\bigg[
\lim_{m \to \infty}
\Big \langle 
u_n  v_m,
\big[ 
\tfrac{\partial}{\partial x_i} \phi 
\big ]_{\mu_{(0,1)^d}, \mathcal{B}(\R)}
\Big \rangle_{\! L^2(\mu_{(0,1)^d}; \R ) }
\bigg]
\\
&
=
-
\lim_{n \to \infty}
\bigg[
\lim_{m \to \infty}
\int_\D
\tilde u_n (x) 
\,
\tilde v_m (x)
\,
\big(
\tfrac{\partial}{\partial x_i} \phi
\big)(x)
\, dx
\bigg]
\\
&
= 
\lim_{n \to \infty}
\bigg[
\lim_{m \to \infty}
\int_\D
\Big[
\big( \tfrac{\partial}{\partial x_i}
\tilde u_n \big)(x)
\,
\tilde v_m (x) 
+
\tilde u_n(x)
\,
\big( 
\tfrac{\partial}{\partial x_i}
\tilde v_m
\big)(x)
\Big]
\phi(x)
\, dx
\bigg]
\\
&
=
\lim_{n \to \infty}
\bigg[
\lim_{m \to \infty}
\langle 
v_m \, \partial_i u_n
+
u_n \, \partial_i v_m,  
\phi \rangle_{ L^2(\mu_{(0,1)^d}; \R ) }
\bigg]
\\
&
=
\lim_{n \to \infty} 
\langle 
v \, \partial_i u_n
+
u_n \, \partial_i v,  
\phi 
\rangle_{L^2(\mu_{(0,1)^d}; \R ) } 
= 
\langle 
v \, \partial_i u
+
u \, \partial_i v,  
\phi 
\rangle_{L^2(\mu_{(0,1)^d}; \R ) } 
.
\end{split}
\end{equation}
This completes the proof of Lemma~\ref{lemma:chain_rule}. 
\end{proof}
\begin{lemma}[Weak integration by parts]
	\label{lemma:int_parts}
	Let $ d \in \N $,
	$ u, v \in W_P^{1,2}( (0,1)^d, \R) $,  
	$ j \in \{1,\ldots, d \} $.
	Then it holds that 
	\begin{equation}
	\left< \partial_j u, v \right>_{\! L^2(\mu_{(0,1)^d}; \R)} 
	=
	\!
	- 
	\!
	\left< u, \partial_j v \right>_{\! L^2(\mu_{(0,1)^d}; \R)} 
	.
	\end{equation}
\end{lemma}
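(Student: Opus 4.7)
The plan is to carry out a standard approximation/density argument based on the classical integration by parts formula for smooth periodic functions. The subscript $P$ in $W_P^{1,2}((0,1)^d,\R)$ indicates periodicity, and the key structural point is that the periodicity makes the boundary terms in classical integration by parts vanish, so passing this cancellation through a density argument yields the weak statement.

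First, I would invoke a standard density result (analogous to the one used in the proof of Lemma~\ref{lemma:chain_rule}, where the excerpt cites Theorem~7.3.2 in Atkinson \& Han~\cite{AtkinsonWeimin2009}): there exist sequences $\tilde u_n, \tilde v_n \in \mathcal{C}^\infty(\R^d, \R)$ of smooth $(0,1)^d$-periodic functions such that the equivalence classes
$u_n = [\tilde u_n|_{(0,1)^d}]_{\mu_{(0,1)^d}, \mathcal{B}(\R)}$ and
$v_n = [\tilde v_n|_{(0,1)^d}]_{\mu_{(0,1)^d}, \mathcal{B}(\R)}$
satisfy $\limsup_{n \to \infty}(\|u - u_n\|_{W^{1,2}((0,1)^d, \R)} + \|v - v_n\|_{W^{1,2}((0,1)^d, \R)}) = 0$.

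Next, for the smooth periodic approximants I would apply the classical Fubini theorem and one-dimensional integration by parts in the variable $x_j$ on $(0,1)$. The boundary contribution reads $\tilde u_n(x) \tilde v_n(x) |_{x_j=0}^{x_j=1}$ (integrated over the remaining variables), and periodicity of both $\tilde u_n$ and $\tilde v_n$ in $x_j$ forces this boundary term to vanish, giving
\begin{equation}
\langle \partial_j u_n, v_n \rangle_{L^2(\mu_{(0,1)^d}; \R)}
= - \langle u_n, \partial_j v_n \rangle_{L^2(\mu_{(0,1)^d}; \R)}.
\end{equation}

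Finally, since $\partial_j u_n \to \partial_j u$ and $v_n \to v$ in $L^2(\mu_{(0,1)^d}; \R)$ (and similarly for the swapped pair), the Cauchy-Schwarz inequality lets me pass to the limit in both sides of the above identity, yielding the claimed equality. The only potentially delicate point is ensuring that the density approximation can be chosen to respect periodicity; since $W_P^{1,2}$ is by definition the completion/closure associated to periodic boundary conditions, this is standard (e.g., via mollification after periodic extension, or Fourier series truncation), so I would just cite it rather than reconstruct the argument.
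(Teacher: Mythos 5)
Your proposal is correct and follows essentially the same route as the paper's proof: approximate $u$ and $v$ in $W^{1,2}$ by smooth periodic functions, apply classical integration by parts (where periodicity kills the boundary terms), and pass to the limit using the $L^2$-convergence of the functions and their weak derivatives. The only cosmetic difference is that the paper passes to the limit in the two approximating sequences one at a time (iterated limits) rather than simultaneously, which changes nothing of substance.
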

\begin{proof}[Proof of Lemma~\ref{lemma:int_parts}]
	Throughout this proof let
	$ \tilde u_n,
	\tilde v_n
	\in
	\mathcal{C}_P^\infty([0,1]^d, \R) $,
	$ n \in \N $,
	and
	$ u_n, v_n \in  
	W_P^{1,2}((0,1)^d, \R) 
	$,
	$ n \in \N $,
	satisfy for all $ n \in \N $
	that
	$ u_n = 
	[ \tilde u_n |_{(0,1)^d}]_{\mu_{(0,1)^d}, \mathcal{B}(\R)} $,
	$ v_n = 
	[ \tilde v_n |_{(0,1)^d}]_{\mu_{(0,1)^d}, \mathcal{B}(\R)} $,
	and
	$ \limsup_{m \to \infty} 
	\big( 
	\| u  - 
	[ \tilde u_m |_{ (0,1)^d } ]_{\mu_{ (0,1)^d }, \mathcal{B}(\R)} \|_{W^{1,2}(  (0,1)^d , \R)} 
	+
	\| v  - 
	[ \tilde v_m |_{ (0,1)^d } ]_{\mu_{ (0,1)^d }, \mathcal{B}(\R)} \|_{W^{1,2}( (0,1)^d, \R)} 
	\big) = 0 $.
	Observe that
	integration by parts
	and the fact that
	$ \forall \, n \in \N \colon
	\tilde u_n, \tilde v_n \in \mathcal{C}_P^\infty([0,1]^d, \R) $
	prove that
	\begin{equation}
	\begin{split}
	& 
	\langle \partial_j u, v \rangle_{L^2(\mu_{(0,1)^d}; \R ) }
	=
	\lim_{n \to \infty}
	\langle \partial_j u_n, v \rangle_{L^2(\mu_{(0,1)^d}; \R ) }
	= 
	\lim_{n \to \infty}
	\left(
	\lim_{m \to \infty}
	\langle \partial_j u_n, v_m \rangle_{L^2(\mu_{(0,1)^d}; \R ) }
	\right)
	\\
	&
	= 
	\lim_{n \to \infty}
	\left(
	\lim_{m \to \infty}
	\int_\D
	\tilde v_m(x)
	\,
	\big(
	\tfrac{\partial}{\partial x_j} 
	\tilde u_n
	\big)(x) 
	\, dx
	\right)
	=
	-
	\lim_{n \to \infty}
	\left(
	\lim_{m \to \infty}
	\int_\D
	\tilde u_n(x)
	\,
	\big(
	\tfrac{\partial}{\partial x_j} 
	\tilde v_m
	\big)(x) 
	\, dx
	\right)
	\\
	&
	=
	-
	\lim_{n \to \infty}
	\left(
	\lim_{m \to \infty}
	\langle  u_n, \partial_j v_m \rangle_{L^2(\mu_{(0,1)^d}; \R ) }
	\right)
	=
	-
	\lim_{n \to \infty} 
	\langle  u_n, \partial_j v \rangle_{L^2(\mu_{(0,1)^d}; \R ) } 
	\\
	&
	=
	-
	\langle  u, \partial_j v \rangle_{L^2(\mu_{(0,1)^d}; \R ) } 
	.
	\end{split}
	\end{equation}
	The proof of Lemma~\ref{lemma:int_parts}
	is thus completed.
\end{proof}
\begin{lemma}[Weak integration by parts revisited] 
	\label{lemma:int_parts2}
	Let $ d \in \N $,
	$ u, v, w \in [ W_P^{1,2}( (0,1)^d, \R)
	\cap L^\infty( \mu_{(0,1)^d}; \R) ] $,
	$ j \in \{1,\ldots, d \} $.
	Then it holds that 
	$ u \cdot v \in [ W^{1,2}( (0,1)^d, \R ) \cap L^\infty( \mu_{(0,1)^d}; \R) ] $
	and
	\begin{equation}
	\left< \partial_j( u v ), w \right>_{\! L^2(\mu_{(0,1)^d}; \R)} 
	=
	\!
	- 
	\!
	\left< u v, \partial_j w \right>_{\! L^2(\mu_{(0,1)^d}; \R)} 
	.
	\end{equation}
\end{lemma}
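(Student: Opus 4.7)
The plan is to reduce the claim to Lemma~\ref{lemma:int_parts} applied to the pair $(v, u\,w)$. Lemma~\ref{lemma:chain_rule} immediately gives $u\,v \in W^{1,2}((0,1)^d, \R) \cap L^\infty(\mu_{(0,1)^d}; \R)$ with the weak Leibniz rule $\partial_j(u\,v) = u\,\partial_j v + v\,\partial_j u$, and analogously $u\,w,\, v\,w \in W^{1,2} \cap L^\infty$ with the corresponding product formulas. The first conclusion of the lemma is therefore immediate, and the only substantive task is to verify that $u\,w$ actually lies in $W_P^{1,2}((0,1)^d, \R)$ (not merely in $W^{1,2}$), so that Lemma~\ref{lemma:int_parts} becomes applicable to the pair $(v, u\,w)$.

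To establish $u\,w \in W_P^{1,2}$ I would proceed by a direct approximation argument mirroring the one used in the proof of Lemma~\ref{lemma:int_parts}. Take defining sequences $\tilde u_n, \tilde w_n \in \mathcal{C}_P^\infty([0,1]^d, \R)$ with $u_n := [\tilde u_n|_{(0,1)^d}]_{\mu_{(0,1)^d},\mathcal{B}(\R)} \to u$ and $w_n := [\tilde w_n|_{(0,1)^d}]_{\mu_{(0,1)^d},\mathcal{B}(\R)} \to w$ in $W^{1,2}$ and, by convolving with a nonnegative periodic mollifier if necessary, impose the uniform bounds $\|u_n\|_{L^\infty} \leq \|u\|_{L^\infty}$ and $\|w_n\|_{L^\infty} \leq \|w\|_{L^\infty}$; extracting a further subsequence gives pointwise almost-everywhere convergence. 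Writing
\begin{equation}
\partial_j(u_n w_n) - (u\,\partial_j w + w\,\partial_j u) = u_n(\partial_j w_n - \partial_j w) + (u_n - u)\partial_j w + w_n(\partial_j u_n - \partial_j u) + (w_n - w)\partial_j u,
\end{equation}
each summand tends to $0$ in $L^2$ by H\"older's inequality combined with dominated convergence, the uniform $L^\infty$-bounds supplying the dominant. Hence $\tilde u_n \tilde w_n \in \mathcal{C}_P^\infty$ converges to $u\,w$ in $W^{1,2}$ along a subsequence, and thus $u\,w \in W_P^{1,2}$.

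Applying Lemma~\ref{lemma:int_parts} to $(v, u\,w)$ and expanding $\partial_j(u\,w)$ via Lemma~\ref{lemma:chain_rule} then yields $\langle \partial_j v, u\,w\rangle_{L^2} = -\langle v, u\,\partial_j w\rangle_{L^2} - \langle v, w\,\partial_j u\rangle_{L^2} = -\langle u\,v, \partial_j w\rangle_{L^2} - \langle \partial_j u, v\,w\rangle_{L^2}$, where in the last identity I only used symmetry of the inner product and pointwise commutativity of multiplication. Combining this with the product rule $\partial_j(u\,v) = u\,\partial_j v + v\,\partial_j u$ gives
\begin{equation}
\langle \partial_j(u\,v), w\rangle_{L^2} = \langle \partial_j v, u\,w\rangle_{L^2} + \langle \partial_j u, v\,w\rangle_{L^2} = -\langle u\,v, \partial_j w\rangle_{L^2},
\end{equation}
which is the desired identity. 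The main obstacle is the verification $u\,w \in W_P^{1,2}$: since $W^{1,2} \cap L^\infty$ functions cannot in general be approximated in $L^\infty$ by smooth functions, the $W^{1,2}$-convergence of the product sequence $\tilde u_n \tilde w_n$ to $u\,w$ is not automatic and relies essentially on the uniform $L^\infty$-control supplied by nonnegative mollifiers together with a subsequence extraction yielding pointwise almost-everywhere convergence.
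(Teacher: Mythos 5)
Your argument is correct, but it takes a genuinely different route from the paper's. The paper never shows that any product lies in $W_P^{1,2}$: it works directly with the three defining sequences $\tilde u_l, \tilde v_m, \tilde w_n \in \mathcal{C}_P^{\infty}$, expands $\langle \partial_j(uv), w\rangle$ via Lemma~\ref{lemma:chain_rule}, and passes through a triple iterated limit to the classical integration-by-parts identity for smooth periodic functions; the order of the limits is chosen so that at every stage one factor is a fixed smooth (hence bounded) function and the convergence used is plain $L^2$-convergence against an $L^2$ function, so no uniform $L^{\infty}$ control on the approximating sequences is ever needed. You instead reduce to Lemma~\ref{lemma:int_parts} applied to the pair $(v, uw)$, which forces you to prove the stronger intermediate statement $uw \in W_P^{1,2}((0,1)^d,\R)$; this is where your proof goes beyond the paper's toolkit, since the defining sequences coming from the closure definition of $W_P^{1,2}$ carry no $L^{\infty}$ bound, and you must replace them by periodic mollifications to get $\|u_n\|_{L^{\infty}} \leq \|u\|_{L^{\infty}}$ (together with the standard identification of $W_P^{1,2}((0,1)^d,\R)$ with the Sobolev space on the torus, so that mollification is well defined, preserves periodicity, and converges in $W^{1,2}$). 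Granting that standard fact, your decomposition of $\partial_j(u_n w_n) - (u\,\partial_j w + w\,\partial_j u)$ and the dominated-convergence argument along an a.e.-convergent subsequence are sound, and the final algebra combining Lemma~\ref{lemma:int_parts} with the weak product rule is correct. What your route buys is a reusable closure property (products of $W_P^{1,2}\cap L^{\infty}$ functions stay in $W_P^{1,2}$); what the paper's route buys is economy, since it needs nothing beyond the defining sequences and H\"older's inequality.
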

\begin{proof}[Proof of Lemma~\ref{lemma:int_parts2}]
	Throughout this proof let
	$ \tilde u_n,
	\tilde v_n,
	\tilde w_n
	\in
	\mathcal{C}_P^\infty([0,1]^d, \R) $,
	$ n \in \N $,
	and 
	$ u_n, v_n, w_n \in  
	W_P^{1,2}((0,1)^d, \R) 
	$,
	$ n \in \N $,
	satisfy 
	for all
	$ n \in \N $ 
	that
	$ u_n = 
	[ \tilde u_n |_{(0,1)^d}]_{\mu_{(0,1)^d}, \mathcal{B}(\R)} $,
	$ v_n = 
	[ \tilde v_n |_{(0,1)^d}]_{\mu_{(0,1)^d}, \mathcal{B}(\R)} $,
	$ w_n = 
	[ \tilde w_n |_{(0,1)^d}]_{\mu_{(0,1)^d}, \mathcal{B}(\R)} $,
	and
	$ \limsup_{m \to \infty} 
	\big( 
	\| u  - 
	u_m  \|_{W^{1,2}(  (0,1)^d , \R)} 
	+
	\| v  -   v_m   \|_{W^{1,2}( (0,1)^d, \R)}
	+
	\| w  -   w_m   \|_{W^{1,2}( (0,1)^d, \R)}  
	\big) = 0 $.
	Observe that
	Lemma~\ref{lemma:chain_rule}
	(with $ d = d $,
	$ u = u $,
	$ v = v $,
	$ j = j $
	in the notation of Lemma~\ref{lemma:chain_rule})
and the product rule for differentiation	 
		prove that  
		$ u \cdot v \in [ W^{1,2}( (0,1)^d, \R ) 
		\cap L^\infty( \mu_{ (0,1)^d }; \R ) ] $
		and
	\begin{equation}
	\begin{split}
	& 
	\langle \partial_j ( u v ), w \rangle_{L^2(\mu_{(0,1)^d}; \R ) }
	=
	\langle 
	u
	\,
	\partial_j v
	+
	v
	\,
	\partial_j u,
	w
	\rangle_{L^2(\mu_{(0,1)^d}; \R ) }
	\\
	&
	=
	\lim_{l \to \infty}
	\Big(
	\langle 
	u_l
	\,
	\partial_j v,
	w
	\rangle_{L^2(\mu_{(0,1)^d}; \R ) }
	+ 
	\langle
	v
	\,
	\partial_j u_l,
	w
	\rangle_{L^2(\mu_{(0,1)^d}; \R ) }
	\Big)
	\\
	&
	=
	\lim_{l \to \infty}
	\Big( 
	\lim_{n \to \infty}
	\Big(
	\langle 
	u_l
	\,
	\partial_j v,
	w_n
	\rangle_{L^2(\mu_{(0,1)^d}; \R ) }
	+
	\langle
	v
	\,
	\partial_j u_l,
	w_n
	\rangle_{L^2(\mu_{(0,1)^d}; \R ) }
	\Big)
	\Big)
	\\
	&
	=
	\lim_{l \to \infty}
	\Big( 
	\lim_{n \to \infty}
	\Big(
	\lim_{m \to \infty}
	\Big(
	\langle 
	u_l
	\,
	\partial_j v_m,
	w_n
	\rangle_{L^2(\mu_{(0,1)^d}; \R ) }
	+
	\langle
	v_m 
	\,
	\partial_j u_l,
	w_n
	\rangle_{L^2(\mu_{(0,1)^d}; \R ) }
	\Big)
	\Big)
	\Big)
	\\
	&
	=
	\lim_{l \to \infty}
	\Big( 
	\lim_{n \to \infty}
	\Big(
	\lim_{m \to \infty}
	\langle  
	u_l
	\,
	\partial_j v_m
	+
	v_m
	\,
	\partial_j u_l,
	w_n
	\rangle_{L^2(\mu_{(0,1)^d}; \R ) }
	\Big)
	\Big)
	\\
	&
	=
	\lim_{l \to \infty}
	\Big( 
	\lim_{n \to \infty}
	\Big(
	\lim_{m \to \infty}
	\langle  
	\partial_j ( u_l v_m ),
	w_n
	\rangle_{L^2(\mu_{(0,1)^d}; \R ) }
	\Big)
	\Big)
	.
	\end{split}
	\end{equation}
	Integration by parts 
	and the fact that
		$ \forall \, n \in \N \colon
		\tilde u_n, \tilde v_n, \tilde w_n \in \mathcal{C}_P^\infty([0,1]^d, \R) $
		hence show that
	\begin{equation}
	\begin{split}
	& 
	\langle \partial_j ( u v ), w \rangle_{L^2(\mu_{(0,1)^d}; \R ) }
	=
	\lim_{l \to \infty}
	\bigg( 
	\lim_{n \to \infty}
	\bigg(
	\lim_{m \to \infty}
	\int_{ (0,1)^d }
	\big[ 
	\tfrac{\partial}{\partial x_j} 
	(
	\tilde u_l ( x )
	\cdot
	\tilde v_m ( x )
	)
	\big]
	\,
	\tilde w_n(x)
	\,
	dx
	\bigg)
	\bigg)
	\\
	&
	=
	-
	\lim_{l \to \infty}
	\bigg( 
	\lim_{n \to \infty}
	\bigg(
	\lim_{m \to \infty}
	\int_{ (0,1)^d } 
	\tilde u_l(x)
	\,
	\tilde v_m(x) 
	\,
	\big(
	\tfrac{\partial}{\partial x_j} 
	\tilde w_n
	\big)(x)
	\,
	dx
	\bigg)
	\bigg)
	\\
	&
	=
	-
	\lim_{l \to \infty}
	\Big( 
	\lim_{n \to \infty}
	\Big(
	\lim_{m \to \infty}
	\langle  
	u_l v_m,
	\partial_j 
	w_n
	\rangle_{L^2(\mu_{(0,1)^d}; \R ) }
	\Big)
	\Big)
	\\
	&
	=
	-
	\lim_{l \to \infty}
	\Big( 
	\lim_{n \to \infty} 
	\langle  
	u_l v,
	\partial_j 
	w_n
	\rangle_{L^2(\mu_{(0,1)^d}; \R ) }
	\Big) 
	\\
	&
	=
	-
	\lim_{l \to \infty}  
	\langle  
	u_l v,
	\partial_j 
	w 
	\rangle_{L^2(\mu_{(0,1)^d}; \R ) } 
	=
	-   
	\langle  
	u v,
	\partial_j 
	w 
	\rangle_{L^2(\mu_{(0,1)^d}; \R ) } 
	.
	\end{split}
	\end{equation}
	The proof of Lemma~\ref{lemma:int_parts2}
	is thus completed.
\end{proof}
\begin{lemma}
	\label{lemma:zero_coercivity}
	Assume the setting in Subsection~\ref{setting:Examples},
	let  
$ \rho \in [\gamma, \infty) $,  	
	$ u \in  H_\rho $,
	and  
	assume for all
	$ j \in \{1,\ldots, d\} $,
	$ v, w \in \H $
	that
	 $ \H
		\subseteq
		W^{1,2}(\D, \R^d)  $, 
		$  
		\big(
		\sum_{ h \in \H }
		| \lambda_h |^{-2\rho }
		\big)
		+
				\sup_{ h \in \H }
		\big(
		\| h
		\|_{L^\infty( \mu_\D; \R^d)}
		+
		\| \partial_j h 
		\|_U
		| \lambda_h |^{-\rho} 
		\big)   
		< \infty $,  
		$ \langle \partial_j v,  
		\partial_j w  \rangle_U \, \1_{\H \backslash \{ v \} } (w)= 0 $.
		Then
		 it holds  		
		 that
		$ u \in [ W^{1,2}(\D, \R^d) \cap L^\infty (\mu_\D; \R^d) ] $
		and
		\begin{equation}
\begin{split}
\label{eq:estimate_W12} 
&
\| \partial u \|_{L^2(\mu_\D; \R^{d\times d} ) } 
	\leq
	\bigg[
		\smallsum\limits_{j=1}^d
		\| \partial_j u \|_{L^2(\mu_\D; \R^d) }^2
	\bigg]^{\nicefrac{1}{2}}
	\!
	\leq
	\sqrt{d}
	\bigg[
	\sup\limits_{h \in \H}
	\sup\limits_{ j \in  \{1,\ldots, d\} }
	\tfrac{
		\| \partial_j h \|_U
	}{
	|  \lambda_h |^{ \rho}
}
\bigg]
\| u \|_{H_\rho} 
< \infty,
\end{split}
\end{equation}
			\begin{equation}
			\begin{split}
			\label{eq:estimate_infiniti}
			\| u \|_{L^\infty( \mu_\D; \R^d)} 
			&
			\leq
			\| u \|_{H_\rho}
			\bigg(
			\sup_{ h \in \H } 
			\| h
			\|_{L^\infty( \mu_\D; \R^d)}
			\bigg) 
		\Bigg[ 
		\sum_{h \in \H }
		| \lambda_h |^{- 2 \rho}
		\Bigg]^{\nicefrac{1}{2}}
		< \infty,
		\end{split}
		\end{equation} 
		\begin{equation}
	   	\begin{split}
	   	\label{eq:estimateF}
	   	&
	   	\| F( u ) \|_H  
	   	\leq 
	   	\eta \| u \|_H
	   	\\
	   	&
	   	+
	   	d \sqrt{d}
	   	\| u \|_{H_\rho}^2
	   	\bigg[
	   	\sup_{h\in \H}
	   	\sup_{j \in 
	   	 \{1,\ldots, d\}}
	   	\tfrac{
	   		\| \partial_j h \|_U 
	   	}{
	   	| \lambda_h |^{ \rho}
	   }
	   \bigg]
	   	\bigg(  
	   	\sup_{h \in \H } 
	   	\| h
	   	\|_{L^\infty( \mu_\D; \R^d)}
	   	\bigg) 
	   	\Bigg[ 
	   	\sum_{ h \in \H }
	   	| \lambda_h |^{- 2 \rho}
	   	\Bigg]^{\nicefrac{1}{2}}
	    < \infty
	   .
	   	\end{split}
	   	\end{equation}
\end{lemma}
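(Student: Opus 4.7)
The plan is to assemble the three assertions by invoking the three preceding auxiliary lemmas and then combining their outputs.

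First, I would obtain the Sobolev inclusion and the gradient bound \eqref{eq:estimate_W12}. The hypotheses of Lemma~\ref{lemma:zero_coercivity} exactly match those of Lemma~\ref{lemma:weak_exists} (with $\rho = \rho$): $\mathbb H \subseteq W^{1,2}(\mathcal D,\R^d)$, the suprema $\sup_{h\in\mathbb H}\|\partial_j h\|_U |\lambda_h|^{-\rho}$ are finite for each $j\in\{1,\dots,d\}$, and the weak partials of distinct basis elements are $L^2$-orthogonal. Therefore item~\eqref{item:Hrho_inclusion} of Lemma~\ref{lemma:weak_exists} gives $u\in W^{1,2}(\mathcal D,\R^d)$ and item~\eqref{item:conv_partial_L2} together with the bound in~\eqref{eq:estimate_partialL2} yields \eqref{eq:estimate_W12} directly (with the factor $\sqrt{d}$ absorbing the sum over $j$).

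Next I would read off \eqref{eq:estimate_infiniti}. Since the assumed summability $\sum_{h\in\mathbb H}|\lambda_h|^{-2\rho}<\infty$ and uniform $L^\infty$-bound $\sup_{h\in\mathbb H}\|h\|_{L^\infty(\mu_{\mathcal D};\R^d)}<\infty$ are precisely the hypotheses of Lemma~\ref{lemma:Linfty} applied with $\rho=\rho$, that lemma immediately delivers \eqref{eq:estimate_infiniti} and in particular shows $u\in L^\infty(\mu_{\mathcal D};\R^d)$, completing the proof that $u\in [W^{1,2}(\mathcal D,\R^d)\cap L^\infty(\mu_{\mathcal D};\R^d)]$.

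Finally, for \eqref{eq:estimateF} I would invoke Lemma~\ref{lemma:F_well_defined} with $v=u$ (which is applicable since $H_\rho\subseteq H_\gamma$ and the intersection $W^{1,2}\cap L^\infty$ has just been established). This gives
\[
  \|F(u)\|_H \le \eta\|u\|_H + d\,\|u\|_{L^\infty(\mu_{\mathcal D};\R^d)}\,\|\partial u\|_{L^2(\mu_{\mathcal D};\R^{d\times d})}.
\]
Substituting the bound \eqref{eq:estimate_infiniti} for $\|u\|_{L^\infty(\mu_{\mathcal D};\R^d)}$ and the bound \eqref{eq:estimate_W12} for $\|\partial u\|_{L^2(\mu_{\mathcal D};\R^{d\times d})}$, and using $\|u\|_H\le\|u\|_{H_\rho}$ (which follows from $\rho\ge 0$ and the definition of the interpolation norm), yields \eqref{eq:estimateF}. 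There is no real obstacle here; the only thing to double-check is the bookkeeping of the constants, in particular that the $\sqrt d$ from \eqref{eq:estimate_W12} combines with the $d$ from Lemma~\ref{lemma:F_well_defined} to produce the $d\sqrt d$ prefactor displayed in \eqref{eq:estimateF}, and that the finiteness assumptions on $\mathbb H$ ensure every supremum and sum appearing on the right-hand side is finite.
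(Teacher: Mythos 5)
Your proposal is correct and follows essentially the same route as the paper: the paper likewise assembles the three claims from Lemma~\ref{lemma:Linfty}, Lemma~\ref{lemma:weak_exists}, and Lemma~\ref{lemma:F_well_defined} (applied with $v=w=u$), only stating the $L^\infty$ bound before the gradient bound. The one superfluous step is your appeal to $\|u\|_H\le\|u\|_{H_\rho}$, which is neither needed (the target estimate \eqref{eq:estimateF} retains $\eta\|u\|_H$ as is) nor guaranteed by the stated hypotheses.
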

\begin{proof}[Proof of Lemma~\ref{lemma:zero_coercivity}]
First, note that Lemma~\ref{lemma:Linfty}
(with 
$ \rho = \rho $,
$ v = u $
in the notation of
Lemma~\ref{lemma:Linfty})
proves~\eqref{eq:estimate_infiniti}.
Moreover, observe that
Lemma~\ref{lemma:weak_exists} 
(with
$ \rho = \rho $
in the notation of
Lemma~\ref{lemma:weak_exists}) 
establishes  
that
$
u \in W^{1,2}(\D, \R^d) 
$
and~\eqref{eq:estimate_W12}.
This
and~\eqref{eq:estimate_infiniti}
ensure that 
$ u \in [ W^{1,2}(\D, \R^d) \cap
L^\infty ( \mu_D; \R^d ) ] $.
Combining
Lemma~\ref{lemma:F_well_defined}
(with $ v = u $,
$ w = u $
in the notation of Lemma~\ref{lemma:F_well_defined}),
\eqref{eq:estimate_W12},
and~\eqref{eq:estimate_infiniti}
hence proves~\eqref{eq:estimateF}.
The proof of Lemma~\ref{lemma:zero_coercivity}
is thus completed.
\end{proof}
\begin{lemma}
	\label{lemma:zero_coercivity2}
	Assume the setting in Subsection~\ref{setting:Examples},
	let  
$ \rho \in [\gamma, \infty) $,  	
	$ u = ( u_1, \ldots, u_d ) \in  H_\rho $,
	and  
	assume for all
	$ j \in \{1,\ldots, d\} $,
	$ v, w \in \H $
	that
	 $ \H
		\subseteq
		W_P^{1,2}(\D, \R^d)  $, 
		$  
		\big(
		\sum_{ h \in \H }
		| \lambda_h |^{-2\rho }
		\big)
		+
				\sup_{ h \in \H }
		\big(
		\| h
		\|_{L^\infty( \mu_\D; \R^d)}
		+
		\| \partial_j h 
		\|_U
		| \lambda_h |^{-\rho} 
		\big)   
		< \infty $,  
		$ \langle \partial_j v,  
		\partial_j w  \rangle_U \, \1_{\H \backslash \{ v \} } (w)= 0 $.
		Then it holds that
		 $ u \in [ W_P^{1,2}(\D, \R^d) \cap L^\infty (\mu_\D; \R^d) ] $
		 and
		\begin{equation}
	    2 \langle u, F( u ) \rangle_H
	    = 
	    2
	    \eta \| u \|_H^2
	    +  
	    \smallsum_{j=1}^d \langle  u , u \, \partial_j u_j  \rangle_U
	    =
	    2
	    \eta \| u \|_H^2
	    + 
	    \big\langle  \smallsum_{i=1}^d ( u_i )^2 , 
	    \smallsum_{j=1}^d \partial_j u_j  
	    \big\rangle_{L^2(\mu_\D; \R)} 
	    .
		\end{equation} 
\end{lemma}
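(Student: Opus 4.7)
The proof plan proceeds in three main stages: establish the regularity $u \in [W_P^{1,2}(\D,\R^d) \cap L^\infty(\mu_\D;\R^d)]$, rewrite $\langle u, F(u)\rangle_H$ using that $R$ is the orthogonal projection onto $H$, and finally perform a weak integration by parts on the nonlinear term.

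First, I would invoke Lemma~\ref{lemma:zero_coercivity} (applicable since $\H \subseteq W_P^{1,2}(\D,\R^d) \subseteq W^{1,2}(\D,\R^d)$) to obtain that $u \in [W^{1,2}(\D,\R^d) \cap L^\infty(\mu_\D;\R^d)]$ together with the expansion $\partial_j u = \sum_{h\in \H} \langle h, u\rangle_H \partial_j h$ from Lemma~\ref{lemma:weak_exists}. Because each $\partial_j h$ arises from a periodic basis element $h \in \H \subseteq W_P^{1,2}(\D,\R^d)$, the partial sums $\sum_{h \in F} \langle h, u\rangle_H h$ indexed by finite $F \subseteq \H$ converge to $u$ in the $W^{1,2}$-norm; since $W_P^{1,2}(\D,\R^d)$ is closed in $W^{1,2}(\D,\R^d)$, this gives $u \in W_P^{1,2}(\D,\R^d)$, and hence each component $u_i \in [W_P^{1,2}(\D,\R) \cap L^\infty(\mu_\D;\R)]$.

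Next, using that $R \in L(U)$ is the orthogonal projection of $U$ onto the closed subspace $H$ and $u \in H$, the identity $\langle u, R v\rangle_H = \langle u, v\rangle_U$ for $v \in U$ yields
\begin{equation}
\langle u, F(u)\rangle_H
= \eta \|u\|_H^2 - \smallsum_{i=1}^d \langle u, u_i\, \partial_i u\rangle_U
= \eta \|u\|_H^2 - \smallsum_{i,j=1}^d \langle u_j\,\partial_i u_j, u_i\rangle_{L^2(\mu_\D;\R)}.
\end{equation}
The task reduces to showing $2\smallsum_{i,j} \langle u_j \partial_i u_j, u_i\rangle_{L^2(\mu_\D;\R)} = -\smallsum_{i,j} \langle u_j^2, \partial_i u_i\rangle_{L^2(\mu_\D;\R)}$.

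For this, I apply Lemma~\ref{lemma:chain_rule} to the scalar periodic functions $u_j, u_j$, which yields $u_j^2 \in [W^{1,2}(\D,\R) \cap L^\infty(\mu_\D;\R)]$ and $\partial_i(u_j^2) = 2 u_j \partial_i u_j$. Approximating by periodic smooth functions (as in the proof of Lemma~\ref{lemma:int_parts2}) shows $u_j^2 \in W_P^{1,2}(\D,\R)$, so Lemma~\ref{lemma:int_parts2} applied with $u = v = u_j$ and $w = u_i$ gives
\begin{equation}
2\langle u_j\,\partial_i u_j, u_i\rangle_{L^2(\mu_\D;\R)} = \langle \partial_i(u_j^2), u_i\rangle_{L^2(\mu_\D;\R)} = -\langle u_j^2, \partial_i u_i\rangle_{L^2(\mu_\D;\R)}.
\end{equation}
Summing over $i, j \in \{1,\dots,d\}$ and inserting the result into the expression for $\langle u, F(u)\rangle_H$ yields the claimed identity $2\langle u, F(u)\rangle_H = 2\eta \|u\|_H^2 + \smallsum_{j=1}^d \langle u, u\, \partial_j u_j\rangle_U = 2\eta \|u\|_H^2 + \langle \smallsum_i u_i^2, \smallsum_j \partial_j u_j\rangle_{L^2(\mu_\D;\R)}$.

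The only delicate point is the periodicity step: one must verify that $u$ (and its products $u_j^2$) genuinely lie in $W_P^{1,2}$ rather than merely $W^{1,2}$, in order to legitimately apply Lemma~\ref{lemma:int_parts2}. This is handled by the closedness of $W_P^{1,2}$ in $W^{1,2}$ combined with the explicit periodic spectral expansion provided by Lemma~\ref{lemma:weak_exists}; the remaining calculations are routine rearrangements of finite double sums.
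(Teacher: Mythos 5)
Your proposal is correct and follows essentially the same route as the paper: the same regularity argument (Lemma~\ref{lemma:zero_coercivity} plus $W^{1,2}$-convergence of the spectral partial sums and closedness of $W_P^{1,2}$), and the same key tools (the weak product rule of Lemma~\ref{lemma:chain_rule} and the periodic integration by parts of Lemma~\ref{lemma:int_parts2}), differing only in that you apply them to $u_j^2$ to get the identity directly, while the paper applies them to $u_iu_j$ and solves for $\langle u,F(u)\rangle_H$, which appears on both sides. One small remark: your extra step verifying $u_j^2\in W_P^{1,2}$ is unnecessary, since Lemma~\ref{lemma:int_parts2} only requires the factors $u_j,u_j,u_i$ to lie in $W_P^{1,2}\cap L^\infty$ and already supplies the needed conclusion for the product.
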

\begin{proof}[Proof of Lemma~\ref{lemma:zero_coercivity2}]
First, note that Lemma~\ref{lemma:zero_coercivity}
(with $ \rho = \rho $,
$ u = u $ 
in the notation of
Lemma~\ref{lemma:zero_coercivity})
ensures that
\begin{equation}
\label{eq:correct_subset}
 u \in [ W^{1,2}(\D, \R^d) \cap L^\infty (\mu_\D; \R^d) ] 
 .
 \end{equation}
Moreover,
observe that
\begin{equation}
\label{eq:L2_conv}
\limsup\nolimits_{\mathcal{P}_0( \H) \ni I \to \H}
 \| u - \smallsum_{h \in I} 
\langle h, u \rangle_H h   \|_{ L^2( \mu_\D; \R^d ) } = 0 
.
\end{equation}
In addition, note that
item~\eqref{item:conv_partial_L2} in Lemma~\ref{lemma:weak_exists}
(with
$ \rho = \rho $,
$ u = u $,
$ j = j $
for  
$ j \in \{ 1, \ldots, d \} $
in the notation of
Lemma~\ref{lemma:weak_exists})
proves that for all
$ j \in \{1,\ldots, d \} $
it holds
that
\begin{equation}
\begin{split}
\label{eq:W12_conv}
&
\limsup\nolimits_{\mathcal{P}_0( \H) \ni I \to \H}
  \| 
\partial_j u 
- 
\smallsum_{h \in I} \langle h, u \rangle_H \partial_j h 
  \|_{ L^2( \mu_\D; \R^d ) }
=
0
.
\end{split}
\end{equation}
Combining~\eqref{eq:correct_subset}--\eqref{eq:W12_conv} with the fact that
$ \forall \, v \in W^{1,2}(\D, \R^d) 
\colon  \| v \|_{ W^{1,2}(\D, \R^d) }^2
=
\| v \|_{ L^2( \mu_\D; \R^d ) }^2
+
\sum_{j=1}^d
\| \partial_j v 
\|_{ L^2( \mu_\D; \R^d ) }^2 $
proves that
\begin{equation}
\limsup\nolimits_{\mathcal{P}_0( \H) \ni I \to \H}
  \| 
 u 
- 
\smallsum_{h \in I} \langle h, u \rangle_H   h 
  \|_{ W^{1, 2}( \D; \R^d ) }
=
0
.
\end{equation}
The fact that
$ W^{1,2}_P( (0,1)^d, \R^d) $
is a closed subspace of 
$ W^{1,2}( (0,1)^d, \R^d) $, 
\eqref{eq:correct_subset},
and the fact that
 $ \forall \, I \in \mathcal{P}_0(\H) 
 \colon 
 \sum_{h \in I} 
 \langle h, u \rangle_H h 
 \in 
 W_P^{1,2}( (0,1)^d, \R^d ) $
hence show that 
\begin{equation} 
\label{eq:correct_sub}
u \in [ W_P^{1,2}(\D, \R^d) \cap L^\infty (\mu_\D; \R^d) ] 
.
\end{equation}
This 
and
Lemma~\ref{lemma:chain_rule}
(with
$ d = d $,
$ u = u_i$,
$ v = u_j $,
$ j = j $
for $ i,j \in \{1,\ldots, d\} $
in the notation of Lemma~\ref{lemma:chain_rule})
prove that
for all 
$ i,j \in \{1,\ldots, d \} $
it holds that
$ u_i u_j \in W^{1,2}(\D, \R) $
and
$ \partial_j(u_i u_j)
=
u_i \, \partial_j u_j
+
u_j \, \partial_j u_i $.
Combining this
and the fact that
$ \forall \, i \in \{1,\ldots, d \} \colon 
u_i \in [ W_P^{1,2}(\D, \R) \cap L^{\infty} (\mu_\D; \R) \cap H ] $
with Lemma~\ref{lemma:int_parts2}
(with
$ d = d $,
$ u = u_i $,
$ v = u_j $,
$ w = u_i $,
$ j = j $
for 
$ i, j \in \{1,\ldots, d\} $
in the notation of
Lemma~\ref{lemma:int_parts2})
ensures that
\begin{equation}
\begin{split} 
&
\langle u, F(u) \rangle_H 
=
\langle
u,
R( \eta u - \smallsum_{j=1}^d u_j \, \partial_j u ) 
\rangle_H
=
\eta
\| u \|_H^2
-
\smallsum_{j=1}^d
\langle R u,
u_j \, \partial_j u 
\rangle_U
\\
&
=
\eta
\| u \|_H^2
-
\smallsum_{j=1}^d
\langle u,
u_j \, \partial_j u 
\rangle_U
=
\eta \| u \|_H^2
-
\smallsum_{j=1}^d
\smallsum_{i=1}^d
\langle u_i, u_j \, \partial_j u_i \rangle_{L^2(\mu_\D; \R)}
\\
&
=
\eta \| u \|_H^2
-
\smallsum_{j=1}^d
\smallsum_{i=1}^d
\langle u_i u_j, \partial_j u_i \rangle_{L^2(\mu_\D; \R)}
=
\eta \| u \|_H^2
+
\sum_{j=1}^d
\sum_{i=1}^d
\langle  
  \partial_j ( u_i u_j ),
  u_i
\rangle_{L^2(\mu_\D; \R)}
\\
&
=
\eta \| u \|_H^2
+
\smallsum_{j=1}^d
\smallsum_{i=1}^d
\langle  
u_i
\,
\partial_j u_j 
+
u_j
\,
\partial_j u_i,
u_i
\rangle_{L^2(\mu_\D; \R)}
.
\end{split}
\end{equation}
Hence, we obtain that
\begin{equation}
\begin{split}
\label{eq:first_part_coercivity}
&
\langle u, F(u) \rangle_U
=
\eta \| u \|_H^2
+
\smallsum_{j=1}^d
\smallsum_{i=1}^d
\langle    
u_i,
u_i
\,
\partial_j u_j 
+
u_j
\,
\partial_j u_i
\rangle_{L^2(\mu_\D; \R)}
\\
&
=
\eta \| u \|_H^2
+
\smallsum_{j=1}^d
\langle
u,
u \, \partial_j u_j
\rangle_U
+
\smallsum_{j=1}^d
\langle
u,
u_j \, \partial_j u 
\rangle_U
\\
&
=
2 \eta \| u \|_H^2
-
\big[
\eta \| u \|_H^2 
-
\langle
R u,
\smallsum_{j=1}^d
u_j \, \partial_j u
\rangle_U
\big]
+
\smallsum_{j=1}^d
\langle u, u \, \partial_j u_j \rangle_U
\\
&
=
2 \eta \| u \|_H^2
-
\big[
\langle u, R ( \eta u ) \rangle_H
-
\langle u, R ( \smallsum_{i=1}^d u_i \partial_i u ) \rangle_H
\big]
+
\smallsum_{j=1}^d
\langle u, u \, \partial_j u_j \rangle_U
\\
&
=
2
\eta \| u \|_H^2
-
\langle u, F( u ) \rangle_H
+
\smallsum_{j=1}^d 
\langle  
u, 
u \, \partial_j u_j
\rangle_U 
.
\end{split}
\end{equation}
In addition, 
note that
\begin{equation}
\begin{split}
\label{eq:coercivity_equality2}
&
\smallsum_{j=1}^d \langle  u , u \, \partial_j u_j \rangle_U  
=
\smallsum_{j=1}^d
\smallsum_{i=1}^d 
\langle  u_i, 
u_i \, \partial_j u_j 
\rangle_{L^2(\mu_\D; \R)}
=
\smallsum_{j=1}^d
\smallsum_{i=1}^d 
\langle  
(u_i)^2,
\partial_j u_j
\rangle_{L^2(\mu_\D; \R)}
\\
&
=
\big \langle  \smallsum_{i=1}^d ( u_i )^2 , 
\smallsum_{j=1}^d \partial_j u_j   
\big \rangle_{L^2(\mu_\D; \R)} 
.
\end{split}
\end{equation}
Combining this, \eqref{eq:correct_sub},
and~\eqref{eq:first_part_coercivity}
completes the proof of Lemma~\ref{lemma:zero_coercivity2}.
\end{proof}
\begin{corollary}
	\label{corollary_continuous}
	Assume the setting in Subsection~\ref{setting:Examples}
	and 
	assume for all $ j \in \{ 1, \ldots, d \} $,
	$ v, w \in \H $
	that
	$ \H
	\subseteq
	  W^{1,2}(\D, \R^d) $,
	$  
	\big(
	\sum_{ h \in \H }
	| \lambda_h |^{-2\gamma }
	\big)
	+
	\sup_{ h \in \H }
	\big(
	\| h
	\|_{L^\infty( \mu_\D; \R^d)}
	+
	\| \partial_j h 
	\|_U
	| \lambda_h |^{-\gamma} 
	\big)   
	< \infty $, 
	$ \langle \partial_j v,  
	\partial_j w \rangle_U \, \1_{ \H \backslash \{ v \} } (w) = 0 $.
	Then
	it holds that
	$ F \in \mathcal{C}( H_\gamma, H ) $
	and
	$ B \in \mathcal{C}( H_\gamma, \HS(U,H) ) $.
\end{corollary}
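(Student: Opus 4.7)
The plan is to reduce both continuity statements to the Lipschitz-type bounds already proved in Lemmas~\ref{lemma:F_well_defined} and~\ref{lemma:estimate_B_1}, and to absorb the auxiliary norms $\| \cdot \|_{W^{1,2}}$ and $\| \cdot \|_{L^\infty}$ into the $H_\gamma$-norm by way of Lemma~\ref{lemma:zero_coercivity}.

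First I would apply Lemma~\ref{lemma:zero_coercivity} with $\rho = \gamma$ (whose assumptions match exactly those imposed in Corollary~\ref{corollary_continuous}) to conclude that for every $u \in H_\gamma$ one has $u \in [W^{1,2}(\D, \R^d) \cap L^\infty(\mu_\D; \R^d)]$ together with continuous-embedding estimates of the form
\[
\| \partial u \|_{L^2(\mu_\D; \R^{d \times d})} \leq \kappa_1 \| u \|_{H_\gamma}
\quad \text{and} \quad
\| u \|_{L^\infty(\mu_\D; \R^d)} \leq \kappa_2 \| u \|_{H_\gamma},
\]
where $\kappa_1, \kappa_2 \in [0,\infty)$ depend only on $\H$ and $\lambda$. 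Combined with the standard embedding $H_\gamma \hookrightarrow H$ (available because $\gamma \geq 0$ and $\sup(\operatorname{im}(\lambda)) < 0$), this yields Lipschitz embeddings of $H_\gamma$ into each of $H$, $W^{1,2}(\D,\R^d)$, and $L^\infty(\mu_\D;\R^d)$. As a byproduct, the representation formula for $F$ given in Subsection~\ref{setting:Examples} is then available on all of $H_\gamma$, not merely on the intersection.

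Next I would combine these embeddings with Lemma~\ref{lemma:F_well_defined} applied to $v, w \in H_\gamma$: the right-hand side of the Lipschitz estimate provided there, namely
\[
\eta \| v - w \|_H + d \bigl( \| \partial v \|_{L^2} \| v - w \|_{L^\infty} + \| w \|_{L^\infty} \| \partial(v-w) \|_{L^2} \bigr),
\]
is then bounded by a constant multiple of $(1 + \| v \|_{H_\gamma} + \| w \|_{H_\gamma}) \| v - w \|_{H_\gamma}$, which shows that $F \colon H_\gamma \to H$ is locally Lipschitz continuous and thus continuous. For $B$, Lemma~\ref{lemma:estimate_B_1} directly bounds $\| B(v) - B(w) \|_{\HS(U,H)}$ by a constant multiple of $\| v - w \|_{L^\infty(\mu_\D; \R^d)}$, which in turn is $\leq \kappa_2 \| v - w \|_{H_\gamma}$; this yields globally Lipschitz, and a fortiori continuous, dependence of $B$ on its $H_\gamma$-argument.

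There is no genuine obstacle in this outline: the corollary is essentially a bookkeeping consequence of the auxiliary lemmas already in place, and the only step that merits care is verifying that the defining formulas for $F$ and $B$ actually apply on all of $H_\gamma$ rather than only on $H_\gamma \cap W^{1,2}(\D,\R^d) \cap L^\infty(\mu_\D; \R^d)$; this is precisely what the first step above supplies.
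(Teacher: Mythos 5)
Your proposal is correct and follows essentially the same route as the paper's proof: the continuous embeddings $H_\gamma \hookrightarrow W^{1,2}(\D,\R^d)$ and $H_\gamma \hookrightarrow L^\infty(\mu_\D;\R^d)$ from Lemma~\ref{lemma:zero_coercivity} with $\rho=\gamma$, combined with the difference estimates of Lemma~\ref{lemma:F_well_defined} for $F$ and Lemma~\ref{lemma:estimate_B_1} for $B$. The only cosmetic difference is that you make the local-Lipschitz (for $F$) and global-Lipschitz (for $B$) conclusions explicit, which the paper leaves implicit.
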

\begin{proof}[Proof of Corollary~\ref{corollary_continuous}]
First of all, note that
Lemma~\ref{lemma:zero_coercivity} 
(with
$ \rho = \gamma $
in the notation of 
Lemma~\ref{lemma:zero_coercivity})
assures that 
\begin{equation}
\label{eq:cont_inclusion}
H_\gamma \subseteq   W^{1,2}(D, \R^d)  
\quad
\text{continuously}
\qquad
\text{and}
\qquad
 H_\gamma \subseteq 
L^\infty( \mu_{\mathcal{D}}; \R^d) 
\quad
\text{continuously.}
\end{equation}
This and 
Lemma~\ref{lemma:F_well_defined} 
(with 
$ v = v $,
$ w = w $
for $ v, w \in H_\gamma $ 
in the notation of 
Lemma~\ref{lemma:F_well_defined})
show that for all 
$ v, w \in H_\gamma $
it holds
that
\begin{equation}
\begin{split}
\label{eq:diff_estimate}
&
\| F(v) - F(w) \|_{L^2(\mu_\D; \R^d)} 
\leq
\eta
\| v - w \|_{ L^2(\mu_\D; \R^d) } 
\\
&
+
d
\big(
\| \partial v \|_{L^2(\mu_\D; \R^{d\times d} )} 
\| v - w \|_{L^\infty(\mu_\D; \R^d)}
+ 
\| w \|_{L^\infty(\mu_\D; \R^d)}
\| \partial( v -w ) \|_{L^2(\mu_\D; \R^{d\times d} )} 
\big) 
< \infty
.
\end{split}
\end{equation}
In addition,
Lemma~\ref{lemma:estimate_B_1} 
(with 
$ v = v $,
$ w = w $
for $ v, w \in H_\gamma $
in the notation of 
Lemma~\ref{lemma:estimate_B_1})
proves that for all
$ v, w \in H_\gamma $
it holds that
\begin{equation}
\begin{split} 
&
\| 
B( v )
-
B( w ) 
\|_{\HS( U, H )}
\\
&
\leq 
\Big(
\sup\nolimits_{x \in \D, y\in \R^d, z \in \R^d \backslash \{ y \} }
\tfrac{ \| b(x,y) - b(x,z) \|_{ \R^{ d \times d } } }{\| y - z \|_{\R^d}}
\Big)
\| v - w \|_{ L^\infty(\mu_\D; \R^d) }
\sqrt{ \operatorname{trace}_U(Q) } 
.
\end{split}
\end{equation} 
Combining this
and~\eqref{eq:diff_estimate}
with~\eqref{eq:cont_inclusion}
 completes the proof of
Corollary~\ref{corollary_continuous}.
\end{proof}
\subsection[Stochastic Burgers equations]{Stochastic Burgers equations}
\label{sec:Burgers}
\begin{corollary}
\label{corollary:Burgers}
Assume the setting in 
Subsection~\ref{setting:Examples},
let 
$ (e_n)_{n\in \N} \subseteq \H $,
and
assume 
for all $ n \in \N $,
$ v \in H_\gamma $
that 
$ \eta = 0 $,
$ d=1 $,  
$ \gamma \geq \nicefrac{1}{2} $,  
$ e_n 
= 
[ \{ \sqrt{2} \sin(n \pi x) \}_{x\in \D} ]_{\mu_{\D}, \mathcal{B}(\R)} $,  
$ \lambda_{e_n} = - \pi^2 n^2 $, 
$ r(v) \geq  
\max\!\big\{ 
\sqrt{ \vartheta }
+
\varepsilon
\| v \|_H^2
, 
\frac{ 1 }{\sqrt{3}}
\| v \|_{H_\gamma}^2
\big\} $.
Then  
\begin{equation}
  \begin{split}
  \label{eq:BurgersExpMoments}
  &
  \sup_{ \theta \in \varpi_T}
  \sup_{I \in \mathcal{P}_0(\H)}
  \sup_{J \in \mathcal{P}_0(\mathbb{U})}
  \sup_{t\in [0,T]} 
  \E\!\left[
       \exp\!\left(
         \tfrac{   
         \varepsilon      
           \| Y^{ \theta, I, J  }_t \|_H^2
         }
         {
         e^{ 2 \varepsilon \vartheta  t }
         }
       \right)
     \right] 
<\infty 
.
\end{split}
  \end{equation}
\end{corollary}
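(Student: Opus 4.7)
The plan is to apply Corollary~\ref{Corollary:full_discrete_scheme_convergence} to the Burgers setting with parameters $b_1 = b_2 = 0$ and $\varsigma = \delta$. First I would verify the standing continuity and measurability hypotheses of Subsection~\ref{setting:exponential_moments_full_discrete}: from $e_n(y) = \sqrt{2}\sin(n\pi y)$, $\lambda_{e_n} = -\pi^2 n^2$, $\|\partial e_n\|_U = n\pi$, $\|e_n\|_{L^\infty} = \sqrt{2}$, and the pairwise $L^2$-orthogonality of $\{\partial e_n\}_{n \in \N}$, the summability conditions $\sum_n |\lambda_{e_n}|^{-2\gamma} < \infty$ and $\sup_n \|\partial e_n\|_U\, |\lambda_{e_n}|^{-\gamma} < \infty$ hold for $\gamma \geq 1/2$, so Corollary~\ref{corollary_continuous} delivers continuity of $F$ and $B$ on $H_\gamma$.

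The crux is the Lyapunov inequality $\langle x, P_I F(x)\rangle_H \leq b_1 + b_2 \|x\|_H^2$ required by Corollary~\ref{Corollary:full_discrete_scheme_convergence}. Since $P_I$ is the orthogonal projection onto $\operatorname{span}\{e_n : n \in I\} \subseteq H$ and $P_I x = x$ for $x \in P_I(H_\gamma)$, self-adjointness reduces the inequality to showing $\langle x, F(x)\rangle_H = -\langle x^2, \partial x\rangle_{L^2(\mu_\D;\R)} = 0$. Because $x = \sum_{n \in I}\langle e_n, x\rangle_H e_n$ is a finite linear combination of smooth basis functions vanishing at $0$ and $1$, classical integration by parts yields $\int_0^1 x(y)^2\,\partial x(y)\,dy = \tfrac{1}{3}[x^3]_0^1 = 0$. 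Consequently the Lyapunov bound holds with $b_1 = b_2 = 0$.

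For the taming bounds on $D_h^I$, the lower bound $r(v) \geq \tfrac{1}{\sqrt{3}}\|v\|_{H_\gamma}^2$ together with $r(x) \leq c h^{-\delta}$ gives $\|x\|_{H_\gamma}^2 \leq \sqrt{3}\,c\, h^{-\delta}$. Lemma~\ref{lemma:zero_coercivity} (with $\eta = 0$, $d = 1$, $\rho = \gamma$) then produces $\|P_I F(x)\|_H \leq \|F(x)\|_H \leq C\|x\|_{H_\gamma}^2 \leq C\sqrt{3}\,c\, h^{-\delta}$ for an explicit constant $C$, while Lemma~\ref{lemma:estimate_B_1} yields $\|P_I B(x)\hat P_J\|_{\HS(U,H)} \leq \sqrt{\vartheta} \leq \sqrt{\vartheta}\,T^\delta h^{-\delta}$; both bounds can be absorbed into a single enlarged constant $c'$. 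The inclusion $D_h^I \subseteq \{v \in H : \sqrt{\vartheta} + \varepsilon\|v\|_H^2 \leq c'\, h^{-\delta}\}$ is immediate from $r(v) \geq \sqrt{\vartheta} + \varepsilon\|v\|_H^2$, and $\delta < 1/18$ forces $\delta < (1-14\delta)/4$, so $\varsigma = \delta$ is admissible.

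With all assumptions verified, Corollary~\ref{Corollary:full_discrete_scheme_convergence} (third inequality, specialised at $b_2 = 0$) delivers $\sup_{\theta, I, J, t}\E[\exp(\varepsilon\|Y_t^{\theta,I,J}\|_H^2 / e^{2\varepsilon\vartheta t})] < \infty$, which is precisely~\eqref{eq:BurgersExpMoments}. The principal technical obstacle lies in the coercivity step: the generic zero-coercivity result Lemma~\ref{lemma:zero_coercivity2} is formulated for the periodic Sobolev space $W_P^{1,2}$, whereas the Burgers setting here has Dirichlet boundary conditions, so the vanishing of $\langle x^2, \partial x\rangle_{L^2}$ must be extracted directly from the explicit smoothness and Dirichlet values of the Galerkin basis $\{e_n\}_{n \in I}$ rather than invoked from that lemma.
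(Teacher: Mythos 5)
Your proposal is correct and follows the same overall route as the paper: verify the summability and derivative bounds for the sine basis, obtain the $F$- and $B$-growth estimates from Lemma~\ref{lemma:zero_coercivity} and Lemma~\ref{lemma:estimate_B_1}, get measurability from Corollary~\ref{corollary_continuous}, and feed everything into Corollary~\ref{Corollary:full_discrete_scheme_convergence} with $b_1=b_2=0$ and $\varsigma=\delta$. The one genuine divergence is the coercivity step. You prove $\langle x,F(x)\rangle_H=0$ directly: for $x\in P_I(H_\gamma)$ with $I$ finite, $x$ is a trigonometric polynomial vanishing at the endpoints, so classical integration by parts gives $\int_0^1 x^2\,\partial x=\tfrac13[x^3]_0^1=0$. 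The paper instead observes that $\H\subseteq W_0^{1,2}((0,1),\R)=\overline{\mathcal{C}^\infty_{cpt}}^{W^{1,2}}\subseteq\overline{\mathcal{C}^\infty_{P}}^{W^{1,2}}=W_P^{1,2}((0,1),\R)$, so the Dirichlet basis sits inside the periodic Sobolev space and Lemma~\ref{lemma:zero_coercivity2} applies verbatim to all of $H_\gamma$ (not just to finite Galerkin projections). Your direct argument is more elementary and suffices, since the Lyapunov hypothesis of Corollary~\ref{Corollary:full_discrete_scheme_convergence} is only required on $D_h^I\subseteq P_I(H_\gamma)$; the paper's route buys the identity on all of $H_\gamma$ and reuses the machinery shared with the Kuramoto--Sivashinsky and Navier--Stokes examples. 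One small refinement: the constant $\tfrac{1}{\sqrt 3}$ in the hypothesis $r(v)\geq\tfrac{1}{\sqrt 3}\|v\|_{H_\gamma}^2$ is calibrated so that $\|F(x)\|_H\leq\tfrac{1}{\sqrt 3}\|x\|_{H_\gamma}^2\leq r(x)\leq c\,h^{-\delta}$ holds with the original $c$ (indeed $\tfrac{\sqrt 2}{\pi}(\sum_n n^{-2})^{1/2}=\tfrac{1}{\sqrt 3}$), so no enlargement of $c$ is needed; your "absorb into $c'$" step is harmless (all hypotheses of Corollary~\ref{Corollary:full_discrete_scheme_convergence} are monotone in $c$) but avoidable.
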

\begin{proof}[Proof of Corollary~\ref{corollary:Burgers}]
First of all, note that the fact that
$ \{e_n \colon n \in \N \}
\subseteq \H
\subseteq H \subseteq U $
shows that
\begin{equation} 
\label{eq:isONS}
 \{ e_n \colon n \in \N \} = \H  
 \qquad
 \text{  and  }
 \qquad
 H = U .
\end{equation}
In the next step 
we observe  
that  
\begin{equation} 
\label{eq:sum_conv}
\sum\nolimits_{ h \in \H }
| \lambda_h |^{-2 \gamma} 
=
\sum\nolimits_{n\in \N}
| \pi^2 n^2 |^{-2 \gamma}
=
\pi^{-4 \gamma}
\sum\nolimits_{n \in \N}
 n^{-4 \gamma}
 \leq
 \pi^{-2}
 \sum\nolimits_{n \in \N}
 n^{-2}
 < \infty.
\end{equation}
Moreover,
note that for all 
$ n \in \N $
it holds that
\begin{equation}
\begin{split}
\label{eq:eig_decrease}
\| \partial e_n 
\|_U
| \lambda_{e_n} |^{- \gamma} 
&
=
\| [ \{ \pi n \sqrt{2} \cos( n \pi  x )  \}_{x\in \D} ]_{\mu_\D, \mathcal{B}(\R)} \|_U
| \pi^2 n^2 |^{-\gamma}
\\
&
=
\pi n | \pi^2 n^2 |^{-\gamma}
=
\tfrac{1}{ (\pi n)^{2 \gamma - 1 } } 
\leq
1
.
\end{split}
\end{equation}
Combining~\eqref{eq:isONS}--\eqref{eq:eig_decrease},
the fact that
$ \sup_{h\in \H} \| h
\|_{L^\infty( \mu_\D; \R)}
= \sqrt{2} $,
and
Lemma~\ref{lemma:zero_coercivity}
(with   
$ \rho = \gamma $,
$ u = v $
for 
$ v \in H_\gamma $  
in the notation of 
Lemma~\ref{lemma:zero_coercivity})
proves that for all
	$ v \in H_\gamma $ 
	it holds that 
	$ H_\gamma \subseteq [ W^{1,2}(\D, \R) 
	\cap L^\infty( \mu_{\D}; \R) ] $
	and
	\begin{equation}
	\begin{split}
	\label{eq:F_B_estimate}
	\| F(v) \|_H
\leq
\tfrac{\sqrt{2}}{\pi}
\big(
\smallsum_{n\in \N}
n^{-2}
\big)^{\nicefrac{1}{2}}
\| v \|_{H_\gamma}^2
=
\tfrac{1}{\sqrt{3}}
\| v \|_{H_\gamma}^2
< \infty
	.
\end{split}
\end{equation}
Next note that
\begin{equation}
\begin{split}
\H
\subseteq D(A) 
& = [ W^{2,2}( \D, \R ) \cap W_0^{1,2}( \D, \R ) ]
\subseteq W_0^{1,2} ( \D, \R ) 
=
\overline{ \mathcal{C}_{cpt}^\infty ( \D, \R ) }^{ W^{1,2} ( \D, \R) } 
\\
&
\subseteq
\overline{ \mathcal{C}_{P}^\infty ( \D, \R ) }^{ W^{1,2} ( \D, \R) } 
= W^{1,2}_P (\D, \R) 
.
\end{split}
\end{equation}
This,
\eqref{eq:isONS}--\eqref{eq:eig_decrease},
the fact that
$ \sup_{h\in \H} \| h
\|_{L^\infty( \mu_\D; \R)}
= \sqrt{2} $, 
and
Lemma~\ref{lemma:zero_coercivity2}
(with   
$ \rho = \gamma $,
$ u = x $
for  
$ x \in H_\gamma $ 
in the notation of 
Lemma~\ref{lemma:zero_coercivity2})
ensure that for all  
$ x \in H_\gamma $
it holds that
$ H_\gamma \subseteq [ W_P^{1,2}( \D, \R ) \cap L^\infty( \mu_\D; \R ) ] $
and
	\begin{equation}
	\begin{split}
2 \langle x, F(x) \rangle_H
&
=
2 \eta \| x \|_H^2
+
\langle x, x \, \partial x \rangle_U
=
2 \eta \| x \|_H^2
+
\langle x, R ( x \, \partial x ) \rangle_H
\\
&
=
2 \eta \| x \|_H^2
+
\langle x, F(x) \rangle_H
=
\langle x, F(x) \rangle_H
.
\end{split}
 \end{equation} 
Hence, we obtain that
for all
$ I\in \mathcal{P}_0( \H ) $, 
$ x \in P_I(H) $
it holds that
\begin{equation}
\label{eq:Burger_coercivity}
\langle x , P_I F(x) \rangle_H =
\langle P_I x, F(x) \rangle_H
=
\langle x, F(x) \rangle_H
=
0.
\end{equation}
In the next step
we observe that~\eqref{eq:isONS}--\eqref{eq:eig_decrease},
the fact that
$ \sup_{h\in \H} \| h
\|_{L^\infty( \mu_\D; \R)}
= \sqrt{2} $,
and
Corollary~\ref{corollary_continuous} 
assure that 
$
F \in \mathcal{C}( H_\gamma, H )
$
and
$
B \in \mathcal{C}( H_\gamma, \HS(U,H) )
$.
This proves that
\begin{equation} 
\label{eq:meas}
F \in \mathcal{M}( \mathcal{B}(H_\gamma), \mathcal{B}(H) )
\qquad
\text{and}
\qquad
B \in \mathcal{M}( \mathcal{B}(H_\gamma), \mathcal{B}(\HS(U,H)) )
.
\end{equation}
Moreover, \eqref{eq:F_B_estimate} and Lemma~\ref{lemma:estimate_B_1} 
(with 
$ v = x $,
$ w = x $
for $ x \in 
\cup_{ h \in (0,T] }
\cup_{I \in \mathcal{P}_0(\H)} D_h^I $
in the notation of 
Lemma~\ref{lemma:estimate_B_1})
imply for all 
$ h\in (0,T] $, 
$ I \in \mathcal{P}_0(\H) $, 
$ J \in \mathcal{P}_0 ( \mathbb{U} ) $,
$ x \in D_h^I $ that
\begin{equation}
\begin{split}
\label{eq:growt_estimate_Burgers}
\max\!\big\{
\| P_I F(x) \|_H,
\| P_I B(x) \hat P_J \|_{\HS( U, H )}
\big\}
&
\leq 
\max\!\big\{
\| F(x) \|_H,
\| B(x) \|_{\HS( U, H )}
\big\}
\\
&
\leq 
\max \big\{
\tfrac{ 1 }{\sqrt{3}}
\| x \|_{H_\gamma}^2
,
\sqrt{ \vartheta }
\big\}
\leq r(x) \leq ch^{-\delta}
. 
\end{split}
\end{equation}
Furthermore, we observe that
the fact that
$ \forall \, v \in H_\gamma \colon
\sqrt{\theta} + \varepsilon \| v \|_H^2
\leq 
r(v) $
shows
that
for all $ I\in \mathcal{P}_0(\H) $, $ h\in (0,T] $ 
it holds
that
\begin{equation} 
\begin{split}
\label{eq:subset_consistent_Burgers}
 D_h^I
 = \{ x \in P_I(H ) \colon
 r(x) \leq ch^{-\delta}
 \}
 &
  \subseteq 
  \{x\in P_I(H )\colon \sqrt{\vartheta} + \varepsilon \| x \|_H^2
\leq c h^{-\delta}\}
\\
&
\subseteq
\{ v \in H \colon \sqrt{\vartheta} + \varepsilon \| v \|_H^2
\leq c h^{-\delta}\}
.
\end{split}
\end{equation}
In addition, we note that
Lemma~\ref{lemma:estimate_B_1}
ensures that $ \sup_{x\in H_\gamma} \| B(x) \|_{\HS(U,H)}^2 \leq \vartheta < \infty $.
Combining 
\eqref{eq:Burger_coercivity}--\eqref{eq:subset_consistent_Burgers} 
and
Corollary~\ref{Corollary:full_discrete_scheme_convergence}  
(with 
$ H = H $,
$ U = U $,
$ \H = \H $,
$ \mathbb{U} = \mathbb{U} $,
$ T = T $,
$ \gamma = \gamma $,
$ \delta = \delta $, 
$ \lambda = \lambda $,
$ A = A $,
$ \xi = \xi $,
$ F = F $,
$ B = B $,
$ D_h^I = D_h^I $,
$ \vartheta = \vartheta $,
$ b_1 = 0 $,
$ b_2 = 0 $,
$ \varepsilon = \varepsilon $,
$ \varsigma = \delta $,
$ c = c $,
$ Y^{\theta, I, J} = Y^{\theta, I, J} $
for 
$ h \in (0,T] $,
$ \theta \in \varpi_T $,
$ I \in \mathcal{P}_0(\H) $,
$ J \in \mathcal{P}_0(\mathbb{U}) $ 
in the notation of Corollary~\ref{Corollary:full_discrete_scheme_convergence})
hence
completes the proof of Corollary~\ref{corollary:Burgers}. 
\end{proof}
\begin{remark}
Consider the setting of Corollary~\ref{corollary:Burgers}.
Then
the stochastic processes
$ Y^{\theta, I, J} \colon $ $ [0,T] \times \Omega \to P_I(H) $,
$ \theta \in \varpi_T $,
$ I \in \mathcal{P}_0(\H) $,
$ J \in \mathcal{P}_0(\mathbb{U}) $,
are 
space-time-noise discrete numerical
approximation processes
for the  
 stochastic Burgers equation
\begin{equation}
dX_t(x)
=
\big[
\tfrac{\partial^2}{\partial x^2}
X_t(x)
-  
X_t(x)
\cdot
\tfrac{\partial }{\partial x} X_t(x) 
\big]\,dt
+
b(x, X_t(x)) \, 
d ( \sqrt{Q} W )_t(x) , 
\end{equation}
with
$ X_0(x)=\xi(x) $ 
and 
$ X_t(0)=X_t(1)=0 $
for $ t\in [0,T] $, 
$ x\in (0,1) $
(cf., e.g., 
Section~1 in 
Da Prato et al.~\cite{DaPratoDebusscheTemam1994}
and
Section~2 in Hairer \emph{\&} Voss~\cite{HairerVoss2011}).
\end{remark}
\subsection{Stochastic Kuramoto-Sivashinsky equations}
\label{sec:Kuramoto}
\begin{corollary}
\label{corollary:Kuramoto}
Assume the setting in Subsection~\ref{setting:Examples}, 
let 
$ (e_k)_{ k \in \Z } \subseteq \H $,
and
assume 
for all
$ n\in \N $, 
$ k \in \Z $, 
$ v \in H_\gamma $ 
that
$ \eta \in (0,\infty) $,
$ d=1 $, 
$ \gamma \geq \nicefrac{1}{4} $,  
$ e_0 = [ \{ 1 \}_{x \in \D} ]_{\mu_\D, \mathcal{B}(\R)} $,
$ e_n  = 
[ \{ \sqrt{2} \cos(2\pi n x)  \}_{x\in \D} ]_{\mu_{\D}, \mathcal{B}(\R)}
 $,
$ e_{-n}  = 
[ \{  \sqrt{2} \sin(2\pi n x) \}_{x\in \D} ]_{\mu_{\D}, \mathcal{B}(\R)}
 $, 
$ r(v) \geq
 \max\!
\big\{
\sqrt{\vartheta} 
+
\varepsilon \| v \|_H^2  ,
\eta \| v \|_H
$
$
+ 
 5
 \max\{ 1, \eta^{- \gamma} \}  
\| v \|_{  H_\gamma }^2
\big \}
$,
$ \lambda_{e_k} =   4 k^2 \pi^2 - 16 k^4 \pi^4 - \eta $.
Then  
\begin{equation}
  \begin{split}
  \label{eq:KuramotoExpMoments}
  &
  \sup_{ \theta \in \varpi_T}
  \sup_{I \in \mathcal{P}_0(\H)}
  \sup_{J \in \mathcal{P}_0(\mathbb{U})}
  \sup_{t\in [0,T]} 
  \E\!\left[
       \exp\!\left(
         \tfrac{ 
         \varepsilon      
           \| Y^{ \theta, I ,   J }_t \|_H^2
         }
         {
         e^{ 2 
         (
            \eta
           +
           \varepsilon \vartheta ) t }
         }
       \right)
     \right]
<\infty
.
\end{split}
  \end{equation}
\end{corollary}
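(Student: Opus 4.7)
The plan is to verify the hypotheses of Corollary~\ref{Corollary:full_discrete_scheme_convergence} with the coercivity constants $b_1 = 0$, $b_2 = \eta$ and the truncation exponent $\varsigma = \delta$, and then invoke that corollary directly; this follows the template used in the proof of Corollary~\ref{corollary:Burgers}, adapted to the Fourier basis and to the coercivity inequality appropriate for periodic boundary conditions.

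First I would use the assumption $(e_k)_{k \in \Z} \subseteq \H$ together with the orthonormality of the trigonometric system and the fact that $\H$ is already an orthonormal basis of $H$ to conclude that $\{ e_k \colon k \in \Z \} = \H$. I would then verify the three structural conditions entering the hypotheses of Lemmas~\ref{lemma:zero_coercivity} and~\ref{lemma:zero_coercivity2}, namely
\begin{equation*}
\smallsum_{k \in \Z} |\lambda_{e_k}|^{-2\gamma} < \infty,
\qquad
\sup_{k \in \Z} \| \partial e_k \|_U \, |\lambda_{e_k}|^{-\gamma} < \infty,
\qquad
\langle \partial e_j, \partial e_k \rangle_U \, \1_{\H \backslash \{ e_j \}}(e_k) = 0 .
\end{equation*}
The first uses the asymptotic $|\lambda_{e_k}| \sim 16 \pi^4 k^4$ as $|k|\to\infty$ together with $4\gamma \geq 1$; the second follows from $\| \partial e_k \|_U \leq 2\pi |k| \sqrt{2}$ for $k \neq 0$ and $\partial e_0 = 0$; and the third is a direct Fourier computation using $\partial e_n = -2\pi n \cdot e_{-n}$ and $\partial e_{-n} = 2\pi n \cdot e_n$ for $n \geq 1$.

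Given these preliminaries, I would apply Lemma~\ref{lemma:zero_coercivity2} (the inclusion $\H \subseteq W_P^{1,2}(\D,\R)$ is immediate since the Fourier basis consists of smooth periodic functions) to obtain, for every $x \in H_\gamma$, the identity
\begin{equation*}
2 \langle x, F(x) \rangle_H = 2 \eta \| x \|_H^2 + \langle x^2, \partial x \rangle_{L^2(\mu_\D; \R)} ,
\end{equation*}
and then use the periodic integration-by-parts identity $\int_0^1 \partial(x^3) \, d\mu_\D = 0$ (valid for $x \in [ W_P^{1,2}(\D, \R) \cap L^\infty(\mu_\D; \R) ]$ by combining Lemma~\ref{lemma:chain_rule} with smooth periodic approximation and Lemma~\ref{lemma:int_parts}) to conclude $\langle x, F(x) \rangle_H = \eta \| x \|_H^2$. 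Since $P_I$ is a self-adjoint projection with range contained in $H_\gamma$, this yields $\langle x, P_I F(x) \rangle_H \leq \eta \| x \|_H^2$, which is the coercivity hypothesis of Corollary~\ref{Corollary:full_discrete_scheme_convergence} with $b_1 = 0$ and $b_2 = \eta$. On $D_h^I$ the threshold condition $r(v) \leq c h^{-\delta}$, together with the assumption on $r$, provides both the set-inclusion $D_h^I \subseteq \{ v \in H \colon \sqrt{\vartheta} + \varepsilon \| v \|_H^2 \leq c h^{-\delta} \}$ and the growth bound $\| F(v) \|_H \leq \eta \| v \|_H + 5 \max\{ 1, \eta^{-\gamma} \} \| v \|_{H_\gamma}^2 \leq r(v) \leq c h^{-\delta}$, where the estimate on $\| F(v) \|_H$ comes from Lemma~\ref{lemma:zero_coercivity} after careful bookkeeping of the constants in the summability and Sobolev bounds established in the preceding paragraph; the complementary bound $\| B(v) \|_{\HS(U, H)} \leq \sqrt{\vartheta} \leq c h^{-\delta}$ then follows for free from Lemma~\ref{lemma:estimate_B_1}.

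The main obstacle I anticipate is the rigorous justification of the periodic identity $\int_0^1 \partial(x^3) \, d\mu_\D = 0$ for $x \in [ W_P^{1,2}(\D,\R) \cap L^\infty(\mu_\D;\R) ]$: one needs to approximate $x$ by smooth periodic functions in $W^{1,2}$, truncate to preserve uniform $L^\infty$ control of the approximants, pass to the limit in $\partial(x_n^3) = 3 x_n^2 \, \partial x_n$ via the Sobolev product rule of Lemma~\ref{lemma:chain_rule}, and conclude via periodic integration by parts (Lemma~\ref{lemma:int_parts}). Once that step is settled, the remaining work is the same bookkeeping as in the proof of Corollary~\ref{corollary:Burgers}, and a direct application of Corollary~\ref{Corollary:full_discrete_scheme_convergence} (with $H = H$, $U = U$, $\vartheta = \vartheta$, $b_1 = 0$, $b_2 = \eta$, $\varepsilon = \varepsilon$, $\varsigma = \delta$, $c = c$) delivers~\eqref{eq:KuramotoExpMoments}.
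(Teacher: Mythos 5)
Your proposal follows the paper's proof essentially step for step: verify $\{e_k\colon k\in\Z\}=\H$, the summability $\sum_{h\in\H}|\lambda_h|^{-2\gamma}<\infty$ and the derivative bounds for the Fourier basis, apply Lemma~\ref{lemma:zero_coercivity} and Lemma~\ref{lemma:zero_coercivity2} to obtain the growth bound and the coercivity identity, check the set inclusion for $D_h^I$, and feed $b_1=0$, $b_2=\eta$, $\varsigma=\delta$ into Corollary~\ref{Corollary:full_discrete_scheme_convergence}. The one ``main obstacle'' you anticipate, namely justifying $\int_0^1\partial(x^3)\,d\mu_\D=0$ by smooth periodic approximation, is sidestepped in the paper by pure algebra: combining the conclusion of Lemma~\ref{lemma:zero_coercivity2}, $2\langle x,F(x)\rangle_H=2\eta\|x\|_H^2+\langle x,x\,\partial x\rangle_U$, with the definitional relation $\langle x,F(x)\rangle_H=\eta\|x\|_H^2-\langle x,x\,\partial x\rangle_U$ yields a two-by-two linear system whose solution is $\langle x,F(x)\rangle_H=\eta\|x\|_H^2$, so no additional approximation argument is needed (the weak integration by parts has already been absorbed into Lemma~\ref{lemma:int_parts2} and the proof of Lemma~\ref{lemma:zero_coercivity2}).
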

\begin{proof}[Proof of Corollary~\ref{corollary:Kuramoto}]
First of all, note that the fact that
$ \{ e_l \colon l \in \Z \}
\subseteq \H 
\subseteq H
\subseteq U $
shows that 
\begin{equation} 
\label{eq:isONS2}
\{ e_l \colon l \in \Z \} = \H
\qquad
\text{and}
\qquad H = U 
.
\end{equation}
Hence, we obtain that
\begin{equation} 
\begin{split}
\label{eq:sum_conv2}
\smallsum_{ h \in \H }
| \lambda_h |^{-2 \gamma} 
&
=
\smallsum_{k \in \Z}
| \lambda_{e_k} |^{-2 \gamma}
= 
\smallsum_{k \in \Z}
| 16 k^4 \pi^4
 -4 k^2 \pi^2
+ \eta
|^{-2 \gamma}
\\
&
=
 \eta^{-2 \gamma} 
 +
 2
\smallsum_{k = 1}^\infty
| 16 k^4 \pi^4
- 
4 k^2 \pi^2
+ 
\eta 
|^{-2 \gamma}
\leq
\eta^{-2 \gamma} 
 +
 2
\smallsum_{k = 1}^\infty
| 12 k^4 \pi^4 
+ 
\eta 
|^{-2 \gamma}
\\
&
\leq
\eta^{-2\gamma}
+
2 \smallsum_{k=1}^\infty | 12 k^4 \pi^4 |^{-2\gamma}
=
\eta^{-2\gamma}
+
\tfrac{ 2 }{ | 12 \pi^4 |^{2\gamma } }
\sum_{k=1}^\infty
\tfrac{1}{ k^{8 \gamma} }
\\
&
\leq
\eta^{-2\gamma}
+
\tfrac{2}{ (12 \pi^4 )^{ \nicefrac{1}{2} } }
\smallsum_{k=1}^\infty
\tfrac{1}{k^2}
\leq
\eta^{-2 \gamma}
+
\smallsum_{k=1}^\infty
\tfrac{1}{k^2} 
= 
\eta^{-2\gamma}
+ 
\tfrac{ \pi^2 }{6} 
< \infty.
\end{split}
\end{equation}
Moreover,
note that for all 
$ n \in \N $
it holds that
\begin{equation}
\begin{split}
\label{eq:eig_decrease2}
\| \partial e_n
\|_U
| \lambda_{e_n} |^{- \gamma} 
&
=
\tfrac{
\|
[ 
\{
2 \pi n \sqrt{2} \sin(2 \pi n x)
\}_{x \in (0,1)}
]_{\mu_{(0,1)}, \mathcal{B}(\R)}
\|_U
}
{
| 16 n^4 \pi^4 - 4 n^2 \pi^2 + \eta |^\gamma
}
=
\tfrac{
	2   \pi n
}{
| 16 n^4 \pi^4 - 4 n^2 \pi^2 + \eta |^\gamma
}
\leq
\tfrac{
	2 \pi n
}{
| 12 n^4 \pi^4 + \eta |^\gamma
}
\\
&
\leq
\tfrac{
	2   \pi n
}{
| 12 n^4 \pi^2 |^\gamma
}
\leq
\tfrac{
	2 \pi n
}{
n 
(12)^{ \nicefrac{1}{4} }
\sqrt{ \pi }
}
=
\tfrac{ 2 \sqrt{ \pi } }{ (12)^{ \nicefrac{1}{4} } }
\leq
2
.
\end{split}
\end{equation}
This shows that
for all $ n \in \N $ it holds that
\begin{equation}
\label{eq:eig_decrease2b}
\| \partial e_{-n}
\|_U
| \lambda_{ e_{-n} } |^{- \gamma} 
=
\tfrac{
\|
[ 
\{
2 \pi n \sqrt{2} \cos(2 \pi n x)
\}_{x \in (0,1)}
]_{\mu_{(0,1)}, \mathcal{B}(\R)}
\|_U
}
{
| 16 n^4 \pi^4 - 4 n^2 \pi^2 + \eta |^\gamma
}
=
\tfrac{
	2   \pi n
}{
| 16 n^4 \pi^4 - 4 n^2 \pi^2 + \eta |^\gamma
}
\leq
2.
\end{equation}
Combining~\eqref{eq:isONS2}--\eqref{eq:eig_decrease2b},
the fact that
$ \sup_{h\in \H} \| h
\|_{L^\infty( \mu_\D; \R)}
= \sqrt{2} $,
and
Lemma~\ref{lemma:zero_coercivity}
(with  
$ \rho = \gamma $,
$ u = v $
for $ v \in H_\gamma $
in the notation of 
Lemma~\ref{lemma:zero_coercivity}) 
proves that for all
$ v \in H_\gamma $
it holds that
$ H_\gamma \subseteq [ W^{1,2}(\D, \R) 
\cap L^\infty( \mu_{\D}; \R) ] $
and
\begin{equation}
\begin{split}
\label{eq:F_estimateKuramoto}
\| F(v) \|_H
&
 \leq
 \eta \| v \|_H
 +
 2 \sqrt{2} 
 \big( 
 \eta^{- 2\gamma }  
 +
 \tfrac{\pi^2}{6}
 \big)^{\nicefrac{1}{2}}
 \| v \|_{H_\gamma }^2
 =
 \eta \| v \|_H
 +
 \big(
 8 \eta^{-2 \gamma}
 +
 \tfrac{ 4 \pi^2 }{ 3 }
 \big)^{ \nicefrac{1}{2} }
 \| v \|_{ H_\gamma }^2
 \\
 &
 \leq
 \eta \| v \|_H
 +
 \max \{ 1, \eta^{-\gamma} \}
 \big(
 8 
 +
 \tfrac{ 4 \pi^2 }{ 3 }
 \big)^{ \nicefrac{1}{2} }
 \| v \|_{ H_\gamma }^2
 \leq
 \eta \| v \|_H
 +
 5
 \max\{ 1, \eta^{- \gamma } \}  
 \| v \|_{H_\gamma }^2
.
\end{split}
\end{equation}
Next note that
\begin{equation}
\begin{split}
\H 
\subseteq
\overline{ \mathcal{C}_{P}^\infty ( \D, \R ) }^{ W^{1,2} ( \D, \R) } 
= W^{1,2}_P (\D, \R) 
.
\end{split}
\end{equation}
This,
\eqref{eq:isONS2}--\eqref{eq:eig_decrease2b},
the fact that
$ \sup_{h\in \H} \| h
\|_{L^\infty( \mu_\D; \R)}
= \sqrt{2} $,  
and
Lemma~\ref{lemma:zero_coercivity2}
(with   
$ \rho = \gamma $,
$ u = x $
for  
$ x \in H_\gamma $ 
in the notation of 
Lemma~\ref{lemma:zero_coercivity2})
ensure that for all
$ x \in H_\gamma $
it holds
that
$ H_\gamma \subseteq [ W_P^{1, 2}( \D, \R ) \cap L^\infty( \mu_\D; \R ) ] $
and
\begin{equation}
\begin{split}
&
2 \langle x, F(x) \rangle_H 
=
2 \eta \| x \|_H^2 + \langle x, x \, \partial x \rangle_U
=
2 \eta \| x \|_H^2 + \langle x, R ( x \, \partial x ) \rangle_H
\\
&
=
3 \eta \| x \|_H^2 
-
[ 
\langle x, R( \eta x ) \rangle_H
-
\langle x, R( x \, \partial x ) \rangle_H
]
=
3 \eta \| x \|_H^2 - \langle x, F(x) \rangle_H 
.
\end{split}
\end{equation}
Hence, we obtain that
for all
$ I\in \mathcal{P}_0( \H ) $, 
$ x \in P_I(H) $
it holds that
	\begin{equation}
	\label{eq:coercivity_Kuramoto}
	  \langle x , P_I F(x) \rangle_H =
\langle P_I x, F(x) \rangle_H
=
\langle x, F(x) \rangle_H
=
\eta   \| x \|_H^2.
 \end{equation} 
In the next step
we observe that
\eqref{eq:isONS2}--\eqref{eq:eig_decrease2b},
the fact that
$ \sup_{h\in \H} \| h
\|_{L^\infty( \mu_\D; \R)}
= \sqrt{2} $, 
and
Corollary~\ref{corollary_continuous} 
assure that 
$
F \in \mathcal{C}( H_\gamma, H )
$
and
$
B \in \mathcal{C}( H_\gamma, \HS(U,H) )
$.
This proves that
\begin{equation} 
\label{eq:meas2}
F \in \mathcal{M}( \mathcal{B}(H_\gamma), \mathcal{B}(H) )
\qquad
\text{and}
\qquad
B \in \mathcal{M}( \mathcal{B}(H_\gamma), \mathcal{B}(\HS(U,H)) )
.
\end{equation}
Moreover, \eqref{eq:F_estimateKuramoto} and Lemma~\ref{lemma:estimate_B_1} 
(with
$ v = x $,
$ w = x $
for $ x \in \cup_{h \in (0,T] } \cup_{I \in \mathcal{P}_0(\H)} D_h^I $
in the notation of
Lemma~\ref{lemma:estimate_B_1})
imply that for all 
$ h\in (0,T] $, 
$ I \in \mathcal{P}_0(\H) $, 
$ J \in \mathcal{P}_0( \mathbb{U} ) $,
$ x \in D_h^I $ it holds that
\begin{equation}
\begin{split}
\label{eq:growt_estimate_Kuramoto}
&
\max \{ 
\| P_I F(x) \|_H
,
\| P_I B(x) \hat P_J \|_{\HS( U, H )} 
\}
\leq
\max \{ 
\| F(x) \|_H
,
\| B(x) \|_{\HS( U, H )} 
\}
\\
&
\leq
\max \big\{
\eta \| x \|_H
+ 
 5
 \max\{ 1, \eta^{- \gamma} \}  
\| x \|_{H_\gamma}^2
, 
\sqrt{\vartheta}
\big\}
\leq r(x) \leq ch^{-\delta} 
. 
\end{split}
\end{equation}
Furthermore, we observe that
the fact that
$ \forall \, v \in H_\gamma \colon
\sqrt{\theta} + \varepsilon \| v \|_H^2
\leq 
r(v) $
implies 
that
for all 
$ I\in \mathcal{P}_0(\H) $, 
$ h\in (0,T] $ 
it holds
that
\begin{equation} 
\begin{split}
\label{eq:subset_consistent_Kuramoto}
D_h^I
= \{ x \in P_I( H ) \colon
r(x) \leq ch^{-\delta}
\}
&
\subseteq 
\{x\in P_I( H )\colon \sqrt{\vartheta} + \varepsilon \| x \|_H^2
\leq c h^{-\delta}\}
\\
&
\subseteq
\{v \in H \colon \sqrt{\vartheta} + \varepsilon \| v \|_H^2
\leq c h^{-\delta}\}
.
\end{split}
\end{equation}
In addition, we note that
Lemma~\ref{lemma:estimate_B_1}
ensures that $ \sup_{x\in H_\gamma} \| B(x) \|_{\HS(U,H)}^2 \leq \vartheta < \infty $.
Combining  \eqref{eq:coercivity_Kuramoto}--\eqref{eq:subset_consistent_Kuramoto}  and
Corollary~\ref{Corollary:full_discrete_scheme_convergence}  
(with
$ H = H $,
$ U = U $, 
$ \H = \H $,
$ \mathbb{U} = \mathbb{U} $,
$ T = T $,
$ \gamma = \gamma $,
$ \delta = \delta $,
$ \lambda = \lambda $,
$ A = A $,
$ \xi = \xi $,
$ F = F $,
$ B = B $, 
$ D_h^I = D_h^I $,
$ \vartheta = \vartheta $,
$ b_1 = 0 $,
$ b_2 = \eta $,
$ \varepsilon = \varepsilon $,
$ \varsigma = \delta $,
$ c = c $,
$ Y^{\theta, I, J} = Y^{\theta, I, J} $
for 
$ h \in (0,T] $,
$ \theta \in \varpi_T $,
$ I \in \mathcal{P}_0(\H) $,
$ J \in \mathcal{P}_0(\mathbb{U}) $ 
in the notation of Corollary~\ref{Corollary:full_discrete_scheme_convergence})
hence completes the proof of Corollary~\ref{corollary:Kuramoto}.
\end{proof}
\begin{remark}
Consider the setting of Corollary~\ref{corollary:Kuramoto}.
Then the stochastic processes
$ Y^{\theta, I, J} \colon $ $ [0,T] \times \Omega \to P_I(H) $,
$ \theta \in \varpi_T $,
$ I \in \mathcal{P}_0(\H) $,
$ J \in \mathcal{P}_0(\mathbb{U}) $,
are space-time-noise discrete numerical
approximation processes
for the   
stochastic Kuramoto-Sivashinsky equation 
\begin{equation}
dX_t(x) =
\big[   
- 
\tfrac{\partial^4}{\partial x^4} X_t(x) 
-
\tfrac{\partial^2}{\partial x^2} X_t(x)  
-
X_t (x) 
\cdot
\tfrac{\partial}{\partial x} 
X_t(x) \big]\, dt
+ 
b(x,X_t(x)) 
\,  d ( \sqrt{Q} W )_t(x) ,
\end{equation}
with
$ X_t(0)=X_t(1) $, 
$ X_t'(0)=X_t'(1) $,
$ X_t''(0)=X_t''(1) $, 
$ X_t^{(3)}(0) = X_t^{(3)}(1) $,
and
$ X_0(x)=\xi(x) $
for $ t \in [0,T] $, 
$ x \in (0,1) $
(cf., e.g, 
Duan \& Ervin~\cite{DuanErvin2001}
and
Section~1
in
Hutzenthaler et al.~\cite{HutzenthalerJentzenSalimova2016}).
\end{remark}
\subsection{Two-dimensional stochastic Navier-Stokes equations}
\label{sec:2DNavier}
\begin{corollary}
\label{corollary:2DNavier}
Assume the setting in Subsection~\ref{setting:Examples},
let 
$ (\varphi_k)_{k\in \Z} \subseteq 
\mathcal{C}( (0,1), \R) $, 
$ (\phi_{k,l})_{k,l\in \Z} 
\subseteq \mathcal{C}( \D, \R) $,
$ (e_{i, j, 0})_{i, j \in \Z} \subseteq U $,
$ e_{0,0,1}  \in U $,
and
assume for all 
$ n \in \N $,
$ (k, l) \in \Z^2 \backslash \{(0,0)\} $, 
$ v \in H_\gamma $, 
$ x, y \in (0,1) $
that  
$ \H =  \{ e_{0,0,1} \}
\cup \{ e_{i, j, 0} \colon i, j \in \Z \} $,
$ \eta \in (0,\infty) $,
$ d=2 $,  
$ \gamma > \nicefrac{1}{2} $, 
$ \varphi_0(x)  = 1 $, 
$ \varphi_{n}(x)  = 
 \sqrt{2} \cos( 2 n \pi x ) $,
$ \varphi_{-n} (x)  = 
  \sqrt{2}\sin( 2 n \pi x ) $, 
$ \phi_{k, l}(x,y )  = 
 \varphi_k(x)  \varphi_l(y) $, 
$ e_{0,0,0}  = 
[
\{ 
( 1, 0 ) 
\}_{(x,y)\in \D}
]_{\mu_\D, \mathcal{B}(\R^2)} $,
$ e_{0,0,1} = 
 [
 \{  
(0, 1)
 \}_{(x,y)\in \D}
 ]_{\mu_\D, \mathcal{B}(\R^2)} $,
$ e_{k, l, 0}  = 
\big[
\{
\nicefrac{1}{ \sqrt{k^2 + l^2}  } 
	(
	l \phi_{k,l}(x,y),
	k \phi_{-k, -l}(x,y) 
	)
\}_{(x,y)\in \D}
\big]_{\mu_\D, \mathcal{B}(\R^2)}
$, 
$ \lambda_{e_{k,l,0}} = - \eta - 4 \pi^2 (k^2 + l^2) $,
$ \lambda_{ e_{0,0,0}  } 
=
\lambda_{ e_{0, 0, 1}  } = - \eta $,
$ r(v) \geq
\max\! \big\{ 
\sqrt{\vartheta}
+
\varepsilon \| v \|_H^2,
\eta
\| v \|_H
+ 
6
\big[
\eta^{-2 \gamma}
+ 
\smallsum_{i, j \in \Z }
( \eta + 4 \pi^2 ( i^2 + j^2 ) )^{-2 \gamma} 
\big]^{ \nicefrac{1}{2} }
 \| v \|_{H_\gamma}^2
\big\}
 $. 
Then  
\begin{equation}
  \begin{split}
  \label{eq:expMomentBound2d}
  &
  \sup_{ \theta \in \varpi_T}
  \sup_{ I \in \mathcal{P}_0(\H)}
  \sup_{ J \in \mathcal{P}_0(\mathbb{U})}
  \sup_{t\in [0,T]} 
  \E\!\left[
       \exp\!\left(
         \tfrac{   
         \varepsilon      
           \| Y^{  \theta, I ,  J }_t \|_H^2
         }
         {
         e^{ 2 ( \eta +  \varepsilon \vartheta ) t }
         }
       \right)
     \right]
<\infty
.
\end{split}
  \end{equation}
\end{corollary}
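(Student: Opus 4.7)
The plan is to verify the hypotheses of Corollary~\ref{Corollary:full_discrete_scheme_convergence} by following the template established in the proofs of Corollaries~\ref{corollary:Burgers} and~\ref{corollary:Kuramoto}, with the crucial extra ingredient that the basis $\H$ consists of (weakly) divergence-free vector fields.

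First, I would record the elementary consequences of the $\H$-structure. Since $\{\phi_{k,l}\}_{k,l\in\Z}$ is an orthonormal basis of $L^2(\mu_\D;\R)$ and $\|\phi_{k,l}\|_{L^\infty(\mu_\D;\R)}\le 2$, a direct computation gives $\sup_{h\in\H}\|h\|_{L^\infty(\mu_\D;\R^2)}\le 2$. For each $(k,l)\in\Z^2\setminus\{(0,0)\}$, using $\varphi_k'=-2\pi k\,\varphi_{-k}$ one obtains $\partial_1 e_{k,l,0}=\tfrac{2\pi k}{\sqrt{k^2+l^2}}(-l\,\phi_{-k,l},k\,\phi_{k,-l})$ and an analogous formula for $\partial_2 e_{k,l,0}$, from which $\|\partial_1 e_{k,l,0}\|_U=2\pi|k|$ and $\|\partial_2 e_{k,l,0}\|_U=2\pi|l|$. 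Together with $|\lambda_{e_{k,l,0}}|=\eta+4\pi^2(k^2+l^2)$ and $\gamma>\nicefrac{1}{2}$, this yields $\sup_{h\in\H}\sup_{j\in\{1,2\}}\|\partial_j h\|_U |\lambda_h|^{-\gamma}<\infty$ and $\sum_{h\in\H}|\lambda_h|^{-2\gamma}<\infty$. The required orthogonality $\langle\partial_j v,\partial_j w\rangle_U\,\1_{\H\setminus\{v\}}(w)=0$ follows from the fact that $\partial_j e_{k,l,0}$ is a linear combination of the $\phi_{\pm k,\mp l}$ and these distinct Fourier modes are mutually orthogonal in $U$; the contributions from $e_{0,0,0}$ and $e_{0,0,1}$ are trivially zero.

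Second, I would invoke Lemma~\ref{lemma:zero_coercivity} (with $\rho=\gamma$) to conclude that $H_\gamma\subseteq[W^{1,2}(\D,\R^2)\cap L^\infty(\mu_\D;\R^2)]$ and that for every $v\in H_\gamma$
\[
\|F(v)\|_H\le\eta\|v\|_H+6\Big[\eta^{-2\gamma}+\smallsum_{i,j\in\Z}(\eta+4\pi^2(i^2+j^2))^{-2\gamma}\Big]^{\nicefrac{1}{2}}\|v\|_{H_\gamma}^2,
\]
the constant $6$ absorbing the factors $d\sqrt{d}\cdot\sup_{h}\|h\|_{L^\infty}\cdot\sup_{h,j}\|\partial_j h\|_U|\lambda_h|^{-\gamma}$ appearing in \eqref{eq:estimateF}. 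Corollary~\ref{corollary_continuous} yields $F\in\mathcal{M}(\mathcal{B}(H_\gamma),\mathcal{B}(H))$ and $B\in\mathcal{M}(\mathcal{B}(H_\gamma),\mathcal{B}(\HS(U,H)))$. Combining the $F$-bound with Lemma~\ref{lemma:estimate_B_1} and the hypothesis on $r$ yields $\max\{\|P_IF(x)\|_H,\|P_IB(x)\hat P_J\|_{\HS(U,H)}\}\le r(x)\le ch^{-\delta}$ for $x\in D_h^I$, and also $D_h^I\subseteq\{v\in H\colon\sqrt{\vartheta}+\varepsilon\|v\|_H^2\le ch^{-\delta}\}$.

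The main obstacle, and the place where the $2$-dimensional Navier-Stokes structure enters, is the coercivity estimate $\langle x,P_IF(x)\rangle_H\le b_1+b_2\|x\|_H^2$. The key point is that every basis element of $\H$ is weakly divergence-free: $e_{0,0,0}$ and $e_{0,0,1}$ are constant, and for $(k,l)\neq(0,0)$ the identities $\partial_1\phi_{k,l}=-2\pi k\,\phi_{-k,l}$, $\partial_2\phi_{-k,-l}=2\pi l\,\phi_{-k,l}$ give $l\,\partial_1\phi_{k,l}+k\,\partial_2\phi_{-k,-l}=0$, so $\partial_1(e_{k,l,0})_1+\partial_2(e_{k,l,0})_2=0$. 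By linearity, every $x\in P_I(H_\gamma)$ satisfies $\partial_1 x_1+\partial_2 x_2=0$ in $L^2(\mu_\D;\R)$. Applying Lemma~\ref{lemma:zero_coercivity2} (with $\rho=\gamma$, $u=x$; one first checks the hypothesis $\H\subseteq W_P^{1,2}(\D,\R^2)$, which holds because each $\phi_{k,l}$ is the restriction of a smooth periodic function) yields
\[
2\langle x,F(x)\rangle_H=2\eta\|x\|_H^2+\big\langle\smallsum_{i=1}^2 x_i^2,\smallsum_{j=1}^2\partial_j x_j\big\rangle_{L^2(\mu_\D;\R)}=2\eta\|x\|_H^2,
\]
and consequently $\langle x,P_IF(x)\rangle_H=\langle P_Ix,F(x)\rangle_H=\langle x,F(x)\rangle_H=\eta\|x\|_H^2$ for every $I\in\mathcal{P}_0(\H)$ and every $x\in P_I(H_\gamma)$. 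This gives $b_1=0$, $b_2=\eta$. With all hypotheses verified, a direct application of Corollary~\ref{Corollary:full_discrete_scheme_convergence} (with $\vartheta=\vartheta$, $b_1=0$, $b_2=\eta$, $\varsigma=\delta$) delivers \eqref{eq:expMomentBound2d}, completing the proof.
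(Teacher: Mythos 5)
Your proposal is correct and follows essentially the same route as the paper's proof: the same summability, derivative, and $L^\infty$ estimates for the basis $\H$, Lemma~\ref{lemma:zero_coercivity} for the bound on $F$, Corollary~\ref{corollary_continuous} for measurability, Lemma~\ref{lemma:zero_coercivity2} combined with the divergence-freeness of the basis elements for the coercivity identity $\langle x, P_I F(x)\rangle_H = \eta\|x\|_H^2$, and finally Corollary~\ref{Corollary:full_discrete_scheme_convergence} with $b_1=0$, $b_2=\eta$, $\varsigma=\delta$. The only (harmless) deviation is that you establish $\partial_1 x_1+\partial_2 x_2=0$ only for finite linear combinations $x\in P_I(H_\gamma)$ by linearity of the weak derivative, which indeed suffices because the coercivity hypothesis of Corollary~\ref{Corollary:full_discrete_scheme_convergence} is only required on $D_h^I\subseteq P_I(H_\gamma)$, whereas the paper proves the divergence-free property for all of $H_\gamma$ via an $L^2$-approximation argument.
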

\begin{proof}[Proof of Corollary~\ref{corollary:2DNavier}]
Observe that 
\begin{equation} 
\begin{split}
\label{eq:estimate_sum1}
&
\smallsum\limits_{ h \in \H }
| \lambda_h |^{-2 \gamma} 
= 
\eta^{-2 \gamma}
+ 
\smallsum\limits_{(k, l) \in \Z^2 }
( \eta + 4 \pi^2 ( k^2 + l^2 ) )^{-2 \gamma} 
\\
&
=
\eta^{-2 \gamma}
+
\eta^{-2 \gamma}
+
2
\smallsum\limits_{l = 1}^\infty
( \eta + 4 \pi^2 l^2 )^{-2 \gamma}
+
2
\smallsum\limits_{k = 1}^\infty
( \eta + 4 \pi^2 k^2 )^{-2 \gamma}
\\
&
\quad
+
4
\smallsum\limits_{k=1}^\infty
\smallsum\limits_{l=1}^\infty
( \eta + 4 \pi^2 ( k^2 + l^2 ) )^{-2\gamma}
\\
&
\leq
2 \eta^{-2 \gamma} 
+
4
\smallsum\limits_{k = 1}^\infty
( 4 \pi^2 k^2 )^{-2 \gamma}
+
4
\smallsum\limits_{k=1}^\infty
\smallsum\limits_{l=1}^\infty
( 4 \pi^2 ( k^2 + l^2 ) )^{-2\gamma}
\\
&
=
2 \eta^{-2 \gamma} 
+
4^{1 - 2 \gamma}
\pi^{ - 4 \gamma }
\smallsum\limits_{k = 1}^\infty
k^{-4 \gamma} 
+
4^{ 1 - 2 \gamma }
\pi^{ - 4 \gamma }
\smallsum\limits_{k=1}^\infty
\smallsum\limits_{l=1}^\infty
 ( k^2 + l^2 )^{-2\gamma}
 \\
 &
 \leq
 2 \eta^{-2 \gamma} 
 + 
 \smallsum\limits_{k = 1}^\infty
 k^{-4 \gamma} 
 + 
 \smallsum\limits_{k,l=1}^\infty 
 ( k^2 + l^2 )^{-2\gamma}
 .
\end{split}
\end{equation}
Next note that the fact that
$ \forall \, k \in \N  \colon
k^{-4\gamma} \leq
\int_{k-1}^k x^{-4\gamma} \, dx $
proves that
\begin{equation}
\begin{split}
\label{eq:estimate_sum2}
&
\smallsum\limits_{k=1}^\infty
k^{-4\gamma}
=
1
+
\smallsum\limits_{k=2}^\infty 
k^{-4\gamma}
\leq
1
+
\smallsum\limits_{k=2}^\infty
\smallint\limits_{k-1}^k x^{-4\gamma} \, dx
=
1
+
\smallint\limits_1^\infty
x^{-4 \gamma} \, dx
=
1
+
\tfrac{ 1 }{ 4 \gamma - 1 }
.
\end{split}
\end{equation}
In addition, 
we observe that
the fact that
$ \forall \, k, l \in \N \colon
( k^2 + l^2 )^{ - 2 \gamma }
=
\int_{k-1}^k
\int_{l-1}^l
( k^2 + l^2 )^{-2 \gamma}
\, dx \, dy
\leq
\int_{k-1}^k 
\int_{l-1}^l
(y^2 + x^2)^{-2 \gamma} 
\, dx \, dy
=
\int_{k-1}^k \int_{l-1}^l (x^2 + y^2 )^{ - 2 \gamma }
\, dx \, dy $
proves that
\begin{equation}
\begin{split}
\label{eq:estimate_sum3}
&
\smallsum\limits_{k, l=1}^\infty
 ( k^2 + l^2 )^{-2\gamma}
 = 
 \smallsum\limits_{k=1}^\infty 
 ( k^2 + 1 )^{-2\gamma}
 + 
\smallsum\limits_{l=2}^\infty
 ( 1 + l^2 )^{-2\gamma}
 +
 \smallsum\limits_{k=2}^\infty
\smallsum\limits_{l=2}^\infty
 ( k^2 + l^2 )^{-2\gamma}
 \\
 &
 \leq
 2
 \smallsum\limits_{k=1}^\infty k^{-4\gamma}
 +
 \smallsum\limits_{k=2}^\infty
\smallsum\limits_{l=2}^\infty
\int_{k-1}^k \int_{l-1}^l (x^2 + y^2 )^{ - 2 \gamma }
\, dx \, dy
\\
&
=
2
 \smallsum\limits_{k=1}^\infty k^{-4\gamma}
 + 
\int_1^\infty \int_1^\infty (x^2 + y^2 )^{ - 2 \gamma } \, dx \, dy
=
 2
 \smallsum\limits_{k=1}^\infty k^{-4\gamma}
 +
 \int_0^{2\pi}
 \int_1^\infty
 s^{1-4\gamma}
 \, ds \, du
\\
&
=
 2 \pi
 \smallint\nolimits_1^\infty
 s^{1-4\gamma} \, ds
 +
 2
 \smallsum\limits_{k=1}^\infty k^{-4\gamma}
 =
 \tfrac{ 2 \pi } {  4 \gamma - 2  }
 +
 2
 \smallsum\limits_{k=1}^\infty k^{-4\gamma}
 =
 \tfrac{ \pi } {  2 \gamma - 1  }
 +
 2
 \smallsum\limits_{k=1}^\infty k^{-4\gamma}
.
\end{split}
\end{equation}
Combining~\eqref{eq:estimate_sum1}--\eqref{eq:estimate_sum3}
proves that
\begin{equation}
\begin{split}
\label{eq:sum_conv3} 
\smallsum\limits_{h \in \H}
| \lambda_h |^{-2\gamma}
<   \infty .
\end{split}
\end{equation}
Moreover,
note that for all 
$ (k, l ) \in \Z^2 \backslash \{ (0,0) \} $
it holds that
\begin{equation}
\begin{split}
\label{eq:eig_decrease3}
&
\| \partial_1 e_{k, l, 0}
\|_U
| \lambda_{e_{k, l, 0}} |^{- \gamma } 
\\
&
=
\|
 [
\{
( k^2 + l^2 )^{-\nicefrac{1}{2}}
	(
	l \, \tfrac{\partial}{\partial x} \phi_{k, l}(x,y),
	k \, \tfrac{\partial}{\partial x} \phi_{-k, -l}(x,y) 
	)
\}_{(x,y)\in \D}
 ]_{\mu_\D, \mathcal{B}(\R^2)}
\|_U
| \lambda_{e_{k, l, 0}} |^{- \gamma } 
\\
&
=
\|
 [
\{
( k^2 + l^2 )^{-\nicefrac{1}{2}}
	(
	- 2 \pi kl \phi_{-k, l}(x,y),
	  2 \pi k^2 \phi_{k, -l}(x,y) 
	)
\}_{(x,y)\in \D}
 ]_{\mu_\D, \mathcal{B}(\R^2)}
\|_U
| \lambda_{e_{k, l, 0}} |^{- \gamma } 
\\
&
= 
( k^2 + l^2 )^{-\nicefrac{1}{2}}
2\pi k
  \sqrt{ k^2 + l^2 }
| \lambda_{e_{k, l, 0}} |^{- \gamma } 
=
	2 \pi  k
| 4 \pi^2 ( k^2 + l^2 ) + \eta |^{-\gamma} 
\\
&
\leq
\tfrac{ 2 \pi k }{ | 4 \pi^2 ( k^2 + l^2 ) + \eta |^{ \nicefrac{1}{2} } }
\leq
\tfrac{ 2 \pi k }{ 2 \pi ( k^2 + l^2 )^{ \nicefrac{1}{2} } }
\leq
1  
\end{split}
\end{equation}
and
\begin{equation}
\begin{split}
\label{eq:eig_decrease3b}
&
\| \partial_2 e_{k, l, 0}
\|_U
| \lambda_{ e_{k, l, 0} } |^{- \gamma } 
\\
&
=
\|
 [
\{
( k^2 + l^2 )^{-\nicefrac{1}{2}}
	(
	l \, \tfrac{\partial}{\partial y} \phi_{k, l}(x,y),
	k \, \tfrac{\partial}{\partial y} \phi_{-k, -l}(x,y) 
	)
\}_{(x,y)\in \D}
 ]_{\mu_\D, \mathcal{B}(\R^2)}
\|_U
| \lambda_{ e_{k, l, 0} } |^{- \gamma } 
\\
&
=
\|
 [
\{
( k^2 + l^2 )^{-\nicefrac{1}{2}}
	(
	 -  2 \pi l^2 \phi_{k, -l}(x,y),
	  2 \pi k l \phi_{-k, l}(x,y) 
	)
\}_{(x,y)\in \D}
 ]_{\mu_\D, \mathcal{B}(\R^2)}
\|_U
| \lambda_{e_{k, l, 0}} |^{- \gamma } 
\\
&
= 
( k^2 + l^2 )^{-\nicefrac{1}{2}}
2\pi l 
  \sqrt{ k^2 + l^2 }
| \lambda_{e_{k, l, 0}} |^{- \gamma } 
= 
	2 \pi  l
| 4 \pi^2( k^2 + l^2 ) + \eta |^{-\gamma} 
\\
&
\leq 
\tfrac{ 2 \pi l }
{ | 4 \pi^2 ( k^2 + l^2 ) + \eta |^{\nicefrac{1}{2}} }
\leq
\tfrac{ 2 \pi l }
{ 2 \pi ( k^2 + l^2 )^{\nicefrac{1}{2}} }
\leq
1 
.
\end{split}
\end{equation}
Furthermore, observe that for all 
$ ( k, l ) \in \Z^2 \backslash \{ ( 0, 0 ) \} $
it holds that
\begin{equation}
\begin{split}
\| e_{k, l, 0} \|_{L^\infty( \mu_\D; \R^2)}
&
=
\tfrac{ 1 } { \sqrt{ k^2 + l^2 } }
\| ( l \phi_{k, l }, k \phi_{-k,- l} ) \|_{L^\infty( \mu_\D; \R^2 )}
\\
&
=
\tfrac{ 1 } { \sqrt{ k^2 + l^2 } }
\| l^2  | \phi_{k, l } ( \cdot ) |^2
+
 k^2 | \phi_{-k, -l} ( \cdot ) |^2  \|_{L^\infty( \mu_\D; \R )}^{ \nicefrac{1}{2} }
\\
&
=
\tfrac{ 1 }{ \sqrt{ k^2 + l^2 } }
\sup_{ x_1, x_2 \in (0, 1) }
\sqrt{
l^2 
| \varphi_k (x_1) |^2
| \varphi_l (x_2) |^2
+
k^2
| \varphi_{-k} (x_1) |^2
| \varphi_{-l} (x_2) |^2
}
\\
&
\leq
\tfrac{ 1 }{ \sqrt{ k^2 + l^2 } }
\sqrt{ l^2 4 + k^2 4 }
=
\tfrac{ 2 \sqrt{ l^2 + k^2} } { \sqrt{ k^2 + l^2} }
=
2
.
\end{split}
\end{equation}
Hence, we obtain that
\begin{equation}
\begin{split}
\label{eq:estimate_Linfty}
&
\sup\nolimits_{h \in \H} \| h \|_{L^\infty(\mu_\D; \R^2)} 
\\
&
=
\max \big\{
\| e_{0,0,0} \|_{L^\infty(\mu_\D; \R^2)} 
,
\| e_{0,0,1} \|_{L^\infty(\mu_\D; \R^2)} 
,
\sup\nolimits_{(k, l)\in \Z^2 \backslash \{ (0,0) \} }
\| e_{k, l, 0} \|_{L^\infty(\mu_\D; \R^2)} 
\big \}
\\
&
\leq
\max \{ 1, 1, 2 \}
=
2
.
\end{split}
\end{equation}
Combining~\eqref{eq:estimate_sum1}, 
\eqref{eq:sum_conv3}, \eqref{eq:eig_decrease3}, \eqref{eq:eig_decrease3b},
\eqref{eq:estimate_Linfty},
and 
Lemma~\ref{lemma:zero_coercivity}
(with  
$ \rho = \gamma $,
$ u = v $
for
$ v \in H_\gamma $ 
in the notation of 
Lemma~\ref{lemma:zero_coercivity}) 
proves that for all
$ v \in H_\gamma $ 
it holds
that
$ H_\gamma \subseteq 
[ W^{1,2}(\D, \R^2) 
\cap L^\infty( \mu_{\D}; \R^2) ] $
and 
\begin{equation}
\begin{split}
\label{eq:F_B_estimateNavier}
&
\| F(v) \|_H
\leq 
\eta 
\| v \|_H
+  
6
\big[
\eta^{-2 \gamma}
+ 
\smallsum_{(k, l) \in \Z^2 }
( \eta + 4 \pi^2 ( k^2 + l^2 ) )^{-2 \gamma} 
\big]^{ \nicefrac{1}{2} }
\| v \|_{H_\gamma }^2
.
\end{split}
\end{equation}
Next note that
\begin{equation}
\H 
\subseteq 
\overline{ \mathcal{C}_P^\infty ( \D; \R^2 ) }^{ W^{1,2}( \D, \R^2) }
=
W_P^{1, 2}( \D, \R ) 
.
\end{equation}
This,
\eqref{eq:sum_conv3}, \eqref{eq:eig_decrease3}, \eqref{eq:eig_decrease3b},
\eqref{eq:estimate_Linfty},  
and
Lemma~\ref{lemma:zero_coercivity2}
(with   
$ \rho = \gamma $,
$ u = u $
for  
$ u =(u_1, u_2) \in H_\gamma $ 
in the notation of 
Lemma~\ref{lemma:zero_coercivity2}) 
ensure that for all
$ u = ( u_1, u_2 ) \in H_\gamma $ 
it holds that
$
H_\gamma \subseteq [ W^{1,2}_P( \D, \R^2) \cap L^\infty( \mu_\D; \R^2 ) ] $
and
\begin{equation}
\label{eq:coerc12}
2 \langle u, F(u) \rangle_H
=
2
\eta \| u \|_H^2 
+
\langle ( u_1 )^2 + ( u_2 )^2, 
\partial_1 u_1  + \partial_2 u_2 \rangle_{ L^2( \mu_\D; \R ) }
.
\end{equation}
In addition, note that for all
$ (k, l) \in \Z^2 $,
$ x, y \in (0,1) $
it holds that
\begin{equation}
\begin{split}
\big|
l \tfrac{ \partial }{ \partial x } \phi_{k,  l} (x,y)
+
k \tfrac{ \partial }{ \partial y } \phi_{-k, -l}(x,y)
\big|
= 
\big|
-
2 \pi k l \phi_{-k, l }(x,y)
+
2 \pi k l \phi_{-k, l }(x,y)
\big|
=
0.
\end{split}
\end{equation}
This assures that for all
$ h = (h_1, h_2) \in \H $
it holds that
\begin{equation} 
\label{eq:div_free_basis}
\partial_1 h_1 + \partial_2 h_2 = [ \{0\}_{x \in \D} ]_{ \mu_\D, \mathcal{B}(\R) } 
.
\end{equation}
Moreover, note that~\eqref{eq:eig_decrease3}, \eqref{eq:eig_decrease3b},
and
item~\eqref{item:conv_partial_L2} in 
Lemma~\ref{lemma:weak_exists}
(with 
$ \rho = \gamma $,
$ u = u $,
$ j = j $
for  
$ u \in H_\gamma $,
$ j \in \{ 1, 2 \} $
in the notation of
Lemma~\ref{lemma:weak_exists})
prove that for all
$ u \in H_\gamma $,
$ j \in \{1, 2 \} $
it holds
that
\begin{equation}
\begin{split}
\label{eq:limite_necessary}
&
\limsup\nolimits_{\mathcal{P}_0( \H) \ni I \to \H}
  \| 
\partial_j u 
- 
\smallsum_{h \in I} \langle h, u \rangle_H \partial_j h 
  \|_{ L^2( \mu_\D; \R^2 ) } 
=
0
.
\end{split}
\end{equation}
This implies that for all 
$ u = (u_1, u_2) \in H_\gamma $,
$ j \in \{ 1, 2 \} $
it holds that
\begin{equation}
\begin{split}
\label{eq:limite_necessary_component}
&
\limsup\nolimits_{\mathcal{P}_0( \H) \ni I \to \H}
  \| 
\partial_j u_j 
- 
\smallsum_{h = (h_1, h_2) \in I} \langle h, u \rangle_H \partial_j h_j 
  \|_{ L^2( \mu_\D; \R ) } 
=
0
.
\end{split}
\end{equation}
Next note that~\eqref{eq:div_free_basis}
ensures that  
 for all 
 $ u = (u_1, u_2) \in W^{1,2} (\D, \R^2) $,
 $ I \in \mathcal{P}_0(\H) $
 it holds that
 \begin{equation}
 \begin{split}
 \label{eq:crucial_triangle}
 &
 \| 
 \partial_1 u_1
 +
 \partial_2 u_2 
 \|_{L^2 ( \mu_D; \R) }
 \\
 &
 =
  \| 
\partial_1 u_1
+
\partial_2 u_2 
- 
\smallsum_{h = (h_1, h_2) \in I} 
\langle h, u \rangle_H
( \partial_1 h_1
+
\partial_2 h_2
)
  \|_{ L^2( \mu_\D; \R ) }
 \\
 &
 \leq
  \| 
\partial_1 u_1  
- 
\smallsum_{h = (h_1, h_2) \in I} 
\langle h, u \rangle_H
 \partial_1 h_1 
  \|_{ L^2( \mu_\D; \R ) }
  \\
  &
  \quad
  +
   \|  
\partial_2 u_2 
- 
\smallsum_{h = (h_1, h_2) \in I} 
\langle h, u \rangle_H 
\partial_2 h_2 
  \|_{ L^2( \mu_\D; \R ) }
  .
 \end{split}
 \end{equation}
 Combining~\eqref{eq:limite_necessary_component}
 with the fact that
 $ H_\gamma \subseteq W^{1,2}(\D, \R^2) $
  hence
shows that 
 for all $ u = (u_1, u_2) \in H_\gamma $ 
 it holds that
 \begin{equation}
 \begin{split}
 &
 \| 
 \partial_1 u_1
 +
 \partial_2 u_2 
 \|_{L^2 ( \mu_D; \R) }
 \\
 &
 =
 \limsup\nolimits_{ \mathcal{P}_0(\H) \ni I \to \H}
  \| 
\partial_1 u_1
+
\partial_2 u_2 
- 
\smallsum_{h = (h_1, h_2) \in I} 
\langle h, u \rangle_H
(
 \partial_1 h_1
+
 \partial_2 h_2
)
  \|_{ L^2( \mu_\D; \R ) }
   \\
   &
   \leq
   \limsup\nolimits_{ \mathcal{P}_0(\H) \ni I \to \H}
   \| 
   \partial_1 u_1 
   - 
   \smallsum_{h = (h_1, h_2) \in I} 
   \langle h, u \rangle_H 
   \partial_1 h_1 
   \|_{ L^2( \mu_\D; \R ) }
   \\
   &
   \quad
   +
   \limsup\nolimits_{ \mathcal{P}_0(\H) \ni I \to \H}
   \|  
   \partial_2 u_2 
   - 
   \smallsum_{h = (h_1, h_2) \in I} 
   \langle h, u \rangle_H 
   \partial_2 h_2 
   \|_{ L^2( \mu_\D; \R ) }
  =
  0.
 \end{split}
 \end{equation}
This assures that
 for all $ u = (u_1, u_2) \in H_\gamma $ 
 it holds that
 \begin{equation}
 \partial_1 u_1 + \partial_2 u_2 = [ \{ 0 \}_{x\in \D} ]_{\mu_{ \D }, \mathcal{B}(\R)} .
 \end{equation}
Equation~\eqref{eq:coerc12} therefore
proves that for all
$ I\in \mathcal{P}_0( \H ) $, 
$ x \in P_I(H) $
it holds that
	\begin{equation} 
\label{eq:coercivity_Navier}
\langle x , P_I F(x) \rangle_H =
\langle P_I x, F(x) \rangle_H
=
\langle x, F(x) \rangle_H
=
\eta \| x \|_H^2 
.
\end{equation} 
In the next step
we observe that
\eqref{eq:sum_conv3}, \eqref{eq:eig_decrease3}, \eqref{eq:eig_decrease3b},
\eqref{eq:estimate_Linfty},
and
Corollary~\ref{corollary_continuous} 
assure that 
$
F \in \mathcal{C}( H_\gamma, H )
$
and
$
B \in \mathcal{C}( H_\gamma, \HS(U,H) )
$.
This proves that
\begin{equation} 
\label{eq:meas3}
F \in \mathcal{M}( \mathcal{B}(H_\gamma), \mathcal{B}(H) )
\qquad
\text{and}
\qquad
B \in \mathcal{M}( \mathcal{B}(H_\gamma), \mathcal{B}(\HS(U,H)) )
.
\end{equation}
Moreover, note that~\eqref{eq:F_B_estimateNavier} and Lemma~\ref{lemma:estimate_B_1}
imply that for all 
$ h\in (0,T] $, 
$ I \in \mathcal{P}_0(\H) $, 
$ J \in \mathcal{P}_0( \mathbb{U} ) $,
$ x \in D_h^I $ 
it holds that
\begin{equation}
\begin{split}
\label{eq:growt_estimate_Navier}
&
\max\!\big\{
\| P_I F(x) \|_H
,
\| P_I B(x) \hat P_J \|_{\HS( U, H )}
\big\}
\leq
\max\!\big\{
\| F(x) \|_H
,
\| B(x) \|_{\HS( U, H )}
\big\}
\\
&
\leq
\max \! \big \{
\eta \| x \|_H
+  
6
\big[
\eta^{-2 \gamma}
+ 
\smallsum_{(k, l) \in \Z^2 }
( \eta + 4 \pi^2 ( k^2 + l^2 ) )^{-2 \gamma} 
\big]^{ \nicefrac{1}{2} }
\| x \|_{H_\gamma }^2
, 
\sqrt{\vartheta}
\big \}
\\
&
\leq r(x) \leq ch^{-\delta} 
. 
\end{split}
\end{equation}
Furthermore, we observe that the fact that 
$ \forall \, v \in H_\gamma \colon
\sqrt{\theta} + \varepsilon \| v \|_H^2
\leq 
r(v) $
implies
that 
for all 
$ I \in \mathcal{P}_0(\H) $, 
$ h \in (0,T] $ 
it holds
that
\begin{equation} 
\begin{split}
\label{eq:subset_consistent_Navier}
D_h^I
= \{ x \in P_I( H ) \colon
r(x) \leq ch^{-\delta}
\}
&
\subseteq \{x \in P_I( H )\colon \sqrt{\vartheta} + \varepsilon \| x \|_H^2
\leq c h^{-\delta}\}
\\
&
\subseteq \{v \in H \colon \sqrt{\vartheta} + \varepsilon \| v \|_H^2
\leq c h^{-\delta}\}
.
\end{split}
\end{equation}
In addition, we note that
Lemma~\ref{lemma:estimate_B_1}
ensures that $ \sup_{x\in H_\gamma} \| B(x) \|_{\HS(U,H)}^2 \leq \vartheta < \infty $.
Combining 
\eqref{eq:coercivity_Navier}--\eqref{eq:subset_consistent_Navier}  
and
Corollary~\ref{Corollary:full_discrete_scheme_convergence}  
(with 
$ H = H $,
$ U = U $,
$ \H = \H $,
$ \mathbb{U} = \mathbb{U} $,
$ T = T $,
$ \gamma = \gamma $,
$ \delta = \delta $, 
$ \lambda = \lambda $,
$ A = A $,
$ \xi = \xi $,
$ F = F $,
$ B = B $,
$ D_h^I = D_h^I $,
$ \vartheta = \vartheta $,
$ b_1 = 0 $,
$ b_2 = \eta $,
$ \varepsilon = \varepsilon $,
$ \varsigma = \delta $,
$ c = c $,
$ Y^{\theta, I, J} = Y^{\theta, I, J} $
for 
$ h \in (0,T] $,
$ \theta \in \varpi_T $,
$ I \in \mathcal{P}_0(\H) $,
$ J \in \mathcal{P}_0(\mathbb{U}) $
in the notation of Corollary~\ref{Corollary:full_discrete_scheme_convergence})
hence
completes the proof of Corollary~\ref{corollary:2DNavier}.
 \end{proof} 
\begin{remark}
Consider the setting of Corollary~\ref{corollary:2DNavier}.
Then the stochastic processes
$ Y^{\theta, I, J} \colon $ $ [0,T] \times \Omega \to P_I(H) $,
$ \theta \in \varpi_T $,
$ I \in \mathcal{P}_0(\H) $,
$ J \in \mathcal{P}_0(\mathbb{U}) $,
are space-time-noise discrete numerical
approximation processes
for the  
two-dimensional
stochastic Navier-Stokes equations
\begin{equation} 
d X_t(x) 
=
\big[
( \tfrac{ \partial^2 }{ \partial x_1^2 } + \tfrac{ \partial^2 }{ \partial x_2^2 } ) X_t(x)
+
( R ( ( \tfrac{\partial}{\partial x} X_t ) \cdot X_t ) ) (x)
\big]\, dt
+
b( x, X_t(x) )\, d (\sqrt{Q} W  )_t(x)
\end{equation}
with periodic boundary conditions,
$ ( \operatorname{div} X_t )(x) = 0 $, and 
$ X_0(x) = \xi(x) $ for
$ t \in [0,T] $,
$ x = (x_1, x_2) \in (0,1)^2 $
(cf., e.g., Section~2 in
Da Prato {\emph{\&}} Debussche~\cite{DaPratoDebussche2002},
Carelli {\emph{\&}} Prohl~\cite{CarelliProhl2012},
Carelli et al.~\cite{CarelliHausenblasProhl2012},
Brze{\'z}niak et al.~\cite{BrzezniakCarelliProhl2013},
and
Bessaih et al.~\cite{BessaihBrzezniakMillet2014}).
\end{remark}
\subsection*{Acknowledgements}
We gratefully acknowledge Zdzis\l{}aw Brze{\'z}niak
for several useful comments that helped to improve
the presentation of the results.
This project has been supported through the SNSF-Research project $ 200021\_156603 $ ``Numerical 
approximations of nonlinear stochastic ordinary and partial differential equations''.
\bibliographystyle{acm} 
\bibliography{bibfile}
\end{document}